\newtheorem*{Whitney towers}{Theorem~\ref{Whitney towers}}
\newtheorem*{h-towers}{Theorems ~\ref{half} \& \ref{$(n)$-solvable}}
\newtheorem*{surgery curves}{Theorem~\ref{surgery curves}}
\newtheorem*{cg=0}{Theorem~\ref{vanish}}
\newtheorem{thm}{Theorem}[section]
\newtheorem{lem}[thm]{Lemma}
\newtheorem{cor}[thm]{Corollary}
\newtheorem{prop}[thm]{Proposition}
\newtheorem{defn}[thm]{Definition}
\newtheorem{rem}[thm]{Remark}
\newtheorem{ex}[thm]{Example}
\numberwithin{equation}{section}
\numberwithin{figure}{section}
\newcommand{\Hom}{\operatorname{Hom}}
\newcommand{\G}{\Gamma}
\newcommand{\FF}{\mathcal{F}}
\newcommand{\ra}{\longrightarrow}
\def\x{\times}
\def\p{\partial}
\def\ov{\overline}
\def\s{\sigma}
\def\lra{\longrightarrow}
\renewcommand{\i}{\iota}
\title{Knot Concordance and Blanchfield Duality}
\author{Tim D. Cochran, Shelly Harvey, Constance Leidy}
\address{Rice University, Houston, Texas, 77005-1892}
\email{cochran@math.rice.edu,shelly@math.rice.edu,cleidy@math.upenn.edu}
\thanks{}
\begin{document}

\begin{abstract} We introduce a new technique for showing classical knots and links are not slice. As one application we resolve a long-standing question as to whether certain natural families of knots contain topologically slice knots. We also present a simpler proof of the result of Cochran-Teichner that the successive quotients of the integral terms of the Cochran-Orr-Teichner filtration of the knot concordance group have rank $1$. For links we have similar results. We show that the iterated Bing doubles of many algebraically slice knots are not topologically slice. Some of the proofs do not use the existence of the Cheeger-Gromov bound, a deep analytical tool used by Cochran-Teichner. Our main examples are actually boundary links but cannot be detected in the algebraic boundary link concordance group, nor by any $\rho$ invariants associated to solvable representations into finite unitary groups.
\end{abstract}

\maketitle
\section{Introduction}\label{sec:Introduction}
We introduce a new technique for showing classical knots and links are not slice (first announced in ~\cite{CHL1}). As an application we report the partial resolution of a long-standing question about whether certain natural families of knots contain slice knots. We have similar results about analogous families of links.

A {\em link} $L=\{K_1,...,K_m\}$ of $m$-components is an ordered  collection of $m$ oriented circles disjointly embedded in $S^3$. A {\em knot} is a link of one component. A {\em topologically slice link} (abbreviated as \emph{slice} in this paper) is a link whose components bound a disjoint union of $m$ $2$-disks topologically and locally flatly embedded in $B^4$. The question of which links are slice links lies at the heart of the topological classification of $4$-dimensional manifolds.

The connected sum operation gives the set of all knots, modulo slice knots, the structure of an abelian group, called the {\em topological knot concordance group} $\mathcal{C}$, which is a quotient of its smooth analogue. For excellent surveys see ~\cite{Go1}~\cite{Li1}. For general links one must consider \emph{string} links to get a well-defined group structure, and this operation is not commutative ~\cite{Le7}. This paper gives new information about all of these groups, using techniques of noncommutative algebra and analysis, many of which have their origins in ~\cite{COT}. We employ the Cheeger-Gromov von Neumann $\rho$-invariants and higher-order Alexander modules that were introduced in ~\cite{COT}. Our new technique is to expand upon previous results of Leidy concerning higher-order Blanchfield forms \emph{without localizing the coefficient system} ~\cite{Lei1}~\cite{Lei3}. This is used to show that certain elements of $\pi_1$ of a slice knot (or link) exterior cannot lie in the kernel of the map  into any slice disk(s) exterior. In our results on links we also use recent results of Harvey on the \emph{torsion-free derived series of groups} ~\cite{Ha2}, and results of Cochran-Harvey on versions of Dwyer's Theorem for the derived series ~\cite{CH2}.

In the late $60$'s Levine \cite{L5} defined an epimorphism from $ \mathcal{C}$ to $\mathbb{Z}^\infty \oplus \mathbb{Z}_2^\infty \oplus \mathbb{Z}_4^\infty$, given by the Arf invariant, certain discriminants and twisted signatures associated to the infinite cyclic cover of the knot complement (using Stolzfus ~\cite{Sto}). A knot for which these invariants vanish is called an \emph{algebraically slice knot}. Thus the question at that time was ``Is every algebraically slice knot actually a slice knot?'' A nice way to create potential counterexamples is to begin with a known slice knot, $R_1$, such as the $9_{46}$ knot shown on the left-hand side of Figure~\ref{fig:ribbonCG}, and ``tie the bands into some knot $J_0$'', as shown on the right-hand side of Figure~\ref{fig:ribbonCG}. All of these genus one knots are algebraically slice since they have the same Seifert matrix as the slice knot $R_1$. Moreover certainly some of these knots \emph{are} slice knots, namely when $J_0$ is itself a slice knot. Similar knots have appeared in the majority of papers on this subject (for example ~\cite{Li1}\cite{Li5}\cite{Li7}\cite{Li10}\cite{GL1}).

\begin{figure}[htbp]\label{fig:ribbonCG}
\setlength{\unitlength}{1pt}
\begin{picture}(200,160)
\put(-70,10){\includegraphics[height=150pt]{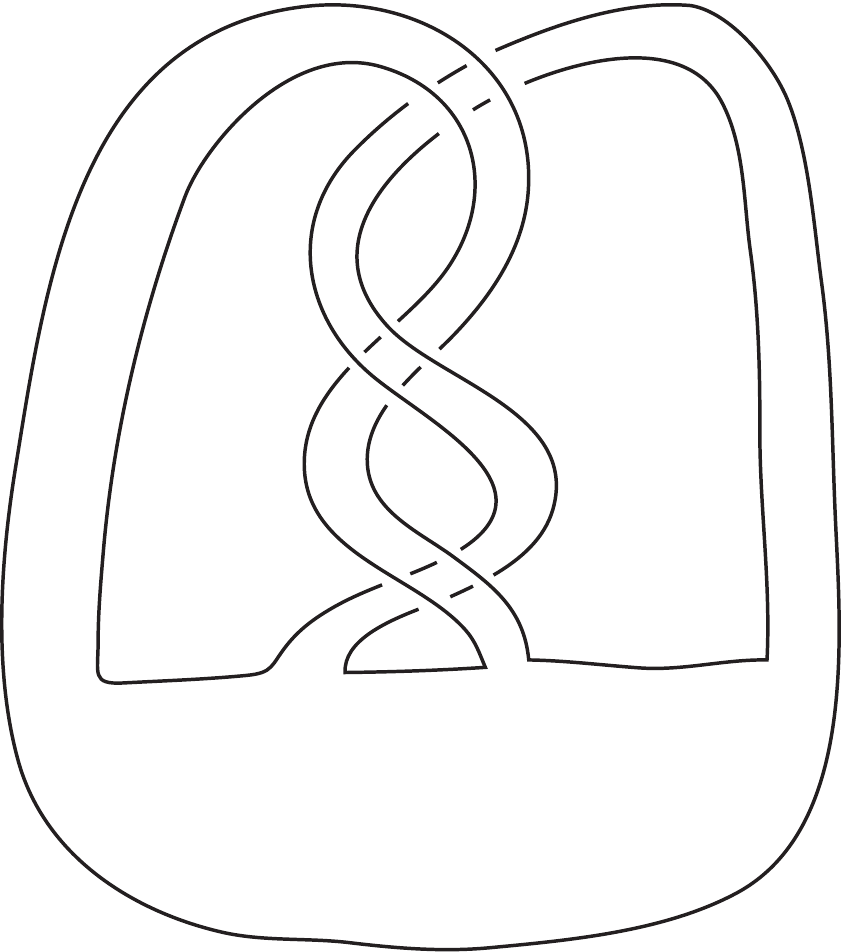}}
\put(130,10){\includegraphics[height=150pt]{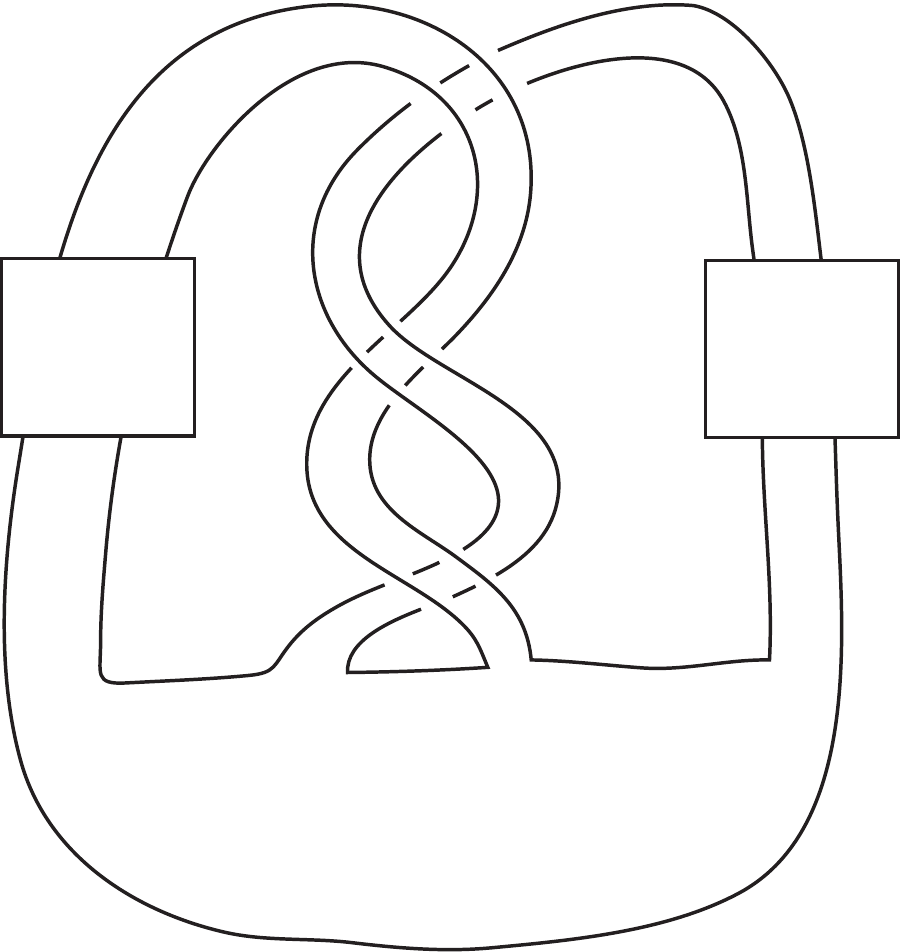}}
\put(251,101){$J_0$}
\put(140,100){$J_0$}
\put(100,98){$J_1\equiv$}
\put(-104,98){$R_1\equiv$}
\end{picture}
\caption{Algebraically Slice Knots $J_1$ Patterned on the Slice Knot $R_1$}
\end{figure}
In the early $70$'s Casson and Gordon defined new knot concordance invariants via dihedral covers \cite{CG1}~\cite{CG2}. These `higher-order signature invariants' were used to show that some of the knots of Figure~\ref{fig:ribbonCG} need not be slice. P. Gilmer showed that these higher-order signature invariants for $J_1$ are equal to certain combinations of classical signatures of $J_0$ and thus the latter constituted higher-order obstructions to $J_1$ being a slice knot \cite{Gi3}\cite{Gi5}~(see ~\cite{Li6} for $2$-torsion invariants). These invariants were also used to show that the subgroup of algebraically slice knots has infinite rank ~\cite{Ji1}. Now the question arose:``What if $J_0$ itself were algebraically slice?'' Thus shortly after the work of Casson and Gordon the self-referencing family of knots shown in Figure~\ref{fig:family} was considered by Casson, Gordon, Gilmer and others ~\cite{Gi1}.

\begin{figure}[htbp]
\setlength{\unitlength}{1pt}
\begin{picture}(200,160)
\put(0,0){\includegraphics[height=150pt]{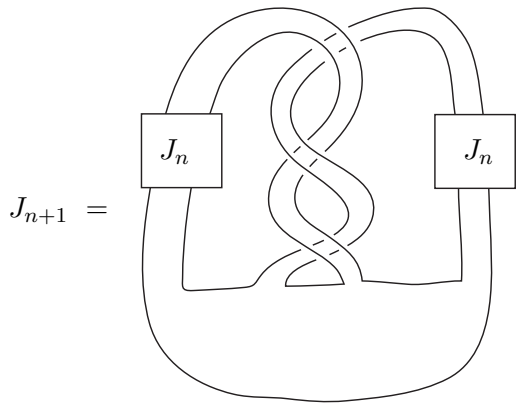}}
\put(-50,70){$J_{n+1}~=$}
\put(7,92){$J_{n}$}
\put(122,92){$J_{n}$}
\end{picture}
\caption{The recursive family $J_{n+1}, n\geq 0$}\label{fig:family}
\end{figure}

An example with $n=3$ and $J_0=U$, the unknot, is shown in Figure~\ref{fig:R3}
\begin{figure}[htbp]
\setlength{\unitlength}{1pt}
\begin{picture}(300,250)
\put(50,0){\includegraphics[height=250pt]{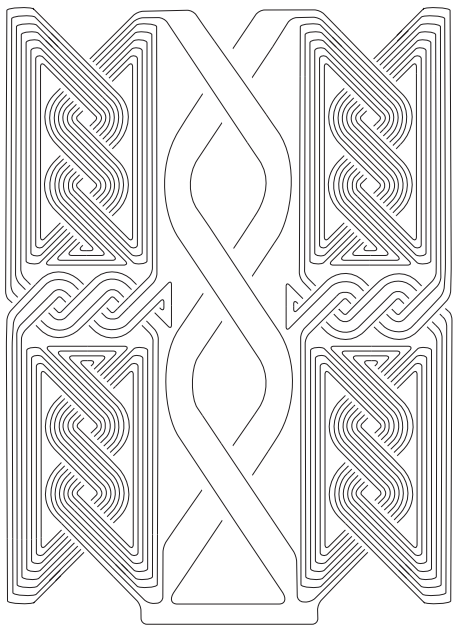}}
\end{picture}
\caption{The Ribbon Knot $J_3(U)$}\label{fig:R3}
\end{figure}

To summarize, each of the knots $J_n$, $n\geq 1$, is algebraically slice. If certain sums of classical signatures of $J_0=K$ are not zero, then Casson-Gordon invariants can be used to show that $J_1(K)$ is not a slice knot (see also torsion invariants ~\cite{Li6}). But all these invariants vanish for $J_n$ if $n\geq 2$. It was asked whether or not $J_n(K)$ is a slice knot assuming that some classical signature of $K$ is non-zero. In fact, Gilmer proved (unpublished) that $J_2(K)$, for certain $K$ is not a \emph{ribbon knot} ~\cite{Gi1}. Further attempts to create ``higher-order'' Casson-Gordon invariants for knots were not successful. 

Much more recently, Cochran, Orr and Teichner, Friedl, and Kim used higher-order signatures associated to solvable covers of the knot complement to find non-slice knots that could not be detected by the invariants of Levine or Casson-Gordon \cite{COT}\cite{COT2}\cite{Ki1}\cite{Fr2}. However the status of the knots $J_n$ above remained open. In fact the techniques of \cite{COT}, ~\cite{COT2}, ~\cite{CT} and ~\cite{CK} were limited to knots of genus at least $2$ (note each $J_n$ has genus $1$) because of their use of localization techniques.

We prove:

\newtheorem*{thm:main}{Theorem~\ref{thm:main}}
\begin{thm:main} For any $n\ge0$ there is a constant $C_n$ such that, if the absolute value of the integral of the Levine signature function of $J_0$ is greater than $C_n$ then $J_n$ is of infinite order in the topological knot concordance group. \end{thm:main}

\textbf{Remark}: We have proved that $C_n$ may be taken to be independent of $n$, but this requires a different proof that will appear separately.

These techniques can also be used to get new information about the topological concordance order of knots. For example, consider the family of knots below where $J_n$, $n\geq 1$, is one of the the algebraically slice knots above . For any such $E$, $E\# E$ is algebraically slice and has vanishing Casson-Gordon invariants. Therefore $E$ cannot be distinguished from an order $2$ knot by these invariants. 
\begin{figure}[htbp]
\setlength{\unitlength}{1pt}
\begin{picture}(200,160)
\put(0,0){\includegraphics[height=150pt]{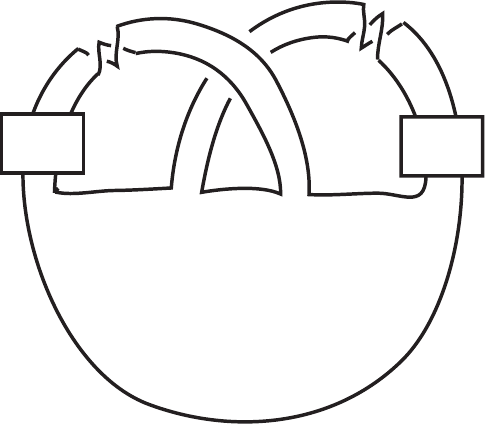}}
\put(-40,70){$E=$}
\put(7,95){$J_n$}
\put(147,95){$J_n$}
\end{picture}
\caption{Knots potentially of order 2 in the concordance group}\label{fig:torsionfigeight}
\end{figure}
However we can show:
\newtheorem*{cor:torsion}{Corollary~\ref{cor:torsion}}
\begin{cor:torsion} There is a constant $D$ such that if the absolute value of the integral of the Levine signature function of $J_0$ is greater than $D$ then $E$ is of infinite order in the concordance group.
\end{cor:torsion}

Analogous to this family of knots, similar natural families of links have been considered (albeit much more recently). In particular, if $K$ is any knot then the \emph{Bing-double of $K$}, $BD(K)$ is the $2$-component link shown in Figure~\ref{fig:Bingdouble}.

\begin{figure}[htbp]
\setlength{\unitlength}{1pt}
\begin{picture}(111,109)
\put(0,0){\includegraphics{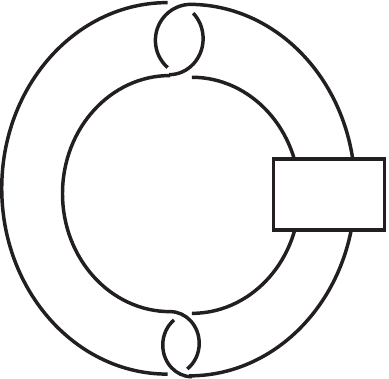}}
\put(88,50){$K$}
\put(137,54){=~~$BD(K)$}
\end{picture}
\caption{Bing double of $K$}\label{fig:Bingdouble}
\end{figure}

Again, if $K$ is slice then it is easy to see that $BD(K)$ is a slice link. A natural question is whether or not the converse is true. It was shown by Harvey that if the Bing double (or even an iterated Bing-double) of $K$ is topologically slice then the integral over the circle of the Levine signatures of $K$ is zero ~\cite[Corollary 5.6]{Ha2}. It was shown by Cimasoni that if $BD(K)$ is \emph{boundary slice} then $K$ is algebraically slice ~\cite{Ci}. After the announcement of our work, it was announced by Cha that if $BD(K)$ is a slice link then the entire signature function of $K$ vanishes (see ~\cite[Theorem 1.5]{Cha4}). Subsequently it was shown by Cha-Livingston-Ruberman that if $BD(K)$ is a slice link then $K$ must be an algebraically slice knot ~\cite{CRL}. Therefore the question remains: If $K$ is algebraically slice then must it follow that $BD(K)$ is a topologically slice link? What about iterated Bing doubles? We answer these questions in the negative by showing that certain higher-order signatures of $K$ offer further obstructions. For example, in Section~\ref{sec:Bingdoubles} we define \textbf{first-order signatures} of $K$, akin to Casson-Gordon invariants, and show that the first-order signatures of $K$, like the ordinary signatures, obstruct any \textbf{iterated Bing double} of $K$ from being a slice link. This improves on Harvey's theorem.

\newtheorem*{thm:Bingdouble}{Theorem~\ref{thm:Bingdouble}}
\begin{thm:Bingdouble} Let $K$ be an arbitrary knot. If some iterated Bing double of $K$ is topologically slice in a rational homology $4$-ball then one of the first-order signatures of $K$ is zero.
\end{thm:Bingdouble}

\newtheorem*{cor:Bingdouble}{Corollary~\ref{cor:Bingdouble}}
\begin{cor:Bingdouble}If $J_1$ is the algebraically slice knot of Figure~\ref{fig:ribbonCG} then there is a constant $C$ such that if the integral of the Levine signature function of $J_0$ is greater than $C$, then no iterated Bing double of $J_1$ is topologically slice. If $E_1$ is any knot as in Figure~\ref{fig:examplehighersigsfigeight2} (of order 2 in the algebraic concordance group) where the integral of the Levine signature function of $E_0$ is not zero, then no iterated Bing double of $E_1$ is slice in a rational homology ball.
\end{cor:Bingdouble}

\begin{figure}[htbp]
\setlength{\unitlength}{1pt}
\begin{picture}(200,160)
\put(0,0){\includegraphics[height=150pt]{figure8.pdf}}
\put(-40,70){$E_1=$}
\put(7,95){$E_0$}
\put(145,94){$E_0$}
\end{picture}
\caption{$E_1$}\label{fig:examplehighersigsfigeight2}
\end{figure}

We remark that subsequent work of Cha shows that even many amphichiral knots have non-slice Bing doubles ~\cite{Cha4}. Amphichiral knots cannot be handled by the present paper.

We have similar results for iterated Bing doubles of the even more subtle knots of the family $J_n$ which, for $n>1$, recall not only are algebraically slice but also have vanishing Casson-Gordon invariants.

Furthermore recall that \cite{COT} introduced a filtration of $\mathcal{C}$ by \textbf{$(n)$-solvable knots}
$$
\cdots \subseteq \mathcal{F}_{n} \subseteq \cdots \subseteq
\mathcal{F}_1\subseteq \mathcal{F}_{0.5} \subseteq \mathcal{F}_{0} \subseteq \mathcal{C}.
$$
This is defined in Section~\ref{sec:appendix}. This filtration exhibits all of the previously known concordance invariants in its associated graded quotients of low degree, yet contains new information. In particular, it was shown in \cite{COT2} that $\mathcal{F}_{2}/\mathcal{F}_{2.5}$ contains an infinite rank summand of concordance classes of knots not detectable by previously known invariants. 

Our techniques provide a simplified proof of the following important result of Cochran and Teichner. 
\newtheorem*{cor:CT}{Theorem~\ref{cor:CT}}
\begin{cor:CT}[(Cochran-Teichner,~\cite{CT})]
For any $n \in \mathbb{N}_0$, the quotient groups $\mathcal{F}_{n}/\mathcal{F}_{n.5}$ contain a subgroup isomorphic to $\mathbb{Z}$.
\end{cor:CT}

In fact we prove this using the knots $J_n(K)$ (for suitably chosen $K$). This family is also simpler than the families of Cochran and Teichner. In fact this family is distinct even up to concordance from the examples of Cochran and Teichner, so one can show:

\begin{thm}\label{thm:main2}
For any $n \in \mathbb{N}_0$, the quotient groups $\mathcal{F}_{n}/\mathcal{F}_{n.5}$ contain a subgroup isomorphic to $\mathbb{Z}\oplus \mathbb{Z}$.
\end{thm}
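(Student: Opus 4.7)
The plan is to exhibit, for each $n\ge 0$, two $(n)$-solvable knots whose classes in $\mathcal{F}_n/\mathcal{F}_{n.5}$ generate a free abelian subgroup of rank two. A natural choice is $A_n := J_n(K)$, with $K$ an auxiliary knot whose Levine signature integral exceeds the constant $C_n$ of Theorem~\ref{thm:main}, and $B_n$ a suitably chosen example from the Cochran-Teichner family of \cite{CT}. Both are $(n)$-solvable by construction, so each defines a class in $\mathcal{F}_n/\mathcal{F}_{n.5}$. Theorem~\ref{cor:CT} gives $\langle [B_n]\rangle\cong\mathbb{Z}$, and the argument underlying Theorem~\ref{thm:main} gives $\langle [A_n]\rangle\cong\mathbb{Z}$; the content of Theorem~\ref{thm:main2} is the linear independence of these two cyclic subgroups.

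The tool is the $L^2$ $\rho$-invariant together with the Cochran-Orr-Teichner Vanishing Theorem \cite[Theorem~4.2]{COT}: if $L\in\mathcal{F}_{n.5}$, then $\rho(M_L,\phi)=0$ for every admissible PTFA representation $\phi\colon\pi_1(M_L)\to\Gamma$ of derived length at most $n+1$. Using additivity of $\rho$ under connected sum, it suffices to construct two such representations $\phi_A$ and $\phi_B$ satisfying the orthogonality relations
\[
\rho(M_{A_n},\phi_A)\neq 0,\quad \rho(M_{B_n},\phi_A)=0,\quad \rho(M_{A_n},\phi_B)=0,\quad \rho(M_{B_n},\phi_B)\neq 0.
\]
For then $a[A_n]+b[B_n]=0$ in $\mathcal{F}_n/\mathcal{F}_{n.5}$ forces $a\,\rho(M_{A_n},\phi_A)=0$ and $b\,\rho(M_{B_n},\phi_B)=0$, hence $(a,b)=(0,0)$.

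The two diagonal nonvanishing statements are the hearts of Theorem~\ref{thm:main} and of \cite{CT}. In each case the representation is the inductively built infection representation of the relevant pattern: $\phi_A$ records at the outermost stage the meridian of the curve around which $J_0$ is tied in the construction of $J_n$, and $\phi_B$ records the meridian of the analogous outermost infection curve of the Cochran-Teichner pattern. Then $\rho(M_{A_n},\phi_A)$ equals, up to a bounded Cheeger-Gromov error, an integer multiple of $\int\sigma_K$, which can be arranged to be large; symmetrically for $\rho(M_{B_n},\phi_B)$ in terms of the seed knot used by Cochran-Teichner.

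The main obstacle is the off-diagonal vanishing. For this we exploit that the two patterns can be arranged to have distinct higher-order Alexander modules, as detected by the non-localized Blanchfield form machinery of \cite{Lei1}\cite{Lei3}. Concretely, the outermost infection axis used to build $A_n$ lies in a proper subgroup of $\pi_1(M_{B_n})$ that is killed by the specific PTFA quotient defining $\phi_A$, so $\phi_A$ restricted to $\pi_1(M_{B_n})$ factors through a group of strictly smaller derived length. The resulting $\rho$-invariant then collapses to a lower-order signature expression, and we have enough freedom in choosing the Cochran-Teichner seed knot to force that expression to be zero; a symmetric argument handles $\phi_B$ on $M_{A_n}$. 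The delicate point is verifying that this ``wrong pattern'' representation really does factor through the lower-derived-length quotient, with no residual contribution from deeper in the derived series; this is where we invoke the torsion-free-derived-series version of Dwyer's Theorem from \cite{CH2} and the structural results of \cite{Ha2}.
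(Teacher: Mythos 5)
The paper does not prove Theorem~\ref{thm:main2}: it explicitly defers the proof to a separate paper (``However the proof of this result will not be given here, but in a separate paper\dots''), noting only that the family $J_n(K)$ is distinct up to concordance from the Cochran--Teichner examples, which is the point of departure you also take. So there is no in-paper argument to compare yours against; I can only assess your sketch on its own terms, and there I see a genuine structural gap.

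The gap is in your use of the vanishing theorem. You state it as: if $L\in\mathcal{F}_{n.5}$ then $\rho(M_L,\phi)=0$ for ``every admissible PTFA representation $\phi$ of derived length at most $n+1$.'' That is not what Theorem~\ref{thm:sliceobstr} (COT Theorem~4.2) says. It says $\rho(M_L,\phi)=0$ only for those $\phi$ \emph{that extend over a specific $(n.5)$-solution $W$}. Your $\phi_A$ and $\phi_B$ are built in advance from the infection patterns of $A_n$ and $B_n$; there is no reason either should extend over a hypothetical solution $W$ for $aA_n\# bB_n$. Consequently the chain ``$a[A_n]+b[B_n]=0$ forces $a\,\rho(M_{A_n},\phi_A)=0$'' does not follow from the vanishing theorem. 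The correct structure (as used throughout this paper, e.g.\ Theorems~\ref{thm:nontriviality}, \ref{thm:iteratednontriviality}, \ref{thm:linkinfection}) is the reverse: one starts from the hypothetical solution $W$, forms the canonical quotient $\Gamma=\pi_1(W)/\pi_1(W)^{(n+1)}_r$, and proves via the unlocalized Blanchfield machinery that this \emph{single} induced coefficient system cannot simultaneously kill enough infection curves in all the summands to make $\rho$ vanish. In that framework your two off-diagonal vanishing statements do not even arise as separate claims; what must be ruled out instead is mutual cancellation of the contributions from $A_n$ and $B_n$ under one common $\phi$, which is a strictly harder problem than the four ``orthogonality'' relations you posit. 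Your proposal offers no mechanism to rule out that cancellation (e.g., choosing seed knots with $\mathbb{Q}$-linearly independent $\rho_0$'s and controlling the Cheeger--Gromov error terms uniformly across both patterns), which is precisely the content deferred to the separate paper. Until that is supplied, the linear independence of $[A_n]$ and $[B_n]$ is asserted rather than proved.
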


However the proof of this result will not be given here, but in a separate paper where we will show that the quotients $\mathcal{F}_{n}/\mathcal{F}_{n.5}$ have \emph{infinite} rank.

We note that our construction of examples is all done in the smooth category  so that we actually also prove the corresponding statements about the smooth knot concordance group.

The specific families of knots and links of Figure~\ref{fig:family} and Figure~\ref{fig:Bingdouble} are important because of their simplicity and their history. However, they are merely particular instances of a more general `doubling' phenomenon to which our techniques may be applied. In order to state these results, we review a method we will use to construct examples. Let $R$ be a knot or link in $S^3$ and $\{\eta_1,\eta_2,\ldots,\eta_m\}$ be an oriented trivial link in $S^3$ which misses $R$ bounding a collection of disks that meet $R$ transversely. Suppose $\{K_1,K_2,\ldots,K_m\}$ is an $m$-tuple of auxiliary knots. Let $R(\eta_1,\ldots,\eta_m\,K_1,\ldots,K_m)$ denote the result of the operation pictured in Figure~\ref{fig:infection}, that is, for each $\eta_i$, take the embedded disk in $S^3$ bounded by $\eta_i$; cut off $R$ along the disk; grab the cut strands, tie them into the knot $K_i$ (with no twisting) and reglue as shown in Figure~\ref{fig:infection}.

\begin{figure}[htbp]
\setlength{\unitlength}{1pt}
\begin{picture}(262,71)
\put(10,37){$\eta_1$} \put(120,37){$\eta_m$} \put(52,39){$\dots$}
\put(206,36){$\dots$} \put(183,37){$K_1$} \put(236,38){$K_m$}
\put(174,9){$R(\eta_1,\dots,\eta_m,K_1,\dots,K_m)$}
\put(29,7){$R$} \put(82,7){$R$}
\put(20,20){\includegraphics{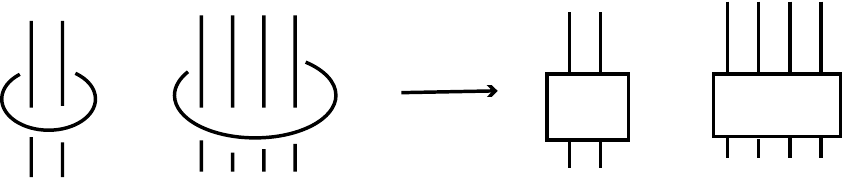}}
\end{picture}
\caption{$R(\eta_1,\dots,\eta_m,K_1,\dots,K_m)$:
Infection of $R$ by $K_i$ along $\eta_i$}\label{fig:infection}
\end{figure}
\noindent We will call this the result of \emph{infection performed on the link $R$ using the infection knots $K_i$ along the curves $\eta_i$}. This construction can also be
described in the following way. For each $\eta_i$, remove a tubular neighborhood of $\eta_i$ in $S^3$ and glue in the exterior of a tubular neighborhood of $K_i$ along their common boundary, which is a
torus, in such a way that the longitude of $\eta_i$ is identified with the meridian of $K_i$ and the meridian of $\eta_i$ with the reverse of the longitude of $K_i$. The resulting space can be seen to be homeomorphic to $S^3$ and the image of $R$ is the new link. In the case that $m=1$ this is the same as the classical satellite construction. In general it can be considered to be a `generalized satellite construction', widely utilized in the study of knot concordance. In the case that $m=1$ and $lk(\eta,R)=0$ it is precisely the same as forming a satellite of $J$ with winding number zero. This yields an operator
$$
R_{\eta}:\mathcal{C}\to \mathcal{C}.
$$
that has been studied (e.g. ~\cite{LiM}). For general $m$ with $lk(\eta_i,R)=0$, it should be considered as a \emph{generalized doubling operator}, $R_{\eta_i}$, parameterized by $(R,\{\eta_i\})$
$$
R_{\eta_i}:~\mathcal{C}\times\dots\times\mathcal{C}\to \mathcal{C}.
$$
If, for simplicity, we assume that all ``input knots'' assume identical then such an operator is a function
$$
R_{\eta_i}:~\mathcal{C}\to \mathcal{C}.
$$
Bing-doubling is an example of this ($m=1$) as suggested by Figure~\ref{fig:bingeta}.
\begin{figure}[htbp]
\setlength{\unitlength}{1pt}
\begin{picture}(111,109)
\put(0,0){\includegraphics{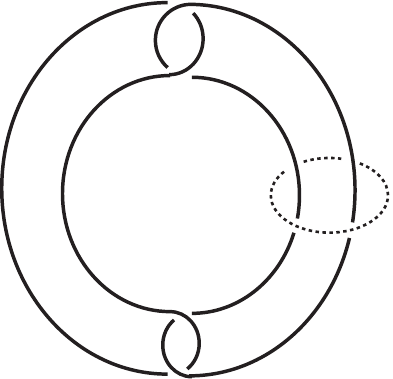}}
\put(120,50){$\alpha$}
\put(137,54){}
\end{picture}
\caption{Bing double of $K$ is infection on the trivial link along $\alpha$} using $K$ \label{fig:bingeta}
\end{figure}

 Another example is the ``$R_1$-doubling'' operation of going from the left-hand side of Figure~\ref{fig:ribbonCG} to the right-hand side ($m=2$). Most of the results of this paper concern to what extent these functions are injective. The point is that, because of the condition on ``winding numbers'', $lk(\eta_i,R)=0$,
if $R$ is a slice knot, the images of such operators $R$ contain only knots (or links) for which the classical Seifert-matrix-type invariants vanish. Moreover these operators respect the COT filtration.

\begin{prop}\label{prop:operatorsfiltration}(~\cite[proof of Proposition 3.1]{COT2}) If $R$ is a slice knot or link and $\eta_i\in \pi_1(S^3-R)^{(n)}$ then the operator $R_{\eta_i}$ satisfies
$$
R_{\eta_i}(\mathcal{F}_{0})\subset \mathcal{F}_{n}.
$$
\end{prop}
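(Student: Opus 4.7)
The plan is to construct a specific $4$-manifold $W$ and verify directly that it is an $(n)$-solution for $R' := R(\eta_i, K_i)$, thereby placing $R'$ in $\mathcal{F}_n$.

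First I would build $W$ by the standard ``infection gluing''. Let $W_R$ be the exterior of slice disks for $R$ in $B^4$, so that $\partial W_R = M_R$, $H_1(W_R) = \mathbb{Z}^m$ and $H_2(W_R) = 0$. Since each $K_i \in \mathcal{F}_0$, choose a $(0)$-solution $V_i$ for $K_i$, so that $\partial V_i = M_{K_i}$, $H_1(V_i) = \mathbb{Z}$ is generated by $\mu_{K_i}$, and $H_2(V_i)$ admits a basis $\{L_{i,j}, D_{i,j}\}$ with hyperbolic intersection form. Define $W := W_R \cup \bigl(\bigsqcup_i V_i\bigr)$ by identifying the solid torus $\nu(\eta_i) \subset \partial W_R$ with the solid torus $\nu(\mu_{K_i}) \subset \partial V_i$ so that the cores $\eta_i$ and $\mu_{K_i}$ are matched. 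Then $\partial W$ is naturally $M_{R'}$. A Mayer-Vietoris computation (with $A = W_R$, $B = \bigsqcup_i V_i$, $A \cap B$ a disjoint union of solid tori, and using $\eta_i \in \pi_1(W_R)^{(1)}$ so $\eta_i$ is null-homologous in $W_R$) gives $H_1(W) = \mathbb{Z}^m$ with $H_1(M_{R'}) \to H_1(W)$ an isomorphism, and $H_2(W) \cong \bigoplus_i H_2(V_i)$. Because the $V_i$'s are pairwise disjoint in $W$, the intersection form on $H_2(W)$ is the orthogonal sum of the hyperbolic forms on each $H_2(V_i)$, so $\{L_{i,j}, D_{i,j}\}$ is a hyperbolic basis for $H_2(W)$.

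The key step is to verify $\pi_1(L_{i,j}), \pi_1(D_{i,j}) \subset \pi_1(W)^{(n)}$; since these surfaces lie in $V_i$, it suffices to prove the single claim $\pi_1(V_i) \subset \pi_1(W)^{(n)}$. By van Kampen, $\pi_1(W) = \pi_1(W_R) *_{\langle \eta_i = \mu_{K_i}\rangle} \pi_1(V_i)$, and since group homomorphisms preserve derived series, the element $\eta_i$, which lies in $\pi_1(W_R)^{(n)}$ by hypothesis, maps into $\pi_1(W)^{(n)}$; under the amalgamation this forces $\mu_{K_i} \in \pi_1(W)^{(n)}$. I would then induct on $k = 1, \dots, n$ to show $\pi_1(V_i) \subset \pi_1(W)^{(k)}$. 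For the base case $k = 1$: $\pi_1(V_i)^{ab} = \mathbb{Z}$ is generated by $\mu_{K_i}$, and $\mu_{K_i}$ maps to $0 \in H_1(W) = \mathbb{Z}^m$, so the image of $\pi_1(V_i)$ in $H_1(W)$ vanishes and $\pi_1(V_i) \subset \pi_1(W)^{(1)}$. For the inductive step $k \to k+1$ with $k+1 \leq n$: the hypothesis gives $\pi_1(V_i)^{(1)} = [\pi_1(V_i), \pi_1(V_i)] \subset [\pi_1(W)^{(k)}, \pi_1(W)^{(k)}] = \pi_1(W)^{(k+1)}$, and $\mu_{K_i} \in \pi_1(W)^{(n)} \subset \pi_1(W)^{(k+1)}$; combined with the set-theoretic factorization $\pi_1(V_i) = \mu_{K_i}^{\mathbb{Z}} \cdot \pi_1(V_i)^{(1)}$ (which holds because $\mu_{K_i}$ generates the abelianization), this yields $\pi_1(V_i) \subset \pi_1(W)^{(k+1)}$.

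The main obstacle is precisely this inductive bootstrap. The point of the hypothesis $\eta_i \in \pi_1(S^3 - R)^{(n)}$ is that, after gluing, it parachutes the abelianization generator $\mu_{K_i}$ of $\pi_1(V_i)$ into $\pi_1(W)^{(n)}$; one then exploits the defining property $[G^{(k)}, G^{(k)}] = G^{(k+1)}$ of the derived series to propagate this depth from the single generator $\mu_{K_i}$ to the entire subgroup $\pi_1(V_i)$. This is the mechanism by which a plain $(0)$-solution for the infection knots gets promoted, by the infection construction, to an $(n)$-solution for the infected link.
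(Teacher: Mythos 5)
Your proof is correct and is essentially the standard argument -- the paper itself defers to the proof of Proposition~3.1 of [COT2] (and the same idea reappears in the paper's Lemma~\ref{lem:nsolv} and in the explicit construction of the rational $(n)$-solution in Lemma~\ref{lem:H2}). The decomposition $W = W_R \cup \bigsqcup_i V_i$, the Mayer--Vietoris computation giving $H_2(W) \cong \bigoplus_i H_2(V_i)$ and $H_1(M_{R'}) \cong H_1(W)$, and the bootstrap that pushes the image of $\pi_1(V_i)$ into $\pi_1(W)^{(n)}$ from the single fact that $\mu_{K_i}$ (identified with $\eta_i$) lands in $\pi_1(W)^{(n)}$, are all exactly what the cited proof does. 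Two small remarks: (i) you should make explicit that the relevant form is the $\pi_1(W)/\pi_1(W)^{(n)}$-equivariant intersection form $\lambda_n$, not the ordinary one -- the reason it reduces to $\lambda_0$ on classes supported in $V_i$ is precisely that $i_*(\pi_1(V_i)) \subset \pi_1(W)^{(n)}$, so each $V_i$ lifts homeomorphically (and disjointly translate-to-translate) to the cover $W^{(n)}$, killing all nontrivial group-ring terms; and (ii) for the integral ($\mathbb{Z}$-coefficient, spin, self-intersection-vanishing) version of ``$(n)$-solution'' one should observe that $W_R \subset B^4$ and the spin $(0)$-solutions $V_i$ glue along the spin solid tori to a spin $W$, and that the vanishing of equivariant self-intersections likewise transfers from the $V_i$. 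These are routine and do not change the structure of the argument.
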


Thus iterations of these operators, \emph{iterated generalized doubling}, produce increasingly subtle knots and links. The family $J_n(K)$ is the result of $n$ iterations 
$$
\mathcal{C}\overset{R_1}\longrightarrow \mathcal{C}\to\dots\to \mathcal{C}\overset{R_1}\rightarrow\mathcal{C}
$$
applied to the initial knot $J_0=K$. More generally let us define an \emph{n-times iterated generalized doubling} to be precisely such a composition of operators using possibly different slice knots $R_j$, and different curves $\eta_{j1},\dots,\eta_{jm_j}$. Then our proof establishes:

\newtheorem*{thm:main3}{Theorem~\ref{thm:main3}}
\begin{thm:main3} If $R_j$, $1\leq j\leq n$, are slice knots and Arf($K$)$=0$, then the result, $R_n\circ\dots\circ R_1(K)$, of the n-times iterated generalized doubling lies in $\mathcal{F}_{n}$. If, additionally, , for each $j$, the submodule of the classical Alexander polynomial of $R_j$ generated by $\{\eta_{j1},\dots,\eta_{jm_j}\}$ contains elements $x,y$ such that $\mathcal{B}\ell_0^j(x,y)\neq 0$, where $\mathcal{B}\ell_0^j$ is the Blanchfield form of $R_j$, then there is a constant $C$, such that if the integral of the Levine signature function of $K$ is greater than $C$ in absolute value, then the resulting knot is not topologically slice, nor even in $\mathcal{F}_{n.5}$.
\end{thm:main3}

Note that any set $\{\eta_{j1},\dots,\eta_{jm_j}\}$ that generates a submodule whose $\mathbb{Q}$-rank is more than half the degree of the Alexander polynomial of $R_j$ necessarily satisfies the condition of Theorem~\ref{thm:main3}, because of the non-singularity of the Blanchfield form.

An analogous result for links (Theorem~\ref{thm:mainlink3}) is also shown.

\section{Higher-Order Signatures and How to Calculate Them}\label{signatures}

In this section we review the von Neumann $\rho$-invariants and explain to what extent they are concordance invariants. We also show how to calculate them for knots or links that are obtained from the infections defined in Section~\ref{sec:Introduction}. 

The use of variations of Hirzebruch-Atiyah-Singer signature defects associated to covering spaces is a theme common to most of the work in the field of knot and link concordance since the 1970's. In particular, Casson and Gordon initiated their use in cyclic covers ~\cite{CG1}~\cite{CG2}; Farber, Levine and Letsche initiated the use of signature defects associated to general (finite) unitary representations ~\cite{L6}~\cite{Let}; and Cochran-Orr-Teichner initiated the use of signatures associated to the left regular representations ~\cite{COT}. See ~\cite{Fr2} for a beautiful comparison of these approaches in the metabelian case.

Given a compact, oriented 3-manifold $M$, a discrete group $\G$, and a representation $\phi : \pi_1(M)
\to \G$, the \emph{von Neumann
$\rho$-invariant} was defined by Cheeger and Gromov by choosing a Riemannian metric and using $\eta$-invariants associated to $M$ and its covering space induced by $\phi$. It can be thought of as an oriented homeomorphism invariant associated to an arbitrary regular covering space of $M$ ~\cite{ChGr1}. If $(M,\phi) = \partial
(W,\psi)$ for some compact, oriented 4-manifold $W$ and $\psi : \pi_1(W) \to \G$, then it is known that $\rho(M,\phi) =
\s^{(2)}_\G(W,\psi) - \s(W)$ where $\s^{(2)}_\G(W,\psi)$ is the
$L^{(2)}$-signature (von Neumann signature) of the intersection form defined on
$H_2(W;\mathbb{Z}\G)$ twisted by $\psi$ and $\sigma(W)$ is the ordinary
signature of $W$ ~\cite{LS}. In the case that $\G$ is a poly-(torsion-free-abelian) group (abbreviated \textbf{PTFA group} throughout), it follows that $\mathbb{Z}\G$ is a right Ore domain that embeds into its (skew) quotient field of fractions $\mathcal{K}\G$ ~\cite[pp.591-592, ~Lemma 3.6ii p.611]{P}. In this case $\s^{(2)}_\G$  is a function of the Witt class of the equivariant intersection form on $H_2(W;\mathcal{K}\G)$ ~\cite[Section 5]{COT}. In the special case (such as $\beta_1(M)=1$) that this form is non-singular, it can be thought of as a homomorphism from
$L^0(\mathcal{K}\G)$ to $\mathbb{R}$. 

All of the coefficient systems $\G$ in this paper will be of the form $\pi/\pi^{(n)}_r$ where $\pi$ is the fundamental group of a space (usually a $4$-manifold) and $\pi^{(n)}_r$ is the $n^{th}$-term of the \textbf{rational derived series}. The latter was first considered systematically by Harvey. It is defined by
$$
\pi^{(0)}_r\equiv \pi,~~~ \pi^{(n+1)}_r\equiv \{x\in \pi^{(n)}_r|\exists k\neq 0, x^k\in [\pi_r^{(n)},\pi_r^{(n)}]\}.
$$
Note that $n^{th}$-term of the usual derived series $\pi^{(n)}$ is contained in the $n^{th}$-term of the rational derived series. For free groups and knot groups, they coincide. It was shown in ~\cite[Section 3]{Ha1} that $\pi/\pi^{(n)}_r$ is a PTFA group.
 
The utility of the von Neumann signatures lies in the fact that they obstruct knots from being slice knots. It was shown in ~\cite[Theorem 4.2]{COT} that, under certain situations, higher-order von Neumann signatures vanish for slice knots, generalizing the classical result of Murasugi and the results of Casson-Gordon. That proof fails for links, but the extension was later accomplished by Harvey (there is an extra obstruction). Moreover, Cochran-Orr-Teichner defined a filtration on knots and links and showed that certain higher-order signatures obstructed a knot's lying in a certain term of the filtration. Harvey also extended this to links. Here we state the needed results for slice knots and links. In an Section~\ref{sec:appendix} we review the filtration and the more general results. In the case of links we prove a more general result than Harvey's, which will be needed later.

First,

\begin{thm}\label{thm:oldsliceobstr}(Cochran-Orr-Teichner~\cite[Theorem 4.2]{COT}) If a knot $K$ is topologically slice in a rational homology $4$-ball and
$\phi:\pi_1(M_K)\to \G$ is a PTFA coefficient system that extends to the fundamental group of the exterior of the slicing disk, then $\rho(M_K,\phi)=0$.
\end{thm}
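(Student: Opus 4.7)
The plan is to realize $\rho(M_K,\phi)$ as a signature defect of a bounding $4$-manifold built from the slicing data, and then to show both the ordinary and the $L^{(2)}$-signatures of that manifold vanish. Let $B$ be the rational homology $4$-ball in which $K\subset\partial B$ bounds the locally flat slice disk $D$, set $W=B\setminus\nu(D)$, and let $V=W\cup h^2$ be the result of attaching a $0$-framed $2$-handle along the Seifert longitude $\lambda_K$. Then $\partial V=M_K$, and by hypothesis the representation $\phi$ extends to $\psi:\pi_1(V)\to\G$. The signature-defect formula of L\"uck--Schick~\cite{LS} yields
\[
\rho(M_K,\phi)=\s^{(2)}_\G(V,\psi)-\s(V),
\]
so it suffices to prove both terms on the right vanish.

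For the ordinary signature, a Mayer--Vietoris computation for $B=W\cup\nu(D)$, using $H_*(B;\Q)\cong H_*(\mathrm{pt};\Q)$, shows that $W$ has the rational homology of $S^1$, generated by a meridian of $K$. Since $\lambda_K$ is rationally nullhomologous in $W$, attaching the $0$-framed handle $h^2$ enlarges $H_2(V;\Q)$ by a single class of self-intersection zero, and hence $\s(V)=0$.

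For the $L^{(2)}$-signature, the strategy is to exhibit a half-rank self-annihilating submodule (a Lagrangian) $L$ of $H_2(V;\KK\G)$ for the equivariant intersection form. Any such form is Witt-trivial over $\KK\G$, and the $L^{(2)}$-signature homomorphism $L^0(\KK\G)\to\R$ then annihilates it. The natural candidate is the image of $H_2(W;\KK\G)\to H_2(V;\KK\G)$, augmented by the class of the $2$-handle core, whose self-intersection is zero by construction. Self-annihilation of $L$ is a cycle-pushing / general-position argument in the slice disk exterior: two classes in the image of $H_2(W;\KK\G)$ can be represented by disjoint $2$-cycles in $V$. The half-rank count uses the Euler characteristic identity $\chi(V;\KK\G)=\chi(V)=0$ (the first equality coming from $\KK\G$ being a skew field, a consequence of the PTFA hypothesis), combined with Poincar\'e--Lefschetz duality $H_i(V;\KK\G)\cong H^{4-i}(V,\partial V;\KK\G)$, to balance dimensions.

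The technical heart is the Lagrangian dimension count. One must show the image from $W$ is large enough to account for half of $H_2(V;\KK\G)$ modulo the radical of the form, which itself contributes symmetrically. Over a rational $4$-ball this is more delicate than for $B^4$ because $B$ may contribute nontrivial $\KK\G$-homology that must be tracked, but the skew-field structure on $\KK\G$ makes Euler-characteristic and rank arguments behave as in the field case. Once $L$ is verified as a half-rank Lagrangian, Witt-triviality of the equivariant intersection form and hence $\s^{(2)}_\G(V,\psi)=0$ follow, completing the proof.
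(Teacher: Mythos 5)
Your high-level plan (realize $\rho(M_K,\phi)$ as the signature defect of a bounding $4$-manifold and show both signatures vanish) is the right one, but two things go wrong.

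First, the construction of the bounding manifold is mistaken. With $W=B\setminus\nu(D)$ one already has $\partial W=M_K$: removing a tubular neighborhood of the slice disk from $B$ leaves the knot exterior $S^3\setminus\nu(K)$ capped off with the solid torus $D\times S^1$, glued along the Seifert framing, which is precisely zero-framed surgery. Attaching a $0$-framed $2$-handle along $\lambda_K$ therefore does not produce a manifold bounding $M_K$; it changes the boundary, and the curve $\lambda_K$ is in fact null-homotopic in $M_K$, so this handle attachment modifies $\partial W$ nontrivially. You should simply take $V=W$.

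Second, even after repairing the set-up, your argument for $\s^{(2)}_\G(W,\psi)=0$ is both unnecessary and unsound as stated. The ``cycle-pushing / general position'' claim, that two classes coming from $W$ can be represented by disjoint $2$-cycles, is the kind of transversality argument one has only in the smooth category; in the topological locally-flat setting it has no a priori meaning. More to the point, there is no Lagrangian to exhibit here, because $H_2$ is already zero in the relevant sense. Since $B$ is a rational homology $4$-ball, Alexander duality gives $H_2(W;\Q)=0$ and $H_1(M_K;\Q)\cong H_1(W;\Q)$, so $\s(W)=0$ directly. For the $L^{(2)}$-signature, the paper's route (following Cochran--Orr--Teichner) does not push cycles apart: it uses the algebraic rank estimate [COT, Proposition 4.3], which bounds $\operatorname{rank}_{\KK\G}H_2(W;\KK\G)$ above by $\operatorname{rank}_{\Q}H_2(W;\Q)=0$. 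Hence $H_2(W;\KK\G)$ is $\KK\G$-torsion, the equivariant intersection form over $\KK\G$ is identically zero, and $\s^{(2)}_\G(W,\psi)=0$ with no Lagrangian count needed. Packaged in the language of the paper: $W$ is a rational $(n.5)$-solution for every $n$ (Remark~\ref{rem:n-solvable}, item (3)), PTFA implies $\G^{(n+1)}=1$ for some $n$, and the conclusion is exactly Theorem~\ref{thm:sliceobstr}. Your Lagrangian machinery is what is required for the general $(n.5)$-solvable case where $H_2\neq 0$, but it is the wrong tool here, and the geometric disjointness step would need to be replaced by the Witt-class argument in $L^0(\KK\G)$ in any case.
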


The analogous result for links has not specifically appeared, although it is implicit in and follows from the techniques of ~\cite{Ha2}. The proof will be given as a corollary of a more general in an appendix (Section~\ref{sec:appendix}).

\begin{thm}\label{thm:linksliceobstr}If a link $L$ is topologically slice in a rational homology $4$-ball and
$\phi:\pi_1(M_L)\to \G$ is a PTFA coefficient system that extends to the fundamental group of the exterior of the slicing disks, then $\rho(M_L,\phi)=0$.
\end{thm}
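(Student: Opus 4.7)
The plan is to adapt the proof of the Cochran--Orr--Teichner theorem \cite[Theorem 4.2]{COT} from the knot setting to the multi-component link setting, keeping track of the changes needed to accommodate a rational homology ball ambient and multiple slicing disks. Let $B$ be the given rational homology $4$-ball, let $D_1,\dots,D_m\subset B$ be disjoint, locally flat slicing disks for $L$, and set $V := B\setminus\nu(D_1\cup\cdots\cup D_m)$, so that $\p V\cong M_L$ and, by hypothesis, $\phi$ extends to a representation $\psi\colon \pi_1(V)\to \G$. Since $(M_L,\phi) = \p(V,\psi)$, the standard bordism formula for $\rho$-invariants (\cite{LS}, \cite[Section 5]{COT}) gives
$$
\rho(M_L,\phi) \;=\; \s^{(2)}_\G(V,\psi) - \s(V),
$$
so it suffices to show that both summands on the right vanish. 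The ordinary signature is easy: a Mayer--Vietoris decomposition $B = V \cup \bigl(\bigsqcup_i\nu(D_i)\bigr)$ with intersection $\bigsqcup_i S^1\times D^2$, together with $H_*(B;\Q)\cong H_*(\mathrm{pt};\Q)$, gives $H_2(V;\Q)=0$, so $\s(V)=0$.

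For $\s^{(2)}_\G(V,\psi)$, I would use that since $\G$ is PTFA, $\Z\G$ embeds in its Ore skew field $\mathcal{K}\G$, and the $L^{(2)}$-signature is a Witt invariant of the equivariant intersection form on $H_2(V;\mathcal{K}\G)$. The strategy is to exhibit a half-$\mathcal{K}\G$-rank self-annihilating submodule (a Lagrangian) of $H_2(V;\mathcal{K}\G)$, which forces the Witt class, and hence $\s^{(2)}_\G(V,\psi)$, to vanish by the general criterion of \cite[Lemma 4.3]{COT}. The natural candidate is
$$
\mathcal{L} \;:=\; \Ker\!\left(j_*\colon H_2(V;\mathcal{K}\G)\ra H_2(V,\p V;\mathcal{K}\G)\right).
$$
Self-annihilation of $\mathcal{L}$ is automatic: since $\mathcal{K}\G$ is a skew field, Poincar\'e--Lefschetz duality identifies $H_2(V,\p V;\mathcal{K}\G)$ with $\Hom_{\mathcal{K}\G}(H_2(V;\mathcal{K}\G),\mathcal{K}\G)$, and the equivariant intersection form on $H_2(V;\mathcal{K}\G)$ factors as the composition of the induced non-singular pairing with $j_*$ in one factor.

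The main obstacle is the rank equality
$$
\dim_{\mathcal{K}\G}\mathcal{L} \;=\; \frac{1}{2}\dim_{\mathcal{K}\G} H_2(V;\mathcal{K}\G).
$$
I would establish this via the long exact sequence of the pair $(V,\p V)$ with $\mathcal{K}\G$-coefficients, combined with Poincar\'e--Lefschetz duality $H_i(V,\p V;\mathcal{K}\G)\cong H^{4-i}(V;\mathcal{K}\G)$ and the fact that over a skew field every module is free, so rank and Euler-characteristic arithmetic behave as over a field. For a knot, $H_1(M_K;\Z)\cong\Z$ and the meridian generates an infinite cyclic quotient of $\G$, so the sequence collapses cleanly as in \cite{COT}. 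For a link, the larger rank of $H_1(M_L;\Z)\cong\Z^m$ introduces potential extra contributions in $H_1(V;\mathcal{K}\G)$ and $H_2(\p V;\mathcal{K}\G)$ that must be shown to cancel. This cancellation is the heart of the matter and uses that $\psi$ is defined on all of $\pi_1(V)$ together with a half-lives-half-dies argument for twisted coefficients, essentially the content of Harvey's extension of the COT argument in \cite{Ha2}. The theorem will then follow as a special case of the more general statement proved in Section \ref{sec:appendix}.
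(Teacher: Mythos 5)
Your overall plan is sound in outline and you correctly identify the rank count as the real content, but there are two things worth flagging: the paper's proof is much shorter than yours (it reduces to a black box), and your candidate Lagrangian is in the wrong module.

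The paper's actual argument (at the end of Section~\ref{sec:appendix}) is: since $\G$ is PTFA it is solvable, so $\G^{(n+1)}=1$ for some $n$; Alexander duality gives $H_2(W;\Q)=0$ and $H_1(M_L;\Q)\cong H_1(W;\Q)$ for the slice disk exterior $W$, so $W$ is a rational $(n+1)$-solution with the \emph{zero} Lagrangian (this is exactly Remark~\ref{rem:n-solvable}(3)); and Theorem~\ref{thm:rho=0} (Cochran--Harvey, ~\cite[Theorem~4.9, Proposition~4.11]{CH2}) then gives $\rho(M_L,\phi)=0$. In other words, the paper does not re-run a half-lives-half-dies argument at all; it invokes a packaged vanishing theorem whose hypotheses are trivially satisfied when $H_2(W;\Q)=0$. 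What you propose is essentially to unpack the proof of that packaged theorem, which is more work than the reduction requires and duplicates ~\cite{CH2}.

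The more substantive issue is your choice of Lagrangian. You set $\mathcal{L}=\Ker\bigl(j_*\colon H_2(V;\KK\G)\to H_2(V,\p V;\KK\G)\bigr)$ and apply the Witt-class criterion to the form on $H_2(V;\KK\G)$ itself. But for a link exterior $\beta_1(M_L)=m>1$, the equivariant intersection form on $H_2(V;\KK\G)$ is singular, and as Proposition~\ref{prop:rho invariants}(1) (quoting ~\cite[Lemma~3.1, Remark~3.2]{Ha2}) makes explicit, the $L^{(2)}$-signature is a Witt invariant of the induced form on the quotient $H_2(V;\KK\G)/j_*\bigl(H_2(\p V;\KK\G)\bigr)$. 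By exactness of the pair sequence, your $\mathcal{L}$ equals the image of $H_2(\p V;\KK\G)$ in $H_2(V;\KK\G)$ — precisely what is quotiented out — so it is the zero submodule of the form that actually computes $\s^{(2)}_\G$. That is a Lagrangian only if the quotient already has rank zero, and establishing that is exactly the content of the ``extra rank condition'' $\operatorname{rank}_{\KK\G}H_1(M_L;\KK\G)=\beta_1(M_L)-1$ in Theorem~\ref{thm:rho=0}. So the rank-counting step you deferred is not merely technical bookkeeping: without it, the candidate Lagrangian does not address the form that matters. The efficient fix is to do what the paper does and quote Theorem~\ref{thm:rho=0} directly, noting that the hypothesis $H_2(W;\Q)=0$ promotes $W$ to a rational $(n+1)$-solution, which by the final sentence of that theorem automatically yields the rank condition.
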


Some other useful properties of von Neumann
$\rho$-invariants are given below. One can find
detailed explanations of most of these in \cite[Section 5]{COT}. The last property, that for a fixed $3$-manifold, the set $\{\rho(M,\phi)\}$ is bounded above and below, is an analytical result of Cheeger and Gromov that we use in some (but not all) of our results here.

\begin{prop}
\label{prop:rho invariants}Let $M$ be a closed,oriented $3$-manifold and $\phi : \pi_1(M) \to \G$ as above.
\begin{itemize}
\item [(1)] If $(M,\phi) = \partial (W,\psi)$ for some compact
oriented 4-manifold $W$ such that the equivariant intersection form on $H_2(W;\mathcal{K}\G)/j_*(H_2(\partial W;\mathcal{K}\G))$ admits a half-rank summand on
which the form vanishes, then
$\s^{(2)}_\G(W,\psi)=0$ (see ~\cite[Lemma 3.1 and Remark 3.2]{Ha2} for a proper explanation of this for manifolds with $\beta_1>1$). Thus if  $\s(W) = 0$ then $\rho(M,\phi) = 0$ 

\item [(2)] If $\phi$ factors through $\phi' : \pi_1(M) \to \G'$ where
$\G'$ is a subgroup of $\G$, then $\rho(M,\phi') = \rho(M,\phi)$.

\item [(3)] If $\phi$ is trivial (the zero map), then $\rho(M,\phi) = 0$.

\item [(4)] If $M=M_K$ is zero surgery on a knot $K$ and $\phi:\pi_1(M)\to \mathbb{Z}$ is the abelianization, then $\rho(M,\phi)$ is equal to the integral over the circle of the Levine (classical) signature function of $K$, normalized so that the length of the circle is $1$ ~\cite[Prop. 5.1]{COT2}. \textbf{This real number will be denoted $\rho_0(K)$}.

\item [5] (Cheeger-Gromov ~\cite{ChGr1}) Given $M$, there is a positive constant $C_M$, the \textbf{Cheeger-Gromov constant of M}, such that for every $\phi$
$$
|\rho(M,\phi)|<C_M.
$$
\end{itemize}
\end{prop}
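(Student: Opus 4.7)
The plan is to establish each of the five properties via the common formula $\rho(M,\phi)=\s^{(2)}_\G(W,\psi)-\s(W)$ whenever $(M,\phi)=\p(W,\psi)$, together with the fact (recalled in the text) that $\s^{(2)}_\G$ depends only on the Witt class of the equivariant intersection form on $H_2(W;\KK\G)$, or more carefully on the form induced on $H_2(W;\KK\G)/j_*(H_2(\p W;\KK\G))$ when $\beta_1(W)>1$.

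First I would dispatch part (1). The hypothesis that $\s(W)=0$ is stated directly, so the work is entirely on the $L^{(2)}$-side: a half-rank subspace on which the equivariant form vanishes is a Lagrangian for the induced hermitian form over $\KK\G$, and by Witt invariance of the von Neumann signature on nonsingular hermitian forms this forces $\s^{(2)}_\G(W,\psi)=0$. The one subtlety is that when $\beta_1(W)>1$ the form on $H_2(W;\KK\G)$ itself need not be nonsingular, and one must pass to the quotient by the boundary image; here I would follow the argument of Lemma~3.1 in~\cite{Ha2} essentially verbatim. Parts (2) and (3) are formal consequences of the same framework. For (2), if $\phi$ factors as $\pi_1(M)\stackrel{\phi'}\to\G'\hra\G$, then the $\G$-cover of $M$ is induced up from the $\G'$-cover, and a Shapiro-type change of rings identifies the Witt class of the $\KK\G$-intersection form on any bounding $(W,\psi)$ with that obtained by extension from $\KK\G'$; the $L^{(2)}$-signatures therefore agree. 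For (3), a trivial $\phi$ yields a trivial (disjoint) cover, so $\s^{(2)}_\G(W,\psi)=\s(W)$ and $\rho=0$.

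Part (4) is the only genuinely substantive computation. Given $K$ and $\phi:\pi_1(M_K)\to\Z$ the abelianization, I would construct $(W,\psi)$ explicitly: push a Seifert surface $F$ for $K$ into $B^4$, remove a tubular neighborhood of $F$, and cap off with a $2$-handle along a longitude to arrange $\p W=M_K$ and $\pi_1(W)\twoheadrightarrow\Z$ extending $\phi$. Over $\KK\Z=\Q(t)$, the equivariant intersection form on $H_2(W;\Q(t))$ is presented by $(1-t)V+(1-t^{-1})V^T$, where $V$ is a Seifert matrix for $K$. Diagonalizing this family of hermitian matrices over the unit circle and integrating over $\spec \KK\Z = S^1$ (the Fuglede--Kadison trace reduces to ordinary Lebesgue integration for the abelian group $\Z$) identifies $\s^{(2)}_\Z(W,\psi)$ with the integral of the Levine signature function and hence $\rho(M_K,\phi)$ with $\rho_0(K)$; the precise identification is carried out in~\cite[Prop.~5.1]{COT2}. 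Finally, (5) is the Cheeger--Gromov analytic bound, whose proof uses compactness of the moduli space of Riemannian metrics of bounded geometry on $M$ and the local variation formula for the $\eta$-invariant~\cite{ChGr1}; this analytic estimate is the main obstacle to a self-contained treatment of the proposition and I would simply quote it.
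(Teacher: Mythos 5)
The paper does not actually prove Proposition~2.2: immediately before stating it the authors write that ``detailed explanations of most of these'' can be found in \cite[Section 5]{COT}, and they cite \cite[Prop.~5.1]{COT2} for (4) and \cite{ChGr1} for (5). So you are supplying proofs the paper omits, and your outlines for (1), (2), (3) and (5) are the standard arguments and are fine: (1) is the Witt-class argument with the caveat you note from \cite[Lemma~3.1]{Ha2}; (2) is the $L^2$-induction property; (3) is trivial; (5) is the Cheeger--Gromov analytic bound.

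The one place that does not hold up as written is the explicit $4$-manifold in your sketch of (4). If $\hat F$ is a pushed-in Seifert surface in $B^4$, then $\partial\bigl(B^4\setminus N(\hat F)\bigr)$ is $(S^3\setminus N(K))\cup_{T^2}(\hat F\times S^1)$, i.e.\ the knot exterior filled with a surface-times-circle rather than a solid torus; for genus $>0$ this is not the zero-surgery $M_K$, and attaching a $2$-handle along a longitude of $K$ does not obviously remedy this (the longitude already bounds $\hat F\times\{\mathrm{pt}\}$ in that piece). The essential idea you invoke --- that for $\G=\Z$ the Fuglede--Kadison trace becomes Lebesgue integration over $\widehat{\Z}\cong S^1$, turning $\sigma^{(2)}_{\Z}$ into the integral of Levine--Tristram signatures of $(1-\omega)V+(1-\bar\omega)V^{T}$ --- is the right one and is what \cite[Prop.~5.1]{COT2} carries out, but it does so without the particular bounding manifold you propose. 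You should either fix the construction (e.g.\ use a $4$-manifold built from a cobordism realizing $K$ as algebraically split, as in \cite{COT2}, or argue directly from the $\eta$-invariant of the $\Z$-cover without any bounding $W$) or simply defer entirely to the cited reference as the paper itself does.
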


The following elementary lemma reveals the additivity of the $\rho$-invariant under infection. It is only slightly more general than \cite[Proposition 3.2]{COT2}. The use of a Mayer-Vietoris sequence to analyze the effect of a satellite construction on signature defects is common to essentially all of the previous work in this field (see for example ~\cite{Lith1}).

Suppose $L=R(\eta_i,K_i)$ is obtained by infection as described in Section~\ref{sec:Introduction}. Let the zero surgeries on $R$, $L$, and $K_i$ be denoted $M_R$ $M_L$, $M_i$ respectively. Suppose $\phi:\pi_1(M_L)\to \G$ is a map to an arbitrary PTFA group $\G$ such that, for each $i$, $\ell_i$, the longitude of $K_i$, lies in the kernel of $\phi$. Since $S^3-K_i$ is a submanifold of $M_L$, $\phi$ induces  a map on $\pi_1(S^3-K_i)$. Since $l_i$ lies in the kernel of $\phi$ this map extends uniquely to a map that we call $\phi_i$  on $\pi_1(M_i)$. Similarly, $\phi$ induces a map on $\pi_1(M_R-\coprod \eta_i)$. Since $M_R$ is obtained from $(M_R-\coprod \eta_i)$ by adding $m$ $2$-cells along the meridians of the $\eta_i$, $\mu(\eta_i)$ and $m$ $3-$cells, and since $\mu(\eta_i)=l_i^{-1}$ and $\phi_i(l_i)=1$, $\phi$ extends uniquely to $\phi_R$. Thus $\phi$ induces unique maps $\phi_i$ and $\phi_R$ on $\pi_1(M_i)$ and $\pi_1(M_R)$ (characterized by the fact that they agree with $\phi$ on $\pi_1(S^3-K_i)$ and $\pi_1(M_R-\coprod \eta_i)$ respectively). 

There is a very important case when the hypothesis above that $\phi(\ell_i)=1$ is always satisfied. Namely suppose $\G^{(n+1)}=1$ and $\eta_i\in \pi_1(M_R)^{(n)}$.  Since a longitudinal push-off of $\eta_i$, called $\ell_{\eta_i}$ or $\eta_i^+$, is isotopic to $\eta_i$ in the solid torus $\eta_i\times D^2\subset M_R$, $\ell_{\eta_i}\in \pi_1(M_R)^{(n)}$ as well. By ~\cite[Theorem 8.1]{C} or ~\cite{Lei3} it follows that $\ell_{\eta_i}\in \pi_1(M_L)^{(n)}$. Since $\mu_i$, the meridian of $K_i$, is identified to $\ell_{\eta_i}$, $\mu_i \in \pi_1(M_L)^{(n)}$ so $\phi(\mu_i)\in \G^{(n))}$  for each $i$.  Thus $\phi_i(\pi_1(S^3- K_i)^{(1)})\subset\G^{(n+1)}=\{e\}$ and in particular the longitude of each $K_i$ lies in the kernel of $\phi$. 

\begin{lem}\label{lem:additivity} In the notation of the two previous paragraphs (assuming $\phi(\ell_i)=0$ for all $i$),
$$
\rho(M_L,\phi) - \rho(M_R,\phi_R) = \sum^m_{i=1}\rho(M_i,\phi_i).
$$
Moreover if $\pi_1(S^3-K_i)^{(1)}\subset$ kernel($\phi_i$) then either $\rho(M_i,\phi_i)=\rho_0(K_i)$, or $\rho(M_i,\phi_i)=0$, according as $\phi_R(\eta_i)\neq 1$ or $\phi_R(\eta_i)= 1$. Specifically, if $\G^{(n+1)}=1$ and $\eta_i\in \pi_1(M_R)^{(n)}$ then this is the case.
\end{lem}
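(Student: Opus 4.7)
The plan is to dispatch the three assertions in turn: (1) the additivity identity, (2) the evaluation of $\rho(M_i,\phi_i)$ under the factoring hypothesis $\pi_1(S^3-K_i)^{(1)}\subseteq \ker\phi_i$, and (3) the verification of that hypothesis when $\G^{(n+1)}=1$ and $\eta_i\in\pi_1(M_R)^{(n)}$.

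For (1), I would construct a compact oriented $4$-manifold $W$ with oriented boundary $M_L \sqcup (-M_R) \sqcup \bigsqcup_{i=1}^m (-M_i)$ following the standard infection-cobordism recipe of \cite[Prop.~3.2]{COT2}: take $(M_R \times I) \sqcup \bigsqcup_i (M_i \times I)$ and, for each $i$, glue $\nu(\eta_i)\times\{1\}$ to $\nu(\mu_{K_i})\times\{1\}$ via the infection identification of the boundary tori, realizing the toral-sum decomposition of $M_L$ at the top level. The hypothesis $\phi(\ell_i)=1$ is precisely what permits $\phi$ to extend to $\phi_i$ on each $\pi_1(M_i)$ and, via van Kampen, to a coefficient system $\psi: \pi_1(W) \to \G$ restricting to $\phi,\phi_R,\phi_i$ on the respective boundary components. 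The signature-defect formula $\rho(\partial W,\psi) = \sigma^{(2)}_\G(W,\psi) - \sigma(W)$ combined with $\rho$-additivity over disjoint union then reduces (1) to the vanishing $\sigma(W)=0=\sigma^{(2)}_\G(W,\psi)$. A Mayer--Vietoris computation supplies this: the gluing locus $\coprod_i \nu(\eta_i) \simeq \coprod_i S^1$ carries no $H_2$, so $H_2(W;\KK\G)$ and $H_2(W;\Q)$ are generated by classes pushed in from the product slabs $M_R\times I$ and $M_i\times I$, all of which live on $\partial W$; hence the intersection form on $H_2(W;\KK\G)/j_*H_2(\partial W;\KK\G)$ vanishes identically, and Proposition~\ref{prop:rho invariants}(1) delivers the additivity.

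For (2), the assumption $\pi_1(S^3-K_i)^{(1)}\subseteq \ker\phi_i$ forces $\phi_i$ to factor through $H_1(M_i)=\Z\langle\mu_{K_i}\rangle$. Under the infection identification $\mu_{K_i}=\ell_{\eta_i}$, a longitudinal push-off of $\eta_i$, so $\phi_i(\mu_{K_i})=\phi_R(\ell_{\eta_i})$ is conjugate in $\G$ to $\phi_R(\eta_i)$, and the two are simultaneously trivial. If $\phi_R(\eta_i)\neq 1$, then $\phi_i$ embeds $\Z$ into the torsion-free group $\G$, and parts (2) and (4) of Proposition~\ref{prop:rho invariants} give $\rho(M_i,\phi_i)=\rho_0(K_i)$; if $\phi_R(\eta_i)=1$, then $\phi_i$ is trivial and part (3) gives $\rho(M_i,\phi_i)=0$. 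For (3), when $\G^{(n+1)}=1$ and $\eta_i\in\pi_1(M_R)^{(n)}$, the discussion immediately preceding the lemma shows $\mu_{K_i}\in\pi_1(M_L)^{(n)}$, so $\phi_i(\mu_{K_i})\in\G^{(n)}$; since $\pi_1(S^3-K_i)$ is normally generated by $\mu_{K_i}$ and $\G^{(n)}$ is characteristic, $\phi_i(\pi_1(S^3-K_i))\subseteq \G^{(n)}$, whence $\phi_i(\pi_1(S^3-K_i)^{(1)}) \subseteq [\G^{(n)},\G^{(n)}] \subseteq \G^{(n+1)}=\{1\}$, verifying the factoring hypothesis.

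The hard part will be the $L^2$-signature computation in (1). The ordinary signature $\sigma(W)$ vanishes by Novikov additivity applied to the slabs $M\times I$, but $\sigma^{(2)}_\G(W,\psi)$ requires a genuine Mayer--Vietoris argument over the skew-field $\KK\G$, with the subtlety that $H_*(S^1;\KK\G)$ of each gluing circle depends on whether $\psi(\eta_i)$ is trivial or not in $\G$. Treating both cases uniformly, and confirming that the relative intersection form really does vanish, is the most delicate piece of bookkeeping.
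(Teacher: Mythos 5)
Your overall strategy is the same as the paper's: build the infection cobordism $E$ from $M_R\times I$ and the $M_i\times I$, extend the coefficient system, reduce the additivity to the vanishing of $\sigma$ and $\sigma^{(2)}_\G$ on $E$ via a Mayer--Vietoris argument, then handle parts (2) and (3) exactly as you do. Parts (2) and (3) are correct and essentially identical to the paper's reasoning (the paper phrases (2) more tersely as ``$\phi_i$ factors through the abelianization, apply Proposition~\ref{prop:rho invariants}(2),(3),(4)'').

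The gap is in the Mayer--Vietoris step, and it is a real one. You write that ``the gluing locus $\coprod_i\nu(\eta_i)\simeq\coprod_i S^1$ carries no $H_2$, so $H_2(W;\KK\G)$\dots are generated by classes pushed in from the product slabs.'' That implication does not hold. From the sequence
$$
H_2(A)\oplus H_2(B)\ra H_2(W)\ra H_1\bigl(\textstyle\coprod_i\eta_i\times D^2\bigr)\ra H_1(A)\oplus H_1(B),
$$
the vanishing of $H_2$ of the gluing locus only controls the \emph{previous} arrow; what you need is that the connecting map $H_2(W)\to H_1(\coprod_i\eta_i\times D^2)$ is zero, i.e.\ that $H_1(\coprod_i\eta_i\times D^2)\to H_1(A)\oplus H_1(B)$ is injective. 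You correctly flag that a case split on whether $\psi(\eta_i)$ is trivial is needed, but you never carry it out, and the sentence as written would not yield the conclusion. The paper's Lemma~\ref{lem:mickeysig} supplies the missing step: if $\phi(\eta_i)=1$ then the coefficient system on $\eta_i\times D^2$ and on $M_i$ is trivial (since $\mu_{K_i}$ normally generates $\pi_1(M_i)$) and injectivity reduces to the untwisted case; if $\phi(\eta_i)\ne 1$, then in the equivariant cellular chain complex for $\eta_i\times D^2\simeq S^1$ the boundary $C_1\otimes\Z\G\to C_0\otimes\Z\G$ is left multiplication by $\phi(\eta_i)-1$, which is injective because $\Z\G$ is a domain, so $H_1(\eta_i\times D^2;\Z\G)=0$ outright. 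Either way $H_1(\eta_i\times D^2)\to H_1(M_i)$ is injective, the connecting map dies, all of $H_2(E)$ comes from $\partial E$, and both signatures vanish. Note also that the paper runs this with $\Z\G$-coefficients for arbitrary $\G$, which is slightly cleaner than passing to $\KK\G$, though your $\KK\G$ route would work too once you fill in this injectivity.
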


\begin{proof} Let $E$ be the $4$-manifold obtained from $M_R\times [0,1] \coprod -M_i\times [0,1]$ by identifying, for each $i$, the copy of $\eta_i\times D^2$ in $M_R\times \{1\}$ with the tubular neighborhood of $K_i$ in $M_i\times \{0\}$ as in Figure~\ref{fig:mickey}. 
\begin{figure}[htbp]
\setlength{\unitlength}{1pt}
\begin{picture}(150,150)
\put(0,0){\includegraphics[height=150pt]{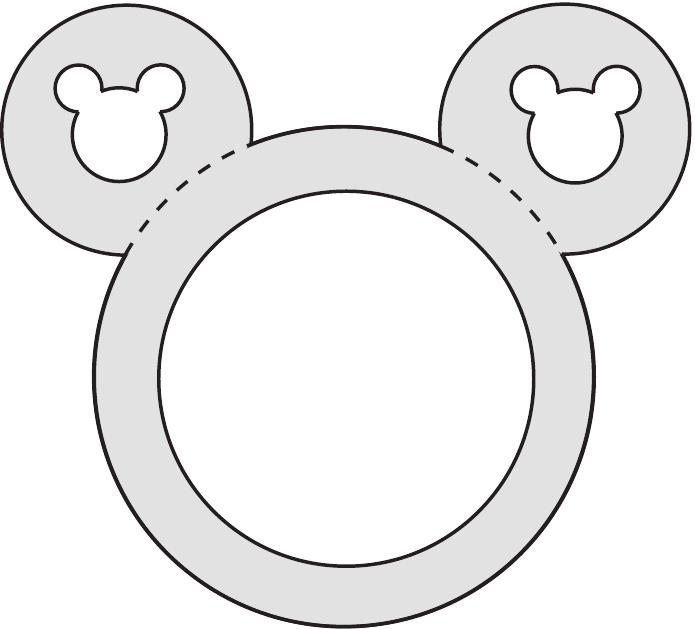}}
\put(54,37){$M_R\times [0,1]$}  
\put(-55,127){$M_1\times [0,1]$}
\put(170,127){$M_m\times [0,1]$}
\put(73,127){$\dots$}
\end{picture}
\caption{The cobordism $E$}\label{fig:mickey}
\end{figure}
The dashed arcs in the figure represent the solid tori $\eta_i\times D^2$. Observe that the `outer' boundary component of $E$ is $M_L$.
Note that $E$ deformation retracts to $\overline{E}= M_L\cup (\coprod_i(\eta_i\times D^2))$, where each solid torus is attached to $M_L$ along its boundary. Hence $\overline{E}$ is obtained from $M_L$ by adding $m$ $2$-cells along the loops $\mu(\eta_i)=l_i$, and $m$ $3$-cells. Thus, by our assumption, $\phi$ extends uniquely to $\overline{\phi}:\pi_1(\overline{E})\to \G$ and hence $\ov\phi:\pi_1(E)\to\G$. Clearly the restrictions of $\overline{\phi}$ to $\pi_1(M_i)$ and $\pi_1(M_R\times \{0\})$ agree with $\phi_i$ and $\phi_R$ respectively. It follows that that
$$
\rho(M_L,\phi) - \rho(M_R,\phi_R) = \sum^m_{i=1}\rho(M_i,\phi_i) + \sigma^{(2)}(E,\overline{\phi})- \sigma(E).
$$
Now we claim that both the ordinary signature of $E$, $\s(E)$, as well as the $L^2$-signature $\s^{(2)}_\G(E)$, vanish. The first part of the proposition will follow immediately. 

\begin{lem}\label{lem:mickeysig} With respect to \emph{any} coefficient system, $\phi:\pi_1(E)\to \Gamma$,  the signature of the equivariant intersection form on the $H_2(E;\mathbb{Z}\G)$ is zero. 
\end{lem}
\begin{proof}[Proof of Lemma~\ref{lem:mickeysig}] We show that all of the (twisted) second homology of $E$ comes from its boundary. This immediately implies the claimed results.

Consider the Mayer-Vietoris sequence with coefficients twisted by $\phi$:
$$
H_2(M_R\x~I)\oplus_i H_2(M_{i}\x~I)\ra H_2(E)\ra H_1(\amalg\eta_i\x D^2)\ra H_1(M_R\x I)\oplus_i H_1(M_i\x I).
$$
We claim that each of the inclusion-induced maps
$$
H_1(\eta_i\x D^2)\ra H_1(M_i)
$$
is injective. If $\phi(\eta_i)=1$ then, since $\eta_i$ is equated to the meridian of $K_i$, $\phi(\mu_{K_i})=1$. Since $\mu_{K_i}$ normally generates $\pi_1(M_i)$, it follows that the coefficient systems on $\eta_i\x D^2$ and $M_i$ are trivial and hence the injectivity follows from the injectivity with $\mathbb{Z}$-coefficients, which is obvious since $\mu_{K_i}$ generates $H_1(M_i)$. Suppose now that that $\phi(\eta_i)\neq 1$. Since $\eta_i\x D^2$ is homotopy equivalent to a circle, it suffices to consider the cell structure on $S^1$ with one $1$-cell. Then the boundary map in the $\mathbb{Z}[\pi_1(S^1)]$ cellular chain complex for $S^1$ is multiplication by $t-1$ so the boundary map in the equivariant chain complex 
$$
C_1\otimes \mathbb{Z}\G \overset{\partial\otimes id}\longrightarrow C_0\otimes \mathbb{Z}\G
$$
is easily seen to be left multiplication by $\phi(\eta_i)-1$. Since $\phi(\eta_i)\neq 1$ and $\mathbb{Z}\G$ is a domain, this map is injective. Thus $H_1(\eta_i\x D^2;\mathbb{Z}\G)=~0$ so injectivity holds.

Now using the Mayer-Vietoris sequence, any element of $H_2(E)$ comes from $H_2(M_R\x\{0\})\oplus_i H_2(M_i\x\{0\})$, in particular from $H_2(\p E)$. Thus the intersection form on $H_2(E)$ is identically zero and any signature vanishes.
\end{proof}

This completes the proof of the first part of the proposition.

If $\pi_1(S^3-K_i)^{(1)}\subset$ kernel($\phi_i$) then $\phi_i$ factors through the abelianization of $H_1(S^3\backslash K_i)$ and so by parts $2,3$ and $4$ of Proposition~\ref{prop:rho invariants}, we are done. In particular if $\G^{(n+1)}=\{e\}$ and $\eta_i)\in \pi_1(M_R)^{(n)}$, then $\phi_i(\mu_i)\in\G^{(n)}$ for each $i$ as we have shown in the paragraph above the Lemma, so $\phi_i(\pi_1(S^3\backslash K_i)^{(1)})\subset\G^{(n+1)}=\{e\}$. Thus each $\phi_i$ factors through the abelianization.
\end{proof}

We want to collect, in the form of a Lemma,  the properties of the cobordism $E$ that we have established in the proofs above. These will be used in later sections.

\begin{lem}\label{lem:mickeyfacts} With regard to $E$ as above, the inclusion maps induce
\begin{itemize}
\item [(1)] an epimorphism $\pi_1(M_L)\to \pi_1(E)$ whose kernel is the normal closure of the longitudes of the infecting knots $K_i$ viewed as curves $\ell_i\subset S^3-K_i\subset M_L$;
\item [(2)] isomorphisms $H_1(M_L)\to H_1(E)$ and $H_1(M_R)\to H_1(E)$;
\item [(3)] and isomorphisms $H_2(E)\cong H_2(M_L)\oplus_i H_2(M_{K_i})\cong H_2(M_R)\oplus_i H_2(M_{K_i})$.
\item [(4)] The longitudinal push-off of $\eta_i$, $\ell_{\eta_i}\subset M_L$ is isotopic in $E$ to $\eta_i\subset M_R$ and to the meridian of $K_i$ , $\mu_i\subset M_{K_i}$.
\item [(5)] The longitude of $K_i$, $\ell_i\subset M_{K_i}$ is isotopic in $E$ to the reverse of the meridian of $\eta_i$, $\eta_i^{-1}\subset M_L$ and to the longitude of $K_i$ in $S^3-K_i\subset M_L$ and to the reverse of the meridian of $\eta_i$, $(\mu_{\eta_i})^{-1}\subset M_R$ (the latter bounds a disk in $M_R$).
\end{itemize}
\end{lem}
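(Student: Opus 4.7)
The plan is to exploit the deformation retract $E \simeq \overline{E} = M_L \cup_i (\eta_i\x D^2)$ already established in the proof of Lemma~\ref{lem:additivity}, giving $\overline{E}$ the structure of $M_L$ with, for each $i$, one $2$-cell attached along $\mu(\eta_i)$ and one $3$-cell. Part (1) is then immediate from van Kampen: via the identification $\mu(\eta_i)=\ell_i^{-1}$, the $2$-cells kill precisely the normal closure of the longitudes $\ell_i$, and the $3$-cells have no effect on $\pi_1$, so $\pi_1(E)\cong \pi_1(M_L)/\langle\langle \ell_i \rangle\rangle$. Abelianizing and using that each $\ell_i$, being the Seifert longitude of $K_i$ sitting inside $S^3\setminus\nu(K_i)\subset M_L$, is already null-homologous there, gives the $M_L$-side of (2).

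For the $M_R$-side of (2) and the first splitting in (3), I would reuse the Mayer--Vietoris decomposition $E=(M_R\x I)\cup(\coprod_i M_i\x I)$ with intersection $\coprod_i(\eta_i\x D^2)$ already analyzed in Lemma~\ref{lem:mickeysig}. Since $H_2(\coprod S^1)=0$ and the map $\bigoplus_i\Z\langle\eta_i\rangle\hookrightarrow H_1(M_R)\oplus\bigoplus_i H_1(M_{K_i})$ is injective (each generator hits $\pm\mu_{K_i}$, which generates $H_1(M_{K_i})$), the sequence collapses to $H_2(E)\cong H_2(M_R)\oplus\bigoplus_i H_2(M_{K_i})$. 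Under the standing assumption $lk(\eta_i,R_j)=0$, the class $[\eta_i]$ vanishes in $H_1(M_R)$, so the cokernel of that same $H_1$-map is exactly $H_1(M_R)$, yielding the $M_R$-side of (2).

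For the $M_L$-splitting in (3) I would run the parallel Mayer--Vietoris on $\overline{E}=M_L\cup(\coprod_i\eta_i\x D^2)$ with torus intersections $\coprod_i T_i^2$. Each $T_i^2$ separates $M_L$ into $M_R\setminus\nu(\eta_i)$ and $S^3\setminus\nu(K_i)$, so $[T_i^2]=0$ in $H_2(M_L)$; while on $H_1$ the identifications $\mu(\eta_i)=\ell_i^{-1}$ (null-homologous in $M_L$ as above) produce enough vanishing to yield $H_2(E)\cong H_2(M_L)\oplus\bigoplus_i H_2(M_{K_i})$. This is the step I expect to be the most technically delicate, since it requires carefully identifying the classes of $\mu_{K_i}=\ell_{\eta_i}$ and of $\ell_i^{-1}=\mu(\eta_i)$ in $H_1(M_L)$ via the meridional basis coming from infection.

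Parts (4) and (5) are pure geometric bookkeeping in the cobordism. Inside $E$ the two solid tori $\eta_i\x D^2\subset M_R$ and $\nu(K_i)\subset M_{K_i}$ have been identified into a single solid torus by the meridian--longitude swap. I would simply trace the curves: $\ell_{\eta_i}$ on the common boundary torus is isotopic through the solid torus to the core $\eta_i$ (viewed from $M_R$) and equals $\mu_{K_i}$ (viewed from $M_{K_i}$), giving (4); dually, $\ell_i=(\mu_{\eta_i})^{-1}$ on the torus bounds a meridional disk in $\nu(\eta_i)\subset M_R$ and coincides with the longitude of $K_i$ in $S^3\setminus K_i\subset M_L$, giving (5).
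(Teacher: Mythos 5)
Your argument is correct and follows essentially the same route as the paper's proof: exploit the deformation retract $E\simeq\overline{E}=M_L\cup(\coprod_i\eta_i\times D^2)$ (equivalently, the cell structure on $\overline{E}$ obtained by adding one $2$-cell along each $\ell_i$ and one $3$-cell) for parts (1), (2), and the $M_L$-side of (3), and run Mayer--Vietoris on $E=(M_R\times I)\cup(\coprod_i M_i\times I)$ for the $M_R$-sides, with (4) and (5) read off directly from the gluing. Your Mayer--Vietoris on $\overline{E}$ with torus intersections for the $M_L$-splitting of $H_2$ is just an explicit unpacking of the paper's observation that the $3$-cells run algebraically zero over the $2$-cells, so the two are the same computation in different packaging.
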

\begin{proof} We saw above that $E\sim\overline{E}$ is obtained from $M_L$ by adding $m$ $2$-cells along the loops $\mu_{\eta_i}=\ell_i$, and then adding $m$ $3$-cells that go algebraically zero over these $2$-cells. Property $(1)$ and the first part of properties $(2)$ and $(3)$ follow. Properties $(4)$ and $(5)$ are obvious from the definitions of infection and of $E$. Since we have assumed that $\eta_i$ are null-homologous in $M_R$, the second parts of properties $(2)$ and $(3)$ follow from an easy Mayer-Vietoris argument as in the proof just above.
\end{proof}

\vspace{4in}

\section{Higher-Order Blanchfield forms for knots and links}\label{Blanchfieldforms}

We have seen in Lemma~\ref{lem:additivity} that an infection will have an effect on a $\rho$-invariant only if the infection circle $\eta$ survives under the map defining the coefficient system. For example if one creates a knot $J$ by infecting a slice knot $R$ along a curve $\eta$ that dies in $\pi_1(B^4-\Delta)$ for some slice disk $\Delta$ for $R$, then this infection will have no effect on the $\rho$-invariants associated to any coefficient system that extends over $B^4-\Delta$. Therefore it is important to prove \emph{injectivity} theorems concerning $\pi_1(S^3-R)\to\pi_1(B^4-\Delta)$, that is to locate elements of $\pi_1(S^3-R)$ that survive under such inclusions. Moreover the curve $\eta$ must usually lie deep in the derived series of $\pi_1(S^3-R)$ to ensure that the infected knot cannot be detected by a less subtle invariant. If $\eta\in \pi_1(S^3-R)^{(n)}$ then $J$ will be rationally $n$-solvable and we may hope to show that it is not $(n.5)$-solvable. Therefore, loosely speaking, we need to be able to prove that $\eta$ survives under the map
$$
j_*:\pi_1(S^3-R)^{(n)}/\pi_1(S^3-R)^{(n+1)}\to \pi_1(B^4-\Delta)^{(n)}/\pi_1(B^4-\Delta)^{(n+1)}.
$$
For $n=1$ this is a question about ordinary Alexander modules and was solved by Casson-Gordon and Gilmer using linking forms on finite branched covers. In general this seems a daunting task. (Note that this is impossible if $\pi_1(B^4-\Delta)$ is solvable, which occurs, for example, for the standard slice disk for the ribbon knot $R_1$ of Figure~\ref{fig:ribbonCG}(e.g. see ~\cite{FrT})). To see that higher-order Alexander modules are relevant to this task, observe that the latter quotient is the abelianization of $\pi_1(B^4-\Delta)^{(n)}$ and thus can be interpreted as $H_1(W_n)$ where $W_n$ is the (solvable) covering space of $B^4-\Delta$ corresponding to the subgroup $\pi_1(B^4-\Delta)^{(n)}$. Such modules were named \emph{higher-order Alexander modules} in ~\cite{COT}~\cite{C}~\cite{Ha1}. We will employ higher-order Blanchfield linking forms on higher-order Alexander modules to find restrictions on the kernels of such maps. The logic of the technique is entirely analogous to the classical case ($n=1$): Any two curves $\eta_0, \eta_1$, say, that lie in the kernel of $j_*$ must satisfy $\mathcal{B}\ell(\eta_0,\eta_0)=\mathcal{B}\ell(\eta_0,\eta_1)=\mathcal{B}\ell(\eta_1,\eta_1)=0$ with respect to a higher order linking form $\mathcal{B}\ell$. Our major new insight is that, if the curves lie in a submanifold $S^3-K\hookrightarrow S^3-J$, a situation that arises whenever $J$ is formed from $R$ by infection using a knot $K$, then the values (above) of the higher-order Blanchfield form of $J$ can be expressed in terms of the values of the classical Blanchfield form of $K$! 

Higher-order Alexander modules and higher-order linking forms for classical knot exteriors and for closed $3$-manifolds with $\beta_1(M)=1$ were introduced in ~\cite[Theorem 2.13]{COT} and further developed in ~\cite{C} and ~\cite{Lei1}. These were defined on the so called higher-order Alexander modules. Higher-order Alexander modules for links and $3$-manifolds in general were defined and investigated in ~\cite{Ha1}. Blanchfield forms for $3$-manifolds with $\beta_1(M)>1$ were only recently defined by Leidy ~\cite{Lei3}. It is crucial to our techniques that we work with such Blanchfield forms without localizing the coefficient systems, as was investigated in ~\cite{Lei1}~\cite{Lei3}. It is in this aspect that our work deviates from that of ~\cite{COT}~\cite{COT2}~\cite{CK}. A non-localized Blanchfield form for knots also played a crucial role in ~\cite{FrT}.

First we recall that \emph{higher-order Blanchfield linking forms} have been defined under fairly general circumstances.
\begin{thm}\label{thm:blanchfieldexist}[~\cite[Theorem 2.3]{Lei3}] Suppose $M$ is a closed, connected, oriented $3$-manifold and $\phi:\pi_1(M)\to \Lambda$ is a PTFA coefficient system. Suppose $\mathcal{R}$ is a classical Ore localization of the Ore domain $\mathbb{Z}\Lambda$ (so $\mathbb{Z}\Lambda\subset\mathcal{R}\subset \mathcal{K}\Lambda$). Then there is a linking form:
$$
\mathcal{B}l^M_{\mathcal{R}}: TH_1(M;\mathcal{R})\to (TH_1(M;\mathcal{R}))^{\#}\equiv \overline{Hom_{\mathcal{R}}(TH_1(M;\mathcal{R}), \mathcal{K}\Lambda/\mathcal{R})}.
$$
\end{thm}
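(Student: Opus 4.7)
The plan is to construct $\mathcal{B}l^M_{\mathcal{R}}$ as the composition of three maps: Poincar\'e duality, the Bockstein associated to the coefficient sequence $0\to\mathcal{R}\to\mathcal{K}\Lambda\to\mathcal{K}\Lambda/\mathcal{R}\to 0$, and a Kronecker evaluation arising from the universal coefficient theorem; then to restrict the result to the torsion submodule. Throughout, one must keep careful track of the involution $\overline{\phantom{x}}:\mathcal{R}\to\mathcal{R}$ induced by $g\mapsto g^{-1}$, which converts right $\mathcal{R}$-modules into left ones and accounts for the overline in the target.

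More precisely, first I would record the three ingredients. Poincar\'e duality gives an isomorphism $PD:H_1(M;\mathcal{R})\xrightarrow{\cong} H^2(M;\overline{\mathcal{R}})$ via cap product with the fundamental class, valid because $M$ is closed, connected, and oriented and $\mathcal{R}$ is a (not necessarily commutative) ring with involution. The Bockstein $B:H^1(M;\mathcal{K}\Lambda/\mathcal{R})\to H^2(M;\mathcal{R})$ comes from the long exact cohomology sequence of $0\to\mathcal{R}\to\mathcal{K}\Lambda\to\mathcal{K}\Lambda/\mathcal{R}\to 0$, and the universal coefficient spectral sequence (or the obvious edge homomorphism) provides $\kappa:H^1(M;\mathcal{K}\Lambda/\mathcal{R})\to \overline{\mathrm{Hom}_{\mathcal{R}}(H_1(M;\mathcal{R}),\mathcal{K}\Lambda/\mathcal{R})}$. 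To define $\mathcal{B}l^M_{\mathcal{R}}$, pick $x\in TH_1(M;\mathcal{R})$, apply $PD$, choose a lift $\xi\in H^1(M;\mathcal{K}\Lambda/\mathcal{R})$ with $B(\xi)=PD(x)$, and then apply $\kappa$ and restrict the resulting homomorphism to $TH_1(M;\mathcal{R})$.

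Next I would discharge the two well-definedness obligations. First, the lift $\xi$ must exist: by naturality of $PD$ with respect to the ring map $\mathcal{R}\to\mathcal{K}\Lambda$, the element $PD(x)$ maps to the image of $x$ in $H_1(M;\mathcal{K}\Lambda)$ (up to the $PD$ isomorphism over $\mathcal{K}\Lambda$), and since Ore localization is flat this image vanishes on torsion; hence $PD(x)\in\ker\bigl(H^2(M;\mathcal{R})\to H^2(M;\mathcal{K}\Lambda)\bigr)=\mathrm{Im}(B)$. Second, the choice of $\xi$ is ambiguous by elements of $\mathrm{Im}\bigl(H^1(M;\mathcal{K}\Lambda)\to H^1(M;\mathcal{K}\Lambda/\mathcal{R})\bigr)$; but any cohomology class in the image of $H^1(M;\mathcal{K}\Lambda)$, when evaluated on a torsion class $y\in TH_1(M;\mathcal{R})$, factors through $H_1(M;\mathcal{K}\Lambda)$ (by naturality of the Kronecker pairing), on which $y$ is zero. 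Thus the ambiguity vanishes after restricting to $TH_1(M;\mathcal{R})$, so $\mathcal{B}l^M_{\mathcal{R}}(x)$ is a well-defined element of $\overline{\mathrm{Hom}_{\mathcal{R}}(TH_1(M;\mathcal{R}),\mathcal{K}\Lambda/\mathcal{R})}$. Sesquilinearity in $x$ follows from the $\mathcal{R}$-linearity of each of $PD$, $B$, $\kappa$, together with the standard appearance of the involution in $PD$.

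The main obstacle, and the reason this requires Leidy's setup rather than the classical one, is that $\mathcal{R}$ is in general neither commutative nor a principal ideal domain, so one cannot invoke the usual universal coefficient isomorphisms or the finite generation of $H_*(M;\mathcal{R})$ that underlie the knot-theoretic Blanchfield form. The delicate point is therefore verifying that $\kappa$ is defined on all of $H^1(M;\mathcal{K}\Lambda/\mathcal{R})$ (not merely on a subquotient) and that the $\mathrm{Ext}$-term obstructions from the universal coefficient spectral sequence do not contribute after passing to $\mathrm{Hom}$ on the torsion part; here one uses that $\mathcal{K}\Lambda/\mathcal{R}$ is an injective $\mathcal{R}$-module (or at least that $\mathrm{Ext}^1_{\mathcal{R}}(H_0(M;\mathcal{R}),\mathcal{K}\Lambda/\mathcal{R})$ evaluates trivially against torsion), which follows from the standard properties of Ore localizations within $\mathcal{K}\Lambda$.
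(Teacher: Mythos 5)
The paper does not prove this theorem; it cites it directly from Leidy~\cite{Lei3} (and the knot case from~\cite[Theorem 2.13]{COT}). The closest the paper comes to a proof is Lemma~\ref{lem:fourmanBlanch}, which constructs the \emph{relative} version $\mathcal{B}l^{\mathrm{rel}}$ by precisely the scheme you describe: a large commutative diagram threading together Poincar\'e--Lefschetz duality $P.D.$, the long exact coefficient sequence for $0\to\mathcal{R}\to\mathcal{K}\to\mathcal{K}/\mathcal{R}\to 0$, the Kronecker map $\kappa$, and the observation that $\overline{\mathrm{Hom}_{\mathcal{R}}(TH_1,\mathcal{K})}=0$ to ensure the map factors through the torsion submodule. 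Your proposal is therefore the same approach as the paper's analogous construction and as the cited source; the key verifications you highlight (flatness of Ore localization killing torsion in $H_*(M;\mathcal{K}\Lambda)$, so $PD(x)$ lies in the image of the Bockstein, and the ambiguity by $\mathrm{Im}\,H^1(M;\mathcal{K}\Lambda)$ vanishing on torsion) are exactly what makes the paper's diagram commute.

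One small caution: $\kappa$ (the edge map from the universal coefficient spectral sequence, equivalently evaluation of cochains on cycles) is defined on all of $H^1(M;\mathcal{K}\Lambda/\mathcal{R})$ without any injectivity hypothesis on $\mathcal{K}\Lambda/\mathcal{R}$; the only thing one uses is $\overline{\mathrm{Hom}_{\mathcal{R}}(TH_1(M;\mathcal{R}),\mathcal{K}\Lambda)}=0$, which follows from torsion-freeness of $\mathcal{K}\Lambda$. So you need not worry about injectivity of $\mathcal{K}\Lambda/\mathcal{R}$ (which is in general unclear for noncommutative Ore domains), and the hedged last sentence of your proposal can be dropped in favor of this more elementary vanishing.
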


An \emph{Ore localization} of $\mathbb{Z}\Lambda$ is $\mathcal{R}=\mathbb{Z}\Lambda[S^{-1}]$ for some right-Ore set $S$ ~\cite{Ste}. When we speak of the \emph{unlocalized} Blanchfield form we mean that $\mathcal{R}=\mathbb{Z}\Lambda$ or $\mathcal{R}=\mathbb{Q}\Lambda$. $TH_1(M;\mathcal{R})$ denotes the $\mathcal{R}$-torsion submodule. In general $TH_1(M;\mathcal{R})$ need not have homological dimension one nor even be finitely-generated, and these linking forms are \emph{singular}.

Leidy analyzed the effect of an infection on the unlocalized Blanchfield forms in ~\cite{Lei1}\cite{Lei3}. This generalizes the result on the classical Blanchfield form for satellite knots ~\cite{LiM}. If $L$ is obtained by infection on a link $R$ along a circle $\alpha$ using the knot $K$ and $\phi:\pi_1(M_L)\to \Lambda$ is a PTFA coefficient system, and $\mathbb{Z}\Lambda\subset\mathcal{R}\subset \mathcal{K}\Lambda$ then $\mathcal{B}l^{L}_{\mathcal{R}}$ is defined. On the other hand, by definition, exterior of the knot $K$ is a submanifold of $M_L$ and there is an induced coefficient system, that we also call $\phi$, with respect to which there is a Blanchfield linking form (first defined in ~\cite[Theorem 2.13]{COT})
$$
\mathcal{B}l_{\mathcal{R}}^K: TH_1(S^3-K;\mathcal{R})\to (TH_1(S^3-K;\mathcal{R}))^{\#}.
$$
(We note that if $\phi$ is nontrivial when restricted to $\pi_1(S^3-K)$ then $TH_1(S^3-K;\mathcal{R})=H_1(S^3-K;\mathcal{R})$. Otherwise $TH_1(S^3-K;\mathcal{R})=0$ ~\cite[Proposition 2.11]{COT}). Then it is an easy exercise for the reader using the geometric definition of these Blanchfield forms (or see ~\cite[Theorem 4.6, proof of property 1]{Lei1}), that these forms are compatible:
\begin{prop}\label{prop:submanifoldcompatible}~\cite[Theorem 3.7]{Lei3} In the situation above the following diagram commutes
\begin{equation}\label{diag:compatible}
\begin{diagram}\dgARROWLENGTH=1em
\node{TH_1(S^3-K;\mathcal{R})} \arrow{e,t}{i_*}
\arrow{s,r}{\mathcal{B}l^K_\mathcal{R}}\node{TH_1(M_L;\mathcal{R})} \arrow{s,r}{\mathcal{B}l^{M_L}_\mathcal{R}}\\
\node{TH_1(S^3-K;\mathcal{R})^{\#}} \node{TH_1(M_L;\mathcal{R})^{\#}}\arrow[1]{w,t}{i^{\#}}
\end{diagram}
\end{equation}
that is, for all $x,y\in H_1(S^3-K;\mathcal{R})$ 
$$
\mathcal{B}l^{M_L}_{\mathcal{R}}(i_*(x),i_*(y))= \mathcal{B}l^K_\mathcal{R}(x,y).
$$
\end{prop}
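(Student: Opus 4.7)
The plan is to verify the commutative diagram directly from the geometric (intersection-theoretic) definition of the higher-order Blanchfield forms developed by Leidy in \cite{Lei1}\cite{Lei3}, exploiting the fact that under infection the knot exterior $S^3-K$ sits as a codimension-zero submanifold of $M_L$. First I would set up the covering data: the map $\phi:\pi_1(M_L)\to\Lambda$ restricts along $i_*:\pi_1(S^3-K)\to\pi_1(M_L)$ to the coefficient system used to form $\mathcal{B}\ell^K_\mathcal{R}$, so a lift of the inclusion induces an equivariant embedding of the $\Lambda$-cover of $S^3-K$ (i.e.\ the preimage of $S^3-K$ in $\widetilde{M_L}$) into $\widetilde{M_L}$; all chain-level constructions used below respect this lift.

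Next I would recall that for either $N=S^3-K$ or $N=M_L$, $\mathcal{B}\ell^N_\mathcal{R}$ is computed as follows. Given $[x]\in TH_1(N;\mathcal{R})$, choose a representative $1$-cycle $\widetilde{x}$ in $\widetilde{N}$, pick $r\in\mathcal{R}$ annihilating $[x]$, and find a $2$-chain $\widetilde{z}$ in $\widetilde{N}$ with $\partial\widetilde{z}=r\widetilde{x}$; then for a representative cycle $\widetilde{y}$ of $[y]$,
\[
\mathcal{B}\ell^N_\mathcal{R}(x,y) \;=\; \frac{1}{r}\sum_{g\in\Lambda}\bigl(\widetilde{z}\cdot g\widetilde{y}\bigr)\,g^{-1} \pmod{\mathcal{R}},
\]
where $\cdot$ is the equivariant intersection number of transverse chains in $\widetilde{N}$. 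Starting with $[x],[y]\in TH_1(S^3-K;\mathcal{R})$ and choices $\widetilde{x},\widetilde{y},\widetilde{z},r$ in the cover of $S^3-K$, the equivariant lift of $i$ carries them to chains $i_*\widetilde{x},i_*\widetilde{y},i_*\widetilde{z}$ in $\widetilde{M_L}$ with $\partial(i_*\widetilde{z})=r\,(i_*\widetilde{x})$, so the same $r$ and the same chain $i_*\widetilde{z}$ compute $\mathcal{B}\ell^{M_L}_\mathcal{R}(i_*x,i_*y)$.

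Finally I would compare the two intersection numbers. Because $\widetilde{z}$ and each translate $g\widetilde{y}$ lie entirely inside the open subset of $\widetilde{M_L}$ sitting over $S^3-K$, all transverse intersection points occur there and the local orientation contributions agree with those computed intrinsically in the cover of $S^3-K$. Therefore $(i_*\widetilde{z})\cdot g(i_*\widetilde{y})=\widetilde{z}\cdot g\widetilde{y}$ for every $g$, and the two sums agree as elements of $\mathcal{K}\Lambda/\mathcal{R}$, giving the claimed identity $\mathcal{B}\ell^{M_L}_\mathcal{R}(i_*x,i_*y)=\mathcal{B}\ell^K_\mathcal{R}(x,y)$. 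The only technical point — and the main place where care is needed — is arranging transversality of representative chains entirely inside $S^3-K$ rather than only in $M_L$; since $S^3-K$ is itself a $3$-manifold (with torus boundary), this is achieved by a standard small homotopy supported in its interior. Alternatively one may simply invoke the corresponding verifications in \cite[Theorem 4.6, property~1]{Lei1} and \cite[Theorem 3.7]{Lei3}, where this compatibility is worked out in detail.
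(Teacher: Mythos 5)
Your proof is correct and takes exactly the geometric, intersection-theoretic approach that the paper itself suggests (``it is an easy exercise for the reader using the geometric definition of these Blanchfield forms'') and for which it defers to Leidy's detailed verification; the paper gives no explicit argument of its own. You simply fill in the exercise: identifying the preimage of $S^3-K$ in $\widetilde{M_L}$ with the induced $\Lambda$-cover of $S^3-K$, pushing the $2$-chain $\widetilde{z}$ forward so the same annihilator $r$ works upstairs, and noting that all transverse intersections occur in the interior of that preimage — and the transversality caveat you flag at the end is precisely the right technical point.
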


Moreover, in some important situations, the induced coefficient system $\phi:\pi_1(S^3-K)\to \Lambda$ factors through, $\mathbb{Z}$, the abelianization of the knot exterior. In particular if $L$ is obtained by infection on a link $R$ along a circle $\alpha\in \pi_1(M_R)^{(k-1)}$ where $\Lambda^{(k)}=1$, then this is the case. Furthermore the higher-order Blanchfield form $\mathcal{B}l^K_\Lambda$ is merely the classical Blanchfield form on the classical Alexander module, ``tensored up''. What is meant by this is the following. Supposing that $\phi$ is both nontrivial and factors through the abelianization, the induced map $\text{image}(\phi)\equiv\mathbb{Z}\hookrightarrow \Lambda$ is an embedding so it induces embeddings
$$
\phi:\mathbb{Q}[t,t^{-1}]\hookrightarrow \mathbb{Q}\Lambda,~~~ \phi:\mathbb{Q}(t)\hookrightarrow \mathcal{K}\Lambda,
$$
and hence an embedding
$$
\ov\phi:\mathbb{Q}(t)/\mathbb{Q}[t,t^{-1}]\hookrightarrow \mathcal{K}\Lambda/\mathbb{Q}\Lambda.
$$
Then there is an isomorphism 
$$
H_1(S^3\backslash K;\mathbb{Q}\Lambda)\cong H_1(S^3\backslash K;\mathbb{Q}[t,t^{-1}])\otimes_{\mathbb{Q}[t,t^{-1}]}\mathbb{Q}\Lambda \cong \mathcal{A}_0(K)\otimes_{\mathbb{Q}[t,t^{-1}]}\mathbb{Q}\Lambda,
$$
where $\mathcal{A}_0(K)$ is the classical (rational) Alexander module of $K$ and where $\mathbb{Q}\Lambda$ is a $\mathbb{Q}[t,t^{-1}]$-module via the map $t\to \phi(\alpha)$ ~\cite[Theorem 8.2]{C}. Moreover
$$
\mathcal{B}l_{\Lambda}^K(x\otimes 1,y\otimes 1)=\ov\phi(\mathcal{B}l^K_0(x,y))
$$
for any $x,y\in \mathcal{A}_0(K)$, where $\mathcal{B}l_0^K$ is the classical Blanchfield form on the rational Alexander module of $K$ ~\cite[Proposition 3.6]{Lei3}~\cite[Theorem 4.7]{Lei1} (see also ~\cite[Section 5.2.2]{Cha2}). 

Then, finally, Leidy shows that the Blanchfield form on $M_L$ the sum of that on $H_1(M_R)$ and that on the infecting knot $K$ (generalizing the classical result for satellites ~\cite{LiM}). We state this below although, in this paper, we shall not need this nontrivial fact that the module $H_1(M_L;\mathbb{Q}\Lambda)$ decomposes, nor even that $\mathcal{A}_0(K)\otimes_{\mathbb{Q}[t,t^{-1}]}\mathbb{Q}\Lambda$ is a submodule of it. We will only need the almost obvious fact that the inclusion of the $3$-manifolds $S^3-K_i\hookrightarrow M_L$ induces a (natural) map on the Blanchfield forms and that the induced Blanchfield form on $S^3-K$ is the classical form ``tensored up''.

\begin{thm}\label{thm:decomposition}[Theorem 3.7,Proposition 3.4 ~\cite{Lei3}] Suppose $L=R(\alpha_i,K_i)$ is obtained by infection as above with $\alpha_i\in \pi_1(M_R)^{(k-1)}$ for all $i$. Let the zero surgeries on $R$, $L$, and $K_i$ be denoted $M_R$ $M_L$, $M_i$ respectively. Suppose $\Lambda$ is a PTFA group such that $\Lambda^{(k)}=1$. Suppose $\phi:\pi_1(M_L)\to \Lambda$ is a coefficient system. Then the inclusions induce an isomorphism
$$
H_1(M_R;S^{-1}\mathbb{Z}\Lambda)\oplus_{i\in A} H_1(S^3\backslash K_i;S^{-1}\mathbb{Z}\Lambda)\overset{i_*}{\to} H_1(M_L;S^{-1}\mathbb{Z}\Lambda).
$$
where $A=\{i~|~\phi((\alpha_i)^+\neq 1\}$. Moreover there is an isomorphism 
$$
H_1(S^3\backslash K_i;\mathbb{Q}[t,t^{-1}])\otimes_{\mathbb{Q}[t,t^{-1}]}S^{-1}\mathbb{Z}\Lambda \cong H_1(S^3\backslash K_i;S^{-1}\mathbb{Z}\Lambda).
$$
Restricting to $S^{-1}\mathbb{Z}\Lambda=\mathbb{Q}\Lambda$ for simplicity, for any $x,y\in H_1(S^3\backslash K_i;\mathbb{Q}[t,t^{-1}])$,
$$
\mathcal{B}l_{\mathbb{Q}\Lambda}^{M_L}(i_*(x\otimes 1),i_*(y\otimes 1))=\ov\phi_i(\mathcal{B}l_0^i(x,y))
$$
where $\mathcal{B}l_{\Lambda}^{M_L}$ is the Blanchfield form on $M_L$ induced by $\phi$, $\mathcal{B}l_0^i$ is the classical Blanchfield form on the classical rational Alexander module of $K_i$, and
$$
\ov\phi_i: \mathbb{Q}(t)/\mathbb{Q}[t,t^{-1}]\to \mathcal{K}\Lambda/\mathbb{Q}\Lambda
$$
is the monomorphism induced by $\phi:\mathbb{Z}\to \Lambda$ sending $1$ to $\phi(\alpha_i)$.
\end{thm}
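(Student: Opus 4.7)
The plan is to set up a Mayer-Vietoris sequence with $\mathcal{R}$-twisted coefficients from the natural splitting of the infected manifold, analyze each piece, and then couple the result with Proposition~\ref{prop:submanifoldcompatible} to obtain the Blanchfield form formula. First I would write $M_L = X_R \cup_{\sqcup T_i} \bigsqcup_i Y_i$, where $X_R = M_R \setminus \bigsqcup \mathrm{int}\, N(\eta_i)$, $Y_i = E(K_i)$, and each gluing torus $T_i$ identifies $\eta_i^+$ with $\mu_{K_i}$ and $\mu_{\eta_i}$ with $\lambda_{K_i}^{-1}$. Setting $t_i = \phi(\eta_i^+)$, note that $\phi(\mu_{\eta_i}) = 1$ because $\mu_{\eta_i}$ is identified with $\lambda_{K_i}^{-1} \in \pi_1(S^3 \setminus K_i)^{(1)}$, and the discussion just above Lemma~\ref{lem:additivity} shows $\pi_1(S^3 \setminus K_i)^{(1)} \subset \ker\phi$ under the hypotheses $\eta_i \in \pi_1(M_R)^{(k-1)}$ and $\Lambda^{(k)} = 1$.

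Second, I would compute each piece. A direct cellular computation for the torus yields $H_0(T_i;\mathcal{R}) = H_1(T_i;\mathcal{R}) = \mathcal{R}/(t_i-1)\mathcal{R}$ and $H_2(T_i;\mathcal{R}) = 0$; for $i \in A$ the Ore set $S$ is chosen so that $t_i - 1$ is inverted, killing both groups, while for $i \notin A$ the coefficient system on $T_i$, on $Y_i$, and on the corresponding solid torus is trivial. Feeding this into the Mayer-Vietoris long exact sequence makes
\[
H_1(X_R;\mathcal{R}) \oplus \bigoplus_{i\in A} H_1(Y_i;\mathcal{R}) \longrightarrow H_1(M_L;\mathcal{R})
\]
an isomorphism; a second Mayer-Vietoris comparison for the pair $(X_R, \bigsqcup \eta_i \times D^2 \subset M_R)$ then identifies $H_1(X_R;\mathcal{R}) \cong H_1(M_R;\mathcal{R})$, yielding the first asserted splitting. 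For the change-of-rings isomorphism, the coefficient system on $Y_i$ factors through the abelianization $\mathbb{Z} \to \Lambda$ sending the generator to $t_i$; then flatness of $\mathcal{R}$ over $\mathbb{Q}[t,t^{-1}]$ via this embedding gives $H_1(Y_i;\mathbb{Q}[t,t^{-1}]) \otimes_{\mathbb{Q}[t,t^{-1}]} \mathcal{R} \cong H_1(Y_i;\mathcal{R})$ through a standard free-resolution argument.

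Finally, for the Blanchfield form formula, Proposition~\ref{prop:submanifoldcompatible} applied to the inclusion $S^3 \setminus K_i \hookrightarrow M_L$ immediately gives
\[
\mathcal{B}l^{M_L}_{\mathbb{Q}\Lambda}\bigl(i_*(x\otimes 1), i_*(y\otimes 1)\bigr) = \mathcal{B}l^{K_i}_{\mathbb{Q}\Lambda}(x\otimes 1, y\otimes 1),
\]
and the discussion immediately preceding the theorem statement identifies the right-hand side with $\bar\phi_i(\mathcal{B}l_0^i(x,y))$, using that $\mathcal{B}l^{K_i}_{\mathbb{Q}\Lambda}$ on the tensored-up module is the classical Blanchfield form composed with the embedding $\bar\phi_i : \mathbb{Q}(t)/\mathbb{Q}[t,t^{-1}] \hookrightarrow \mathcal{K}\Lambda/\mathbb{Q}\Lambda$. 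The main obstacle I anticipate is the bookkeeping in the Mayer-Vietoris sequence: ensuring the indexing set is exactly $A$ rather than all of $\{1,\ldots,m\}$ requires $\mathcal{R}$ to invert $t_i - 1$ precisely for $i \in A$, and disentangling the $i \notin A$ contributions from $H_1(M_R;\mathcal{R})$ relies on the comparison Mayer-Vietoris for the filled-in solid tori; once these combinatorial issues are handled correctly, the homological algebra is routine.
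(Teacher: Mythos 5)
The statement you are proving is not proved in this paper: it is quoted as Theorem~3.7 and Proposition~3.4 of Leidy's paper [Lei3] and used as an external input. There is therefore no ``paper's own proof'' to compare against; the text immediately before the theorem even emphasizes that only a small piece of it (the compatibility of Proposition~\ref{prop:submanifoldcompatible} together with the ``tensored up'' identification of $\mathcal{B}l^{K_i}_{\Lambda}$) is actually used in the sequel.

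As an attempted reconstruction of Leidy's proof, your Mayer--Vietoris outline is pointed in the right direction, and the final step reducing the Blanchfield formula to Proposition~\ref{prop:submanifoldcompatible} plus the ``tensored up'' discussion is correct. However, there is a genuine gap in the homological part. You claim that for $i\in A$ ``the Ore set $S$ is chosen so that $t_i-1$ is inverted, killing both groups'' $H_0(T_i;\mathcal{R})$ and $H_1(T_i;\mathcal{R})$. This is not a hypothesis of the theorem: the statement is asserted for an arbitrary Ore localization $\mathcal{R}=S^{-1}\mathbb{Z}\Lambda$, and the case the paper actually uses is $\mathcal{R}=\mathbb{Q}\Lambda$, where $t_i-1=\phi(\eta_i^+)-1$ is a nonzero nonunit. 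So for $i\in A$ you really have $H_1(T_i;\mathcal{R})\cong\mathcal{R}/(t_i-1)\mathcal{R}\neq 0$, generated by the class of $\mu_{\eta_i}=\lambda_{K_i}^{-1}$, and this term does not disappear from the Mayer--Vietoris sequence. The correct argument must instead track the image of this generator: in $H_1(Y_i;\mathcal{R})\cong\mathcal{A}_0(K_i)\otimes_{\mathbb{Q}[t,t^{-1}]}\mathcal{R}$ the longitude $\lambda_{K_i}$ maps to zero, because it already vanishes in the classical rational Alexander module (it bounds a lift of a Seifert surface); thus the image of $H_1(T_i;\mathcal{R})$ lands entirely in the $H_1(X_R;\mathcal{R})$ summand, and the quotient of $H_1(X_R;\mathcal{R})$ by those classes is then identified with $H_1(M_R;\mathcal{R})$ via your second Mayer--Vietoris comparison. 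Without this observation, the injectivity of the map $H_1(M_R;\mathcal{R})\oplus_{i\in A}H_1(Y_i;\mathcal{R})\to H_1(M_L;\mathcal{R})$ is not established, and your outline as written does not prove the decomposition for the coefficient ring $\mathbb{Q}\Lambda$ that is actually needed.
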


\textbf{Remarks:} Under our hypotheses the coefficient system $\phi$ extends over the cobordism $E$, as in the discussion preceding Lemma~\ref{lem:additivity}, and there is a unique induced coefficient system $\phi_R$ on $M_R$. By Property $(4)$ of Lemma~\ref{lem:mickeyfacts}, $\alpha_i$ and its longitudinal push-off $\alpha_i^+$ are isotopic in $E$ so $\phi((\alpha_i)^+)=\phi_R(\alpha_i)$. Thus $\phi((\alpha_i)^+)\neq 1$ if and only if $\phi_R(\alpha_i\neq 1)$. Moreover, since the meridian of $K_i$ is equated to $(\alpha_i)^+$, $\phi_i(\mu_i)=\phi((\alpha_i)^+)=\phi_R(\alpha_i)$.

 The following is perhaps the key result of the paper, that we use to establish certain ``injectivity'' as discussed in the first paragraph of this section. Recall that the notions of \emph{(n)-solvable} and \emph{rationally (n)-solvable} are defined in Section~\ref{sec:appendix}. For the reader who is just concerned with proving that knots and links are not slice, replace the hypothesis below that ``$W$ is a rational $(k)$-solution for $M_L$'' with the hypothesis that ``$L$ is a slice link and $W$ is the exterior in $B^4$ of a set of slice disks for $L$''. Such an exterior is a rational $(k)$-solution for any $k$.

\begin{thm}\label{thm:nontriviality} Suppose $L=R(\alpha_i,K_i)$ is obtained by infection. Let the zero surgeries on $R$, $L$, and $K_i$ be denoted $M_R$ $M_L$, $M_i$ respectively. Suppose $\alpha_i\in \pi_1(M_R)^{(k-1)}$ for all $i$. Suppose $W$ is a rational $(k)$-solution for $M_L$, $\Lambda$ is a PTFA group such that $\Lambda^{(k)}=1$, and $\psi:\pi_1(W)\to \Lambda$ is a nontrivial coefficient system whose restriction to $\pi_1(M_L)$ is denoted $\phi$. Let $A=\{i~|~\phi((\alpha_i)^+)\neq 1\}$. For each $i\in A$ , let $P_i$ be the kernel of the composition
$$
\mathcal{A}_0(K_i)\overset{id\otimes 1}\lra  ~(\mathcal{A}_0(K_i) \otimes_{\mathbb{Q}[t,t^{-1}]}\mathbb{Q}\Lambda)\overset{i_*}{\to} H_1(M_L;\mathbb{Q}\Lambda)\overset{j_*}\to H_1(W;\mathbb{Q}\Lambda).
$$
Then $P_i\subset P_i^\perp$ with respect to $\mathcal{B}l_0^i$, the classical Blanchfield linking form on the rational Alexander module, $\mathcal{A}_0(K_i)$, of $K_i$.
\end{thm}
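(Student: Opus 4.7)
The plan is to transfer a self-annihilation statement for the higher-order Blanchfield form on $H_1(M_L;\mathbb{Q}\Lambda)$ back to the classical Blanchfield form $\mathcal{B}\ell_0^i$ on $\mathcal{A}_0(K_i)$, using the structural results already recalled in the excerpt.

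First I would invoke Theorem~\ref{thm:decomposition} to obtain the identity
$$
\mathcal{B}\ell_{\mathbb{Q}\Lambda}^{M_L}\bigl(i_*(x\otimes 1),\,i_*(y\otimes 1)\bigr)=\overline{\phi_i}\bigl(\mathcal{B}\ell_0^i(x,y)\bigr)
$$
for all $x,y\in\mathcal{A}_0(K_i)$. The assumption $i\in A$ forces $\phi_i$ to send the generator $t$ to $\phi((\alpha_i)^+)=\phi_R(\alpha_i)\neq 1$, and the induced map $\overline{\phi_i}:\mathbb{Q}(t)/\mathbb{Q}[t,t^{-1}]\hookrightarrow\mathcal{K}\Lambda/\mathbb{Q}\Lambda$ is then a monomorphism. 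By this injectivity, showing $\mathcal{B}\ell_0^i(x,y)=0$ for $x,y\in P_i$ reduces to showing $\mathcal{B}\ell_{\mathbb{Q}\Lambda}^{M_L}(i_*(x\otimes 1),i_*(y\otimes 1))=0$.

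Next I would prove the key metabolizer property: the kernel
$$
\mathcal{N}:=\ker\bigl(j_*:H_1(M_L;\mathbb{Q}\Lambda)\to H_1(W;\mathbb{Q}\Lambda)\bigr)
$$
is self-annihilating under $\mathcal{B}\ell_{\mathbb{Q}\Lambda}^{M_L}$. By the definition of $P_i$, each element $i_*(x\otimes 1)$ with $x\in P_i$ lies in $\mathcal{N}$, so combined with the previous step this would finish the proof. The self-annihilation is established by a now-standard duality argument: from the long exact sequence of $(W,M_L)$, elements of $\mathcal{N}$ lift to classes in $H_2(W,M_L;\mathbb{Q}\Lambda)$; Poincar\'e--Lefschetz duality identifies $H_2(W,M_L;\mathbb{Q}\Lambda)$ with $H^2(W;\mathbb{Q}\Lambda)$; and the universal coefficient theorem identifies the Blanchfield pairing of two boundary classes with the equivariant intersection pairing of their Poincar\'e duals on $H_2(W;\mathbb{Q}\Lambda)$. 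The hypothesis that $W$ is a rational $(k)$-solution for $M_L$, together with $\Lambda^{(k)}=1$, produces a half-rank Lagrangian summand in $H_2(W;\mathbb{Q}\Lambda)$: the surfaces prescribed by the $(k)$-solution have $\pi_1$-classes in $\pi_1(W)^{(k)}\subset\ker\psi$, so their equivariant self-intersections vanish, forcing $\mathcal{B}\ell_{\mathbb{Q}\Lambda}^{M_L}$ to vanish on $\mathcal{N}\times\mathcal{N}$.

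The technical heart of the argument, and the main obstacle, is this duality/Lagrangian step. One must handle a twisted coefficient ring $\mathbb{Q}\Lambda$ which is not a PID, so Poincar\'e--Lefschetz duality and the universal coefficient theorem involve $\mathrm{Ext}$ and torsion terms over $\mathbb{Q}\Lambda$ that must be controlled; one must also verify that the merely \emph{rational} (as opposed to integral) $(k)$-solvability is enough, which is essential since the target applications only provide rational $(k)$-solutions. Most of the requisite ingredients are present in the prior work of Cochran--Orr--Teichner and Leidy, so I expect this to be a careful packaging rather than a fundamentally new argument, with the novelty concentrated in the observation that the \emph{unlocalized} (rather than localized) coefficients $\mathbb{Q}\Lambda$ suffice, keeping the form $\overline{\phi_i}$ injective.
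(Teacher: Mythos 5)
Your outline correctly begins with Theorem~\ref{thm:decomposition} and the injectivity of $\overline{\phi_i}$, and you correctly identify that the remaining task is to show $\ker j_*$ is self-annihilating with respect to $\mathcal{B}\ell^{M_L}_{\mathbb{Q}\Lambda}$. However, your mechanism for that last step does not work as stated, and it is not what the paper does.

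You assert that after lifting $x,y\in\ker j_*$ to $\tilde x,\tilde y\in H_2(W,M_L;\mathbb{Q}\Lambda)$, ``the universal coefficient theorem identifies the Blanchfield pairing of two boundary classes with the equivariant intersection pairing of their Poincar\'e duals on $H_2(W;\mathbb{Q}\Lambda)$,'' and that the rational $(k)$-Lagrangian then forces the pairing to vanish on $\ker j_*$. This is a type mismatch and a misattribution: the Blanchfield pairing takes values in $\mathcal{K}\Lambda/\mathbb{Q}\Lambda$ while the equivariant intersection pairing takes values in $\mathbb{Q}\Lambda$ (or its fraction field), so they cannot literally be ``identified''; and even if one sets that aside, a half-rank Lagrangian in $H_2(W;\mathbb{Q}\Lambda)$ is a single isotropic subspace and does not by itself say anything about the restriction of a pairing to the (a priori unrelated) submodule $\ker j_*$ of $H_1(M_L;\mathbb{Q}\Lambda)$. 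The actual argument in the paper (Lemma~\ref{lem:selfannihil}) separates these two roles cleanly. First, the Lagrangian-with-duals hypothesis is used to run a rank count: it forces $\operatorname{rk}_{\mathcal{K}\Lambda}H_2(W;\mathbb{Q}\Lambda)=2m$, hence $\lambda$ is surjective, hence every $p\in\ker j_*$ lifts to a \emph{torsion} class $\tilde x\in TH_2(W,M;\mathbb{Q}\Lambda)$ (this is the only place the $(k)$-solvability enters). Second, a genuinely new ingredient is needed: a \emph{relative} Blanchfield form $\mathcal{B}\ell^{rel}:TH_2(W,\partial W;\mathbb{Q}\Lambda)\to TH_1(W;\mathbb{Q}\Lambda)^{\#}$ (constructed in Lemma~\ref{lem:fourmanBlanch}, not the intersection form) which fits into a commuting square with $\partial_*$ and $j^{\#}$. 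Then
$$
\mathcal{B}\ell^{M}(x)(y)=\mathcal{B}\ell^{M}(\partial_*\tilde x)(y)=j^{\#}(\mathcal{B}\ell^{rel}(\tilde x))(y)=\mathcal{B}\ell^{rel}(\tilde x)(j_*(y))=0
$$
simply because $j_*(y)=0$ — the Lagrangian plays no role here at all. So while your high-level skeleton (reduce via Theorem~\ref{thm:decomposition}, then prove a metabolizer property by lifting to $H_2(W,M_L)$) matches the paper, the crucial ``heart'' of the argument — the relative Blanchfield form and its compatibility square, together with the careful rank bookkeeping needed to land in torsion — is missing and replaced by reasoning that does not go through.
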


\textbf{Remark:} Under the hypotheses of Theorem~\ref{thm:nontriviality}, the coefficient system extends over the cobordism $E$ of Figure~\ref{fig:mickey} and hence extends to $\pi_1(M_R)$. If this extension is (sloppily) also called $\phi$ then $\phi(\alpha_i)=\phi((\alpha_i)^+)$ since $\alpha_i$ and its longitude $(\alpha_i)^+$ are isotopic in $M_R$ and hence freely homotopic in $E$.

\begin{proof}[Proof of Theorem~\ref{thm:nontriviality}] We need the following result that was proved in ~\cite[Lemma 4.5, Theorem 4.4]{COT} in the special case that $\beta_1(M)=1$. The proof in this more general case is identical, except for Lemma~\ref{lem:fourmanBlanch}.

\begin{lem}\label{lem:selfannihil} Suppose $M$ is connected and is rationally
$(k)$-solvable via $W$ and
$\phi:\pi_1(W)\ra\Lambda$ is a non-trivial coefficient system where
$\Lambda$ is a PTFA group with $\Lambda^{(k)}=1$. Let $\mathcal{R}$ be an Ore localization of $\mathbb{Z}\Lambda$ so $\mathbb{Z}\Lambda\subset\mathcal{R}\subset \mathcal{K}\Lambda$. Then 
$$
TH_2(W,M;\mathcal{R})\xrightarrow{\partial}TH_1(M;\mathcal{R})\xrightarrow{j_{\ast}} TH_1(W;\mathcal{R})
$$
is exact. Moreover, any submodule $P\subset \text{kernel}~j_*$ satisfies
$P\subset ($ker~$j_*)^\perp\subset P^\perp$ with respect to the Blanchfield form on $TH_1(M;\mathcal{R})$.
\end{lem}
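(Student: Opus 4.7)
The plan is to follow the strategy of [COT, Lemma~4.5 and Theorem~4.4] essentially verbatim, with the one substantive substitution being that the $4$-manifold duality between the Blanchfield form on $M$ and the equivariant intersection form on $W$ (standard in the $\beta_1(M)=1$ case via non-singularity) must now be invoked through Lemma~\ref{lem:fourmanBlanch}.

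First I would establish the exactness of the torsion sequence. The long exact sequence of the pair $(W,M)$ with $\mathcal{R}$-coefficients reads
$$
H_2(W;\mathcal{R}) \overset{k_*}{\lra} H_2(W,M;\mathcal{R}) \overset{\p}{\lra} H_1(M;\mathcal{R}) \overset{j_*}{\lra} H_1(W;\mathcal{R}).
$$
Since $\Lambda$ is PTFA, $\mathcal{R}$ is an Ore domain with skew field of fractions $\mathcal{K}\Lambda$, so $\mathcal{R}$-torsion is well-behaved. The image of $\p$ restricted to $TH_2(W,M;\mathcal{R})$ automatically lies in $TH_1(M;\mathcal{R})$. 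Conversely, given a torsion $y\in \ker j_*$, exactness produces $\tilde y\in H_2(W,M;\mathcal{R})$ with $\p\tilde y=y$, and one must adjust $\tilde y$ by some $k_*(z)$ to land in $TH_2(W,M;\mathcal{R})$. This requires a rank control on $k_*$: rational $(k)$-solvability supplies a $\mathbb{Q}$-half-rank submodule of $H_2(W;\mathbb{Q})$ that is isotropic for the ordinary intersection form and whose tensor-up into $\mathcal{K}\Lambda$-coefficients is half-rank isotropic for the equivariant intersection form. A standard Euler-characteristic computation plus this isotropy then forces any class in $\ker j_*\cap TH_1(M;\mathcal{R})$ to arise as $\p$ of a torsion class, giving the claimed exactness.

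Second, for the self-annihilation statement, I would apply Lemma~\ref{lem:fourmanBlanch}. Given $x,y\in \ker j_*\sbq TH_1(M;\mathcal{R})$, write $x=\p\tilde x$, $y=\p\tilde y$ with $\tilde x,\tilde y\in TH_2(W,M;\mathcal{R})$. The $4$-manifold description of the Blanchfield form asserts
$$
\mathcal{B}l^M_{\mathcal{R}}(x,y)\ =\ \langle \tilde x,\tilde y\rangle_W \in \mathcal{K}\Lambda/\mathcal{R},
$$
computed by intersecting representative relative $2$-cycles in $W$, one of which is lifted to a $\mathcal{K}\Lambda$-chain whose boundary represents $y$. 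The half-rank isotropic subspace of $H_2(W;\mathcal{K}\Lambda)$ supplied by rational $(k)$-solvability absorbs the ambiguity in the lift, and the pairing lands in $\mathcal{R}/\mathcal{R}=0$. This shows $\ker j_*\sbq (\ker j_*)^\perp$, and then $P\sbq \ker j_*$ gives $(\ker j_*)^\perp \sbq P^\perp$ formally, so $P\sbq P^\perp$.

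The main obstacle is the proof of Lemma~\ref{lem:fourmanBlanch} itself: in the $\beta_1(M)>1$ setting the Blanchfield form is singular and $TH_1(M;\mathcal{R})$ need not even be finitely generated, so the geometric identification of the Blanchfield pairing with a $4$-dimensional equivariant intersection modulo $\mathcal{R}$ is no longer an immediate consequence of Poincar\'e duality and must be verified directly from the chain-level geometric definition. Once that identification is available, both the exactness on torsion and the isotropy of $\ker j_*$ proceed word-for-word as in the $\beta_1(M)=1$ argument of [COT].
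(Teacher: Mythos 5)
Your overall strategy matches the paper's: you correctly identify that the argument is [COT, Lemma 4.5, Theorem 4.4] run verbatim, with Lemma~\ref{lem:fourmanBlanch} supplying the $4$-manifold duality that, in the $\beta_1(M)=1$ case, comes for free from non-singularity of the Blanchfield form. The first part (exactness on torsion) is glossed but consistent with the paper's block-matrix/rank-count argument using the $(k)$-Lagrangian and its duals.

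One point worth flagging in your second step: you invoke the half-rank isotropic subspace of $H_2(W;\mathcal{K}\Lambda)$ to ``absorb the ambiguity in the lift'' when showing $\mathcal{B}l^M(x,y)=0$. In the paper's actual argument the Lagrangian plays no role at all in this step --- it is used only in the first part, to ensure a torsion preimage $\tilde{x}\in TH_2(W,M;\mathcal{R})$ exists. Once exactness is established, the vanishing is purely formal: the commutative square of Lemma~\ref{lem:fourmanBlanch} gives $\mathcal{B}l^M(\partial_*\tilde{x})(y)=\mathcal{B}l^{rel}(\tilde{x})(j_*(y))$, which is zero simply because $j_*(y)=0$, with no need to lift $y$ or appeal to isotropy. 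Your geometric picture (both $\tilde{x}$ and $\tilde{y}$ lifted to relative classes, so that the pairing is $\mathcal{R}$-valued and hence trivial in $\mathcal{K}\Lambda/\mathcal{R}$) captures the same phenomenon but misattributes the role of the Lagrangian; it also quietly relies on the first part of the lemma to get $\tilde{y}$ torsion, which would make the logic slightly circular if not arranged carefully. The clean route, as in the paper, is to use only $\tilde{x}$ torsion plus $j_*(y)=0$.
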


\begin{proof}[Proof of Lemma~\ref{lem:selfannihil}] Let $2m= \text{rank}_\mathbb{Q}(H_2(W;\mathbb{Q}))$. Let
$\{\ell_1,\ell_2,\ldots, \ell_m\}$ generate a rational
$k$-Lagrangian for $W$ and $\{d_1,d_2,\ldots, d_m\}$ its
$k$-duals in $H_2(W;\mathbb{Z}[\pi_1(W)/\pi_1(W)^{(k)}])$. Since $\Lambda^{(k)}=1$, $\phi$ factors through
$\phi' : \pi_1(W)/\pi_1(W)^{(k)} \lra \Lambda$. We denote by $\ell_i'$ and
$d_i'$ the images of $\ell_i$ and $d_i$ in $H_2(W;\mathcal{R})$. By
naturality of intersection forms, the intersection form $\lambda$
defined on $H_2(W;\mathcal{R})$ vanishes on the module generated by
$\{\ell_1',\ell_2',\ldots, \ell_m'\}$. Let $\mathcal{R}^m\oplus \mathcal{R}^m$ be the
free module on $\{\ell_i',d_i'\}$ and let $A^*$ denote $\overline{Hom_\mathcal{R}(A,\mathcal{R})}$ for any right $\mathcal{R}$-module $A$.
The following composition
$$
\mathcal{R}^m\oplus \mathcal{R}^m \xrightarrow{j_*} H_2(W;\mathcal{R}) \xrightarrow{\lambda}
H_2(W;\mathcal{R})^* \xrightarrow{j^*} (\mathcal{R}^m\oplus \mathcal{R}^m)^*
$$
is represented by a block matrix
$$
\left(\begin{matrix} 0 & I\cr I & X
\end{matrix}\right).
$$
This matrix has an inverse which is
$$
\left(\begin{matrix} -X & I\cr I & 0
\end{matrix}\right).
$$
Thus the composition is an isomorphism. This implies that both $j^*$ and $j^*\circ \lambda$ are epimorphisms. Consequently the rank of $H_2(W;\mathcal{R})$ is at least $2m$. But by ~\cite[Proposition 4.3]{COT}, the rank of $H_2(W;\mathcal{R})$ is at most $2m$ and so is precisely $2m$. Hence the rank of $(H_2(W;\mathcal{R}))^*$ is also $2m$. Thus the kernel of $j^{\ast}$ is the torsion submodule of $(H_2(W;\mathcal{R}))^*$. But the latter is torsion-free since $\mathcal{R}$ is a domain. Hence $j^{\ast}$ is an isomorphism and $(H_2(W;\mathcal{R}))^*$ is free. It follows that $\lambda$ is surjective and hence $H_2(W,; \mathcal{R})$ is the direct sum of a free module of rank $2m$ and its torsion submodule.  Now consider the commutative diagram below with
$\mathcal{R}$-coefficients.
$$
\begin{diagram}\dgARROWLENGTH=1.2em
\node{H_2(W)} \arrow{e,t}{\pi_{\ast}}
\arrow{sse,b}{\lambda} \node{H_2(W,\partial W)} \arrow{e,t}{\partial}
\arrow{s,lr}{\cong}{P.D.}\node{H_1(M)}
\arrow{e,t}{j_*} \node{H_1(W)}\\
\node[2]{H^2(W)} \arrow{s,r}{\kappa}\\
\node[2]{H_2(W)^*}
\end{diagram}
$$ 
Note $\kappa$ is a split surjection between modules of the same rank and thus the kernel of $\kappa\circ\operatorname{P.D.}$ is torsion. Now,
given $p\in TH_1(M;\mathcal{R})$ such that $j_*(p)=0$, choose $x$ such that
$\partial x=p$. Let $y$ be an element of the set $\lambda^{-1}(\kappa\circ
\operatorname{P.D.}(x))$. Then $\partial(x-\pi_{\ast}(y))=p$ and
$x-\pi_{\ast}(y)$ is torsion since it lies in the  kernel of
$\kappa\circ\operatorname{P.D.}$. Thus we have shown that every element of  ker$j_*$ is in the image of an element of $TH_2(W,M;\mathcal{R})$. This concludes the proof of the first part of Lemma~\ref{lem:selfannihil}.

For the second part we need:

\begin{lem}\label{lem:fourmanBlanch} There is a Blanchfield form, $\mathcal{B}l^{rel}$,
$$
\mathcal{B}l^{rel}:~TH_2(W,\partial W;\mathcal{R})\to TH_1(W)^{\#}
$$
such that the following diagram, with coefficients in $\mathcal{R}$ unless specified otherwise, is commutative up to sign:
$$
\begin{diagram}\label{diag:compatible2}\dgARROWLENGTH=1em
\node{TH_2(W,\partial W;\mathcal{R})} \arrow{e,t}{\partial_*}
\arrow{s,r}{\mathcal{B}l^{rel}_\mathcal{R}}\node{TH_1(M;\mathcal{R})} \arrow{s,r}{\mathcal{B}l^{M}_\mathcal{R}}\\
\node{TH_1(W;\mathcal{R})^{\#}} \arrow{e,t}{j^{\#}}\node{TH_1(M;\mathcal{R})^{\#}}
\end{diagram}
$$
\end{lem}

\begin{proof}[Proof of Lemma~\ref{lem:fourmanBlanch}] (See also ~\cite[Lemmas 3.2, 3.3]{Cha3})  Consider the following commutative diagram where homology and cohomology is with $\mathcal{R}$ coefficients unless specified and $\mathcal{K}$ denotes the quotient field of $\mathcal{R}$:
$$
\begin{diagram}[small]
\node{H_3(W,M;\mathcal{K})} \arrow[2]{e,t}{\partial_*} \arrow[2]{s,l}{P.D.} \arrow{se} \node[2]{H_2(M;\mathcal{K})} \arrow{s,-} \arrow{se}\\
\node[2]{H_3(W,M;\mathcal{K}/\mathcal{R})} \arrow[2]{e} \arrow[2]{s} \node{} \arrow{s} \node{H_2(M;\mathcal{K}/\mathcal{R})} \arrow[2]{s} \\
\node{\overline{H^1(W;\mathcal{K})}} \arrow[2]{s,l}{\kappa} \arrow{se} \arrow{e,-} \node{} \arrow{e} \node{\overline{H^1(M;\mathcal{K})}} \arrow{s,-} \arrow{se} \\
\node[2]{\overline{H^1(W;\mathcal{K}/\mathcal{R})}} \arrow[2]{e} \arrow[2]{s} \node{} \arrow{s} \node{\overline{H^1(M;\mathcal{K}/\mathcal{R})}} \arrow[2]{s} \\
\node{\overline{\Hom_{\mathcal{R}}(H_1(W),\mathcal{K})}} \arrow[2]{s,l}{\iota} \arrow{se} \arrow{se} \arrow{e,-} \node{} \arrow{e} \node{\overline{\Hom_{\mathcal{R}}(H_1(M),\mathcal{K})}} \arrow{s,-} \arrow{se} \\
\node[2]{\overline{\Hom_{\mathcal{R}}(H_1(W),\mathcal{K}/\mathcal{R})}} \arrow[2]{e} \arrow[2]{s} \node{} \arrow{s} \node{\overline{\Hom_{\mathcal{R}}(H_1(M),\mathcal{K}/\mathcal{R})}} \arrow[2]{s} \\
\node{\overline{\Hom_{\mathcal{R}}(TH_1(W),\mathcal{K})}} \arrow{se} \arrow{se} \arrow{e,-} \node{} \arrow{e} \node{\overline{\Hom_{\mathcal{R}}(TH_1(M),\mathcal{K})}} \arrow{se} \\
\node[2]{\overline{\Hom_{\mathcal{R}}(TH_1(W),\mathcal{K}/\mathcal{R})}} \arrow[2]{e,t}{j^{\#}} \node[2]{\overline{\Hom_{\mathcal{R}}(TH_1(M),\mathcal{K}/\mathcal{R})}} \\
\end{diagram}
$$
where $\iota$ is the map induced from the inclusion map of the torsion submodule. Since
$$
\overline{\Hom_{\mathcal{R}}(TH_1(W;\mathcal{R}),\mathcal{K})}=0,
$$
it follows that the image of $H_3(W,M;\mathcal{K}) \to H_3(W,M;\mathcal{K}/\mathcal{R})$ is contained in the kernel of the composition $\iota \circ \kappa\circ P.D.$. Furthermore, from the exact sequence,
$$
H_3(W,M;\mathcal{K})\overset{\pi}{\to} H_3(W,M;\mathcal{K}/\mathcal{R})\to H_2(W,M;\mathcal{R})\to H_2(W,M;\mathcal{K})
$$
since $H_2(W,M;\mathcal{K})$ is $\mathcal{R}$-torsion-free, $TH_2(W,M;\mathcal{R})$ is isomorphic to the cokernel of $\pi$. It follows that there is a well-defined map $\mathcal{B}l^{rel}_\mathcal{R}:
TH_2(W,M;\mathcal{R}) \to TH_1(W;\mathcal{R})^{\#}$. Similarly, since
$$
\overline{\Hom_{\mathcal{R}}(TH_1(M;\mathcal{R}),\mathcal{K})}=0,
$$
there is a well-defined map
$\mathcal{B}l^{M}_\mathcal{R}: TH_1(M,\mathcal{R}) \to TH_1(M;\mathcal{R})^{\#}$ such that the following diagram commutes.
$$
\begin{diagram}[small]
\node{H_3(W,M;\mathcal{K}/\mathcal{R})} \arrow[2]{s,l}{\iota \circ \kappa
\circ P.D.} \arrow[2]{e} \arrow{se} \node[2]{H_2(M;\mathcal{K}/\mathcal{R})} \arrow{s,-} \arrow{se} \\
\node[2]{TH_2(W,M)} \arrow{sw,r}{\mathcal{B}l^{rel}_\mathcal{R}} \arrow[2]{e,t}{\partial_*\hspace{.25in}} \node{} \arrow{s} \node{TH_1(M)} \arrow{sw,r}{\mathcal{B}l^{M}_\mathcal{R}}\\
\node{TH_1(W)^{\#}} \arrow[2]{e,t}{j^{\#}} \node[2]{TH_1(M)^{\#}} \\
\end{diagram}
$$
\end{proof}
Finally we can complete the proof of Lemma~\ref{lem:selfannihil}. Suppose $P\subset\text{kernel}~j_*\subset TH_1(M;\mathcal{R})$. Suppose $x\in P$ and $y\in$ kernel~$j_*$. According to the first part of the lemma,
we have $x=\partial_*(\tilde{x})$ for some $\tilde{x}\in TH_2(W,M;\mathcal{R})$. Thus by Diagram~\ref{diag:compatible2},
$$
\mathcal{B}l^M_\mathcal{R}(x)(y)=\mathcal{B}l^M_\mathcal{R}(\partial_*\tilde{x})(y)=\tilde{j^{\#}}(\mathcal{B}l^{rel}_\mathcal{R}(\tilde{x}))(y)= \mathcal{B}l^{rel}(\tilde{x})(j_*(y))=0
$$
since $j_*(y)=0$. Hence $P\subset ~($ker$j_*)^\perp \subset P^\perp$ with respect to the Blanchfield form on $TH_1(M;\mathcal{R})$.

This concludes the proof of Lemma~\ref{lem:selfannihil}.
\end{proof}

We continue with the proof of Theorem~\ref{thm:nontriviality}. Suppose $x,y\in P_i$ as in the statement. Let $\mathcal{R}=\mathbb{Q}\Lambda$, $M=M_L$ and let $P$ be the submodule of $H_1(M_L;\mathbb{Q}\Lambda)$  generated by $\{i_*(x\otimes 1),i_*(y\otimes 1)\}$. Then $P\subset \text{kernel}~j_*$. Apply Lemma~\ref{lem:selfannihil} to conclude that
$$
\mathcal{B}l^{M_L}_{\mathbb{Q}\Lambda}(i_*(x\otimes 1)),(i_*(y\otimes 1))=0.
$$
By Theorem~\ref{thm:decomposition},
$$
\ov\phi_i(\mathcal{B}l_o^i(x,y))=0.
$$
Since $\ov\phi$ is a monomorphism by hypothesis, it follows that $\mathcal{B}l_o^i(x,y)=0$. Thus $P_i\subset P_i^\perp$ with respect to the classical Blanchfield form on $K_i$. This concludes the proof of Theorem~\ref{thm:nontriviality}.
\end{proof}

\section{The family $J_n$}\label{familyJ}

In this section we prove our main theorem that the family of knots $J_n$ of Figure~\ref{fig:family} contains many non-slice knots. The simple ideas of the proof can be lost in the details of the induction, so we first present a proof of the simplest new result. Recall that $J_0=K$ and $J_n=J_n(K)$ is obtained from $J_0$ by applying the ``operator'' $R_1$ $n$ yielding the inductive definition of Figure~\ref{fig:family}. Recall also that, for $J_2$, all classical invariants as well as those of Casson-Gordon vanish.

\begin{thm}\label{thm:J2notslice} There is a constant $C$ such that if $|\rho_0(J_0)|>C$ then $J_2$ is not a slice knot.
\end{thm}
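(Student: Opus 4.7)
The plan is to argue by contradiction. Supposing $J_2$ is slice via a disk $\Delta\subset B^4$ with exterior $W=B^4\setminus\nu(\Delta)$, I will combine iterated $\rho$-additivity (Lemma~\ref{lem:additivity}) with the Blanchfield-form duality of Theorem~\ref{thm:nontriviality} to bound $|\rho_0(J_0)|$ by a universal constant depending only on $R_1$. The setup: unfold $J_2 = R_1(\eta_1^o,\eta_2^o;J_1,J_1)$ and each copy $J_1 = R_1(\eta_1^i,\eta_2^i;J_0,J_0)$, and stack onto $W$ the cobordisms $E$ of Lemma~\ref{lem:additivity} for the outer infection and the two inner infections, producing a $4$-manifold $V$ whose boundary consists of three copies of $M_{R_1}$ and four copies of $M_{J_0}$. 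Set $\Lambda := \pi_1(V)/\pi_1(V)^{(3)}_r$, a PTFA group with $\Lambda^{(3)}=1$; the induced system $\phi:\pi_1(M_{J_2})\to\Lambda$ extends over $W\subset V$, so Theorem~\ref{thm:oldsliceobstr} gives $\rho(M_{J_2},\phi)=0$.

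Next I would run the additivity computation. By Lemma~\ref{lem:mickeyfacts}(1) each infecting longitude dies in the corresponding $E$-cobordism, so Lemma~\ref{lem:additivity} applies at both stages and iteration yields
\[
0 \;=\; \rho(M_{R_1},\phi_R^o) + \sum_{i=1}^{2}\rho(M_{R_1},\phi_R^i) + \sum_{i,j=1}^{2}\rho(M_{J_0},\phi^{ij}).
\]
Because $\Lambda^{(3)}=1$ makes $\Lambda^{(2)}$ abelian, and because each innermost meridian $(\eta_j^i)^+$ lies in $\pi_1(V)^{(2)}$, every $\phi^{ij}$ factors through the abelianization $\mathbb{Z}$ of $\pi_1(M_{J_0})$; Proposition~\ref{prop:rho invariants}(2)--(4) then gives $\rho(M_{J_0},\phi^{ij}) = \rho_0(J_0)$ when $\phi(\eta_j^i)\neq 1$ in $\Lambda$ and $0$ otherwise. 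The three $M_{R_1}$-contributions are each bounded in absolute value by the Cheeger-Gromov constant $C_{R_1}$ (Proposition~\ref{prop:rho invariants}(5)). Writing $N$ for the number of pairs $(i,j)$ with $\phi(\eta_j^i)\neq 1$, I conclude $N\cdot|\rho_0(J_0)| \le 3C_{R_1}$.

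The main obstacle is showing $N\ge 1$, which I would handle with Theorem~\ref{thm:nontriviality} applied to the outer infection using the further quotient $\Lambda_2 := \pi_1(V)/\pi_1(V)^{(2)}_r$ (so $\Lambda_2^{(2)}=1$); since $W$ is a rational $(k)$-solution for every $k$, the hypotheses apply. For each outer index $i$ the classical satellite formula (valid because the inner curves have zero linking number with $R_1$) identifies $\mathcal{A}_0(J_1)\cong\mathcal{A}_0(R_1)$, under which the classes $[\eta_1^i],[\eta_2^i]$ correspond to the two band meridians of $R_1=9_{46}$ and thus generate the full rank-$2$ Alexander module. If both $[\eta_j^i]$ lay in the kernel $P_i$ of the composition in Theorem~\ref{thm:nontriviality}, then $P_i=\mathcal{A}_0(R_1)$ would be self-annihilating under the nonsingular classical Blanchfield form $\mathcal{B}\ell_0^{R_1}$, which is impossible. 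Hence at least one $[\eta_j^i]\notin P_i$. The identification of $H_1(W;\mathbb{Q}\Lambda_2)$ modulo torsion with a submodule of $\Lambda^{(2)}$ (via Proposition~\ref{prop:submanifoldcompatible} and Theorem~\ref{thm:decomposition}) then translates this nonvanishing into $\phi(\eta_j^i)\neq 1$ in $\Lambda$, so $N\ge 1$.

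Combining, $|\rho_0(J_0)|\le 3C_{R_1}$, so setting $C := 3C_{R_1}+1$ forces the required contradiction whenever $|\rho_0(J_0)|>C$. The principal technical point, and where I expect the most care is needed, is the translation in the third paragraph between the $\Lambda_2$-homological vanishing statement produced by Theorem~\ref{thm:nontriviality} and the $\Lambda$-group-theoretic survival statement consumed by the additivity count; the rest is bookkeeping with Cheeger-Gromov absorbing the $M_{R_1}$-contributions.
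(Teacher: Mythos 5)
Your overall plan is the same as the paper's (additivity of $\rho$ across the infection cobordisms plus Theorem~\ref{thm:nontriviality} to force at least one infecting curve to survive into $\pi^{(2)}_r/\pi^{(3)}_r$), and your iterated additivity is a legitimate variant: the paper instead presents $J_2$ in one stroke as $R_2=J_2(U)$ infected four times by $J_0$, giving a single $M_{R_2}$ contribution bounded by the Cheeger--Gromov constant $C_{M_{R_2}}$ rather than your three $M_{R_1}$ contributions bounded by $3C_{M_{R_1}}$. Either packaging is fine (the paper also uses the slice-disk exterior $V$ itself as the coefficient target rather than your auxiliary stacked manifold, which avoids having to argue separately that the quotient of $\pi_1$ of the stacked manifold identifies correctly with the torsion-free quotient seen by $H_1(W;\mathbb{Q}\Lambda_2)$).

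However, there is a genuine gap in your third paragraph, and it is exactly the subtle part that the paper takes pains to handle. You write ``for each outer index $i$\ldots if both $[\eta^i_j]$ lay in the kernel $P_i$\ldots'' and conclude some $[\eta^i_j]\notin P_i$. But the conclusion of Theorem~\ref{thm:nontriviality} --- that $P_i\subset P_i^\perp$ --- holds only for $i\in A=\{i:\phi((\eta^o_i)^+)\neq 1\}$, and you never show $A\neq\emptyset$. If both outer curves $\eta^o_\pm$ were to die in $\pi_1(W)/\pi_1(W)^{(2)}_r$, then the coefficient systems induced on the two inner $M_{J_1}$'s would be trivial, all $\phi(\eta^i_j)$ would vanish, $N=0$, and the additivity identity would reduce to a tautology giving no contradiction. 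So the descent has a missing base case. The paper closes it with the purely classical ($n=1$) observation: the kernel of $\mathcal{A}_0(J_2)\to H_1(V;\mathbb{Q}[t,t^{-1}])$ is self-annihilating under the classical rational Blanchfield form of $J_2$, and since $\{\eta^o_\pm\}$ generate $\mathcal{A}_0(J_2)$ (which has nonsingular Blanchfield form), the kernel cannot contain both. This forces at least one $\psi(\eta^o_\pm)\neq 1$ in $\pi/\pi^{(2)}_r$, making $A\neq\emptyset$, and only then does your application of Theorem~\ref{thm:nontriviality} at the next level engage. You should insert that step; without it the proof does not close.
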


\begin{rem}\label{rem:betterJ2} This theorem can be improved but this result will appear in a later paper: There is a constant $C$ (which may be $0$) such that if $J_2$ is slice then $\rho_0(J_0)\in \{0,C\}$. 
\end{rem}

\begin{proof}[Proof of Theorem~\ref{thm:J2notslice}] Let $R_2$ be the ribbon knot $J_2(U)$, that is, start with $R_1$ and tie the knots $R_1$ into each band as shown in Figure~\ref{fig:R2}. Let $C$ be the Cheeger-Gromov constant of $M_{R_2}$, that is, a positive constant such that:
$$
|\rho(M_{R_2},\phi)| < C
$$
for \emph{any} homomorphism $\phi:\pi_1(M_{R_2})\to \G$ ($\G$ arbitrary). Now assume that $|\rho_0(J_0)|>C$. We proceed by contradiction. Suppose $J_2$ were slice and let $V$ denote the exterior of a slice disk. Thus $\partial V=M_{J_2}$. Let $M=M_{J_2}$, $\pi=\pi_1(V)$, $\G=\pi/\pi^{(3)}_r$ and let $\phi$ denote both the projection $\pi\to \G$ and its restriction to $\pi_1(M)$. Recall that $\pi^{(3)}_r$ denoted the third term of the \textbf{rational} derived series as defined in Section~\ref{signatures}. Recall that $\G$ is a PTFA group by ~\cite[Section 3]{Ha1}. By Remark~\ref{rem:n-solvable}, $V$ is a $(2.5)$-solution for $M$ so by Theorem~\ref{thm:oldsliceobstr} 
$$
\rho(M,\phi)=0.
$$

We reach a contradiction by computing $\rho(M,\phi)$ in another way. We will argue that $J_2$ may be obtained from a ribbon knot $R_2=J_2(U)$ by infections on $4$ curves. Recall that $J_1$ can be obtained from the ribbon knot $R_1$ by infection on $2$ curves labelled $\{\eta^1_+, \eta^1_-\}$ in Figure~\ref{fig:exs_eta2} using the knot $J_0$ as the infecting knot in each case. 

\begin{figure}[htbp]
\setlength{\unitlength}{1pt}
\begin{picture}(200,160)
\put(0,0){\includegraphics[height=150pt]{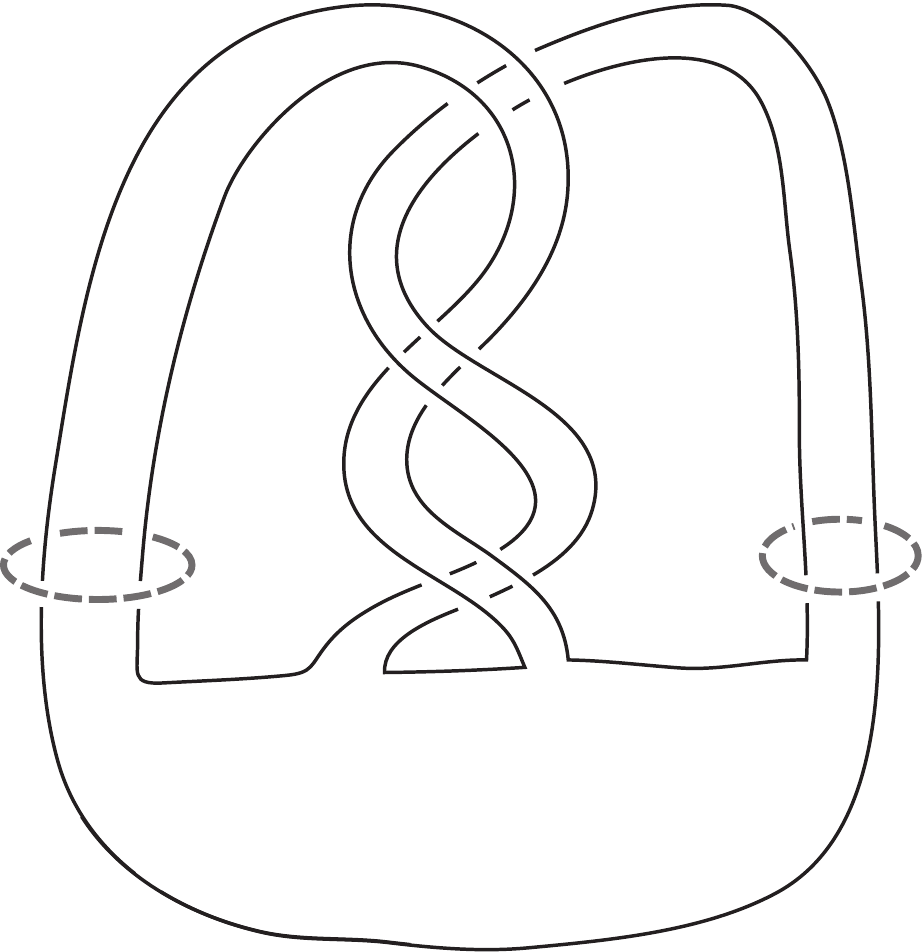}}
\put(-61,85){$(S^3-R_1)_*=$}
\put(-15,60){$\eta^1_-$}
\put(149,60){$\eta^1_+$}
\end{picture}
\caption{$J_{1}$ obtained from $R_1$ by infection on $\eta_\pm ^1$}\label{fig:exs_eta2}
\end{figure}
\noindent Instead of replacing neighborhoods of $\eta_\pm ^1$ by copies of $S^3-J_0$, merely leave them as marked curves to be replaced later. Denote this disguised description of $S^3-J_1$ by $(S^3-R_1)_*$. Now recall that we form $J_2$ from $R_1$ by replacing solid toral neighborhoods of $\{\eta ^1_\pm\}$ by two copies of $S^3-J_1$, which we now think of as two copies $(S^3-R_1)_*^\pm$. If we ignore the marked circles, we obtain a ribbon knot, denoted $R_2$, obtained by infection on $R_1$ along $\{\eta^1_\pm\}$ using the knot $R_1$ as the infecting knot in each case. Thus there exist $4$ marked circles in $S^3-R_2$ (two for each of the two marked infection knots $R_1$). Neighborhoods of these four marked circles must be replaced by copies of $S^3-J_0$ in order to complete the formation of $J_2$. 
These are shown in Figure~\ref{fig:R2}. 
\begin{figure}[htbp]
\setlength{\unitlength}{1pt}
\begin{picture}(500,170)
\put(40,20){\includegraphics[width=5 in]{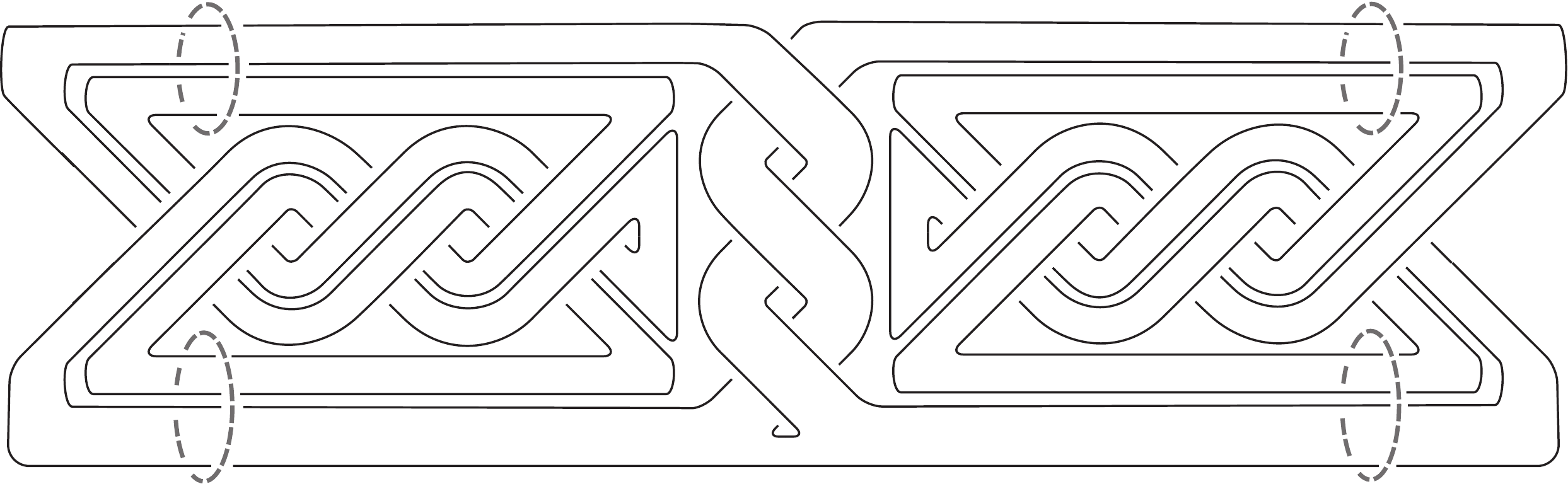}}
\put(77,5){$\eta^2_{++}$}
\put(77,137){$\eta^2_{+-}$}
\put(347,5){$\eta^2_{-+}$}
\put(337,137){$\eta^2_{--}$}
\end{picture}
\caption{$J_2$ obtained from $R_2$ by $4$ infections}\label{fig:R2}
\end{figure}
Thus we have shown shown that $J_2$ may be obtained from a ribbon knot $R_2$ by infections on $4$ curves $\{\eta_i\}$=$\{\eta^2_{++},\eta^2_{+-},\eta^2_{-+},\eta^2_{--}\}$, and these curves clearly lie in $\pi_1(S^3-R_2)^{(2)}$ (this last assertion will be discussed in more detail in our general proof later in this section).

Using this knowledge, by our additivity result, Lemma~\ref{lem:additivity} (use $n=2$),
$$
\rho(M,\phi)-\rho(M_{R_2},\phi_{R_2})=\sum^4_{i=1}\epsilon_i\rho_0(K)
$$
where $\epsilon_i=0$ or $1$ according as $\phi(\eta_i)=1$ or not. Since $\rho(M,\phi)=0$ and
$$
|\rho_0(K)|>C> |\rho(M_{R_2},\phi_{R_2})|,
$$
this is a contradiction if any $\epsilon_i\neq 0$. Thus the proof has quickly been reduced to showing that at least one of the curves $\eta_i$ (their longitudinal push-offs) survives in $\pi^{(2)}_r/\pi_r^{(3)}$. But by definition
$$
\pi^{(2)}_r/\pi_r^{(3)}=(\pi^{(2)}_r/[\pi^{(2)}_r,\pi^{(2)}_r])/(\mathbb{Z}-\text{torsion})
$$
so 
$$
\pi^{(2)}_r/\pi_r^{(3)}\hookrightarrow \pi^{(2)}_r/[\pi^{(2)}_r,\pi^{(2)}_r])\otimes_\mathbb{Z}\mathbb{Q}.
$$
The latter has a strictly homological interpretation as the first homology with $\mathbb{Q}$ coefficients of the covering space of $V$ whose fundamental group is $\pi^{(2)}_r$. In other words
$$
\pi^{(2)}_r/[\pi^{(2)}_r,\pi^{(2)}_r])\otimes_\mathbb{Z}\mathbb{Q}\cong H_1(V;\mathbb{Q}[\pi/\pi^{(2)}_r]).
$$
Therefore it suffices to prove that one of the curves $\eta_i$ survives under the map
$$
H_1(M;\mathbb{Q}[\pi/\pi^{(2)}_r])\overset{j_*}\to H_1(V;\mathbb{Q}[\pi/\pi^{(2)}_r]).
$$
Since $J_2$, by definition, is obtained from $R_1$ by infections on the two curves $\{\eta^1_+, \eta^1_-\}$ using the knot $J_1$ as the infecting knot in each case and where $\eta_{\pm}^1\in\pi_1(M_{R_1})^{(1)}$, we may apply Theorem~\ref{thm:nontriviality} with $k=2$, $\Lambda=\pi/\pi^{(2)}_r$, $\{\alpha_0,\alpha_1\}=\{\eta^1_+, \eta^1_-\}$ and $\psi:\pi_1(V)\to \Lambda$ the quotient map. This gives us information about the two compositions
$$
\mathcal{A}_0(J_1)\overset{id\otimes 1}\lra  ~(\mathcal{A}_0(J_1) \otimes_{\mathbb{Q}[t,t^{-1}]}\mathbb{Q}\Lambda)\overset{i_*}{\to} H_1(M;\mathbb{Q}\Lambda)\overset{j_*}\to H_1(V;\mathbb{Q}\Lambda),
$$
one for each of the two infecting knots $J_1$. But the four curves $\eta_i=\{\eta^2_{++},\eta^2_{+-},\eta^2_{-+},\eta^2_{--}\}$ in question are precisely the generators of these two copies of $\mathcal{A}_0(J_1)$ (which we can identify with $\mathcal{A}_0(R_1)$ since they have the same Seifert matrix). By Theorem~\ref{thm:nontriviality} the kernels of these inclusions satisfy
$P_{\pm}\subset P_{\pm}^\perp$ with respect to the classical Blanchfield form on $\mathcal{A}_0(J_1)$ as long as $\psi(\eta^1_\pm)\neq 1$. Assuming momentarily that, say, $\psi(\eta^1_+)\neq 1$, it cannot be that both of the corresponding $\eta^1_{++}$ and $\eta^1_{+-}$ lie in the kernel $P_+$ since then the submodule they generate, $\mathcal{A}_0(J_1)$, would be contained in $P_+$, contradicting the nonsingularity of the Blanchfield from on $\mathcal{A}_0(J_1)$. More generally, the reader will see that the precise condition needed at this point is that, in the submodule of $\mathcal{A}_0(R_1)$ generated by $\eta^1_{++}$ and $\eta^1_{+-}$, is contained some $x$ and $y$ such that $\mathcal{B}\ell_0(x,y)\neq 0$. This implies that at least one of the two curves must survive. Thus we are reduced to showing that either $\psi(\eta^1_+)\neq 1$ or $\psi(\eta^1_-)\neq 1$. Note that we began by seeking to show that at least one of the $\{\eta^2_{++},\eta^2_{+-},\eta^2_{-+},\eta^2_{--}\}$ does not map into $\pi^{(3)}_r$ and we reduced this to showing that at least one of the curves $\{\eta^1_+, \eta^1_-\}$ does not map into $\pi^{(2)}_r$. In the proof of our more general result below this induction continues downwards. Here, this can be accomplished by again using Theorem~\ref{thm:nontriviality} this time with coefficient system $\lambda=\pi/\pi^{(1)}_r$  ($k=1$) and proceeding just as above, this time using the fact that $J_2$ can be obtained from $R_2$ by infection on the \emph{meridian} of $R_2$. However, we do not include details since the fact we need is just the well-known classical result that the kernel of the inclusion from the rational Alexander module of a slice knot $J_2$ to the Alexander module of its slice exterior $V$ is self-annihilating with respect to the Blanchfield form and hence cannot include the entire generating set $\{\eta^1_\pm\}$. This completes the proof.
\end{proof}

Our main theorem is then:

\begin{thm}\label{thm:main} For any $n\ge0$ there is a constant $C_n$ such that, if $|\rho_0(K)|> C_n$, then $J_n(K)$ is of infinite order in the topological concordance group. Moreover $J_n$ is rationally $(n)$-solvable ($(n)$-solvable if Arf$(K)=0$), yet no non-zero multiple of $J_n$ is rationally $(n.5)$-solvable.
\end{thm}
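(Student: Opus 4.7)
The plan for the first claim---that $J_n$ is rationally $(n)$-solvable, and $(n)$-solvable if $\mathrm{Arf}(K)=0$---is to iterate Proposition~\ref{prop:operatorsfiltration} $n$ times: $J_n$ is the $n$-fold composition of $R_1$-doubling operators applied to $K$, and each pair of infection curves $\eta_\pm$ lies in $\pi_1(S^3-R_1)^{(1)}$, so each iteration drops the output down one level of the derived series.

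For the substantive claim that no nonzero multiple $mJ_n$ is rationally $(n.5)$-solvable, I would follow the blueprint of Theorem~\ref{thm:J2notslice}, iterated through all $n$ levels. Set $C_n$ equal to the Cheeger-Gromov constant of $M_{R_n}$ and argue by contradiction: suppose $mJ_n$ is rationally $(n.5)$-solvable via $V$. Present $mJ_n$ as the result of $m\cdot 2^n$ infections of the slice knot $mR_n := \#_{j=1}^m R_n$ along curves $\{\eta_{j,i}\}$---the leaves of $m$ doubling trees of depth $n$---using $K$ as infecting knot, noting $\eta_{j,i}\in\pi_1(M_{mR_n})^{(n)}$. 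With $\pi=\pi_1(V)$, $\Gamma=\pi/\pi_r^{(n+1)}$, and $\phi:\pi_1(M_{mJ_n})\to\Gamma$ the restriction of the quotient, the rational $(n.5)$-solvable generalization of Theorem~\ref{thm:oldsliceobstr} (from Section~\ref{sec:appendix}) gives $\rho(M_{mJ_n},\phi)=0$, while Lemma~\ref{lem:additivity} yields
\[
\rho(M_{mR_n},\phi_R)+\sum_{j,i}\epsilon_{j,i}\,\rho_0(K)=0,\qquad \epsilon_{j,i}\in\{0,1\},
\]
with $|\rho(M_{mR_n},\phi_R)|\leq mC_n$ by additivity of Cheeger-Gromov constants under connected sum.

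The heart of the proof will be to show $N:=\#\{(j,i):\epsilon_{j,i}=1\}\geq m$; once established, $|\rho_0(K)|\leq mC_n/N\leq C_n$ contradicts the hypothesis. I would prove by induction on depth $k=1,\ldots,n$ that in each tree $j$, at least one curve at depth $k$ survives the composition into $H_1(V;\mathbb{Q}[\pi/\pi_r^{(k)}])$, equivalently maps nontrivially to $\pi/\pi_r^{(k+1)}$. For the base case $k=1$, view $mJ_n$ as infection of the unknot at its $m$ meridians $\{\mu_j\}$ using $J_n$ as infecting knot in each; Theorem~\ref{thm:nontriviality} with $\Lambda=\mathbb{Z}=\pi/\pi_r^{(1)}$ applies separately to each $j$ (since $\phi(\mu_j^+)\neq 1$ as $H_1(V;\mathbb{Q})=\mathbb{Q}$), and the kernel of $\mathcal{A}_0(J_n)=\mathcal{A}_0(R_1)$ into $H_1(V;\mathbb{Q})$ is self-annihilating for the classical Blanchfield form $\mathcal{B}\ell_0^{R_1}$, whose non-singularity precludes both generators $\eta^1_{j,\pm}$ from dying. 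For the inductive step from $k$ to $k+1\leq n$, view $mJ_n$ as an infection of $mR_k$ along its depth-$k$ curves $\{\eta^k_{j,\vec{\varepsilon}}\}$ using $J_{n-k}$ as infecting knot, and apply Theorem~\ref{thm:nontriviality} with $\Lambda=\pi/\pi_r^{(k+1)}$ and $\alpha=\eta^k_{j,\vec{\varepsilon}}\in\pi_1(M_{mR_k})^{(k)}$ (here $V$ is rational $(k+1)$-solvable because $k+1\leq n$); for each surviving $\eta^k_{j,\vec{\varepsilon}}$ (by induction hypothesis), the kernel of $\mathcal{A}_0(J_{n-k})=\mathcal{A}_0(R_1)$ into $H_1(V;\mathbb{Q}[\pi/\pi_r^{(k+1)}])$ is self-annihilating for $\mathcal{B}\ell_0^{R_1}$, so at least one child $\eta^{k+1}_{j,\vec{\varepsilon},\pm}$ must survive. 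After $n$ steps, each tree $j$ delivers a surviving leaf, yielding $N\geq m$.

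The principal obstacle will be orchestrating this induction consistently across $n$ derived-series levels and $m$ parallel copies of the tree: at each step one must carefully identify the extended PTFA coefficient system on the relevant intermediate infection manifold (via the cobordism $E$ of Figure~\ref{fig:mickey} and Lemma~\ref{lem:mickeyfacts}), and invoke the ``tensoring-up'' identification of the induced higher-order Blanchfield form with the classical $\mathcal{B}\ell_0^{R_1}$ (Theorem~\ref{thm:decomposition} and surrounding discussion) in order to reduce survival at each level to non-singularity of $\mathcal{B}\ell_0^{R_1}$ on the genus-one Alexander module of $R_1$.
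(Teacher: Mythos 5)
Your overall strategy for the core inductive step is sound, and it does capture the crucial idea of the paper: use Theorem~\ref{thm:nontriviality} at each derived-series level to show that non-singularity of the classical Blanchfield form on $\mathcal{A}_0(R_1)$ forces at least one ``child'' curve to survive, and iterate. This is essentially what the paper proves as Theorem~\ref{thm:iteratednontriviality} (via the ``ghost'' bookkeeping), and your induction from depth $k$ to $k+1$ tracks the same content. Your treatment of the first claim, that $J_n\in\mathcal{F}_n$ via iterated applications of Proposition~\ref{prop:operatorsfiltration}, is also fine.

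However, there is a genuine gap in your handling of multiples $mJ_n$. You write that $|\rho(M_{mR_n},\phi_R)|\leq mC_n$ ``by additivity of Cheeger-Gromov constants under connected sum.'' There is no such general fact. Two problems: (i) $M_{mR_n}$, the zero surgery on $\#^m R_n$, is \emph{not} the connected sum $\#^m M_{R_n}$ of zero surgeries, so even a putative additivity for $3$-manifold connected sums would not apply; and (ii) to relate $\rho(M_{mR_n},\phi_R)$ to $\rho$-invariants of the individual $M_{R_n}$'s one needs to produce a cobordism $C$ between them with vanishing signature defect and check that $\phi_R$ (which you only defined on $\pi_1(M_{mR_n})$, via the cobordism $E$) actually extends over $C$. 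This is precisely the heavy lifting that the paper delegates to Theorem~\ref{thm:infection} (Cochran-Teichner Theorem~4.2), which it invokes as a black box: that theorem builds, from a hypothesized rational $(n.5)$-solution of $M_{\#^m J_n}$, a rational $(n)$-solution $W$ for a \emph{single} copy of $M_{R_n}$ (using the cobordism $C$ from $M_{\#^m J_n}$ to $\sqcup^m M_{J_n}$, the cobordisms $E^j$, and $(0)$-solutions for the infection knots, exactly as the paper carries out explicitly for links in the proof of Theorem~\ref{thm:linkinfection} and Figure~\ref{fig:cobordismlink}). Once you have such a $W$, you only need the Cheeger-Gromov constant of the \emph{fixed} manifold $M_{R_n}$, not of $M_{mR_n}$, and you feed $W$ into the nontriviality statement (Theorem~\ref{thm:iteratednontriviality}). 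So the paper's route is to verify the hypothesis of the CT theorem for one copy; your route tries to argue directly about $M_{mR_n}$ and runs into the ungrounded bound. Without either invoking the CT theorem or reproducing its cobordism construction, the multiples part of the statement is not established.

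A minor additional imprecision: in your inductive step you should be slightly more careful to distinguish the curve $\eta^k_{j,\vec\varepsilon}\subset M_{mR_k}$, its longitudinal push-off identified with a meridian of the infecting $J_{n-k}$ inside $M_{mJ_n}$, and the image in $H_1(M_{mJ_n};\mathbb{Q}\Lambda)$; the paper tracks these via properties (4)--(5) of Lemma~\ref{lem:mickeyfacts} and the ``ghost'' terminology, and that care is what makes the transition between the ``algebraic'' Blanchfield statement and the ``group-theoretic'' statement about $\pi_r^{(k+1)}/\pi_r^{(k+2)}$ rigorous.
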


\begin{rem} Using a different proof one can choose the constant $C_n$ independent of $n$. This will appear in another paper.
\end{rem}

\begin{cor}\label{cor:CG} For any $n\ge 1$ there exist knots $J\in\mathcal{F}_{(n-1)}$ for which the knot $R_1(J)$, shown in Figure~\ref{fig:family3}, is not a slice knot nor even in $\mathcal{F}_{n.5}$. 
\end{cor}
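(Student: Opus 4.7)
The plan is to derive this corollary directly from Theorem~\ref{thm:main} by applying it twice: once at level $n-1$ to produce the input knot $J$, and once at level $n$ to obstruct the sliceness of $R_1(J)$. First I would choose a knot $K$ satisfying two conditions simultaneously: $\operatorname{Arf}(K)=0$ and $|\rho_0(K)|>\max\{C_{n-1},C_n\}$, where $C_{n-1}$ and $C_n$ are the constants furnished by Theorem~\ref{thm:main}. Such a $K$ exists in abundance: take any knot $K_0$ with $\rho_0(K_0)\neq 0$ (e.g.\ a trefoil), pass to $K_0\#K_0$ to kill the $\operatorname{Arf}$ invariant (which is additive mod $2$), and then take a sufficiently large iterated connected sum, using that $\rho_0$ is additive under connected sum to make $|\rho_0|$ exceed any prescribed bound.

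Next I would set $J\defeq J_{n-1}(K)$. The ``moreover'' clause of Theorem~\ref{thm:main}, applied at exponent $n-1$, tells us that because $\operatorname{Arf}(K)=0$ the knot $J_{n-1}(K)$ is $(n-1)$-solvable, so $J\in \mathcal{F}_{n-1}$ as required.

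The crux is then the tautological observation that, by the recursive definition $J_{m+1}=R_1(J_m)$ of the family (Figure~\ref{fig:family}), one has
\[
R_1(J)\;=\;R_1\bigl(J_{n-1}(K)\bigr)\;=\;J_n(K).
\]
Applying Theorem~\ref{thm:main} a second time at exponent $n$, with the same $K$, the inequality $|\rho_0(K)|>C_n$ gives that $J_n(K)$ has infinite order in the topological concordance group and that no nonzero multiple of it is rationally $(n.5)$-solvable. In particular $R_1(J)=J_n(K)$ is not slice and not an element of $\mathcal{F}_{n.5}$, which is the desired conclusion.

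There is no real obstacle here: the entire analytic and algebraic content has been packaged into Theorem~\ref{thm:main}, and the corollary is a reindexing statement asserting that the doubling operator $R_1\colon \mathcal{F}_{n-1}\to\mathcal{F}_n$ does \emph{not} carry $\mathcal{F}_{n-1}$ into $\mathcal{F}_{n.5}$. The only thing to verify is the compatibility of the two applications of Theorem~\ref{thm:main} at consecutive levels, which is automatic provided one chooses $K$ with $|\rho_0(K)|$ exceeding both constants simultaneously, as arranged above.
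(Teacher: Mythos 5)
Your proposal is correct and follows the paper's own proof essentially verbatim: choose $K$ a connected sum of an even number of trefoils with $|\rho_0(K)|$ large, set $J=J_{n-1}(K)$, observe $R_1(J)=J_n(K)$, and invoke Theorem~\ref{thm:main}. The only (harmless) redundancy is requiring $|\rho_0(K)|>C_{n-1}$ as well — the membership $J_{n-1}(K)\in\mathcal{F}_{n-1}$ already follows from $\operatorname{Arf}(K)=0$ alone via the $(n-1)$-solvability clause of Theorem~\ref{thm:main}, so only $C_n$ is actually needed.
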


\begin{figure}[htbp]
\setlength{\unitlength}{1pt}
\begin{picture}(200,160)
\put(0,0){\includegraphics[height=150pt]{family.pdf}}
\put(-50,70){$R_1(J)~=$}
\put(9,92){$J$}
\put(124,92){$J$}
\end{picture}
\caption{}\label{fig:family3}
\end{figure}

\begin{proof}[Proof of Corollary~\ref{cor:CG}] Let $J=J_{n-1}(K)$ for some $K$ with $|\rho_0(K)|> C_n$ (for example a connected sum of a suitably large even number of trefoil knots). Then the knot on the right-hand side of Figure~\ref{fig:ribbonCG} is merely $J_n(K)$ which, by Theorem~\ref{thm:main}, is $(n)$-solvable hence in $\mathcal{F}_{(n)}$, but is not slice nor even rationally $(n.5)$-solvable; hence not in $\mathcal{F}_{(n.5)}$. Since $J\in \mathcal{F}_{(n-1)}$, if $n\geq 2$ then $J$ is algebraically slice and if $n\geq 3$ then $J$ has vanishing Casson-Gordon invariants ~\cite[Theorem 9.11]{COT}.
\end{proof}
As another immediate consequence, using the knots $J_n$ of Theorem~\ref{thm:main}, we have an easier proof of the following major result of Cochran and Teichner:

\begin{cor}\label{cor:CT} (Cochran-Teichner ~\cite{CT}) For any $n\ge0$, $\FF_n/\FF_{n.5}$ has rank at least 1.
\end{cor}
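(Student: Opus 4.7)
The plan is to deduce Corollary~\ref{cor:CT} directly from Theorem~\ref{thm:main}, which was formulated precisely to yield it. First, I would select an auxiliary knot $K$ satisfying both Arf$(K)=0$ and $|\rho_0(K)| > C_n$, where $C_n$ is the constant supplied by Theorem~\ref{thm:main}. A concrete choice is a connected sum of a sufficiently large even number of right-handed trefoil knots: the Arf invariant vanishes because the trefoils occur in pairs, while $\rho_0$ (the integral of the Levine signature function) is additive under connected sum and strictly positive on a single trefoil, so its absolute value can be made as large as desired.

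Next, I would feed this $K$ into Theorem~\ref{thm:main} to obtain two facts about $J_n(K)$. The first is that $J_n(K)$ is $(n)$-solvable, so its concordance class lies in $\mathcal{F}_n$. The second is that no non-zero multiple of $J_n(K)$ is rationally $(n.5)$-solvable; since ordinary $(n.5)$-solvability implies rational $(n.5)$-solvability, no non-zero integer multiple of $[J_n(K)]$ lies in $\mathcal{F}_{n.5}$.

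Combining these two assertions says exactly that the class of $J_n(K)$ in the quotient $\mathcal{F}_n/\mathcal{F}_{n.5}$ has infinite order, and hence generates a subgroup isomorphic to $\mathbb{Z}$, as required.

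I expect essentially no remaining obstacle: the substantial technical content --- the iterated infection description of $J_n(K)$, the injectivity statements coming from higher-order Blanchfield forms (Theorem~\ref{thm:nontriviality}), the additivity of $\rho$-invariants under infection (Lemma~\ref{lem:additivity}), and the Cheeger-Gromov bound that forces a contradiction once $|\rho_0(K)|$ is large --- has already been absorbed into the proof of Theorem~\ref{thm:main}. The only point one must notice is that the ``no non-zero multiple'' clause of Theorem~\ref{thm:main} is the precise strengthening of ``not slice'' needed to upgrade a non-trivial class in the quotient to an infinite-order class, which is exactly what the rank statement demands.
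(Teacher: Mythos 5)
Your proposal is correct and follows the paper's own argument exactly: choose $K$ to be a connected sum of a large even number of trefoils (so Arf$(K)=0$ and $|\rho_0(K)|>C_n$), then invoke Theorem~\ref{thm:main} to see that $J_n(K)\in\mathcal{F}_n$ while no nonzero multiple lies in $\mathcal{F}_{n.5}$, giving an infinite-order class in the quotient. You also correctly identified that the ``no non-zero multiple'' clause is precisely what promotes non-triviality to infinite order.
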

\begin{proof}[Proof of Corollary~\ref{cor:CT}] The knot $J_n$ wherein $J_0=K$ is a suitably large connected sum of an even number of trefoil knots is an element of infinite order in $\FF_n/\FF_{n.5}$ by Theorem~\ref{thm:main}.
\end{proof}

\begin{proof}[Proof of Theorem \ref{thm:main}] The proof follows the lines of the proof of Theorem~\ref{thm:J2notslice}, but the inductions are notationally complicated. 

First we establish that $J_n(K)$ has an alternative description as the result of $2^n$ infections on the ribbon knot $R_n=J_n(U)$ using the knot $K$ as the infecting knot each time, along curves that lie in $\pi_1(S^3\backslash R_n)^{(n)}$. This will be established as part of a much more general result that says that $J_n(K)$ has many alternative descriptions due to its `fractal' nature.

To this end note that if $K$ is the trivial knot $U$ then it is easily seen by induction that each $J_n(U)$ is a ribbon knot that we denote $R_n$, $n\ge 0$, as shown in Figure~\ref{fig:ribbonfamily} (set $R_0=U$). 

\begin{figure}[htbp]
\setlength{\unitlength}{1pt}
\begin{picture}(200,160)
\put(0,0){\includegraphics[height=150pt]{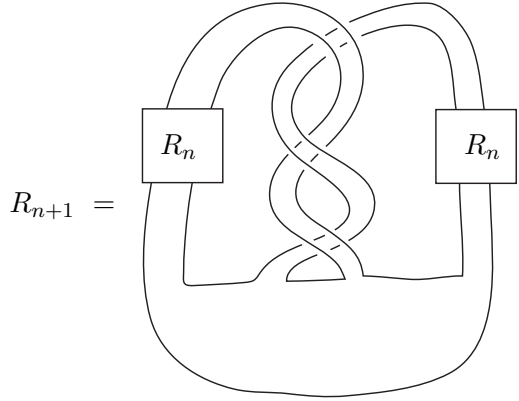}}
\put(-50,70){$R_{n+1}~=$}
\put(7,92){$R_{n}$}
\put(122,92){$R_{n}$}
\end{picture}
\caption{The recursive family of ribbon knots $R_{n+1}$}\label{fig:ribbonfamily}
\end{figure}

\noindent First, note that, for each $1\le i\le n$, because of the alternative description of infection as described in Section~\ref{sec:Introduction}, there are two inclusion maps
$$
f_\pm^{i}: S^3-R_{i-1}\ra S^3- R_{i}
$$
as suggested by Figure~\ref{fig:mapsf}. 

\begin{figure}[htbp]
\setlength{\unitlength}{1pt}
\begin{picture}(400,200)
\put(0,0){\includegraphics[height=200pt]{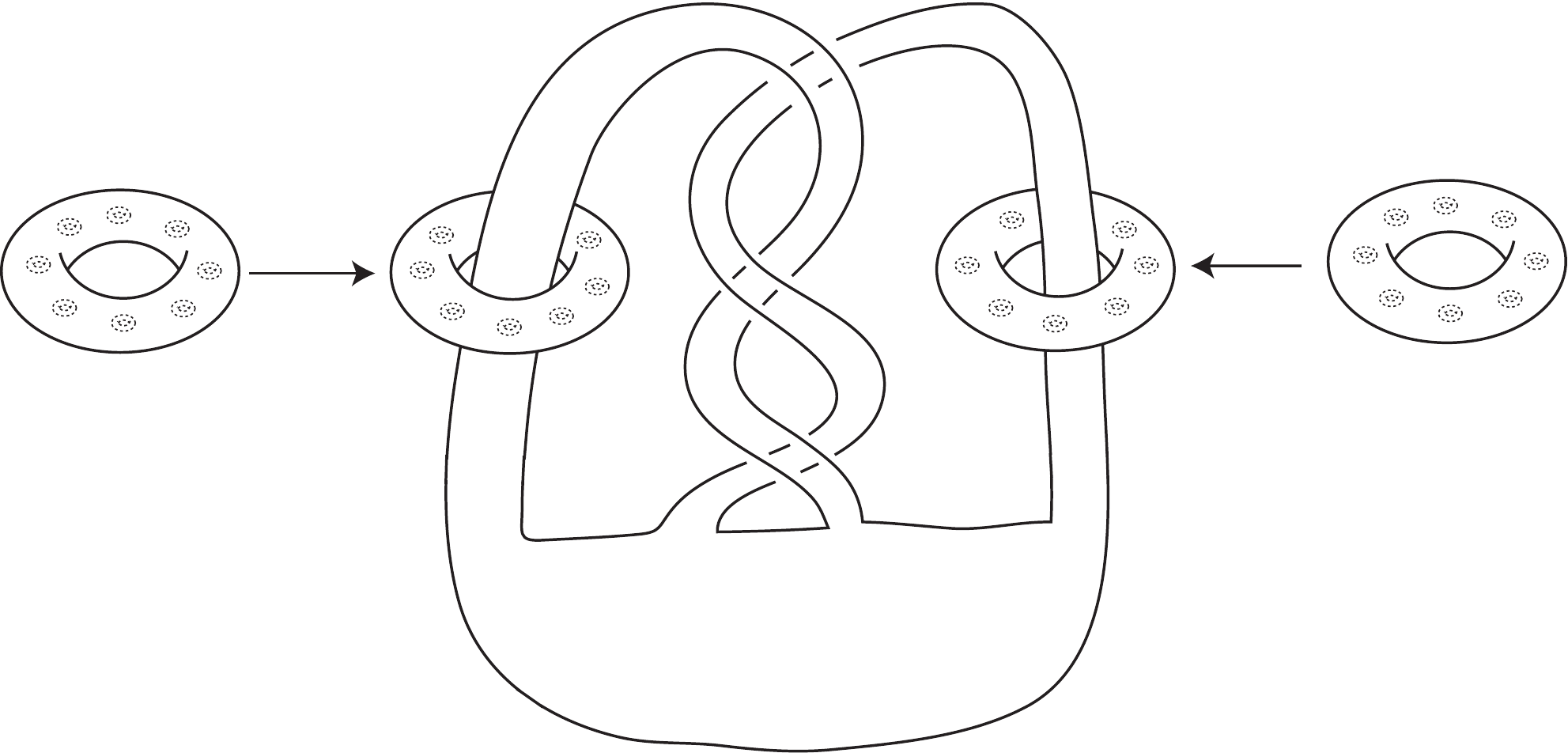}}
\put(77,136){$f^{i}_+$}
\put(323,137){$f^{i}_-$}
\put(6,93){$S^3-R_{i-1}$}
\put(361,95){$S^3-R_{i-1}$}
\put(95,71){}
\put(306,71){}
\put(122,11){}
\end{picture}
\caption{The embeddings  $S^3-R_{i-1}\hookrightarrow S^3-R_{i}$}\label{fig:mapsf}
\end{figure}
Let $\eta^0$ denote the meridian of $R_0$, the trivial knot. Let $\eta^1_+, \eta^1_-$ denote the two images $f_\pm^1(\eta^0)$ in $S^3-R_1$, shown in Figure~\ref{fig:exs_eta2} We call these `clones' of $\eta^0$. More generally, let $\{\eta^i_*\}$ denote the set of $2^i$ images of $\eta^0$ under the $2^i$ compositions $f_\pm^{i}\circ\dots\circ f_\pm^1$. Note that the induced maps
$$
(f_\pm^i)_*: \pi_1(S^3\backslash R_{i-1})\ra\pi_1(S^3\backslash R_{i})
$$
have images contained in the commutator subgroup. Thus the composition
$$
(f_\pm^{i})_*\circ\dots\circ(f_\pm^1)_*:  \pi_1(S^3\backslash R_0)\ra\pi_1(S^3\backslash R_1)^{(1)}\ra\dots\ra\pi_1(S^3\backslash R_i)^{(i)}
$$
has image in $\pi_1(S^3\backslash R_i)^{(i)}$. Therefore we see that each of the clones $\{\eta^i_*\}$ lies in $\pi_1(S^3\backslash R_i)^{(i)}$ and in particular each of the clones $\{\eta_*^n\}$ lies in $\pi_1(S^3\backslash R_n)^{(n)}$. The superscript $i$ of $\{\eta^i_*\}$ can serve to remind the reader in which term of the derived series it lies. 

The following establishes that $J_n(K)$ has a variety of different descriptions.

\begin{prop}\label{prop:altdescriptions} For any knot $K$ and $i$, $0\leq i \leq n$, $J_n(K)$ can be obtained from $R_i$ by multiple infections along the $2^i$ clones
$$
\{\eta^i_*\}= ~\{f^{i}_{\pm}\circ\dots\circ f^1_{\pm}(\eta^0)\},
$$
using knot $J_{n-i}(K)$ as the infecting knot in each case, and each clone $\eta^i_*$ lies in $\pi_1(S^3-R_i)^{(i)}$.
\end{prop}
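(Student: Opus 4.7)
Proof proposal: I would prove the slightly stronger claim that for \emph{any} knot $J$, $J_i(J)$ is obtained from $R_i$ by $2^i$ infections along the clones $\{\eta^i_*\}$ using $J$ as the infecting knot in every case. The original proposition follows by applying this claim with $J = J_{n-i}(K)$, together with the identity $J_n(K) = J_i(J_{n-i}(K))$, which is immediate from the recursive definition $J_{m+1} = R_1(\eta^1_\pm, J_m)$ (the operators $J_a$ compose as $J_a \circ J_b = J_{a+b}$). The derived series claim $\eta^i_* \in \pi_1(S^3-R_i)^{(i)}$ was already proved in the paragraph immediately preceding the proposition statement and needs no further work.

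The strengthened claim is proved by induction on $i$. For $i=0$, $R_0$ is the unknot and its unique clone $\eta^0$ is a meridian; infection along a meridian using $J$ is by definition (indeed by the alternative ``glue-in-the-exterior'' description of infection) the same as replacing $R_0$ by $J$, which gives $J_0(J) = J$. For the inductive step, write $J_{i+1}(J) = J_i(J_1(J))$ using the composition identity above, and apply the inductive hypothesis with infecting knot $J_1(J)$: this identifies $J_{i+1}(J)$ with the result of infecting $R_i$ along the $2^i$ clones $\{\eta^i_*\}$, each time using $J_1(J)$.

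The key maneuver is then to recognize that since $J_1(J)$ is itself the result of an infection (on $R_1$, at $\eta^1_\pm$, using $J$), these $2^i$ simultaneous infections factor through two stages. First, infect $R_i$ along each clone $\eta^i_*$ using $R_1$; by the case $J=R_1$ of the claim (which is established in parallel, or equivalently by noting that $J_i(R_1) = J_{i+1}(U) = R_{i+1}$), this produces exactly $R_{i+1}$. Second, inside each of the $2^i$ inserted copies of $S^3 - R_1$ there remain two marked circles $\eta^1_\pm$, which are carried via the embeddings $f^{i+1}_\pm$ of Figure~\ref{fig:mapsf} precisely to the $2^{i+1}$ new clones $\{\eta^{i+1}_*\} = \{f^{i+1}_\pm \circ f^i_\pm \circ \cdots \circ f^1_\pm(\eta^0)\}$; performing the infection on these using $J$ completes the construction of $J_{i+1}(J)$.

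The main obstacle is verifying cleanly the ``associativity of infection'' used in the last step, namely that infection by a knot which is itself an infection can be done in two stages. This is most transparent from the gluing description of infection in Section~\ref{sec:Introduction}: infecting $R$ at $\eta$ with $K$ glues in $S^3 \setminus \nu(K)$, so if $K$ was produced from $L$ by gluing in $S^3 \setminus \nu(M)$ in place of a solid torus neighborhood of some curve $\gamma \subset S^3 \setminus L$, one may equivalently first glue in $S^3 \setminus \nu(L)$ and then replace the neighborhood of the image of $\gamma$ by $S^3 \setminus \nu(M)$. The identifications are canonical once one fixes the meridian/longitude conventions used in the definition, so this is essentially a bookkeeping verification rather than a technical difficulty.
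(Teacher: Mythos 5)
Your proof is correct in substance and shares the paper's two core ideas: an induction on $i$ together with the ``postpone/associativity of infection'' maneuver for factoring a nested infection into stages. Your reformulation of the statement (parametrized by an arbitrary knot $J$, with the original recovered via $J = J_{n-i}(K)$ and $J_n = J_i \circ J_{n-i}$) is clean, and the explicit gluing-description justification of associativity in your last paragraph is a nice touch the paper leaves implicit. However, the execution differs from the paper in the order in which the recursive structure is peeled, and this produces one small wrinkle. The paper's inductive step writes $J_n(K) = J_1(J_{n-1}(K))$, applies the inductive hypothesis (at level $i-1$) to the two inner copies of $J_{n-1}(K)$, and then postpones infections; the resulting $R_i$ is assembled as $R_1$ with two $R_{i-1}$'s glued in, which is precisely the defining decomposition, so the identification of circles with clones is immediate from $\eta^i_* = f^i_\pm(\eta^{i-1}_*)$. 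Your step instead writes $J_{i+1}(J) = J_i(J_1(J))$, applies the hypothesis (at level $i$) with infecting knot $J_1(J)$, and then decomposes the infecting knot; the resulting $R_{i+1}$ is assembled in the \emph{opposite} direction, as $R_i$ with $2^i$ copies of $R_1$ glued in at $\{\eta^i_*\}$. Consequently, the embeddings $S^3 \smallsetminus R_1 \hookrightarrow S^3 \smallsetminus R_{i+1}$ you invoke are not the single maps $f^{i+1}_\pm$ (those have domain $S^3 \smallsetminus R_i$, not $S^3 \smallsetminus R_1$), but rather the compositions $f^{i+1}_\pm \circ \cdots \circ f^2_\pm$, and one must check that your inner-to-outer reassembly of $R_{i+1}$ does in fact produce exactly these compositions as the gluing maps, so that the marked circles land on $\{\eta^{i+1}_*\}$ as defined in the paper. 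This is true by the self-similar structure of the family, and is routine, but it is a genuine verification that the paper's choice of peeling order makes definitional. You should spell it out rather than appeal to Figure~\ref{fig:mapsf} directly, whose maps do not apply as written.
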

\begin{proof} We proceed by `induction' on $i$. In the base case, $i=0$, for any $n$, there is only one clone, namely $\eta^0$ itself. Then the claim is merely that if one infects the unknot by $J_n(K)$ along a meridian then the result is $J_n(K)$, which is obviously true.

Assume that the Proposition is true for some fixed $i-1$ for \emph{any} $n$ such that $n\geq i-1$. Then consider fixed $i$ and arbitrary $n$ subject to $n\geq i$. Recall that $S^3-J_n(K)$ can be obtained by deleting the two solid tori as shown in the Figure~\ref{fig:family2} and replacing them with two copies of $S^3-J_{n-1}(K)$.
\begin{figure}[htbp]
\setlength{\unitlength}{1pt}
\begin{picture}(400,200)
\put(0,0){\includegraphics[height=200pt]{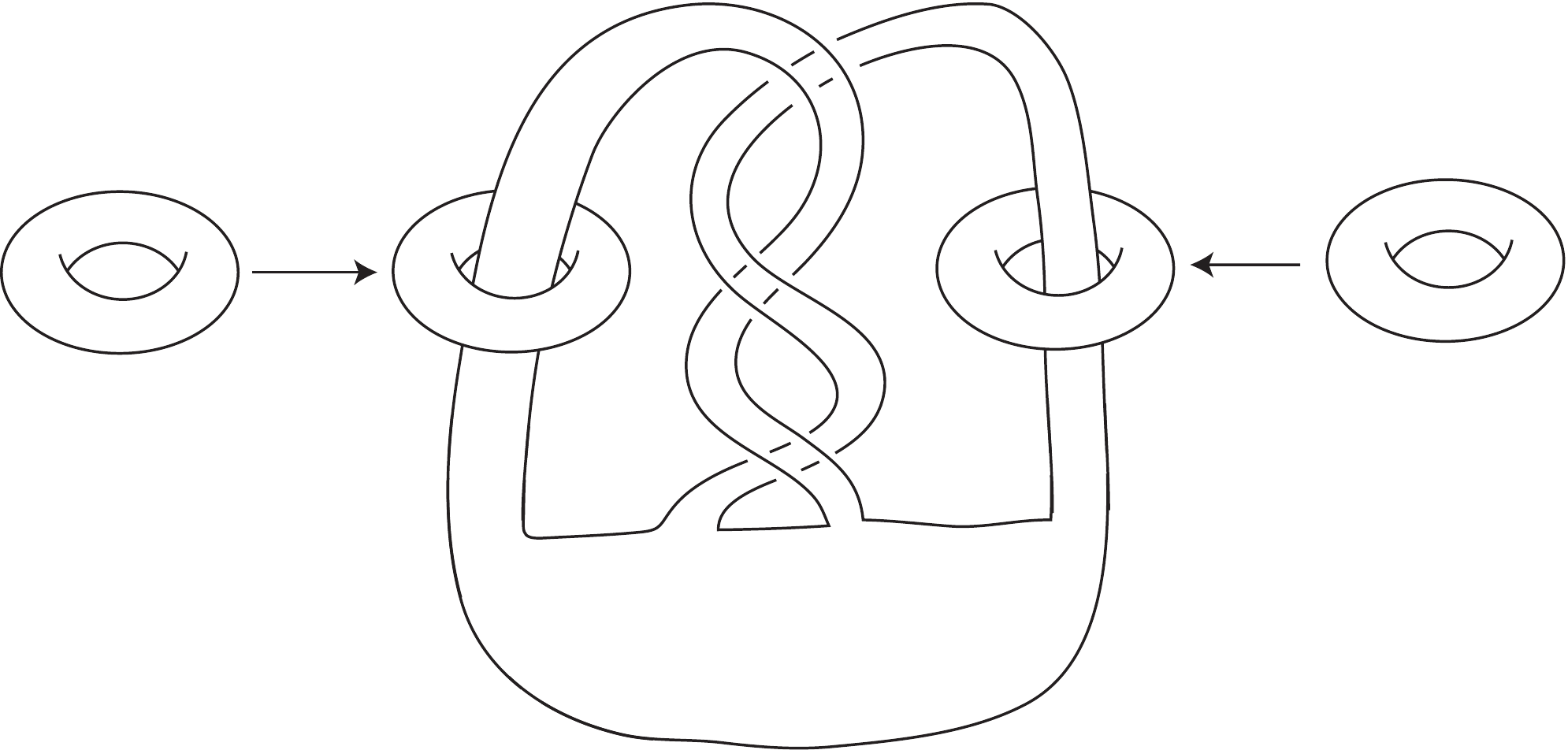}}
\put(77,136){}
\put(323,137){}
\put(6,93){$S^3-J_{n-1}$}
\put(361,95){$S^3-J_{n-1}$}
\put(95,71){}
\put(306,71){}
\put(122,11){}
\end{picture}
\caption{One definition of $S^3-J_{n}$}\label{fig:family2}
\end{figure}
By the inductive hypothesis for (n-1,i-1),  $S^3-J_{n-1}$ can be obtained from $S^3-R_{i-1}$ by infections on the $2^{i-1}$ clones $\{\eta^{i-1}_*\}\equiv ~\{f^{i-1}_{\pm}\circ\dots\circ f^1_{\pm}(\eta^0)\}$ (shown schematically by the very small solid tori in Figure~\ref{fig:alternativeviews} ) using the knot $J_{n-i}(K)$ as the infecting knot in each case. Thus replacing the $2^i$ solid tori shown in Figure~\ref{fig:alternativeviews} by copies of $S^3-J_{n-i}(K)$ yields $S^3-J_n$.
\begin{figure}[htbp]
\setlength{\unitlength}{1pt}
\begin{picture}(400,200)
\put(0,0){\includegraphics[height=200pt]{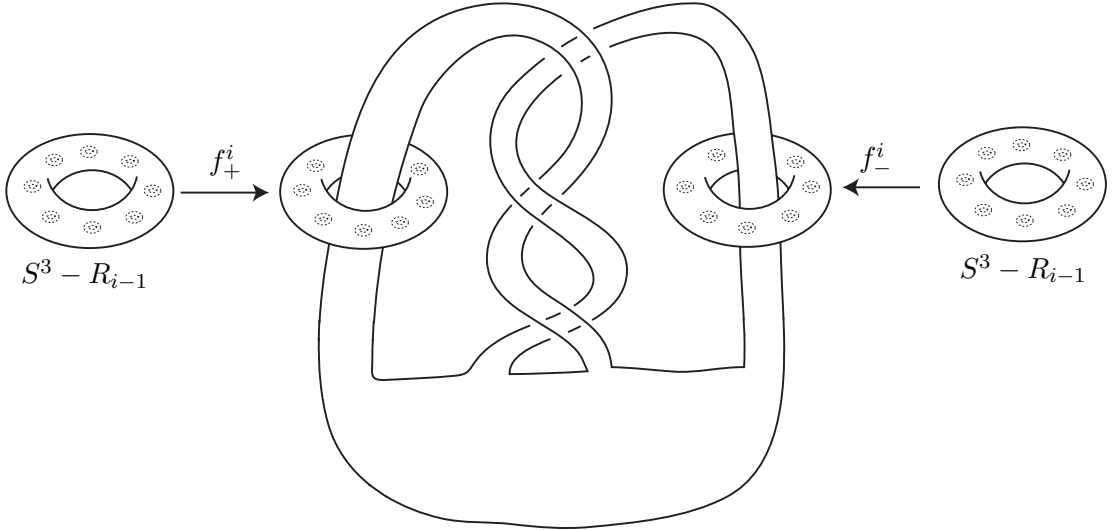}}
\put(77,136){$f^{i}_+$}
\put(323,137){$f^{i}_-$}
\put(6,93){$S^3-R_{i-1}$}
\put(361,95){$S^3-R_{i-1}$}
\end{picture}
\caption{$J_{n}$ as the result of $2^i$ infections on $R_i$}\label{fig:alternativeviews}
\end{figure}
If we alter our point of view by \emph{postponing} (ignoring for the moment) the infections, then we are precisely in the situation of Figure~\ref{fig:mapsf}, that is if we first replace the two fat solid tori by two copies of $S^3-R_{i-1}$ (by convention the maps are named $f_{\pm}^{i}: S^3-R_{i-1}\to S^3-R_{i}$), then we arrive, by definition, at $R_i$. The two collections of images in $S^3-R_i$ of the $2^{i-1}$ clones are precisely the $2^i$ clones $\{\eta^{i}_*\}\equiv ~\{f^{i}_{\pm}\circ\dots\circ f^1_{\pm}(\eta^0)\}$. If we \emph{then} perform these $2^i$ infections using the knot $J_{n-i}(K)$ as the infecting knot in each case, we arrive at the description claimed in the Proposition. This completes the inductive step.
\end{proof}

\begin{cor}\label{cor:infection} $J_n(K)$ may be obtained from the ribbon knot $R_n$ as the result of $2^n$ infections along clones, $\{f^{n}_{\pm}\circ\dots\circ f^1_{\pm}(\eta^0)\}$, that lie in $\pi_1(S^3\backslash R_n)^{(n)}$, using the knot $K$ as the infecting knot each time.
\end{cor}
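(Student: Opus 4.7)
The plan is to obtain this corollary as the special case $i=n$ of Proposition~\ref{prop:altdescriptions}, which has already been established in the excerpt. The only substantive content beyond a direct specialization is to identify $J_{n-i}(K)$ with $K$ when $i=n$, and to recall why the clones at level $n$ land in the $n$-th derived subgroup.

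First I would invoke Proposition~\ref{prop:altdescriptions} with $i=n$. The proposition tells us that $J_n(K)$ can be obtained from $R_n$ by infections along the $2^n$ clones
$$
\{\eta^n_*\}=\{f^{n}_{\pm}\circ\cdots\circ f^1_{\pm}(\eta^0)\}
$$
using the knot $J_{n-n}(K)=J_0(K)$ as the infecting knot in each case. Since by the base definition $J_0(K)=K$, the infecting knot is $K$ at every clone, which is exactly the first assertion of the corollary.

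Second, I would reiterate the derived-series assertion. Each inclusion $f^i_{\pm}\colon S^3-R_{i-1}\hookrightarrow S^3-R_i$ realizes the infection geometrically, and one checks (as already noted above Proposition~\ref{prop:altdescriptions}) that the image of $(f^i_\pm)_*$ lies in $\pi_1(S^3-R_i)^{(1)}$ because the curves $f^i_\pm(\eta^0)$ bound surfaces in $S^3-R_i$ (the complement is a homology circle and the image is in the commutator subgroup). Iterating, the composition
$$
(f^n_{\pm})_*\circ\cdots\circ(f^1_{\pm})_*\colon \pi_1(S^3-R_0)\longrightarrow \pi_1(S^3-R_n)^{(n)}
$$
has image in $\pi_1(S^3-R_n)^{(n)}$. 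Consequently each clone $\eta^n_*=f^n_{\pm}\circ\cdots\circ f^1_{\pm}(\eta^0)$ lies in $\pi_1(S^3-R_n)^{(n)}$, as required.

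There is no real obstacle here; the corollary is essentially a relabeling of the $i=n$ case of Proposition~\ref{prop:altdescriptions}. The only thing to be careful about is to verify the inductive statement was proved for all $n\geq i$, in particular for $n=i$, so that setting $i=n$ is legitimate — but this is exactly the range quantified in the proposition.
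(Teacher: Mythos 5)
Your proposal is correct and matches the paper's own proof, which simply invokes Proposition~\ref{prop:altdescriptions} with $i=n$. The extra observations you make (that $J_{n-n}(K)=J_0(K)=K$ and why the clones lie in $\pi_1(S^3\backslash R_n)^{(n)}$) are just unpacking what the proposition already asserts, so no gap exists.
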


\begin{proof}[Proof of Corollary~\ref{cor:infection}] Apply Proposition~\ref{prop:altdescriptions} in the case $i=n$.
\end{proof}

Returning to the proof of Theorem~\ref{thm:main}, let $C_n$, for $n\ge1$, be the Cheeger-Gromov constant for $M_{R_n}$.  Now suppose $K$ is chosen so that $|\rho_0(K)|>C_n$. We shall show that no non-zero multiple of $J_n=J_n(K)$ is rationally $(n.5)$-solvable. In particular this will demonstrate that $J_n$ is of infinite order in the smooth and topological concordance groups. 
In view of Corollary~\ref{cor:infection}, we can apply the following theorem of Cochran-Teichner to finish the proof of Theorem~\ref{thm:main}, once we verify that the hypotheses of Theorem~\ref{thm:infection} do indeed hold in the case that $R=R_n$ $J=J_n(K)$ and the collection $\{\eta_i\}$ is the collection of $m=2^n$ clones, $\{f^{n}_{\pm}\circ\dots\circ f^1_{\pm}(\eta^0)\}$ described above. Note that the first criterion on $\{\eta_i\}$ in the hypotheses of Theorem~\ref{thm:infection} is already guaranteed by Corollary~\ref{cor:infection}.

\begin{thm}[Theorem 4.2 ~\cite{CT}]\label{thm:infection} Let $R$ be a slice knot and $M$ the $0$-framed surgery on $R$. Let $\{\eta_1,\dots,\eta_m\}$ be an oriented link in $S^3\smallsetminus R$ that is a trivial link in $S^3$. Suppose that
the $\{\eta_i\}$ have the following two properties:
\begin{itemize}

\item $[\eta_i]\in\pi_1(M)^{(n)}, \quad 1\leq i\leq m$,
\item  For \emph{any}
$(n)$-solution $W$ of $M$ there exists {\bf some} $i$ such that
$j_*(\eta_i)\notin\pi_1(W)^{(n+1)}_r$ where $j_*:\pi_1(M)\to\pi_1(W)$.
\end{itemize}

Then for any Arf invariant zero knot $K$ $\{K_1,\dots,K_m\}$ for which
$|\rho_0(K)|>C_M$ (the Cheeger-Gromov constant of M) the knot
\[
J=R(\eta_1,...,\eta_m,K,...,K)
\]
formed from $R$ by infection on the $\{\eta_i\}$ is $(n)$-solvable but not rationally $(n.5)$-solvable. Moreover, $J$ is of infinite order in $\mathcal{F}_{n}/\mathcal{F}_{n.5}$. If the Arf invariant of $K$ is not zero then the result still holds except that $J$ may be only rationally $(n)$-solvable (and also $(n-1)$-solvable).
\end{thm}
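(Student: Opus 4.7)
The plan is to argue by contradiction, using additivity of the Cheeger--Gromov $\rho$-invariant under infection (Lemma~\ref{lem:additivity}) together with the hypothesis on $\{\eta_i\}$. The $(n)$-solvability of $J$ when $\mathrm{Arf}(K)=0$ is immediate from Proposition~\ref{prop:operatorsfiltration}: since $K\in \mathcal{F}_0$ and each $\eta_i\in \pi_1(M_R)^{(n)}$ for the slice knot $R$, the infection sends $\mathcal{F}_0$ into $\mathcal{F}_n$.

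For the harder direction, suppose for contradiction that $J$ is rationally $(n.5)$-solvable via a $4$-manifold $W$. The first step is to upgrade $W$ to an $(n)$-solution $V$ of $M_R$, so that the hypothesis on $\{\eta_i\}$ can be invoked. Construct $V$ by gluing $W$ along $M_J$ to the cobordism $E$ of Lemma~\ref{lem:mickeyfacts} (whose outer boundary is $M_J$ and inner boundary is $M_R\sqcup \coprod_{i=1}^m M_K$), and then capping off each $M_K$ boundary component with a $(0)$-solution $V_K$ (which exists because $\mathrm{Arf}(K)=0$). A Mayer--Vietoris argument using property~(3) of Lemma~\ref{lem:mickeyfacts} and Lemma~\ref{lem:mickeysig} shows that $V$ is a rational $(n)$-solution of $M_R$: $H_2(V)$ is assembled from $H_2(W)$, which carries an $(n+1)$-Lagrangian with $(n)$-duals, together with the $H_2(V_K)$-pieces whose generators sit inside $\pi_1(V)^{(n)}$ by the infection structure (since $\eta_i\in \pi_1(M_R)^{(n)}$). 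The hypothesis then produces some $\eta_i$ with $j_*(\eta_i)\notin \pi_1(V)^{(n+1)}_r$; since any group homomorphism preserves the rational derived series, the inclusion $W\hookrightarrow V$ sends $\pi_1(W)^{(n+1)}_r$ into $\pi_1(V)^{(n+1)}_r$, so the longitudinal push-off $\eta_i^+$ (identified with $\eta_i$ in $E$ by Lemma~\ref{lem:mickeyfacts}(4)) satisfies $\iota_*(\eta_i^+)\notin \pi_1(W)^{(n+1)}_r$, where $\iota:M_J\hookrightarrow W$.

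Now set $\Gamma := \pi_1(W)/\pi_1(W)^{(n+1)}_r$, which is PTFA by~\cite[Section 3]{Ha1} and satisfies $\Gamma^{(n+1)}=1$, and let $\phi:\pi_1(M_J)\to \Gamma$ be the restriction of the quotient map. By the above, $\phi(\eta_i^+)\neq 1$. The extension of Theorem~\ref{thm:oldsliceobstr} to $(n.5)$-solutions, given in the appendix, yields $\rho(M_J,\phi)=0$. Since each $\eta_i\in \pi_1(M_R)^{(n)}$ and $\Gamma^{(n+1)}=1$, Lemma~\ref{lem:additivity} then gives
\[
0 \;=\; \rho(M_J,\phi) \;=\; \rho(M_R,\phi_R) \;+\; |A|\,\rho_0(K),
\]
where $A:=\{i : \phi_R(\eta_i)\neq 1\}$, with $|A|\geq 1$ by the previous paragraph. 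The Cheeger--Gromov bound (Proposition~\ref{prop:rho invariants}(5)) forces $|\rho(M_R,\phi_R)|<C_M$, and since the $|A|$ summands are each equal to $\rho_0(K)$ they add coherently; the hypothesis $|\rho_0(K)|>C_M$ therefore gives $|\rho(M_J,\phi)|\geq |\rho_0(K)|-C_M>0$, a contradiction.

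For the infinite-order statement, the same template applies to a rational $(n.5)$-solution $W$ of $M_{pJ}$ for any $p\neq 0$: realize $pJ$ as $pm$ infections on the slice knot $pR$ along $p$ disjoint copies of the $\{\eta_i\}$, and repeat the construction of step one for each copy of $R$ inside $pR$. The main technical obstacle lies in this first step --- verifying that the glued $4$-manifold is an $(n)$-solution of $M_{pR}$ and controlling the Cheeger--Gromov contribution across the $p$ copies --- and is handled by iterating the single-copy construction above and exploiting the slice structure of $pR$; once at least one $\eta_i$ per copy is shown to survive, the $p|A|$ contributions of $\rho_0(K)$ dominate the Cheeger--Gromov bound, yielding the desired contradiction.
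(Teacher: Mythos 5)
This statement is quoted from Cochran--Teichner~\cite{CT} and is not proved in the present paper; the paper instead proves the link analogue, Theorem~\ref{thm:linkinfection}. Comparing your proposal against that proof is therefore the right benchmark.

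Your argument for the case $p=1$ (i.e.\ that $J$ itself is not rationally $(n.5)$-solvable) is essentially correct, and is a legitimate reorganization of the paper's strategy. The basic moves are the same: build a rational $(n)$-solution $V$ of $M_R$ from the putative $(n.5)$-solution $W$ of $M_J$ by gluing on the cobordism $E$ of Lemma~\ref{lem:mickeyfacts} and capping with $(0)$-solutions for $K$; invoke the hypothesis to produce an $\eta_i$ that survives in $\pi_1(V)/\pi_1(V)^{(n+1)}_r$; pull this back through the functoriality of the rational derived series to get survival in $\Gamma=\pi_1(W)/\pi_1(W)^{(n+1)}_r$; then use the vanishing theorem for $(n.5)$-solutions together with Lemma~\ref{lem:additivity} and the Cheeger--Gromov bound. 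You compute $\rho(M_J,\phi)$ directly via the vanishing theorem, whereas the paper's link proof computes $\rho(M_R,\phi)$ by splitting the signature defect across the pieces $V$, $C$, $E^j$, $Y^j$, $Z^j_i$; these are rearrangements of the same identity. (One typo: $W$ has a rational $n$-Lagrangian with rational $(n+1)$-duals, not an $(n+1)$-Lagrangian with $(n)$-duals.)

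The genuine gap is in the infinite-order statement, which you only sketch. Two specific problems. First, the hypothesis of the theorem is about $(n)$-solutions of $M_R$, not of $M_{pR}$; your sketch builds a bounding $4$-manifold for $M_{pR}$ and then tries to apply the hypothesis ``per copy,'' but the hypothesis gives no information about solutions of $M_{pR}$. The device that resolves this in the paper's link argument (and, one expects, in~\cite{CT}) is the explicit cobordism $C$ from $M_{\#^p L}$ to the disjoint union $\coprod^p M_L$ and, crucially, capping off \emph{all but one} of the resulting $M_R$ boundary components with slice-disk exteriors $Y^j$. This converts the data of an $(n.5)$-solution of $M_{pJ}$ into a rational $(n)$-solution of a \emph{single} $M_R$, to which the hypothesis then applies. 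Your appeal to ``iterating the single-copy construction and exploiting the slice structure of $pR$'' does not describe this step. Second, you claim ``at least one $\eta_i$ per copy is shown to survive'' and that the ``$p|A|$ contributions of $\rho_0(K)$ dominate.'' The argument does not, and need not, establish survival in every copy; the conclusion of the hypothesis (applied to the assembled solution of $M_R$) is only that $\sum_i d_i > 0$ where $d_i$ counts the copies in which $\eta_i$ survives, and the positivity assumption on $\rho_0(K_i)$ then gives the contradiction with a single surviving $\eta_i$. Asserting survival in every copy is both unjustified and unnecessary.
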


The proof of Theorem~\ref{thm:main} has thus been reduced to the following theorem. This theorem replaces the difficult and somewhat mysterious results of Cochran-Teichner ~\cite[Section 6]{CT} and Cochran-Kim ~\cite[Section 6]{CK}

\begin{defn}\label{defn:ghosts} Let $\mu_i$ denote a meridian of $R_i$. For $n\geq i$, a ghost of $\mu_i$, denoted $(\mu_i)_*$ is an element of the set of $2^{n-i}$ circles $\{f^{n}_{\pm}\circ\dots\circ f^{i+1}_{\pm}(\mu_i)\}$. Thus, for any $i$, the ghosts of $\mu_i$ live in $S^3-R_n$ and $(\mu_i)_*\in \pi_1(S^3-R_n)^{(n-i)}$. These circles are precisely the meridians of the \textbf{copies} of $S^3-R_i$ that are embedded in $S^3-R_n$ by the maps $\{f^{n}_{\pm}\circ\dots\circ f^{i+1}_{\pm}\}$. Note that $\mu_0$ is the meridian of $R_0=U$ so $\mu_0=\eta^0$. Thus in particular, taking $i=0$, the ghosts of $\mu_0$ coincide with the clones $\{\eta^n_*\}$, that is $\{(\mu_0)_*\}=\{\eta^n_*\}$.
\end{defn}

An example is shown for $R_2$ ($n=2$) in Figure~\ref{fig:ghosts} where the $4$ ghosts of $\mu_0$ are shown. Notice that they coincide with the $4$ clones $\{\eta^2_*\}$ which were shown in Figure~\ref{fig:R2}. The $2$ ghosts of $\mu_1$ and the single ghost of $\mu_2$ are also shown in Figure~\ref{fig:ghosts}.

\begin{thm}\label{thm:iteratednontriviality} Let $R_n$ be the ribbon knot $J_n(U)$ as above and $0\leq k \leq n$. Suppose $W$ is an \emph{arbitrary} rational $k$-solution for $M_{R_n}$. Then at least one of the ghosts of $\mu_{n-k}$ maps non-trivially under the inclusion-induced map
$$
j_*:\pi_1(M_{R_n})\to \pi_1(W)/\pi_1(W)_r^{(k+1)}.
$$
In particular, taking $k=n$, at least one of the clones $\{\eta^n_*\}$ maps non-trivially under the inclusion-induced map
$$
j_*:\pi_1(M_{R_n})\to \pi_1(W)/\pi_1(W)_r^{(n+1)},
$$
as required in the second hypothesis of Theorem~\ref{thm:infection}.
\end{thm}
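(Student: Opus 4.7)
My plan is to prove Theorem~\ref{thm:iteratednontriviality} by induction on $k$, descending step by step through the rational derived series of $\pi_1(W)$. The base case $k=0$ is immediate: a rational $0$-solution $W$ induces an isomorphism on $H_1$ with $\mathbb{Q}$-coefficients, and the unique ghost of $\mu_n$ is $\mu_n$ itself, which generates $H_1(M_{R_n})\cong\mathbb{Z}$, so its image survives in $\pi_1(W)/\pi_1(W)_r^{(1)}$.

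For the inductive step, given a rational $k$-solution $W$ (which is also a rational $(k{-}1)$-solution), the inductive hypothesis provides some ghost of $\mu_{n-k+1}$ that survives in $\Lambda := \pi_1(W)/\pi_1(W)_r^{(k)}$; let $\psi : \pi_1(W) \to \Lambda$ be the projection. I would then apply Proposition~\ref{prop:altdescriptions} with $i = k-1$ to view $R_n$ as the result of infecting $R_{k-1}$ along the $2^{k-1}$ clones $\{\eta^{k-1}_*\} \subset \pi_1(M_{R_{k-1}})^{(k-1)}$, using $R_{n-k+1}$ as the infecting knot in each case. In this description each clone corresponds bijectively to a copy of $S^3 - R_{n-k+1}$ embedded in $S^3 - R_n$, and the meridian of that copy --- identified by the infection construction with the longitudinal pushoff of the clone --- is precisely a ghost of $\mu_{n-k+1}$. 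Hence the surviving ghost produced by the inductive hypothesis pinpoints a particular clone $\alpha$ with $\psi(\alpha^+)\neq 1$, placing the corresponding index $i_0$ into the set $A$ appearing in Theorem~\ref{thm:nontriviality}.

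Invoking Theorem~\ref{thm:nontriviality} with $R = R_{k-1}$, $L = R_n$, $K_i = R_{n-k+1}$, coefficient group $\Lambda$ (which satisfies $\Lambda^{(k)}=1$), and $\psi$ as the projection, I obtain a submodule $P_{i_0} \subseteq \mathcal{A}_0(R_{n-k+1})$ that is self-annihilating under the classical Blanchfield form $\mathcal{B}\ell_0$. Because $R_{n-k+1}$ is built from $R_1$ by null-homologous infection, $\mathcal{A}_0(R_{n-k+1}) \cong \mathcal{A}_0(R_1)$ as modules with Blanchfield pairing, with the two infection curves $\eta^1_+, \eta^1_-$ (read inside the $i_0$-th copy of $R_{n-k+1}$) serving as generators. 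Since $R_1 = 9_{46}$ satisfies $\mathcal{B}\ell_0(\eta^1_+, \eta^1_-) \neq 0$, the submodule $P_{i_0}$ cannot contain both generators, so at least one --- call it $\eta^1_+$ --- maps nontrivially to $H_1(W;\mathbb{Q}\Lambda)$. Under the infection identifications, this $\eta^1_+$ is precisely a ghost of $\mu_{n-k}$ in $S^3 - R_n$, and its nontriviality in $H_1(W;\mathbb{Q}\Lambda)$ translates to nontriviality in $\pi_1(W)/\pi_1(W)_r^{(k+1)}$, completing the inductive step.

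The main obstacle I anticipate is the careful bookkeeping of the nested identifications: one must verify that the two generators $\eta^1_\pm$ of $\mathcal{A}_0(R_{n-k+1})$, when interpreted inside a nested copy of $R_{n-k+1}$ sitting in $S^3 - R_n$, really do correspond to two distinct ghosts of $\mu_{n-k}$, and that the derived-series positions align (in particular, that $\eta^1_\pm$ lands in $\pi_1(M_{R_n})^{(k)}$, using the commutator-lifting results of~\cite[Theorem 8.1]{C} or~\cite{Lei3}). Once this dictionary is in place, the rest is a clean assembly of Proposition~\ref{prop:altdescriptions}, Theorem~\ref{thm:nontriviality}, and the non-singularity of the classical Blanchfield pairing on $9_{46}$.
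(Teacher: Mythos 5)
Your proposal is correct and follows essentially the same route as the paper's proof: induction on $k$, with the base case using the $H_1(\cdot;\mathbb{Q})$-isomorphism, and the inductive step applying Proposition~\ref{prop:altdescriptions} with $i=k-1$, invoking Theorem~\ref{thm:nontriviality} against the coefficient system $\Lambda=\pi_1(W)/\pi_1(W)_r^{(k)}$, using non-singularity of the classical Blanchfield form of $R_1$ to rule out both generators dying, and then identifying the surviving generator with a ghost of $\mu_{n-k}$ via the $H_1(W;\mathbb{Q}\Lambda)\cong(\pi^{(k)}_r/[\pi^{(k)}_r,\pi^{(k)}_r])\otimes\mathbb{Q}$ isomorphism. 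The only minor cosmetic difference is that you cite the specific fact $\mathcal{B}\ell_0(\eta^1_+,\eta^1_-)\neq 0$ for $9_{46}$ where the paper argues that $P_i$ would otherwise be the entire module, contradicting non-singularity; both arguments are equivalent here since the two clones generate $\mathcal{A}_0(R_1)$.
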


\begin{figure}[htbp]
\setlength{\unitlength}{1pt}
\begin{picture}(500,150)
\put(40,20){\includegraphics[width= 5 in]{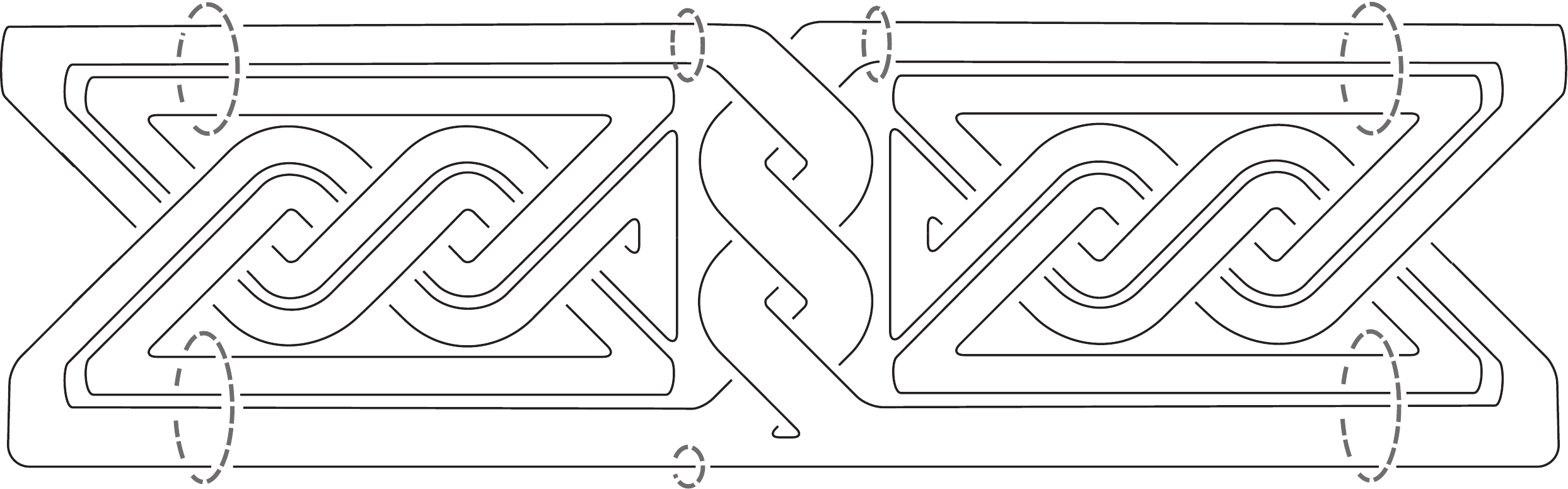}}
\put(79,7){$\mu_0$}
\put(77,137){$\mu_0$}
\put(349,7){$\mu_0$}
\put(343,137){$\mu_0$}
\put(195,8){$\mu_2$}
\put(195,136){$\mu_1$}
\put(237,136){$\mu_1$}
\end{picture}
\caption{The ghosts of $\mu_0$, $\mu_1$ and $\mu_2$ in $R_2$}\label{fig:ghosts}
\end{figure}

\begin{proof}[Proof of Theorem~\ref{thm:iteratednontriviality}] Here we view $n$ as fixed and proceed by induction on $k$. First suppose $k=0$. In this degenerate case $\mu_{n-k}=\mu_{n}$ is merely the meridian of $R_n$. Then there is only one ghost of $\mu_n$, namely $\mu_n$ itself. Clearly $\mu_n$ generates $H_1(M_{R_n};\mathbb{Q})$. Since $W$ is a rational $0$-solution
$$
j_*:~H_1(M_{R_n};\mathbb{Q})\to H_1(W;\mathbb{Q})
$$ 
is an isomorphism. Thus $j_*(\mu_n)\neq 0$ in $H_1(W;\mathbb{Q})$. But 
$$
\pi_1(W)/\pi_1(W)_r^{(1)}\cong (H_1(W;\mathbb{Z})/\text{Torsion})\hookrightarrow H_1(W;\mathbb{Q}),
$$
so $j_*(\mu_n)\neq 0$ in $\pi_1(W)/\pi_1(W)_r^{(1)}$. Thus the conclusion of Theorem~\ref{thm:iteratednontriviality} holds for $k=0$.

Now suppose that the Lemma is true for some $k-1$ where $1\leq k \leq n$. We will establish it for $k$. So consider a rational $(k)$-solution $W$ for $M_{R_n}$. Note that $W$ is \emph{a fortiori} a rational ($k-1$)-solution. Let $\Lambda=\pi_1(W)/\pi_1(W)^{(k)}_r$ and let $\psi:\pi_1(W)\to \Lambda$, and $\phi:\pi_1(M_{R_n})\to \Lambda$ be the induced coefficient systems. Then the inductive hypothesis applies to $W$ for the value $k-1$ and allows us to conclude that for at least one ghost of $\mu_{n-k+1}$,  we have ~$\phi(\mu_{n-k+1})_*)\neq 1$.  

We can then apply Proposition~\ref{prop:altdescriptions} with $K=U$ , the unknot, and $i=k-1$, to deduce that $J_n(U)$, i.e. $R_n$,
can be obtained from $R_{k-1}$ by infections along the clones $\{\eta^{k-1}_*\}\subset (S^3-R_{k-1})$ using the knot $R_{n-k+1}$ as infecting knot in each case, where
$$
\{\eta^{k-1}_*\}=~\{f^{k-1}_{\pm}\circ\dots\circ f^1_{\pm}(\eta^0)\}.
$$
In summary then, in the notation of Theorem~\ref{thm:nontriviality},
$$
R_n=R_{k-1}(\eta^{k-1}_i,(R_{n-k+1})_i)
$$
where $(R_{n-k+1})_i$ is a copy of $R_{n-k+1}$. Applying Theorem~\ref{thm:nontriviality} we see that the kernel, $P_i$, of the composition
$$
\mathcal{A}_0(R_{n-k+1})\to \mathcal{A}_0(R_{n-k+1}) \otimes\mathbb{Q}\Lambda\overset{\i_*}{\to} H_1(M_{R_n};\mathbb{Q}\Lambda)\overset{j_*}\to H_1(W;\mathbb{Q}\Lambda),
$$
satisfies $P_i\subset P_i^\perp$ for any clone $\eta^{k-1}_i$ such that $\phi(\eta^{k-1}_i)\neq 1$. We claim that there is at least one such clone $\eta^{k-1}_i$. For, by definition of infection, when we infect $R_{k-1}$ along $\eta^{k-1}_i$, the circle $\eta^{k-1}_i$  or more precisely, the longitude of such a circle, becomes identified to the meridian of that copy of the infecting knot $(R_{n-k+1})^i$. This meridian is not really a meridian of the abstract knot $R_{n-k+1}$, but rather an embedded copy of that meridian in $S^3-R^n$. In fact it is precisely one of the ghosts of $\mu_{n-k+1}$, $\{f^{n}_{\pm}\circ\dots\circ f^{i+1}_{\pm}(\mu_{n-k+1})\}$. By our inductive assumption, for at least one of these ghosts, ~$\phi((\mu_{n-k+1})_*)\neq 1$. Thus we have verified that there is at least one such clone such that $\phi(\eta^{k-1}_i)\neq 1$. We now restrict attention to such a value of $i$.

The two circles
$$
f^{n-k+1}_\pm(\mu_{n-k}) \in \pi_1(S^3-R_{n-k+1})^{(1)}
$$
as shown in the Figure~\ref{fig:twoghosts}, form a generating set for $\mathcal{A}_0(R_{n-k+1})$ (which is isomorphic to $\mathcal{A}_0(R_{1})$ and hence nontrivial). 
\begin{figure}[htbp]
\setlength{\unitlength}{1pt}
\begin{picture}(400,200)
\put(0,0){\includegraphics[height=200pt]{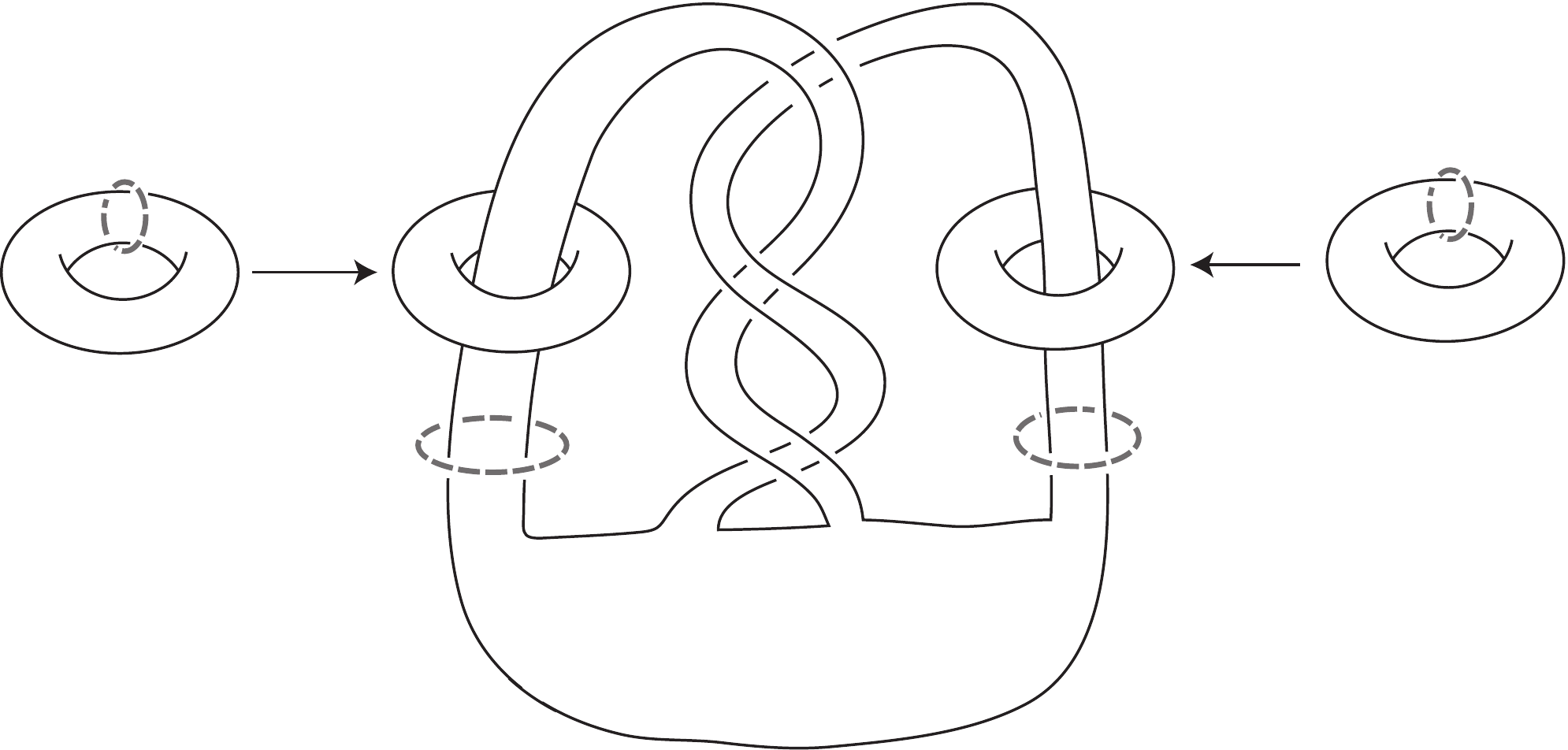}}
\put(74,136){$f^{n-k+1}_+$}
\put(321,137){$f^{n-k+1}_-$}
\put(6,93){$S^3-R_{n-k}$}
\put(361,95){$S^3-R_{n-k}$}
\put(52,69){$f^{n-k+1}_+(\mu_{n-k})$}
\put(306,71){$f^{n-k+1}_-(\mu_{n-k})$}
\put(18,160){$\mu_{n-k}$}
\put(370,160){$\mu_{n-k}$}
\end{picture}
\caption{$S^3-R_{n-k+1}$}\label{fig:twoghosts}
\end{figure}
We can be assured that at least one of the generators is not in $P_i$ since otherwise
$$
P_i=\mathcal{A}_0(R_{n-k+1})\subset\mathcal{A}_0(R_{n-k+1})^\perp,
$$
contradicting the nonsingularity of the classical Blanchfield form of $\mathcal{A}_0(R_{n-k+1})$. Finally, consider the commutative diagram below, where we abbreviate $\pi_1(W)$ by $\pi$. 
\begin{equation*}
\begin{CD}
\pi_1(S^3-R_{n-k+1})^{(1)}      @>i_*>>    \pi_1(M_{R_n})^{(k)}  @>j_*>>   \pi^{(k)}_r  @>>>
\pi^{(k)}_r/\pi^{(k+1)}_r \\
  @VVV   @VVV        @VVV       @VVjV\\
\mathcal{A}_0(R_{n-k+1})     @>i_*>>  H_1(M_{R_n};\mathbb{Q}\Lambda)    @>j_*>> H_1(W;\mathbb{Q}\Lambda) @>\cong>>
  (\pi^{(k)}_r/[\pi^{(k)}_r,\pi^{(k)}_r])\otimes_{\mathbb{Z}} \mathbb{Q}\\
\end{CD}
\end{equation*}
Recall that $H_1(W;\mathbb{Q}\Lambda)$ is identifiable as the ordinary rational homology of the covering space of $W$ whose fundamental group is the kernel of $\psi:\pi\to \Lambda$. Since this kernel is precisely $\pi^{(k)}_r$, we have that
$$
H_1(W;\mathbb{Q}\Lambda)\cong (\pi^{(k)}_r/[\pi^{(k)}_r,\pi^{(k)}_r])\otimes_{\mathbb{Z}} \mathbb{Q}
$$
as indicated in the diagram. Note that, essentially by definition, the vertical map $j$ is injective. Therefore, since the composition in the bottom row sends one of the two homology classes $[f^{n-k+1}_\pm(\mu_{n-k})]$ to non-zero, the composition in the top row sends at least one of the two $f^{n-k+1}_\pm(\mu_{n-k})$ to a non-zero homotopy class.
Now observe that the inclusion-induced map $i_*$ in the top row above is induced by one of the compositions $f_\pm^{n}\circ\dots\circ f_\pm^{n-k+2}$. Thus
$$
f_i(f^{n-k+1}_\pm(\mu_{n-k}))=f_\pm^{n}\circ\dots\circ f_\pm^{n-k+2}\circ f^{n-k+1}_\pm(\mu_{n-k}).
$$
But for various values of $i$ these are precisely the ghosts $(\mu_{n-k})_*$. Consequently we have shown that at least one ghost such that
$$
j_*((\mu_{n-k})_*)\neq 1 ~\text{in} ~\pi^{(k)}_r/\pi^{(k+1)}_r
$$
as desired. 

This finishes the inductive proof of Theorem~\ref{thm:iteratednontriviality} and hence the proof of Theorem~\ref{thm:main}.
\end{proof}
\end{proof}

More generally, the proof above proves this more general result about iterated generalized doublings of knots.

\begin{thm}\label{thm:main3} If $R_j$, $1\leq j\leq n$, are slice knots and Arf($K$)$=0$, then the result, $R_n\circ\dots\circ R_1(K)$, of the n-times iterated generalized doubling lies in $\mathcal{F}_{n}$. If, additionally, , for each $j$, the submodule of the classical Alexander polynomial of $R_j$ generated by $\{\eta_{j1},\dots,\eta_{jm_j}\}$ contains elements $x,y$ such that $\mathcal{B}\ell_0^j(x,y)\neq 0$, where $\mathcal{B}\ell_0^j$ is the Blanchfield form of $R_j$, then there is a constant $C$, such that if the integral of the Levine signature function of $K$ is greater than $C$ in absolute value, then the resulting knot is of infinite order in the topologically concordance group (moreover no multiple lies in $\mathcal{F}_{n.5}$).
\end{thm}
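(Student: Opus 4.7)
The plan is to reduce the statement to Theorem~\ref{thm:infection} of Cochran-Teichner applied to a suitable base slice knot, following the template of the proof of Theorem~\ref{thm:main}. Set $L\equiv R_n\circ\cdots\circ R_1(K)$ and introduce the auxiliary knot $\tilde R\equiv R_n\circ\cdots\circ R_1(U)$. An easy induction on $n$ shows that $\tilde R$ is itself a slice knot: if $\tilde R_{j-1}\equiv R_{j-1}\circ\cdots\circ R_1(U)$ is slice, then capping off the $m_j$ infecting copies of $\tilde R_{j-1}$ inside $R_j$ with their slice disks produces a slice disk for $\tilde R_j=R_j(\tilde R_{j-1})$.

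Next, exactly as in Proposition~\ref{prop:altdescriptions} and Corollary~\ref{cor:infection}, unroll the $n$-fold composition and re-express $L$ as obtained from $\tilde R$ by a single round of $m_1 m_2 \cdots m_n$ infections along a collection of curves $\{\tilde\eta_\alpha\}\subset S^3-\tilde R$, using $K$ as the infecting knot each time. Here each clone $\tilde\eta_\alpha$ is the image of a meridian of $U$ under an iterated composition of natural ``doubling'' inclusions $f^{\,j}_i\colon S^3-\tilde R_{j-1}\hookrightarrow S^3-\tilde R_j$, one such inclusion for each of the $m_j$ infection sites at each level $j$. Since each $\eta_{j,i}$ represents an element of the rational Alexander module of $R_j$ and hence lies in $\pi_1(S^3-R_j)^{(1)}$, each $f^{\,j}_i$ induces a map into the commutator subgroup, and so after $n$ compositions every $\tilde\eta_\alpha$ lies in $\pi_1(S^3-\tilde R)^{(n)}$. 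This verifies the first hypothesis of Theorem~\ref{thm:infection} with base slice knot $\tilde R$.

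The main remaining step is to verify the second hypothesis: for every $(n)$-solution $W$ of $M_{\tilde R}$ there exists some clone $\tilde\eta_\alpha$ whose image in $\pi_1(W)^{(n)}_r/\pi_1(W)^{(n+1)}_r$ is nontrivial. This is a direct adaptation of Theorem~\ref{thm:iteratednontriviality}, carried out by induction on $k$ from $0$ to $n$: for any rational $(k)$-solution $W$ of $M_{\tilde R}$, at least one ghost (i.e., image under iterated $f^{\,l}_i$-inclusions) of the meridian of $\tilde R_{n-k}$ survives the map to $\pi_1(W)/\pi_1(W)^{(k+1)}_r$. The base case $k=0$ is that the meridian of $\tilde R$ itself generates $H_1(M_{\tilde R};\mathbb Q)\xrightarrow{\cong} H_1(W;\mathbb Q)$. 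In the inductive step from $k-1$ to $k$, the alternative description realizes $\tilde R$ as the result of infection on $\tilde R_{n-k+1}$ along clones of $\{\eta_{n-k+1,1},\ldots,\eta_{n-k+1,m_{n-k+1}}\}$ using $\tilde R_{k-1}$ as the infecting knot, and Theorem~\ref{thm:nontriviality} applies with coefficient system $\pi_1(W)\to\pi_1(W)/\pi_1(W)^{(k)}_r$. For each clone of some $\eta_{n-k+1,i}$ that survives this system (one exists by the inductive hypothesis at level $k-1$, since that ghost is precisely the meridian of the corresponding infecting copy of $\tilde R_{k-1}$), the kernel $P\subset \mathcal{A}_0(R_{n-k+1})$ of the induced map to $H_1(W;\mathbb Q\Lambda)$ satisfies $P\subset P^\perp$ with respect to $\mathcal{B}\ell_0^{n-k+1}$. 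If every ghost of the $\eta_{n-k+1,i}$ failed to survive the next quotient, the entire submodule of $\mathcal{A}_0(R_{n-k+1})$ generated by $\{\eta_{n-k+1,i}\}$ would lie in $P$; in particular the hypothesized $x,y$ with $\mathcal{B}\ell_0^{n-k+1}(x,y)\neq 0$ would be in $P$, contradicting $P\subset P^\perp$. Hence at least one ghost survives, completing the induction.

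With both hypotheses of Theorem~\ref{thm:infection} verified, its conclusion immediately yields that, if $|\rho_0(K)|$ exceeds the Cheeger-Gromov constant $C$ of $M_{\tilde R}$ and $\mathrm{Arf}(K)=0$, then $L\in\mathcal F_n$ but no nonzero multiple of $L$ lies in $\mathcal F_{n.5}$, and in particular $L$ has infinite order in the topological concordance group. The principal obstacle is the combinatorial bookkeeping of clones and ghosts in the heterogeneous doubling tower (where the slice knots $R_j$ and the infection counts $m_j$ vary with $j$) and the correct identification, at each inductive level, of the submodule of $\mathcal{A}_0(R_j)$ in which the Blanchfield nonsingularity hypothesis must be applied; once that labeling is in place, the argument proceeds in exact parallel to the proof of Theorem~\ref{thm:iteratednontriviality}.
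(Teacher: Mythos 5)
Your approach is the paper's: the paper proves Theorem~\ref{thm:main3} simply by remarking that the proof of Theorem~\ref{thm:main} (via Proposition~\ref{prop:altdescriptions}, Theorem~\ref{thm:iteratednontriviality}, Theorem~\ref{thm:nontriviality}, and the Cochran--Teichner Theorem~\ref{thm:infection}) carries over verbatim to a heterogeneous tower, which is precisely what you spell out, and your use of the Blanchfield hypothesis on the submodule generated by $\{\eta_{j,i}\}$ to rule out all ghosts lying in $P$ is the right replacement for the paper's genus-one nonsingularity argument. One labeling caution in your inductive step: the decomposition should peel off the \emph{outermost} $k-1$ operators as the base slice knot, i.e.\ infect $(R_n\circ\cdots\circ R_{n-k+2})(U)$ along $(k-1)$-deep clones of $\eta^0$ using $\tilde R_{n-k+1}=R_{n-k+1}\circ\cdots\circ R_1(U)$ as the infecting knot (whose Alexander module is identified with $\mathcal A_0(R_{n-k+1})$); you have the roles of base and infecting knot swapped and wrote $\tilde R_{k-1}$ where the outer composite belongs, although your subsequent claim that $P\subset\mathcal A_0(R_{n-k+1})$ and the contradiction with $\mathcal B\ell_0^{\,n-k+1}$ are consistent with the correct decomposition.
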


A nice application of the more general theorem is the following which gives new information about the concordance order of knots that previously could not be distinguished from an order two knot.

\begin{cor}\label{cor:torsion} There is a constant $D$ such that if the absolute value of the integral of the Levine signature function of $J_0$ is greater than $D$ then the knot $E$ of Figure~\ref{fig:torsionfigeight} is of infinite order in the concordance group.
\end{cor}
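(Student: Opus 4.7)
The plan is to recognize $E$ as an $(n{+}1)$-fold iterated generalized doubling of $J_0$ and then to invoke Theorem~\ref{thm:main3} directly. First I would identify the doubling structure. Let $R'$ be the ribbon knot obtained from the figure-eight pattern of Figure~\ref{fig:torsionfigeight} by replacing both copies of $J_n$ with the unknot, and let $\eta_+,\eta_-\subset S^3\setminus R'$ denote the two unknotted infection circles. Since each has linking number zero with $R'$, the circles lie in $\pi_1(S^3-R')^{(1)}$, and by construction
$$
E=R'(\eta_+,\eta_-;J_n,J_n).
$$
Combining this with the recursive description $J_n=R_1\circ\cdots\circ R_1(J_0)$ of Figure~\ref{fig:family} exhibits $E$ as an $(n{+}1)$-fold composition $E=R'\circ R_1^{\circ n}(J_0)$ of generalized doubling operators applied to $J_0$.

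Next I would verify the hypotheses of Theorem~\ref{thm:main3} for the sequence of operators $R_1,\ldots,R_1,R'$. Slice-ness is clear: $R_1$ is the ribbon knot $9_{46}$, and $R'$ is ribbon by construction. The condition Arf$(J_0)=0$ can be arranged by taking $J_0$ to be, for instance, a connected sum of an even number of trefoils with $|\rho_0(J_0)|$ as large as desired. For the Blanchfield condition at each $R_1$-step, the two curves $\eta^1_\pm$ are well known to generate the rational Alexander module $\mathcal{A}_0(R_1)$, which is nonzero; since the classical Blanchfield form is nonsingular on $\mathcal{A}_0(R_1)$, there exist $x,y$ in the generating set with $\mathcal{B}\ell_0(x,y)\neq 0$. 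For the final $R'$-step, I would inspect the obvious genus-one Seifert surface visible in the figure-eight pattern, compute its Seifert form, and check that the homology classes of $\eta_+,\eta_-$ span $H_1$ of that surface and hence generate the nonzero module $\mathcal{A}_0(R')$, so that nonsingularity of $\mathcal{B}\ell_0$ again produces the required pair $x,y$.

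The main obstacle is this explicit Blanchfield verification for the specific pattern $R'$: one must confirm that $\eta_+$ and $\eta_-$ genuinely generate $\mathcal{A}_0(R')$ (or at least a submodule on which $\mathcal{B}\ell_0$ is not identically zero), rather than lying in a self-annihilating submodule that would allow the infection curves to die inside some $(n{+}1.5)$-solution. Once this computation is carried out, Theorem~\ref{thm:main3} immediately supplies a constant $D$, assembled from the Cheeger-Gromov constants of the $0$-surgeries on $R_1$ and $R'$, with the property that whenever $|\rho_0(J_0)|>D$ the iterated doubling $E=R'\circ R_1^{\circ n}(J_0)$ has infinite order in the topological knot concordance group, and in fact no nonzero multiple of $E$ lies in $\mathcal{F}_{n+1.5}$. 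Taking this $D$ as the constant in the statement completes the proof of Corollary~\ref{cor:torsion}.
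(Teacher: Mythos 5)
Your proposal founders on a crucial structural point: the pattern knot $R'$ you extract from Figure~\ref{fig:torsionfigeight} by replacing both copies of $J_n$ with the unknot is the \emph{figure-eight knot} $4_1$, and the figure-eight knot is \textbf{not} a ribbon knot --- indeed it is not slice at all (its Arf invariant is $1$, and equally it fails the Fox--Milnor condition). Theorem~\ref{thm:main3} requires every pattern $R_j$ in the tower to be a slice knot, so the theorem simply cannot be applied to the tower $R', R_1, \dots, R_1$ that you propose, and the assertion that ``$R'$ is ribbon by construction'' is false. This is not a computation to be deferred; it invalidates the decomposition $E = R'\circ R_1^{\circ n}(J_0)$ as an input to Theorem~\ref{thm:main3}. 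Consistent with this, the paper explicitly points out that $E$ itself is \emph{not} algebraically slice; only $E\#E$ is.

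The paper's actual argument routes around exactly this obstruction in two steps. First, any odd multiple of $E$ has Arf invariant $1$ (Arf is unchanged by infection along null-homologous curves, so $\operatorname{Arf}(E)=\operatorname{Arf}(4_1)=1$), hence odd multiples are already not slice nor even $(0)$-solvable, with no hypothesis on $J_0$. Second, for even multiples one passes to $J=\#^{2m}E=\#^m(E\#E)$ and observes that $E\#E$ is obtained from the \emph{slice} knot $R=4_1\#4_1$ (slice because $4_1$ is negatively amphichiral, hence of order $2$) by four infections along a basis of $\mathcal{A}_0(R)$ using copies of $J_n$; together with $J_n=R_1^{\circ n}(J_0)$ this exhibits $E\#E$ as an $(n{+}1)$-times iterated generalized doubling of $J_0$ by genuine slice patterns, to which Theorem~\ref{thm:main3} does apply. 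Two smaller points: your restriction to $J_0$ with $\operatorname{Arf}(J_0)=0$ proves a weaker statement than the corollary (which quantifies over all $J_0$ with $|\rho_0(J_0)|>D$); as the paper notes, the Arf hypothesis on the input knot is not needed for the non-sliceness conclusion of Theorem~\ref{thm:main3}. And your plan does not address odd multiples of $E$ at all, which the Arf step is there to handle.
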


\begin{proof}[Proof of Corollary~\ref{cor:torsion}] Any odd multiple of $E$ has Arf invariant one and hence is not a slice knot, nor even $(0)$-solvable. Let $J=\#^{2m}E=\#^m(E\# E)$. Since $E\# E$ is obtained from a connected-sum of two copies of the figure-eight knot (a slice knot $R$) by $4$ infections along a basis of the Alexander module of $R$, using the knot $J_1$ in each case, Theorem~\ref{thm:main3} applies ($n=1$). Arf($J_0)=0$ is not necessary for the second half of this theorem.
\end{proof}

\section{Iterated Bing doubles and higher-order $L^{(2)}$-signatures}\label{sec:Bingdoubles}

In this section we investigate higher-order signature invariants that obstruct any iterated Bing double of $K$ being a topologically slice link. We first state and prove the simplest results and later generalize.

Suppose $K$ is a knot in $S^3$, $G=\pi_1(M_K)$ and $\mathcal{A}_0=\mathcal{A}_0(K)$ is its classical rational Alexander module. Note that since the longitudes of $K$ lie in $\pi_1(S^3-K)^{(2)}$,
$$
\mathcal{A}_0\equiv G^{(1)}/G^{(2)}\otimes_{\mathbb{Z}[t,t^{-1}]}\mathbb{Q}[t,t^{-1}]
$$
Each submodule $P\subset \mathcal{A}_0$ corresponds to a unique metabelian quotient of $G$,
$$
\phi_P:G\to G/\tilde{P},
$$
by setting 
$$
\tilde{P}\equiv \{x~| x\in \text{kernel}(G^{(1)}\to G^{(1)}/G^{(2)}\to \mathcal{A}_0/P)\}.
$$
\noindent Note that $G^{(2)}\subset \tilde{P}$ so $G/\tilde{P}$ is metabelian. Therefore to any such submodule $P$ there corresponds a real number, the Cheeger-Gromov invariant, $\rho(M_K, \phi_P:G\to G/\tilde{P})$.

\begin{defn}\label{defn:highordersignatures} The first-order $L^{(2)}$-signatures of a knot $K$ are the real numbers
$\rho(M_K, \phi_P)$ where $P\subset \mathcal{A}_0(K)$ satisfies $P\subset P^\perp$. 
\end{defn}

The first-order signatures that correspond to metabolizers, that is submodules $P$ for which $P=P^\perp$, have been previously studied and are closely related to Casson-Gordon-Gilmer invariants ~\cite{Let}~\cite{Fr2}~\cite{Fr3}~\cite{Ki1}. Since $P=0$ always satisfies $P\subset P^\perp$, we give a special name to the signature corresponding to this case.

\begin{defn}\label{defn:rho1} $\rho^1(K)$ of a knot $K$ is the first-order $L^2$-signature given by the Cheeger-Gromov invariant $\rho(M_K, \phi:G\to G/G^{(2)})$.
\end{defn}

We remark that $\rho^1$ vanishes for a $(\pm)$-amphichiral knot by Proposition~\ref{prop:amphi} but it is not true that all the first-order signatures vanish for an amphichiral knot.

\begin{prop}\label{prop:amphi} If a $3$-manifold $M$ admits an orientation-reversing homeomorphism, then $\rho(M,\phi)=0$ for any $\phi$ whose kernel is a characteristic subgroup of $\pi_1(M)$.
\end{prop}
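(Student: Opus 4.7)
The plan is to exploit two basic properties of the Cheeger-Gromov $\rho$-invariant: its sign reverses under orientation reversal of the base, $\rho(-M,\phi)=-\rho(M,\phi)$, and it is functorial under orientation-preserving homeomorphisms, in the sense that for orientation-preserving $g:M_1\to M_2$ and $\phi_2:\pi_1(M_2)\to\Gamma$ one has $\rho(M_1,\phi_2\circ g_*)=\rho(M_2,\phi_2)$. Together these will force $\rho(M,\phi)$ to equal its own negative.

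Let $f:M\to M$ be the orientation-reversing self-homeomorphism guaranteed by hypothesis. Since $\ker\phi$ is characteristic in $\pi_1(M)$, the induced automorphism $f_*$ preserves $\ker\phi$, so $\phi\circ f_*$ has the same kernel as $\phi$. Viewing $f$ as an orientation-preserving homeomorphism $M\to -M$ and combining the two properties above yields
$$
\rho(M,\phi\circ f_*)=\rho(-M,\phi)=-\rho(M,\phi).
$$
Thus the proposition reduces to the naturality assertion $\rho(M,\phi\circ f_*)=\rho(M,\phi)$.

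For this I would first apply Property (2) of Proposition~\ref{prop:rho invariants} to replace both $\phi$ and $\phi\circ f_*$ by their image-valued reductions, which are two surjections $\pi_1(M)\twoheadrightarrow Q$ onto the common quotient $Q:=\pi_1(M)/\ker\phi$. These two surjections have identical kernel and therefore differ by the automorphism $\sigma$ of $Q$ induced by $f_*$. It then remains to verify that $\rho$ is invariant under post-composition with an automorphism of the target group. This is a standard feature of the $\rho$-invariant: from the $L^{(2)}$-signature description $\rho(M,\psi)=\sigma^{(2)}_Q(W,\Psi)-\sigma(W)$, any automorphism of $Q$ induces a trace-preserving automorphism of the group von Neumann algebra $\mathcal{N}Q$, which cannot affect the $L^{(2)}$-signature; equivalently, from the $\eta$-invariant description, the oriented regular covering used to define $\rho$ depends only on $\ker\psi$, not on any identification of the deck group with $Q$.

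The only step requiring genuine attention is this last naturality under automorphisms of the target, since it is not listed among the properties collected in Proposition~\ref{prop:rho invariants}; once it is accepted, the proposition is an immediate two-line consequence of the orientation-reversal rule and homeomorphism invariance already recorded there.
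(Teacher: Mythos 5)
Your proposal is correct and follows essentially the same route as the paper's own proof: pass to the orientation-reversed copy via the given homeomorphism, use $\rho(-M,\cdot)=-\rho(M,\cdot)$, and then invoke the characteristic-kernel hypothesis to identify $\rho(M,\phi\circ f_*)$ with $\rho(M,\phi)$. The only difference is cosmetic: the paper simply asserts that ``the $\rho$ invariant depends only on the kernel of $\phi$,'' whereas you unpack that assertion into Property~(2) of Proposition~\ref{prop:rho invariants} plus invariance under automorphisms of the target group.
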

\begin{proof}[Proof of Proposition~\ref{prop:amphi}] Suppose $h:-M\to M$ is an orientation preserving homeomorphism. Then for any $\phi$,
$$
\rho(M,\phi)=\rho(-M, \phi\circ h_*)=-\rho(M,\phi\circ h_*).
$$
Since the $\rho$ invariant depends only on the kernel of $\phi$, which, being characteristic, is the same as the kernel of $\phi\circ h_*$, the last term equals $-\rho(M,\phi)$. Since the $\rho$ invariant is real-valued, it is zero.
\end{proof}

\begin{ex}\label{ex:first-ordersigs} A genus one algebraically slice knot has precisely $3$ first-order signatures, two corresponding to the two metabolizers , $P_1$, $P_2$ of the Seifert form and the third corresponding to $P_3=0$. Consider the knot $K$ in Figure~\ref{fig:examplehighersigs}. Since this knot is obtained from the ribbon knot $R_1$ by two infections on the band meridians $\eta_1, \eta_2$, we may apply Lemma~\ref{lem:additivity} to show
$$
\rho(M_K, \phi_P)=\rho(M_{R_1}, \phi_P)+\epsilon^1 _P\rho_0(K_1)+\epsilon^2 _P\rho_0(K_2)
$$
where $\epsilon^i_P$ is $0$ or $1$ according as $\phi_P(\eta_i)=0$ or not. Both $P_1$ and $P_2$ correspond to the kernels of actual ribbon disks for $R_1$ and so the maps $\phi_P$ on $M_{R_1}$ extend over ribbon disk exteriors in these cases. Consequently $\rho(M_{R_1}, \phi_P)=0$ for $P=P_1$ and $P=P_2$. Of course $\rho(M_R, \phi_{P_3})=\rho^1(R_1)$ by definition. Also $\epsilon^1 _{P_2}=0$ and $\epsilon^2 _{P_1}=0$. Therefore the first-order $L^2$-signatures of the knot $K$ are $\{\rho_0(K_1),\rho_0(K_2),\rho^1(R_1)+\rho_0(K_1)+\rho_0(K_2)\}$.

\begin{figure}[htbp]
\setlength{\unitlength}{1pt}
\begin{picture}(200,160)
\put(0,0){\includegraphics[height=150pt]{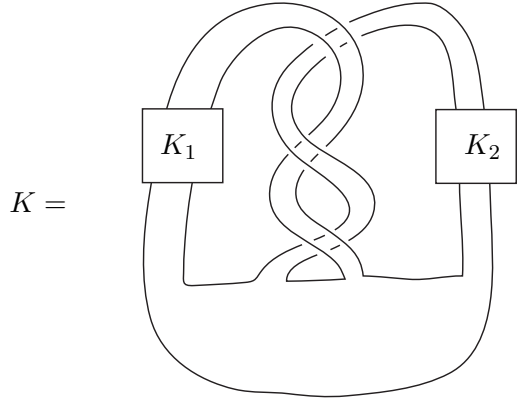}}
\put(-50,70){$K=$}
\put(7,92){$K_1$}
\put(122,92){$K_2$}
\end{picture}
\caption{A genus $1$ algebraically slice knot $K$}\label{fig:examplehighersigs}
\end{figure}

A genus one knot that is \textbf{not} zero in the rational algebraical concordance group (that is there is no metabolizer for the rational Blanchfield form) has precisely one first-order signature, namely $\rho^1(K)$ since any \textbf{proper} submodule $P$ of the rational Alexander module satisfying $P\subset P^\perp$ would have to be a (rational) metabolizer. The knot $K$ in Figure~\ref{fig:examplehighersigsfigeight} is of order two in the rational algebraic concordance group, but, using Lemma~\ref{lem:additivity}, we see that $\rho^1(K)=\rho^1(\text{figure eight})+2\rho_0(K_1)=2\rho_0(K_1)$.
\begin{figure}[htbp]
\setlength{\unitlength}{1pt}
\begin{picture}(200,160)
\put(0,0){\includegraphics[height=150pt]{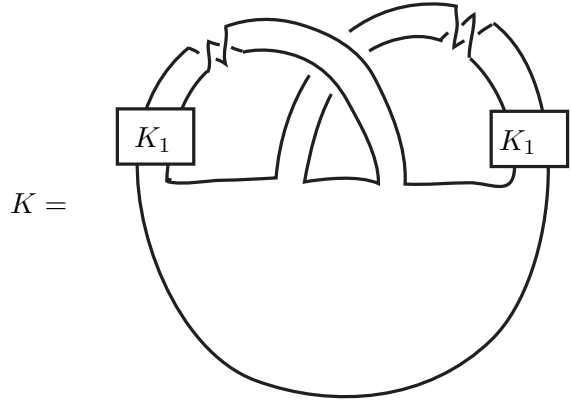}}
\put(-40,70){$K=$}
\put(7,95){$K_1$}
\put(145,94){$K_1$}
\end{picture}
\caption{Order 2 in algebraic concordance group}\label{fig:examplehighersigsfigeight}
\end{figure}
\end{ex}

Since the first-order signatures are very similar to Casson-Gordon invariants, the following is not surprising.

\begin{prop}\label{prop:firstordervanish} If $K$ is topologically slice in a rational homology $4$-ball (or more generally if $K$ is rationally $(1.5)$-solvable) then one of the first-order signatures of $K$ is zero.
\end{prop}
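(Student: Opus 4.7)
The plan is to produce a submodule $P\subset \mathcal{A}_0(K)$ satisfying $P\subset P^\perp$ whose associated metabelian quotient $\phi_P$ is realized by a PTFA coefficient system on $\pi_1(W)$, so that vanishing of $\rho(M_K,\phi_P)$ will follow from the obstruction theorem for $(1.5)$-solvable knots.

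Concretely, I would fix a rational $(1.5)$-solution $W$ for $M_K$ and set
$$
P \;\defeq\; \ker\bigl(i_*\colon \mathcal{A}_0(K)\to H_1(W;\mathbb{Q}[t,t^{-1}])\bigr).
$$
Since $(1.5)$-solvability implies $(1)$-solvability and $\mathcal{A}_0(K)=TH_1(M_K;\mathbb{Q}[t,t^{-1}])$, Lemma~\ref{lem:selfannihil}, applied with $k=1$, $\Lambda=\mathbb{Z}$, and $\mathcal{R}=\mathbb{Q}[t,t^{-1}]$, immediately yields $P\subset P^\perp$ with respect to the classical Blanchfield form. Thus $P$ gives rise to a legitimate first-order signature $\rho(M_K,\phi_P)$, and my task reduces to showing it vanishes. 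For that, let $\pi=\pi_1(W)$ and $\Lambda=\pi/\pi^{(2)}_r$; by \cite[Section~3]{Ha1}, $\Lambda$ is a metabelian PTFA group (with $\Lambda^{(2)}=0$), and the composition $\phi\colon G\xrightarrow{j_*}\pi\twoheadrightarrow\Lambda$ tautologically extends over $W$. The $(1.5)$-solvable generalization of Theorem~\ref{thm:oldsliceobstr}, proved in Section~\ref{sec:appendix} (cf.~\cite[Theorem~4.2]{COT}), then forces $\rho(M_K,\phi)=0$; by Proposition~\ref{prop:rho invariants}(2) this equals $\rho(M_K,\phi_P)$ as soon as I identify $\phi$ as factoring through the embedding $G/\tilde P\hookrightarrow\Lambda$.

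The technical heart of the argument will be this kernel computation. Because $\Lambda^{(1)}=\pi^{(1)}_r/\pi^{(2)}_r$ is torsion-free abelian, it embeds into $\Lambda^{(1)}\otimes\mathbb{Q}$; and under the identification $\pi/\pi^{(1)}_r=\mathbb{Z}$ (which holds by rational $(0)$-solvability) one obtains $\Lambda^{(1)}\otimes\mathbb{Q}\cong H_1(W;\mathbb{Q}[t,t^{-1}])$. Thus $\phi|_{G^{(1)}}$ factors through $G^{(1)}/G^{(2)}\otimes\mathbb{Q}=\mathcal{A}_0(K)\xrightarrow{i_*}H_1(W;\mathbb{Q}[t,t^{-1}])$, with kernel exactly $\tilde P$. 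On the abelianized piece $G/G^{(1)}=\mathbb{Z}$, rational $(1)$-solvability gives $H_1(M_K;\mathbb{Q})\xrightarrow{\sim} H_1(W;\mathbb{Q})$, so a meridian of $K$ has infinite order in $\Lambda/\Lambda^{(1)}$ and $\phi$ is injective there. Together these identify $\ker(\phi)$ with $\tilde P$, as required.

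The main obstacle is precisely this bridging step: $P$ lives in the rational Alexander module, which is the natural home of the classical Blanchfield form and hence of the self-annihilating condition $P\subset P^\perp$; whereas $\Lambda$ is constructed from the rational derived series of $\pi_1(W)$, since the slice obstruction demands a PTFA coefficient system. The isomorphism $\Lambda^{(1)}\otimes\mathbb{Q}\cong H_1(W;\mathbb{Q}[t,t^{-1}])$ is the reconciling algebraic fact, and it is also the reason one must hypothesize rational (rather than integral) $(1)$-solvability throughout.
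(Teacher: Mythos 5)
Your proposal is correct and follows essentially the same route as the paper: pick a rational $(1.5)$-solution $W$, take $\phi\colon G\to\pi_1(W)/\pi_1(W)^{(2)}_r$, get $\rho(M_K,\phi)=0$ from the slice-obstruction theorem, and identify the kernel of the induced metabelian representation with $\tilde P$ for the self-annihilating $P=\ker\bigl(\mathcal{A}_0(K)\to H_1(W;\mathbb{Q}[t,t^{-1}])\bigr)$. The only cosmetic difference is that you invoke Lemma~\ref{lem:selfannihil} directly (with $k=1$, $\Lambda=\mathbb{Z}$) where the paper cites Theorem~\ref{thm:nontriviality} in the degenerate case $K=U(\mu,K)$, and you write out the kernel identification that the paper defers to the proof of Theorem~\ref{thm:Bingdouble}.
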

\begin{proof}[Proof of Proposition~\ref{prop:firstordervanish}] Let $V$ be a rational $(1.5)$-solution for $M_K$, $G=\pi_1(M_K)$, $\pi=\pi_1(V)$ and $\phi:\pi\to \pi/\pi^{(2)}_r$. By  ~\cite[Theorem 4.2]{COT} $\rho(M_K,\phi)=0$. Clearly $\phi\circ j*$ factors through $G/G^{(2)}$. Now, by Theorem~\ref{thm:nontriviality}, if $P$ denotes the kernel of the map
$$
\mathcal{A}_0(K)\overset{i_*}{\to} H_1(M_K;\mathbb{Q}[\pi/\pi^{(1)}_r])\overset{j_*}\to H_1(V;\mathbb{Q}[\pi/\pi^{(1)}_r]),
$$
then $P\subset P^\perp$ with respect to the classical Blanchfield form of $K$ (see also ~\cite[Theorem 4.4]{COT}). It follows that $\rho(M_K,\phi)$ is one of the first-order signatures of $K$. The details in verifying this final claim are carried out in more detail in the proof of the more general Theorem~\ref{thm:Bingdouble} below.
\end{proof}

The definition of the first-order signatures is not quite the same as that implicit in the work of Casson-Gordon-Gilmer and in more generality in ~\cite[Theorem 4.6]{COT}. One would hope that one need only consider those $P$ such that $P=P^\perp$. However this is false in the context of rational concordance. The knots in Figure~\ref{fig:examplehighersigsfigeight} are in general not slice in a rational homology ball, but this fact is \textbf{not} detected by signatures associated to metabolizers of the classical rational Blanchfield form. But this \emph{is} detected by $\rho^1$. Note that the figure eight knot is slice in a rational homology $4$-ball in such a way that the Alexander module of the figure-eight knot \textbf{injects} into $\pi/\pi^{(2)}_r$ where $\pi$ is the fundamental group of the complement of the slicing disk! 

We will now show that the first-order signatures of $K$, like the ordinary signatures, obstruct any \textbf{iterated Bing double} of $K$ from being a (topologically) slice link. This improves on Harvey's theorem which showed this same fact for the integral of the classical signatures ~\cite[Corollary 5.6]{Ha2}. There are several ways to define iterated Bing Doubling. In the most general way, one doubles one component at a time. However for simplicity, let us focus on the notion of Bing Doubling wherein we Bing double \textbf{each} component, then an $n$-fold iterated Bingdouble of $K$, $BD^{n}(K)$, is a $2^n$ component link. Note that once we show that none of these restricted Bing doubles is slice then it follows that none of the more general iterated Bing doubles is slice. 

\begin{thm}\label{thm:Bingdouble} Let $K$ be an arbitrary knot. If some $n$-fold iterated Bing double of $K$ ($n\geq 1$) is topologically slice in a rational homology $4$-ball (or is a rationally $(n+1.5)$-solvable link) then one of the first-order signatures of $K$ is zero.
\end{thm}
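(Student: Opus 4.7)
The plan is to generalize the argument of Proposition~\ref{prop:firstordervanish} by combining the infection description of iterated Bing doubles with Theorem~\ref{thm:nontriviality} and the additivity of $\rho$-invariants.

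First I would establish, by induction on $n$ analogous to Proposition~\ref{prop:altdescriptions}, that $L := BD^n(K)$ is obtained from the strongly slice link $R := BD^n(U)$ by a single infection using $K$ along a circle $\eta \subset S^3\backslash R$ with $\eta \in \pi_1(S^3\backslash R)^{(n)}$. Each Bing doubling step embeds the previous link complement into the commutator subgroup of the new one, deepening the derived-series depth of $\eta$ by one.

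Now suppose $L$ is rationally $(n{+}1.5)$-solvable via a $4$-manifold $V$, let $\pi := \pi_1(V)$, and set $\Lambda := \pi/\pi^{(n+2)}_r$ (a PTFA group by Harvey). Let $\bar\phi : \pi_1(M_L) \to \Lambda$ be the restriction of the projection. The link version of the COT $(n{+}1.5)$-obstruction, implicit in Section~\ref{sec:appendix}, gives $\rho(M_L, \bar\phi) = 0$. Since $\eta \in \pi_1(M_R)^{(n)}$, the meridian $\mu_K$ lies in $\pi_1(M_L)^{(n)}$, so $\bar\phi$ sends $\pi_1(S^3\backslash K)^{(2)}$ into $\Lambda^{(n+2)} = 1$; in particular the longitude of $K$ is killed. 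Lemma~\ref{lem:additivity} therefore yields
$$\rho(M_L, \bar\phi) - \rho(M_R, \bar\phi_R) = \rho(M_K, \phi_K),$$
with $\phi_K$ factoring through the metabelian quotient of $G := \pi_1(M_K)$.

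To identify $\rho(M_K, \phi_K)$ as one of the first-order signatures of $K$, I apply Theorem~\ref{thm:nontriviality} with $k = n+1$ and the coarser coefficient system $\Lambda'' := \pi/\pi^{(n+1)}_r$ (using that $V$ is a fortiori a rational $(n+1)$-solution). The resulting submodule $P := \ker(\mathcal{A}_0(K) \to H_1(V;\mathbb{Q}\Lambda''))$ satisfies $P \subseteq P^\perp$ with respect to the classical rational Blanchfield form of $K$. Using Proposition~\ref{prop:rho invariants}(2) together with the identifications of Theorem~\ref{thm:decomposition}, one checks that $\rho(M_K, \phi_K) = \rho(M_K, \phi_P)$, where $\phi_P : G \to G/\tilde P$ is the metabelian coefficient system attached to $P$ in Definition~\ref{defn:highordersignatures}.

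The hardest step will be showing $\rho(M_R, \bar\phi_R) = 0$. The plan is to exploit the strong slice structure of $R = BD^n(U)$: it is a boundary link that bounds disjoint ribbon disks in $B^4$ whose exterior $W_R$ has fundamental group free on the meridians of $R$. Using the cobordism $E$ from Lemma~\ref{lem:mickeyfacts} to track how $\bar\phi_R$ arises from $\bar\phi$, one verifies that the slice kernel of $\pi_1(M_R) \to \pi_1(W_R)$ lies in the kernel of $\bar\phi_R$, so that $\bar\phi_R$ extends over $\pi_1(W_R)$; Theorem~\ref{thm:linksliceobstr} then gives $\rho(M_R, \bar\phi_R) = 0$. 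Combining, $\rho(M_K, \phi_P) = 0$, and together with $P \subseteq P^\perp$ this exhibits the vanishing first-order signature of $K$ asserted by the theorem.
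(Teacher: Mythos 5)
Your overall strategy matches the paper's: describe $BD^n(K)$ as a single infection of the trivial $2^n$-component link $T = BD^n(U)$ along a circle $\alpha \in F^{(n)} \smallsetminus F^{(n+1)}$, use Lemma~\ref{lem:additivity} to isolate $\rho(M_K,\phi_K)$, kill the contribution of $M_T$ by extending the coefficient system over a bounding $Y$ with $\pi_1(\partial Y)\cong\pi_1(Y)$, and then invoke Theorem~\ref{thm:nontriviality} to identify $\rho(M_K,\phi_K)$ with a first-order signature. (Your worry about whether the slice kernel of $\pi_1(M_R)\to\pi_1(W_R)$ dies under $\bar\phi_R$ is vacuous: for the trivial link that kernel is zero, which is exactly why the paper uses $T$ rather than a general slice base.)

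However, there is a genuine gap at the crux of the argument. Theorem~\ref{thm:nontriviality} only gives $P_i\subset P_i^{\perp}$ for those indices with $\phi((\alpha_i)^+)\neq 1$; if $\alpha$ dies under the coefficient system, the theorem says nothing, and worse, the kernel of $\phi_K$ on $G=\pi_1(M_K)$ would not be contained in $G^{(1)}$, so $\rho(M_K,\phi_K)$ would not be a first-order signature in the sense of Definition~\ref{defn:highordersignatures}. You never verify that $\alpha$ survives in $\pi_1(V)/\pi_1(V)^{(n+1)}_r$. This is precisely where the paper does its hardest work: it builds $W=V\cup E$, observes that the $(n+1)$-Lagrangian of $V$ together with a capped Seifert surface for $K$ gives $\pi_1(\Sigma)\subset\pi_1(W)^{(n)}$, invokes the Cochran--Harvey Dwyer-type injectivity theorem for the torsion-free derived series on $M_T\hookrightarrow W$, and uses that for the free group $\pi_1(M_T)$ the torsion-free derived series equals the ordinary one, so $\alpha\notin\pi_1(M_T)^{(n+1)}_H$ implies $\psi(\alpha)\neq 1$. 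It also notes that $\pi^{(n)}_r/\pi^{(n+1)}_r$ is $\Z$-torsion-free, so no power of $\alpha$ dies, which forces $\ker\phi_K\subset G^{(1)}$. Without building $W$ (you work only with $V$) and without these ingredients, the step ``The resulting submodule $P$ satisfies $P\subseteq P^\perp$'' is unjustified, and so is the final identification. You should add the cobordism $W=V\cup E$ to your setup, verify the hypotheses of the Cochran--Harvey theorem on $M_T\to W$, and run the diagram argument that the paper calls diagram~(\ref{diag:harvey}) before claiming the conclusion.
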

\begin{cor}\label{cor:Bingdouble}If $K$ is the algebraically slice knot of Figure~\ref{fig:examplehighersigs}, where $\rho_o(K_1)\neq 0$, $\rho_o(K_2)\neq 0$ and $\rho_o(K_1)+\rho_o(K_2)+\rho^1(R_1)\neq 0$ then no iterated Bing double of $K$ is topologically slice (nor even $(n+1.5)$-solvable). Therefore there is a constant $C$ ($=|\rho^1(R_1)|$) such that if $|\rho_0(K_1)|>C$ and Arf($K_1$)$=0$, then the $n$-fold iterated Bing double of $J_1(K_1)$ (the knot in Figure~\ref{fig:examplehighersigs} with $K_2=K_1$), is $(n+1)$-solvable but not slice nor even rationally $(n+1.5)$-solvable. If $K$ is the knot of Figure~\ref{fig:examplehighersigsfigeight} where $\rho_0(K_1)\neq 0$ then no iterated Bing double of $K$ is topologically slice (nor even rationally $(n+1.5)$-solvable). 
\end{cor}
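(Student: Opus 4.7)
The plan is to combine Theorem~\ref{thm:Bingdouble} with the explicit enumeration of first-order $L^{(2)}$-signatures already carried out in Example~\ref{ex:first-ordersigs}. In contrapositive form Theorem~\ref{thm:Bingdouble} asserts that if \emph{every} first-order signature of $K$ is nonzero, then no $n$-fold iterated Bing double of $K$ is rationally $(n+1.5)$-solvable, and in particular no such double is topologically slice in a rational homology $4$-ball. Thus the whole corollary collapses to listing the first-order signatures of each example knot and checking that the stated hypotheses force them all to be nonzero.

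For the genus-one algebraically slice knot $K$ of Figure~\ref{fig:examplehighersigs}, Example~\ref{ex:first-ordersigs} already identifies the three first-order signatures as $\rho_0(K_1)$, $\rho_0(K_2)$, and $\rho^1(R_1)+\rho_0(K_1)+\rho_0(K_2)$, so the three nonvanishing hypotheses of the corollary say precisely that none of these numbers is zero and the first assertion follows. For the specialization $K_2=K_1$ with $|\rho_0(K_1)|>|\rho^1(R_1)|=:C$, two of the signatures collapse to $\rho_0(K_1)$ and the third obeys $|\rho^1(R_1)+2\rho_0(K_1)|\geq 2|\rho_0(K_1)|-|\rho^1(R_1)|>|\rho_0(K_1)|>0$ by the triangle inequality. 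For the knot $E$ of Figure~\ref{fig:examplehighersigsfigeight}, Example~\ref{ex:first-ordersigs} isolates the unique first-order signature as $\rho^1(E)=2\rho_0(K_1)$, nonzero by hypothesis.

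The one remaining piece, needed only for the middle statement, is the $(n+1)$-solvability \emph{upper} bound for $BD^n(J_1(K_1))$. I would obtain this by viewing Bing doubling as infection of the two-component unlink along a curve $\alpha$ that lies in the commutator subgroup of the free group $\pi_1(S^3\setminus\text{unlink})$, so that the link-theoretic analog of Proposition~\ref{prop:operatorsfiltration} pushes $\mathcal{F}_k$ into $\mathcal{F}_{k+1}$ under a single Bing doubling. Starting from $J_1(K_1)\in\mathcal{F}_1$, which holds because $\mathrm{Arf}(K_1)=0$ by Theorem~\ref{thm:main}, and iterating $n$ times yields $BD^n(J_1(K_1))\in\mathcal{F}_{n+1}$.

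Beyond invoking Theorem~\ref{thm:Bingdouble} there is no genuine obstacle; the corollary is essentially bookkeeping layered on top of it. The one point worth double-checking carefully is that Definition~\ref{defn:highordersignatures} indexes first-order signatures by \emph{all} submodules $P\subset P^{\perp}$ of the rational Alexander module, not merely the metabolizers, so one must confirm that Example~\ref{ex:first-ordersigs} really does list every such $P$ for these specific knots. In each case the relevant rational Alexander module has $\mathbb{Q}$-dimension two, which forces any such $P$ to be either zero or a metabolizer, and both cases are handled in the example.
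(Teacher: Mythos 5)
Your reduction via the contrapositive of Theorem~\ref{thm:Bingdouble} together with the enumeration of first-order signatures in Example~\ref{ex:first-ordersigs} is correct, and the triangle-inequality check for the specialization $K_2=K_1$ is fine; that settles the first and third sentences of the corollary and the ``not slice nor rationally $(n+1.5)$-solvable'' part of the second. However, your argument for the $(n+1)$-solvability upper bound has two gaps. Proposition~\ref{prop:operatorsfiltration} is already stated for slice knots \emph{or} links, so there is no ``link-theoretic analog'' to supply; more importantly it only asserts $R_{\eta}(\mathcal{F}_0)\subset\mathcal{F}_n$, and does \emph{not} promote an $\mathcal{F}_k$ input to $\mathcal{F}_{k+n}$. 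The refined statement you actually need is Lemma~\ref{lem:nsolv}. Beyond the citation, the proposed iteration---applying one Bing doubling at a time and transporting $\mathcal{F}_k$ into $\mathcal{F}_{k+1}$ at each stage---fails once the intermediate object is a genuine link: $BD$ of a \emph{knot} is an infection of the $2$-component unlink, but $BD$ applied to a link with two or more components is a satellite operation with a link companion and is not a single knot infection, so the step $BD^{j}(J_1(K_1))\in\mathcal{F}_{j+1}\Rightarrow BD^{j+1}(J_1(K_1))\in\mathcal{F}_{j+2}$ is not justified by anything you have cited.

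The repair is to avoid iteration altogether. As the proof of Theorem~\ref{thm:Bingdouble} records, $BD^n(J_1(K_1))$ is obtained from the trivial $2^n$-component link $T$ by a \emph{single} infection along a curve $\alpha\in F^{(n)}$ using the knot $J_1(K_1)$. Since $T$ is slice (hence $(n+1)$-solvable) and, given Arf$(K_1)=0$, the knot $J_1(K_1)$ is $(1)$-solvable via a $4$-manifold whose fundamental group is normally generated by the meridian (the construction of~\cite[Proposition 3.1]{COT2} supplies one), Lemma~\ref{lem:nsolv} with $p=n$, $q=1$ gives $BD^n(J_1(K_1))\in\mathcal{F}_{n+1}$. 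The same upper bound is also a special case of Corollary~\ref{cor:iteratedbing}.
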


\begin{proof}[Proof of Theorem~\ref{thm:Bingdouble}] Let $L=BD^{n}(K)$ for some $n\geq 1$ and $M=M_{L}$. Suppose $M$ is rationally $(n+1.5)$-solvable via $V$. We shall show that one of the first-order signatures of $K$ is zero.

Recall that $BD(K)$ can be obtained from the trivial link of two components by infection on the circle $\alpha$ shown dashed in Figure~\ref{fig:bingeta}, using $K$ as the infecting knot. This curve $\alpha$ can be expressed as $[x,y]$ in the fundamental group of the zero surgery on the trivial link where $x$ and $y$ are the meridians. If one now doubles each component of the this trivial link, then the image of the curve $\alpha$ becomes a curve that represents the double commutator $[[x,x'],[y,y']]$. Continuing in this manner, one sees that the iterated Bing double $L$ can be obtained from the trivial $2^n$ component link $T$ by a single infection, using the knot $K$, along a circle $\alpha$ representing, in $\pi_1(M_T)$, an element in $F^{(n)}$ but not in $F^{(n+1)}$. At this point we note that we need not assume that we are dealing with an iterated Bing double, but rather this previous sentence is all that we need assume. Thus our proof is really going to prove:

\begin{thm}\label{thm:basiclink} Suppose $T$ is a trivial link of $m$ components, $n\geq 1$ and $\alpha$ is an unknotted circle in $S^3-T$ that represents an element in $F^{(n)}-F^{(n+1)}$ where $F=\pi_1(S^3-T)$, and $L$ denotes $T(\alpha,K)$, the result of infection of $T$ along $\alpha$ using the knot $K$. If $L$ is topologically slice in a rational homology $4$-ball (or is even a rationally $(n+1.5)$-solvable link) then one of the first order signatures of $K$ is zero.
\end{thm}

Proceeding, since $L=T(\alpha,K)$, there exists a cobordism $E$ as in Figure~\ref{fig:mickey} whose boundary is $M_T\sqcup M_K\sqcup -M$. We form a null-bordism $W$ as follows. Cap off $M\subset \partial E$ using $V$. Thus $\partial W=M_K\cup M_T$. Let $\pi=\pi_1(W)$ and consider $\phi:\pi\to\pi/\pi^{(n+2)}_r$. In the case that $V$ is a slice disk exterior then we can apply Theorem~\ref{thm:linksliceobstr} to conclude that 
$$
\rho(M,\phi)=0.
$$
If $V$ is merely a rational $(n+1.5)$-solution, we would like to apply Theorem~\ref{thm:rho=0} to arrive at the same conclusion. But we must first verify that $L$ satisfies the conditions of Lemma~\ref{lem:rank}. This requires only that $\phi(\ell_K)=1$. This is certainly the case since, by property $(5)$ of Lemma~\ref{lem:mickeyfacts}, $\ell_K$ is identified with the reverse of meridian of $\alpha$ which bounds a disk in $M_T$, hence is null-homotopic in $W$. Let $\ov\phi$ be restriction of $\phi$ to $\pi_1(M_K)$ and $\phi_T$ denote the restriction of $\phi$ to $\pi_1(M_T)$. Thus, by Lemma~\ref{lem:additivity}
$$
\rho(M_K,\ov\phi)+ \rho(M_T,\phi_T)=0.
$$
Since $T$ is a trivial link, $M_T=\partial Y$ where $Y$ is a boundary connected-sum of copies of $S^1\times B^3$. Since $\pi_1(\partial Y)\cong \pi_1(Y)$, $\phi_T$ extends to $Y$. Hence by Theorem~\ref{thm:linksliceobstr},
$$
\rho(M_T,\phi_T)=0.
$$
Therefore 
$$
\rho(M_K,\ov\phi)=0.
$$

It remains only to identify $\rho(M_K,\ov\phi)$ as one of the first-order signatures of $K$. First note that the meridian of $K$ is isotopic in $E$ to the infection circle $\alpha$ in $M_T$. Since $\alpha\in \pi_1(S^3-T)^{(n)}$, this meridian represents an element of $\pi_1(E)^{(n)}$ and hence an element of $\pi^{(n)}$. Since $G\equiv\pi_1(M_K)$ is normally generated by this meridian, 
$$
i_*(G)\subset \pi^{(n)}
$$
and so 
$$
i_*(G^{(2)})\subset \pi^{(n+2)}.
$$
Consequently $\ov\phi$ factors through $G/G^{(2)}$ and the image of $\ov\phi$ is contained in $\pi^{(n)}/\pi^{(n+2)}_r$. By Property $2$ of Proposition~\ref{prop:rho invariants}, $\rho(M_K,\ov\phi)$ depends only on the image of $\ov\phi$. Thus
$$
\rho(M_K,\ov\phi)=\rho(M_K,G\to G/G^{(2)}\to G/\tilde{P})
$$
where $\tilde{P}=\text{ker}\ov\phi$. Therefore we need only characterize $\tilde{P}$. To this end, let $\tilde{\pi}=\pi_1(V)$. From property $(1)$ of Lemma~\ref{lem:mickeyfacts}
$$
\pi_1(M)\to \pi_1(E)
$$
is surjective with kernel the normal closure of the longitude $\ell_K$ of $K$ (here we are considering that $S^3-K\subset M$). Therefore the kernel of the map
$$
\tilde{\pi}\to \pi
$$
induced by the inclusion $V\hookrightarrow V\cup E$ is the normal closure of $\ell_K$. We claim that this induces an isomorphism
$$
\tilde{\pi}/\tilde{\pi}^{(n+2)}_r\cong \pi/\pi^{(n+2)}_r.
$$
This will follow if we show $\ell_K\in \tilde{\pi}^{(n+2)}$.  Recall that $\alpha\in \pi_1(S^3-T)^{(n)}$. It follows, as shown in ~\cite[Proof of Theorem 8.1]{C} that a stronger fact holds, namely that the longitudinal push-off of $\alpha$, $\ell_\alpha$, lies in $\pi_1(M)^{(n)}$. But $\ell_\alpha$ is identified to the meridian, $\mu_K$, of $S^3-K\subset M$. Since $\ell_K\in \pi_1(S^3-K)^{(2)}$ and $\pi_1(S^3-K)$ is normally generated by $\mu_K$, 
$$
\ell_K\in \pi_1(M)^{(n+2)}\subset \tilde{\pi}^{(n+2)},
$$
as required.
Hence
$$
\tilde{P}=\text{ker}(G\to \tilde{\pi}/\tilde{\pi}^{(n+2)}_r).
$$
Moreover, since the copy of $S^3-K$ that is a subset of $M_K$ and the copy of $S^3-K$ that is a subset of $M$ are isotopic in $E$, we are now free to think of $G$ as $\pi_1$ of the latter.

Now consider $\Lambda=\tilde{\pi}/\tilde{\pi}^{(n+1)}_r\cong \pi/\pi^{(n+1)}_r$ and $\psi:\tilde{\pi}\to \Lambda$. We seek to apply Theorem~\ref{thm:nontriviality} to $L=T(\alpha,K)$, $\alpha\in \pi_1(S^3-T)^{(n)}$, $k=n+1$ and the rational $(n+1)$-solution $V$ for $M$. To apply Theorem~\ref{thm:nontriviality}, we first need to verify that $\psi(\alpha)\neq 1$. 

Consider the inclusion $i:M_T\to W$. By property $(2)$ of Lemma~\ref{lem:mickeyfacts} and since $V$ is a rational $(n)$-solution, this map induces an isomorphism on $H_1(-;\mathbb{Q})$. By property $(3)$ of Lemma~\ref{lem:mickeyfacts}
$$
H_2(W;\mathbb{Q})\cong H_2(V;\mathbb{Q})\oplus i_*(H_2(M_K;\mathbb{Q}).
$$
Since $V$ is a rational $(n)$-solution, $H_2(V;\mathbb{Q})$ has a basis consisting of surfaces $\Sigma$ wherein $\pi_1(\Sigma)\subset \pi^{(n)}$. $H_2(M_K)$ is represented by a capped off Seifert surface $\overline{\Sigma}$ for $K$. Since $\pi_1(M_K)$ is normally generated by the meridian of $K$, which lies in $\pi^{(n)}$, $\pi_1(\overline{\Sigma})\subset \pi^{(n)}$. Thus, by ~\cite[Theorem 2.1]{CH2}, there is a monomorphism
$$
i_H:\pi_1(M_T)/\pi_1(M_T)^{(n+1)}_H\hookrightarrow \pi/\pi^{(n+1)}_H
$$
where the subscript $H$ denotes Harvey's torsion-free derived series ~\cite[Section 2]{Ha2}. Since the rational derived series is contained in the torsion-free derived series we have the commutative diagram
\begin{equation}\label{diag:harvey}
\begin{diagram}\dgARROWLENGTH=1em
\node{\pi_1(M_T)/\pi_1(M_T)^{(n+1)}_r} \arrow{e,t}{i_*}
\arrow{s,l}{\pi}\node{\pi/\pi^{(n+1)}_r} \arrow{s,l}{}\arrow{e,t}{\cong}\node{\Lambda}\\
\node{\pi_1(M_T)/\pi_1(M_T)^{(n+1)}_H}\arrow{e,t}{i_H} \node{\pi/\pi^{(n+1)}_H}
\end{diagram}
\end{equation}
From this diagram we see that if $\alpha\in \pi_1(M_T)$ mapped to zero in $\Lambda$ then 
$\pi(\alpha)=1$ meaning that $\alpha\in \pi_1(M_T)^{(n+1)}_H$. But this contradicts our hypothesis on $\alpha$ since , for the free group $\pi_1(M_T)$, the torsion-free derived series coincides with the derived series ~\cite[Proposition 2.3]{Ha2}. Hence $\psi(\alpha)\neq 1$ and therefore Theorem~\ref{thm:nontriviality} can be applied. Also note that since $\tilde{\pi}^{(n)}_r/\tilde{\pi}^{(n+1)}_r$ is $\mathbb{Z}$-torsion free, no power of $\alpha$ maps to zero. This implies that the kernel of $\ov\phi$ is contained in $G^{(1)}$ since $G/G^{(1)}$ is generated by $\alpha$.

Now, by Theorem~\ref{thm:nontriviality}, if $P$ denotes the kernel of the map
$$
\mathcal{A}_0(K)\overset{i_*}{\to} H_1(M;\mathbb{Q}\Lambda)\overset{j_*}\to H_1(V;\mathbb{Q}\Lambda).
$$
then $P\subset P^\perp$ with respect to the classical Blanchfield form of $K$. Examine the commutative diagram below where $P$ is the kernel of the bottom horizontal composition. To justify the isomorphism in the bottom row, recall that $H_1(V;\mathbb{Q}\Lambda)$ is identifiable as the ordinary rational homology of the covering space of $V$ whose fundamental group is the kernel of $\psi:\tilde{\pi}\to \Lambda$. Since this kernel is precisely $\tilde{\pi}^{(n+1)}_r$, we have that
$$
H_1(V;\mathbb{Q}\Lambda)\cong (\tilde{\pi}^{(n+1)}_r/[\tilde{\pi}^{(n+1)}_r,\tilde{\pi}^{(n+1)}_r])\otimes_{\mathbb{Z}} \mathbb{Q}
$$
as indicated in the diagram
\begin{equation*}
\begin{CD}
G^{(1)}      @>i_*>>    \pi_1(M)^{(n+1)}  @>j_*>>   \tilde{\pi}^{(n+1)}_r  @>>>
\tilde{\pi}^{(n+1)}_r/\tilde{\pi}^{(n+2)}_r \\
  @VV{\pi}V   @VVV        @VVV       @VVjV\\
\mathcal{A}_0(K)     @>i_*>>  H_1(M;\mathbb{Q}\Lambda)    @>j_*>> H_1(V;\mathbb{Q}\Lambda) @>\cong>>
  (\tilde{\pi}^{(n+1)}_r/[\tilde{\pi}^{(n+1)}_r,\tilde{\pi}^{(n+1)}_r])\otimes_{\mathbb{Z}} \mathbb{Q}\\
\end{CD}
\end{equation*}
Since, by definition, 
$$
\tilde{\pi}^{(n+2)}_r\equiv \text{kernel}(\tilde{\pi}^{(n+1)}_r\to (\tilde{\pi}^{(n+1)}_r/[\tilde{\pi}^{(n+1)}_r,\tilde{\pi}^{(n+1)}_r])\otimes_{\mathbb{Z}} \mathbb{Q}))
$$
the far-right vertical map $j$ is injective. Thus the kernel of the top horizontal composition is precisely $\pi^{-1}(P)$, which is precisely $\tilde{P}$. This identifies the image of the map $G\to \pi/\pi^{(n+2)}_r$ as $G/\tilde{P}$ for a submodule $P\subset \mathcal{A}_0(K)$ where $P\subset P^\perp$. Thus $\rho(M_K,\ov\phi)$ is a first-order signature.

\noindent This completes the proof of Theorem~\ref{thm:Bingdouble}.
\end{proof}

In examining the proof above, one sees that we made almost no use of the fact that $T$ was a trivial link. Indeed the proof really proves this more general result. The more general result says that if the first-order signatures are large then the infected link is not slice. This generalizes Harvey's ~\cite[Theorem 5.4]{Ha2} where it was shown under identical hypotheses that $\rho_0(K)$ obstructs $T(\alpha,K)$ from being slice.

\begin{thm}\label{thm:basiclink2} Suppose $T$ is a slice link of $m$ components, $n\geq 1$ and $\alpha$ is an unknotted circle in $S^3-T$ that represents an element in $\pi_1(S^3-T)^{(n)}$ that does not lie in $\pi_1(M_T)^{(n+1)}_H$. Let $L$ denotes $T(\alpha,K)$, the result of infection of $T$ along $\alpha$ using the knot $K$. If $L$ is topologically slice in a rational homology $4$-ball (or is even a rationally $(n+1.5)$-solvable link) then one of the first order signatures of $K$ is less in absolute value than the Cheeger-Gromov constant of $M_T$.
\end{thm}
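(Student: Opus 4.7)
The plan is to mimic the proof of Theorem~\ref{thm:Bingdouble} (more specifically the proof of Theorem~\ref{thm:basiclink} carried out along the way), substituting only the step in which triviality of $T$ was used to kill $\rho(M_T,\phi_T)$, replacing it by the Cheeger-Gromov universal bound. Since the hypothesis on $\alpha$ is now phrased directly in terms of Harvey's torsion-free derived series, one does not even need the auxiliary fact from~\cite[Proposition 2.3]{Ha2} that the two series coincide for a free group; in every other respect the proof is unchanged.

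Concretely: build the standard cobordism $E$ of Figure~\ref{fig:mickey} with $\partial E = M_T \sqcup M_K \sqcup -M_L$, and cap off the outer boundary component $M_L$ by a rational $(n+1.5)$-solution $V$ to form $W$ with $\partial W = M_T \cup M_K$. Set $\pi = \pi_1(W)$ and take $\phi\colon \pi \to \pi/\pi^{(n+2)}_r$. By property (5) of Lemma~\ref{lem:mickeyfacts}, $\ell_K$ is isotopic in $E$ to the reverse of the meridian of $\alpha$, which bounds a disk in $M_T$, so $\phi(\ell_K)=1$ and Lemma~\ref{lem:additivity} applies. As in the proof of Theorem~\ref{thm:basiclink}, the rational $(n+1.5)$-solvability of $L$ gives $\rho(M_L, \phi|_{M_L})=0$ (via Theorem~\ref{thm:rho=0}), and Lemma~\ref{lem:additivity} yields
$$
\rho(M_K,\ov\phi) \;=\; -\rho(M_T,\phi_T).
$$

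This is the only place where the hypothesis on $T$ differs from Theorem~\ref{thm:basiclink}. There, triviality of $T$ allowed $\phi_T$ to be extended over a boundary connected sum of $S^1\times B^3$'s, forcing $\rho(M_T,\phi_T)=0$. Here we have no such extension, but we do have the Cheeger-Gromov bound (Proposition~\ref{prop:rho invariants}(5)): $|\rho(M_T,\phi_T)| \leq C_{M_T}$. Hence $|\rho(M_K,\ov\phi)| \leq C_{M_T}$.

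It remains to identify $\rho(M_K,\ov\phi)$ as one of the first-order signatures of $K$, for which the argument in the proof of Theorem~\ref{thm:Bingdouble} applies verbatim. One shows $\ov\phi$ factors through $G/G^{(2)}$ because $\mu_K$ is identified in $E$ with the longitude of $\alpha \in \pi_1(M_T)^{(n)}$, whence $i_*(G^{(2)}) \subset \pi^{(n+2)}$. To check the nontriviality hypothesis $\psi(\alpha)\neq 1$ needed to invoke Theorem~\ref{thm:nontriviality} with $\Lambda = \pi/\pi^{(n+1)}_r$, use precisely the argument from the proof of Theorem~\ref{thm:Bingdouble}: the decomposition $H_2(W;\mathbb{Q}) \cong H_2(V;\mathbb{Q})\oplus i_*H_2(M_K;\mathbb{Q})$ (from Lemma~\ref{lem:mickeyfacts}(3), which does not require triviality of $T$) together with the $(n+1)$-solvability and the fact that $\pi_1(\ov\Sigma)\subset \pi^{(n)}$ for a capped-off Seifert surface lets one apply~\cite[Theorem 2.1]{CH2} to get the Harvey monomorphism $\pi_1(M_T)/\pi_1(M_T)^{(n+1)}_H \hookrightarrow \pi/\pi^{(n+1)}_H$; the image $\psi(\alpha)$ is then nontrivial because $\alpha \notin \pi_1(M_T)^{(n+1)}_H$ by direct hypothesis. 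Applying Theorem~\ref{thm:nontriviality} and chasing the diagram exhibiting $\ker(\ov\phi)$ as $\tilde P$ for a submodule $P\subset \mathcal{A}_0(K)$ with $P\subset P^\perp$ finishes the identification.

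The main obstacle is simply the bookkeeping of verifying that every step of the Theorem~\ref{thm:Bingdouble} proof other than the vanishing of $\rho(M_T,\phi_T)$ goes through for a slice (rather than trivial) $T$ and a hypothesis on $\alpha$ stated directly in terms of the torsion-free derived series. In fact, the key decomposition of $H_2(W;\mathbb{Q})$ and the application of~\cite[Theorem 2.1]{CH2} depend only on $V$ being a rational $(n+1)$-solution and on the presence of $M_K$ as a boundary component, not on the structure of $T$; so the only genuine modification is the replacement of the (now unavailable) extension of $\phi_T$ over a handlebody by the Cheeger-Gromov inequality.
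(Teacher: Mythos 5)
Your proof is correct and follows the paper's approach exactly: the paper's own proof consists of the single observation that the argument for Theorem~\ref{thm:Bingdouble} goes through verbatim except that $\rho(M_T,\phi_T)=0$ (from extending $\phi_T$ over a handlebody) must be replaced by the Cheeger--Gromov bound $|\rho(M_T,\phi_T)|<C_{M_T}$, which is precisely the substitution you make. You also correctly note the second, smaller change -- that the hypothesis $\alpha\notin\pi_1(M_T)^{(n+1)}_H$ is now assumed directly, so the free-group identification of the torsion-free derived series with the derived series is no longer needed -- a point the paper's one-sentence proof leaves implicit.
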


\begin{proof}[Proof of Theorem~\ref{thm:basiclink2}] The proof is identical except that instead of $\rho(M_K,\ov\phi)=0$ we have only that
$$
|\rho(M_K,\ov\phi)|=|\rho(M_T,\phi_T)|<C_{M_T}.
$$
\end{proof}

Before moving on to more general results, we give another application.

\vspace{.3in}

In ~\cite[Section 6]{Ha2} Harvey considered a filtration $\mathcal{F}^m_{
(n)}$ of the $m$-component string link concordance group wherein a string link $L$ is $(n)$-solvable if its closure $\hat{L}$ is an $(n)$-solvable link in the sense of ~\cite[Section 8]{COT}. The restriction of this filtration to boundary string links, $\mathcal{B}(m)$ was denoted $\mathcal{BF}^m_{(n)}$. Harvey defined specific real-valued higher-order signature invariants, $\rho_n$ of string links. She showed that \emph{each} $\rho_i$ gave a \textbf{homomorphism} $\rho_i:\mathcal{B}(m)\to \mathbb{R}$. Moreover she showed that $\rho_n$ induces a homomorphism
$$
\rho_n:\mathcal{BF}^m_{(n)}/\mathcal{BF}^m_{(n+1)}\to \mathbb{R}
$$
whose image, for any $m>1$, contains an infinitely generated rational vector subspace of $\mathbb{R}$. This was slightly improved to $\mathcal{BF}^m_{(n)}/\mathcal{BF}^m_{(n.5)}$ in ~\cite[Theorem 4.5]{CH2}. From this she concluded that (we incorporate the improvement of ~\cite[Theorem 4.5]{CH2})

\begin{thm}\label{thm:shellythm} ~\cite[Theorem 6.8]{Ha2} For any $m>1$ the abelianization of $\mathcal{BF}^m_{(n)}/\mathcal{BF}^m_{(n.5)}$ has infinite rank, and so $\mathcal{BF}^m_{(n)}/\mathcal{BF}^m_{(n.5)}$ is an infinitely generated subgroup of $\mathcal{F}^m_{(n)}/\mathcal{F}^m_{(n.5)}$
\end{thm}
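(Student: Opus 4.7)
The plan is to deduce Theorem~\ref{thm:shellythm} from the previously stated homomorphism
$$
\rho_n:\mathcal{BF}^m_{(n)}/\mathcal{BF}^m_{(n.5)}\to \R
$$
whose image contains an infinitely generated $\Q$-subspace of $\R$, established by Harvey and refined in~\cite{CH2}. The content of the theorem beyond that input is purely formal together with a vanishing check.

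First I would note that any homomorphism from a group $G$ to the abelian group $(\R,+)$ factors through the abelianization $G^{\mathrm{ab}}$. Applied to $G = \mathcal{BF}^m_{(n)}/\mathcal{BF}^m_{(n.5)}$, the induced map $G^{\mathrm{ab}} \to \R$ has image equal to that of $\rho_n$, hence contains an infinite-dimensional $\Q$-subspace. Choose boundary string links $\{L_i\}$ whose values $\{\rho_n(L_i)\}$ are $\Q$-linearly independent in $\R$; any integer-linear dependence $\sum n_i [L_i] = 0$ in $G^{\mathrm{ab}}$ would push down to $\sum n_i\,\rho_n(L_i)=0$, a contradiction. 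Hence $G^{\mathrm{ab}}$ has infinite rank, and since a finitely generated group has finitely generated abelianization, $G$ itself must be infinitely generated.

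For the ``subgroup of $\mathcal{F}^m_{(n)}/\mathcal{F}^m_{(n.5)}$'' assertion, I would study the natural homomorphism
$$
\iota:\mathcal{BF}^m_{(n)}/\mathcal{BF}^m_{(n.5)}\to \mathcal{F}^m_{(n)}/\mathcal{F}^m_{(n.5)}
$$
induced by regarding a boundary string link simply as a string link; it suffices to show that $\iota$ has infinitely generated image. For this, I would verify that $\rho_n$ --- being a Cheeger-Gromov $\rho$-invariant of the zero surgery $M_{\hat{L}}$ with a PTFA coefficient system of derived length at most $n+1$ --- actually vanishes on the larger subgroup $\mathcal{F}^m_{(n.5)}$, not merely on $\mathcal{BF}^m_{(n.5)}$. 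This is the link slicing obstruction Theorem~\ref{thm:linksliceobstr}, together with its refinement to rational $(n.5)$-solutions developed in Section~\ref{sec:appendix}: the PTFA coefficient system extends over any rational $(n.5)$-solution by the standard derived-length matching argument. Hence $\rho_n$ descends to a homomorphism on $\mathcal{F}^m_{(n)}/\mathcal{F}^m_{(n.5)}$ pulling back to the original along $\iota$, and the $\Q$-independent family $\{\rho_n(\iota(L_i))\}$ witnesses infinite generation of $\iota(G)$.

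The main obstacle is the vanishing claim in the last step: confirming that the specific coefficient system Harvey uses to define $\rho_n$ on a boundary string link extends over any rational $(n.5)$-solution of the closure, rather than only over a \emph{boundary} $(n.5)$-solution. Once this is in place, the rest of the argument rests only on the homomorphism property of $\rho_n$ and the general fact that an infinite-rank abelianization forces a group to be infinitely generated.
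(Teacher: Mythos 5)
The statement is not proved in the paper: it is a recalled result from Harvey (Theorem 6.8 of \cite{Ha2}, with the $(n.5)$ refinement from \cite{CH2}), and the paper's implicit justification is precisely the preceding sentence about Harvey's homomorphism $\rho_n$. Your deduction of the first claim --- that the abelianization of $\mathcal{BF}^m_{(n)}/\mathcal{BF}^m_{(n.5)}$ has infinite rank --- is correct and is the intended argument: $\rho_n$ factors through the abelianization, pull back a $\Q$-independent family, and infinite rank follows.

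Your treatment of the ``subgroup'' claim, however, has a genuine gap and is also an unnecessary detour. You write that ``$\rho_n$ descends to a homomorphism on $\mathcal{F}^m_{(n)}/\mathcal{F}^m_{(n.5)}$ pulling back to the original along $\iota$,'' but $\rho_n$ is a homomorphism only on $\mathcal{B}(m)$, the boundary string links, not on the whole string link concordance group. A map on a subgroup does not ``descend'' to the ambient group; you would need to \emph{extend} it, and that is far from automatic (indeed it is false in general: Harvey's $\rho_n$ requires a canonical epimorphism to the free group, which exists for boundary string links but not for arbitrary string links). The obstacle you identify at the end --- whether the coefficient system extends over arbitrary, not just boundary, $(n.5)$-solutions --- is real, but it is not even the first problem; before that you would need $\rho_n$ to be defined and additive on $\mathcal{F}^m_{(n)}$ at all.

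Fortunately none of that is needed. Since $\mathcal{BF}^m_{(n)}$ and $\mathcal{BF}^m_{(n.5)}$ are, by Harvey's definition (as the paper recalls), simply the intersections $\mathcal{B}(m)\cap\mathcal{F}^m_{(n)}$ and $\mathcal{B}(m)\cap\mathcal{F}^m_{(n.5)}$, the natural map
$$
\iota:\mathcal{BF}^m_{(n)}/\mathcal{BF}^m_{(n.5)}\to\mathcal{F}^m_{(n)}/\mathcal{F}^m_{(n.5)}
$$
has kernel $(\mathcal{B}(m)\cap\mathcal{F}^m_{(n)})\cap\mathcal{F}^m_{(n.5)}=\mathcal{BF}^m_{(n.5)}$, so $\iota$ is injective purely formally (the second isomorphism theorem). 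Thus $\mathcal{BF}^m_{(n)}/\mathcal{BF}^m_{(n.5)}$ embeds as a subgroup, and the first part of your argument already shows this subgroup is infinitely generated. You should replace the $\rho_n$-extension step with this one-line injectivity observation.
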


Our examples cannot be detected by any of Harvey's $\{\rho_i\}$ and so we can use them to show that

\begin{cor}\label{cor:kernelrhon} For any $m>1$, $n\geq 2$, the kernel of Harvey's
$$
\rho_n:\mathcal{BF}^m_{(n)}/\mathcal{BF}^m_{(n.5)}\to \mathbb{R}
$$
contains an infinitely generated subgroup.
\end{cor}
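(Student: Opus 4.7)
Fix $m>1$ and $n\geq 2$. The plan is to use (generalized) iterated Bing doubles of the algebraically slice knots supplied by Corollary~\ref{cor:Bingdouble} to produce infinitely many $\mathbb{Q}$-linearly independent classes in $\mathcal{BF}^m_{(n)}/\mathcal{BF}^m_{(n.5)}$ that all lie in $\operatorname{ker}\rho_n$. For each $j\in\mathbb{N}$ choose a knot $K_1^{(j)}$ with $\operatorname{Arf}(K_1^{(j)})=0$ and $|\rho_0(K_1^{(j)})|>|\rho^1(R_1)|$ --- for instance a suitable connected sum of trefoils --- arranged so that the values $\{\rho_0(K_1^{(j)})\}_j$ are $\mathbb{Q}$-linearly independent and arbitrarily large. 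Set $K^{(j)}:=J_1(K_1^{(j)})$, which is algebraically slice; by Example~\ref{ex:first-ordersigs} its three first-order signatures are
$$
\bigl\{\rho_0(K_1^{(j)}),\ \rho_0(K_1^{(j)}),\ \rho^1(R_1)+2\rho_0(K_1^{(j)})\bigr\},
$$
all nonzero by construction. Let $L_j$ be an $(n-1)$-fold iterated Bing double of $K^{(j)}$, split-unioned with a trivial link if necessary to bring the component count to exactly $m$. Each $L_j$ is a boundary link, and by Corollary~\ref{cor:Bingdouble} is $(n)$-solvable but not rationally $(n.5)$-solvable, so $[L_j]$ is a nonzero class in $\mathcal{BF}^m_{(n)}/\mathcal{BF}^m_{(n.5)}$.

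That $\rho_n(L_j)=0$ is a direct additivity computation. Harvey's $\rho_n$ is the Cheeger-Gromov invariant of the canonical projection $\psi\colon\pi_1(M_{L_j})\to\pi_1(M_{L_j})/\pi_1(M_{L_j})^{(n+1)}_H$. Viewing $L_j$ as an infection $T(\alpha,K^{(j)})$ of a trivial link $T$ along a curve $\alpha\in\pi_1(S^3\smallsetminus T)^{(n-1)}$, Lemma~\ref{lem:additivity} gives
$$
\rho_n(L_j)=\rho(M_T,\psi_T)+\rho(M_{K^{(j)}},\psi_{K^{(j)}}).
$$
The first summand vanishes because $M_T$ bounds a boundary-connected sum of copies of $S^1\times B^3$ over which $\psi_T$ extends, exactly as in the proof of Theorem~\ref{thm:Bingdouble}. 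For the second, the depth of $\alpha$ and the derived-length constraint on the coefficient group force $\psi_{K^{(j)}}$ to factor through the abelianization of $\pi_1(S^3\smallsetminus K^{(j)})$, so part (4) of Proposition~\ref{prop:rho invariants} identifies this contribution as either $\rho_0(K^{(j)})$ or $0$. Since $K^{(j)}$ is algebraically slice, $\rho_0(K^{(j)})=0$; hence $\rho_n(L_j)=0$.

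It remains to establish $\mathbb{Q}$-linear independence of $\{[L_j]\}$ in $\mathcal{BF}^m_{(n)}/\mathcal{BF}^m_{(n.5)}$. Suppose $\sum_{j\in S}a_j[L_j]=0$ for some nontrivial finite integer combination, i.e.\ the stacking product in $\mathcal{B}(m)$ is rationally $(n.5)$-solvable. Re-running the argument of Theorem~\ref{thm:Bingdouble} on this combined link --- building the combined infection cobordism of Figure~\ref{fig:mickey}, capping off with the hypothetical $(n.5)$-solution, and applying Lemma~\ref{lem:additivity} together with Theorem~\ref{thm:nontriviality} to the classical Blanchfield form of each $K^{(j)}$ --- yields an inequality of the form
$$
\Big|\sum_{j\in S}a_j\,s_j\Big|\leq C,
$$
where each $s_j$ is one of the first-order signatures of $K^{(j)}$ displayed above and $C$ depends only on the Cheeger-Gromov constants of the finitely many model slice and trivial links used. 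Each $s_j$ is a fixed integer-coefficient polynomial in $\rho_0(K_1^{(j)})$ and $\rho^1(R_1)$, so the linear independence and arbitrary largeness of the $\rho_0(K_1^{(j)})$ make this bound impossible for any nonzero $(a_j)$, a contradiction. The principal obstacle is precisely this upgrade of the single-link result of Theorem~\ref{thm:Bingdouble} to linear combinations: one must construct a single cobordism that accommodates all the infecting knots $K^{(j)}$ simultaneously and verify that the Cheeger-Gromov error $C$ stays uniform in the $K_1^{(j)}$, so that the linear growth of the $s_j$ in $\rho_0(K_1^{(j)})$ eventually dominates.
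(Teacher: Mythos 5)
There are two genuine gaps in your proposal. First, the construction does not produce $m$-component links for all $m>1$: an $(n-1)$-fold iterated Bing double of $K^{(j)}$ (Bing-doubling each component) has $2^{n-1}$ components, and split-unioning with a trivial link only \emph{increases} the component count, so for $m<2^{n-1}$ (e.g. $m=2$, $n=4$) you cannot hit $m$ components. The paper sidesteps this by taking $L_i=T(\alpha,J_i)$ where $T$ is the trivial $m$-component string link and $\alpha$ is an unknotted circle with $\alpha\in F^{(n-1)}\smallsetminus F^{(n)}$ (and bounding a disk in $D^2\times I$), a ``generalized'' Bing-doubling that has exactly $m$ components for every $m>1$ and every $n$.

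Second, your justification that $\rho_n(L_j)=0$ is wrong as stated. You assert that ``the depth of $\alpha$ and the derived-length constraint on the coefficient group force $\psi_{K^{(j)}}$ to factor through the abelianization.'' But with $\alpha$ at derived depth $n-1$ and a coefficient group $\Lambda$ with $\Lambda^{(n+1)}=1$ (which is what is needed for $\rho_n$ to descend to $\mathcal{BF}^m_{(n)}/\mathcal{BF}^m_{(n.5)}$), the meridian of $K^{(j)}$ lands in $\Lambda^{(n-1)}$, so $\pi_1(S^3\smallsetminus K^{(j)})^{(1)}$ maps into $\Lambda^{(n)}$ and only $\pi_1(S^3\smallsetminus K^{(j)})^{(2)}$ dies; the restriction therefore factors through the \emph{metabelianization}, not the abelianization, and generically contributes a first-order signature of $K^{(j)}$, which is nonzero for your $K^{(j)}=J_1(K_1^{(j)})$. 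The vanishing of $\rho_n(L_j)$ is nevertheless true, but it relies on Harvey's specific definition (using the torsion-free derived series, whose successive quotients are torsion-free modules and hence kill the image of the torsion Alexander module of the infecting knot); the paper simply invokes Harvey's explicit formula~\cite[Theorem 5.4]{Ha2} at this point rather than re-deriving it. You should also verify that $L_j$ is a boundary string link and that $\hat L_j$ is $(n)$-solvable in the COT sense, both of which are done in the paper (via~\cite[Section 2]{CO2} and Lemma~\ref{lem:nsolv}, respectively). Your independence argument at the end is essentially the paper's -- rerun the nontriviality argument on a linear combination and use Theorem~\ref{thm:nontriviality} on each infecting knot's Alexander module separately -- so once the two issues above are repaired the outline is sound.
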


\begin{proof}[Proof of Corollary~\ref{cor:kernelrhon}] Let $\{K_i\}$ be an infinite set of Arf invariant zero knots such that $\{\rho_0(K_i)\}$ is $\mathbb{Q}$-linearly independent subset of $\mathbb{R}$ (the existence of such a set was established in ~\cite[Proposition 2.6]{COT2}). Let $R_1$ be the ribbon knot $9_{46}$. It is easy to see that by taking a subset if necessary, that  we can assume that $\{\rho_0(K_i), \rho^1(M_{R_1})\}$ is linearly independent. Let $J_i$ denote the knot of Figure~\ref{fig:examplehighersigs} with $K_1=K_2=K_i$. By ~\cite[Proposition 3.1]{COT2} $J_i$ is a $(1)$-solvable knot. Fix $m>1$ and let $T$ denote the trivial $m$-string link in $D^2\times I$. Fixing $n\geq 2$, choose a circle $\alpha\in F^{(n-1)}-F^{(n)}$, where $F$ is the group of the exterior of $T$, such that $\alpha$ bounds a disk in $D^2\times I$. Let $L_i$ denote $T(\alpha,J_i)$, the string link obtained by infecting $T$ along $\alpha$ using the knot $J_i$. The closure $\hat{L}_i$ is obtained from the trivial link  (which is $(n)$-solvable) by a $(1)$-solvable knot along a circle in $F^{(n-1)}$. Thus by Lemma~\ref{lem:nsolv}, $\hat{L}_i$ is $(n)$-solvable in the sense of ~\cite{COT} (one must check that $\pi_1$ of the $(1)$-solution for $J_i$ produced by ~\cite[Proposition]{COT2} is normally generated by the meridian). In any case we will show this more generally in the proof of our main theorem Theorem~\ref{thm:mainlink}. Consequently $L_i\in \mathcal{F}^m_{(n)}$. It is easily seen that $L_i$ is a boundary string link (see ~\cite[Section 2]{CO2}), so
$$
L_i\in \mathcal{BF}^m_{(n)}.
$$
It follows directly from Harvey's formula ~\cite[Theorem 5.4]{Ha2} that $\rho_n(L_i)=0$ (indeed all her $\rho_j$ vanish for these links). Consider the subgroup of $\mathcal{BF}^m_{(n)}$ generated by $\{L_i\}$. Suppose this were finitely generated. Then there is a subset $\{L_1,...,L_k\}$ that is a generating set. Consider $L_N$ for some $N>k$. Then the closure of the product $L=L_NL_{i_1}^{\epsilon_1}L_{i_2}^{\epsilon_2}...L_q^{\epsilon_q}$ is $(n.5)$-solvable for $i_j\in \{1,...,k\}$ and $\epsilon_j\in\{\pm 1\}$. A crucial point is now the observation that $\hat{L}$ can be obtained from the trivial link by multiple infections on curves $\alpha$ and $\alpha_i$, all lying in $F^{(n)}-F^{(n+1)}$, where the infection along $\alpha_N$ is done using $J_N$ and the other infections are done using copies of $J_1,...,J_k$ or their mirror images (if $\epsilon_j=-1$). The proof of Theorem~\ref{thm:basiclink} applies verbatim to this situation (although it was stated above for only one infection) because the crucial Theorem~\ref{thm:nontriviality} applies to the Alexander module of each infection knot separately. The conclusion is that some first first-order signature of $J_N$ is equal to some linear combination of first-order signatures of the knots $\{J_1,...,J_k\}$. We saw in Example~\ref{ex:first-ordersigs} that a first order signature for $J_i$ is an element of the set $\{\rho_0(K_i),\rho^1(R_1)+2\rho_0(K_i)\}$. It follows that $\rho_0(K_N)$ is a (possibly trivial) linear combination of $\{\rho_0(K_1),...,\rho_0(K_k),\rho^1(M_{R_1})\}$, contradicting our choice of $\{K_i\}$. Therefore the subgroup of $\mathcal{BF}^m_{(n)}$ generated by $\{L_i\}$ is infinitely generated. 
\end{proof}

The techniques of the proof of Theorem~\ref{thm:Bingdouble} can be generalized to include the iterated Bing doubles of more and more subtle knots, in particular knots whose classical signatures \emph{and} first-order signatures and Casson-Gordon invariants vanish. For specificity we consider the family of knots $J_n$ from Figure~\ref{fig:family}. If $n>1$ these have vanishing classical signatures, first-order signatures and Casson-Gordon invariants. Yet we find that higher-order signatures obstruct their iterated Bing doubles from being slice. For the family $J_n(K)$, these higher-order signatures can be calculated, ``up to a constant'', in terms of the classical signatures of $K$, so we formulate our results in those terms. For simplicity of exposition, we restrict $K$ to have Arf invariant zero.

\begin{thm}\label{thm:mainlink} Suppose $T$ is a trivial link of $m$ components, $k$ and $n$ are positive integers such that $1\leq k\leq n$ and $\alpha$ is an unknotted circle in $S^3-T$ that represents an element in $F^{(k)}-F^{(k+1)}$ where $F=\pi_1(S^3-T)$, $K$ is a knot with Arf($K)=0$, and  $L_n(K)$ denotes $T(\alpha,J_{n-k}(K))$, the result of infection of $T$ along $\alpha$ using the knot $J_{n-k}(K)$ shown in Figure~\ref{fig:linkfamily}. Then $L_n(K)$ is $n$-solvable. Moreover, there is a positive constant $C$ such that if $|\rho_0(K)|>C$, then $L_n(K)$ is not topologically slice in a rational homology ball (nor even rationally $(n+1)$-solvable). Moreover if $L_n(K)$ is expressed as the closure of the $m$-component string link $\mathcal{SL}$ then no non-zero multiple of $\mathcal{SL}$ has closure that is rationally $(n+1)$-solvable.
\end{thm}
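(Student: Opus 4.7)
The plan closely follows the strategy of Theorem~\ref{thm:Bingdouble} combined with the iterated downward induction of Theorem~\ref{thm:main}. First, for $(n)$-solvability, Theorem~\ref{thm:main} provides an $(n-k)$-solution $W_K$ for $M_{J_{n-k}(K)}$ (using $\mathrm{Arf}(K) = 0$), and the slice disk exterior $W_T$ of the trivial link $T$ in $B^4$ is a rational $(j)$-solution for every $j$. Take a properly embedded surface $\Sigma \subset W_T$ realizing $\alpha$ as an element of $\pi_1(W_T)^{(k)}$, and perform the ``$4$-dimensional infection'' by removing a tubular neighborhood of $\Sigma$ and gluing in $W_K$ along the appropriate torus boundary; the resulting $4$-manifold serves as an $(n)$-solution for $M_{L_n(K)}$ by the standard construction used in the proof of Theorem~\ref{thm:infection}.

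For the obstruction, suppose $V$ is a rational $(n+1)$-solution for $M := M_{L_n(K)}$. Construct the cobordism $E$ of Figure~\ref{fig:mickey} with outer boundary $M$ and inner boundaries $M_T \sqcup M_{J_{n-k}(K)}$, and cap off by $V$ to obtain $W$ with $\partial W = M_T \sqcup M_{J_{n-k}(K)}$. Let $\pi = \pi_1(W)$, $\Lambda = \pi/\pi^{(n+1)}_r$, and let $\phi$ denote the projection to $\Lambda$ and its various restrictions. The link-version rho-invariant vanishing result (an appropriate extension of Theorem~\ref{thm:linksliceobstr} to rational $(n+1)$-solutions, deferred to the appendix, with the hypothesis $\phi(\ell_{J_{n-k}(K)}) = 1$ verified as in the proof of Theorem~\ref{thm:Bingdouble}) yields $\rho(M, \phi) = 0$. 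Since $M_T$ bounds $\natural_m(S^1 \times B^3)$ with $\pi_1 \cong \pi_1$, the restriction of $\phi$ extends, so Proposition~\ref{prop:rho invariants}(1) gives $\rho(M_T, \phi|_{M_T}) = 0$. Lemma~\ref{lem:additivity} then yields $\rho(M_{J_{n-k}(K)}, \phi|_{M_{J_{n-k}(K)}}) = 0$. By Corollary~\ref{cor:infection}, $J_{n-k}(K)$ equals $R_{n-k}$ infected along the $2^{n-k}$ clones $\{\eta^{n-k}_\ast\} \subset \pi_1(S^3 \setminus R_{n-k})^{(n-k)}$ using $K$ each time, so applying Lemma~\ref{lem:additivity} once more gives
\[
0 = \rho(M_{J_{n-k}(K)}, \phi|_{M_{J_{n-k}(K)}}) = \rho(M_{R_{n-k}}, \phi|_{M_{R_{n-k}}}) + \Bigl(\sum_{i} \epsilon_i\Bigr)\rho_0(K),
\]
with $\epsilon_i \in \{0,1\}$ recording which top-level clones survive under $\phi$. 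Setting $C := C_{M_{R_{n-k}}}$, the Cheeger-Gromov constant, if $\sum_i \epsilon_i \neq 0$ then $|\rho_0(K)| \leq C$, contradicting the hypothesis.

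The heart of the argument, and the main obstacle, is to show that at least one top-level clone survives; I will prove this by a downward induction analogous to Theorem~\ref{thm:iteratednontriviality}, but with a different base case. The base case uses Harvey's theorem comparing the rational and torsion-free derived series for free groups (as employed in the proof of Theorem~\ref{thm:Bingdouble}): since $\alpha \in F^{(k)} - F^{(k+1)}$ and the two derived series coincide on the free group $F = \pi_1(S^3-T)$, the Harvey-type injection (Diagram~\ref{diag:harvey} at level $k$) gives $\phi(\alpha) \neq 1$ in $\pi/\pi^{(k+1)}_r$, whence the meridian of $J_{n-k}(K)$ (identified with $\alpha^+$ in $E$ by Lemma~\ref{lem:mickeyfacts}(4)) also survives. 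The inductive step then applies Theorem~\ref{thm:nontriviality} with coefficient systems $\pi/\pi^{(k+j)}_r$ for $j = 2, \ldots, n-k+1$: at each level, nonsingularity of the classical rational Blanchfield form on the Alexander module $\mathcal{A}_0(R_1)$ of the infecting ribbon knot forces at least one of its two generators --- which are precisely the clones at the next level --- to survive, exactly mirroring the argument of Theorem~\ref{thm:iteratednontriviality}. For the claim about multiples, the closure of $N \cdot \mathcal{SL}$ is obtained from $T$ by $|N|$ infections along parallel copies of $\alpha$ each using $J_{n-k}(K)$ (or its mirror for $N < 0$); applying the inductive survival argument independently to each of the $|N|$ copies guarantees at least $|N|$ nonzero $\epsilon_i$'s, all of the same sign, so the accumulated contribution has absolute value at least $|N|\cdot|\rho_0(K)|$ and is bounded above by $|N|\cdot C$, yielding the uniform bound $|\rho_0(K)| \leq C$ independent of $N$, once again contradicting the hypothesis.
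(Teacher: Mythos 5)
Your proof takes a genuinely different route from the paper's. Rather than proving the general Theorem~\ref{thm:linkinfection} (which builds the elaborate rational $(n)$-solution $W = V \cup C \cup_j E^j \cup_j Y^j \cup_{i,j} Z^j_i$ for the \emph{fixed} manifold $M_{T_{n-k}}$ and then invokes Theorem~\ref{thm:iteratedlink}), you work directly with $V$ and a single cobordism $E$, extract $\rho(M_{J_{n-k}(K)},\cdot) = 0$ by subtraction, and then run the downward induction from Theorem~\ref{thm:iteratednontriviality} inside $V$. For a single copy ($N=1$) this is essentially the strategy of Theorem~\ref{thm:Bingdouble} generalized to the iterated setting, and it is cleaner than the paper's reduction. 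However, your handling of multiples has a real gap. The paper's construction of $W$ as an $(n)$-solution for $M_{T_{n-k}}$ is precisely engineered so that the Cheeger--Gromov bound is $C_{M_{T_{n-k}}}$ — a quantity \emph{independent of the multiple} $p$ — and then only \emph{one} surviving clone is needed to produce the contradiction (Prop.~\ref{prop:iteratedghost} delivers exactly one). In your approach, the additivity decomposes $\rho(M_{\widehat{\mathcal{SL}^N}})$ into contributions from $N$ copies of $M_{R_{n-k}}$, so the Cheeger--Gromov bound grows like $N \cdot C_{M_{R_{n-k}}}$; to offset this you need at least $N$ nonzero $\epsilon_i$, i.e.\ at least one surviving clone \emph{per copy}. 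This is a strictly stronger conclusion than Proposition~\ref{prop:iteratedghost} gives, and you assert it (``applying the inductive survival argument independently to each of the $|N|$ copies'') without an argument; one would need to re-run the entire downward induction while simultaneously tracking survivors in $N$ branches, verifying the Cochran--Harvey injectivity hypotheses for the cobordism built from $N$ copies of $E$, and justifying the identification of $\widehat{\mathcal{SL}^N}$ with the $N$-fold parallel infection of $T$. Also, the $(n)$-solvability step is slightly garbled: one does not remove a tubular neighborhood of a surface $\Sigma$ realizing $\alpha$; the paper instead appeals to Proposition~\ref{prop:operatorsfiltration}/Lemma~\ref{lem:nsolv}, which glue in a $(0)$-solution for $K$ along the boundary (as in ~\cite[Prop.~3.1]{COT2}).
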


\begin{figure}[htbp]
\setlength{\unitlength}{1pt}
\begin{picture}(111,150)
\put(-50,0){\includegraphics{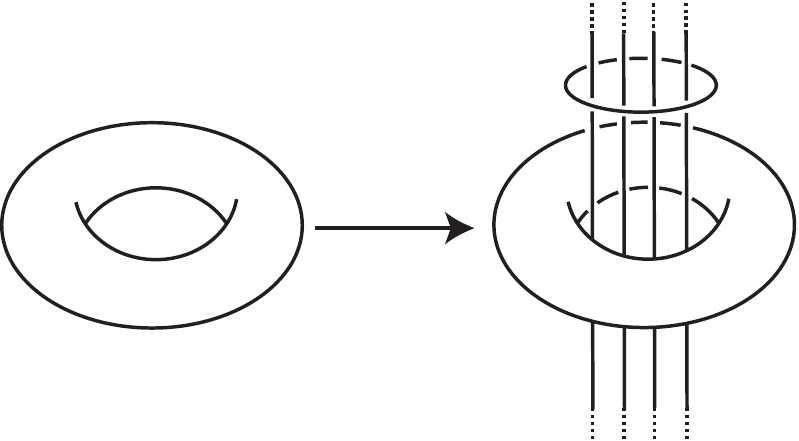}}
\put(98,100){$\tilde{\alpha}$}
\put(61,68){$g^{n-k}$}
\put(-30,19){$S^3-J_{n-k}(K)$}
\put(160,19){$T$}
\end{picture}
\caption{$L_n(K)$}\label{fig:linkfamily}
\end{figure}

\begin{rem} It is possible to show that $L_n(K)$ is not even rationally $(n.5)$-solvable, and also to choose $C$ independent of $n$, but these slight refinements require a more complicated proof.
\end{rem}

\begin{cor}\label{cor:BingdoubleJnK} For any $K$ with Arf$(K)=0$ and any $n$, there is a constant $C$ such that if the absolute value of the integral of the Levine-Tristram signatures of $K$ is greater than $C$ then the Bing double of $J_{n-1}(K)$ is $(n)$-solvable but not slice nor even rationally $(n+1)$-solvable.
\end{cor}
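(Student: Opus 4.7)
The strategy is to recognize the Bing double construction as an instance of the infection setup in Theorem~\ref{thm:mainlink} with $k=1$, and then simply invoke that theorem. There is essentially no independent work; the corollary is a direct specialization.

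First, I would recall from Figure~\ref{fig:bingeta} and the discussion following it that the Bing double $BD(J)$ of a knot $J$ is obtained from the trivial $2$--component link $T$ by infection along the circle $\alpha$ depicted there, using $J$ as the infecting knot. Writing $F=\pi_1(S^3-T)$ (a free group on the two meridians $x,y$ of the components of $T$), the infection curve $\alpha$ represents the commutator $[x,y]\in F^{(1)}$. Since $F$ is free on $\{x,y\}$, this commutator is nontrivial modulo $F^{(2)}$, so $\alpha\in F^{(1)}\smallsetminus F^{(2)}$. The circle $\alpha$ is clearly unknotted in $S^3$.

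Next I would apply Theorem~\ref{thm:mainlink} with $m=2$, $k=1$, and infecting knot $J_{n-k}(K)=J_{n-1}(K)$. All the hypotheses are satisfied: $T$ is a trivial link of two components, $\alpha$ is an unknotted circle lying in $F^{(1)}\smallsetminus F^{(2)}$, $K$ has Arf invariant zero by hypothesis, and we are in the range $1\le k\le n$. The theorem then yields at once that
\[
L_n(K)=T\bigl(\alpha,\,J_{n-1}(K)\bigr)=BD\bigl(J_{n-1}(K)\bigr)
\]
is $(n)$-solvable, and provides a constant $C$ such that if $|\rho_0(K)|>C$, then $BD(J_{n-1}(K))$ is not rationally $(n{+}1)$-solvable, and in particular is not topologically slice in a rational homology $4$-ball.

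Finally I would translate the hypothesis from $\rho_0(K)$ to the signature integral: by Proposition~\ref{prop:rho invariants}(4), $\rho_0(K)$ equals the integral over the circle of the Levine signature function of $K$ (normalized so the circle has length $1$). Hence the condition $|\rho_0(K)|>C$ is precisely the condition on the integral of the Levine--Tristram signatures appearing in the statement of the corollary, completing the proof. Since the entire argument is just a specialization of Theorem~\ref{thm:mainlink}, there is no genuine obstacle; the only point worth checking carefully is the (immediate) verification that $[x,y]\in F^{(1)}\smallsetminus F^{(2)}$ in the free group on two generators.
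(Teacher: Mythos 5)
Your proof is correct and follows exactly the same route as the paper's: recognize the Bing double as an infection of the two-component trivial link along the unknotted circle $\alpha=[x,y]\in F^{(1)}\smallsetminus F^{(2)}$, then invoke Theorem~\ref{thm:mainlink} with $k=1$. The only difference is that you spell out the brief verification that $[x,y]\notin F^{(2)}$, which the paper simply asserts.
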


\begin{cor}\label{cor:iteratedbing}Suppose $k$ and $n$ are positive integers, and $K$ is a knot with Arf($K)=0$. The there is a constant $C$ such that if $|\rho_0(K)|>C$, then the $k$-fold iterated Bing double of $J_{n-k}(K))$ is $(n)$-solvable but not slice nor even rationally $(n+1)$-solvable.
\end{cor}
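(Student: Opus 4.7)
The plan is to reduce Corollary~\ref{cor:iteratedbing} directly to Theorem~\ref{thm:mainlink} by re-expressing the $k$-fold iterated Bing double as a single infection. The essential observation, already implicit in the proof of Theorem~\ref{thm:Bingdouble}, is that $BD^k(J)$ for any knot $J$ is obtained from the trivial link $T$ of $m = 2^k$ components by a single infection along an unknotted curve $\alpha \in F^{(k)} \setminus F^{(k+1)}$, where $F = \pi_1(S^3 - T)$ is free of rank $2^k$. Concretely, the ordinary Bing double corresponds to infection along the commutator $[x,y]$ of the two meridians; each further Bing-doubling step replaces every surviving meridian $\mu$ of the current trivial link by a pair of meridians $\mu_+,\mu_-$, under which the original infection curve lifts to (an element isotopic to) one obtained by substituting $[\mu_+,\mu_-]$ for $\mu$. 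Iterating $k$ times, $\alpha$ represents a balanced iterated commutator of depth $k$ in the $2^k$ meridians of $T$, and such a commutator is a standard example of an element of $F^{(k)} \setminus F^{(k+1)}$.

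With this description in hand, take $J = J_{n-k}(K)$. Since $\mathrm{Arf}(K) = 0$, the hypotheses of Theorem~\ref{thm:mainlink} are satisfied for the trivial link $T$ of $m=2^k$ components, the curve $\alpha$, and the infecting knot $J_{n-k}(K)$; and $BD^k(J_{n-k}(K))$ is precisely the link $L_n(K) = T(\alpha, J_{n-k}(K))$ appearing in that theorem. Invoking Theorem~\ref{thm:mainlink}, $BD^k(J_{n-k}(K))$ is $(n)$-solvable, and there is a positive constant $C$ (arising from the Cheeger--Gromov constant of $M_T$ together with that of the ribbon knot $R_{n-k}$ appearing through the decomposition of $J_{n-k}(K)$) such that $|\rho_0(K)| > C$ forces $BD^k(J_{n-k}(K))$ to be neither topologically slice in a rational homology $4$-ball nor rationally $(n+1)$-solvable. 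This is exactly the conclusion sought.

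There is essentially no obstacle. The only point requiring care is the single-infection reformulation of iterated Bing doubling together with the identification of the derived-series depth of $\alpha$; both are straightforward and were already used in Section~\ref{sec:Bingdoubles}. All the substantive content---the additivity of $\rho$ under infection (Lemma~\ref{lem:additivity}), the propagation of rational $(n)$-solvability under infection along deep derived-series curves (via Proposition~\ref{prop:operatorsfiltration} and its link analogue), and the higher-order Blanchfield-form nontriviality results (Theorem~\ref{thm:nontriviality} applied iteratively as in Theorem~\ref{thm:iteratednontriviality})---has already been absorbed into Theorem~\ref{thm:mainlink}, so nothing new is needed here.
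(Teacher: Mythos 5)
Your proposal is correct and follows the paper's own proof essentially verbatim: expressing $BD^k(J_{n-k}(K))$ as a single infection of the trivial $2^k$-component link along a curve $\alpha \in F^{(k)} \setminus F^{(k+1)}$ and then applying Theorem~\ref{thm:mainlink}. The extra detail you give about $\alpha$ being a balanced iterated commutator of depth $k$ in the meridians is a helpful elaboration of what the paper leaves implicit, but the reduction is the same.
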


\begin{proof}[Proof of Corollary~\ref{cor:BingdoubleJnK}] As we have seen in Figure~\ref{fig:bingeta},  a Bing double is obtained by a single infection of the trivial link of two components along a circle $\alpha$ representing the generator of the non-zero group $F^{(1)}/F^{(2)}$ where $F$ is the free group on two letters. The result then follows directly from Theorem~\ref{thm:mainlink} with $k=1$.
\end{proof}

\begin{proof}[Proof of Corollary~\ref{cor:iteratedbing}] As discussed in the proof of Theorem~\ref{thm:Bingdouble}, the $k$-fold iterated Bing double can be obtained from the trivial $2^k$ component link $T$ by a single infection, using the knot $J_{n-k}(K))$, along a circle $\alpha$ representing, in $\pi_1(M_T)$, an element in $F^{(k)}-F^{(k+1)}$. The result then follows directly from Theorem~\ref{thm:mainlink}.
\end{proof}

\begin{proof}[Proof of Theorem~\ref{thm:mainlink}] The structure of the proof is similar to that of Theorem~\ref{thm:main}, but many of the needed results for links are not in the literature and have complexities not present in the case of knots. Without loss of generality we can assume that $L\equiv L_n(K)$ is the closure of a string link $\mathcal{SL}$ as in the last clause of the theorem. Since the closure of a multiple of $\mathcal{SL}$ is just a particular connected-sum of copies of $L$, we can (proceeding by contradiction) suppose that $\tilde L\equiv\#^M_{j=1}L$ were rationally $(n.5)$-solvable for some $M>0$.

We first establish that $L$ can be obtained from a ribbon link by multiple infections along curves lying in the $n^{th}$-derived subgroup of the ribbon group. It will follow immediately from Proposition~\ref{prop:operatorsfiltration} that $L$ is $(n)$-solvable. Specifically:

\begin{cor}\label{cor:linknsolvable} $L_n(K)$ can be obtained from the slice boundary link $L_n(U)=T(\alpha,R_{n-k})$ as the result of $2^{n-k}$ infections using the knot $K$ each time, along the clones $\alpha^{n-k}_*=\{g^{n-k}(\eta_*^{n-k})\}$ that lie in $\pi_1(S^3-L_n(U))^{(n)}$. Hence $L_n(K)$ is $n$-solvable.
\end{cor}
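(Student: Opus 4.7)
My proof proposal proceeds by combining Proposition~\ref{prop:altdescriptions} (applied internally to $J_{n-k}(K)$) with a derived-series shift provided by the infection $T\mapsto L_n(U)$, and then invoking Proposition~\ref{prop:operatorsfiltration}. First I would apply Proposition~\ref{prop:altdescriptions} in the case ``$n \leftarrow n-k$, $i \leftarrow n-k$'' to realize
$$
J_{n-k}(K)=R_{n-k}(\eta^{n-k}_{*},\,K,\dots,K),
$$
where the $2^{n-k}$ clones $\eta^{n-k}_{*}$ lie in $\pi_1(S^3-R_{n-k})^{(n-k)}$. Because infection is a purely local surgical operation on tubular neighborhoods, the defining description $L_n(K)=T(\alpha,J_{n-k}(K))$ factors through two stages: replace a tubular neighborhood of $\alpha$ in $S^3$ by $S^3-R_{n-k}$ (producing $L_n(U)$ together with the inclusion $g^{n-k}:S^3-R_{n-k}\hookrightarrow S^3-L_n(U)$), and then perform the $2^{n-k}$ infections on $L_n(U)$ along the images $\alpha^{n-k}_{*}=g^{n-k}(\eta^{n-k}_{*})$ using $K$ each time. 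This gives the advertised alternative description.

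Next I would verify the derived-series claim $\alpha^{n-k}_{*}\subset \pi_1(S^3-L_n(U))^{(n)}$. The inclusion $g^{n-k}$ identifies the meridian $\mu$ of $R_{n-k}$ with the longitudinal pushoff $\alpha^{+}$ of $\alpha$ in $S^3-L_n(U)$. Since $\alpha\in F^{(k)}$, the result ~\cite[Theorem 8.1]{C} (already invoked in Section~\ref{signatures}) ensures $\alpha^{+}\in \pi_1(S^3-L_n(U))^{(k)}$. Because $\pi_1(S^3-R_{n-k})$ is normally generated by $\mu$, the normal subgroup $\pi_1(S^3-L_n(U))^{(k)}$ must contain all of $g^{n-k}(\pi_1(S^3-R_{n-k}))$. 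The elementary fact that a homomorphism $f:G\to H$ with $f(G)\subset H^{(k)}$ satisfies $f(G^{(i)})\subset H^{(k+i)}$ (by induction on $i$, using $[H^{(k+i)},H^{(k+i)}]=H^{(k+i+1)}$) then yields
$$
g^{n-k}\bigl(\pi_1(S^3-R_{n-k})^{(n-k)}\bigr)\subset \pi_1(S^3-L_n(U))^{(n)},
$$
so in particular $\alpha^{n-k}_{*}\subset \pi_1(S^3-L_n(U))^{(n)}$.

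To close the argument, note that $L_n(U)$ is a slice boundary link because $T$ is a trivial (hence slice boundary) link and infection by the ribbon knot $R_{n-k}$ can be realized in $B^{4}$ by gluing a slice-disk exterior for $R_{n-k}$ to the exterior of slice disks for $T$ along a torus (the standard $4$-dimensional description of infection used throughout ~\cite{COT,COT2}). Since $\mathrm{Arf}(K)=0$, the knot $K$ lies in $\mathcal{F}_{0}$ by ~\cite[Theorem]{COT}. Then Proposition~\ref{prop:operatorsfiltration} applied to the slice link $L_n(U)$ and the curves $\alpha^{n-k}_{*}\in \pi_1(S^3-L_n(U))^{(n)}$ immediately gives $L_n(K)\in \mathcal{F}_n$, i.e.\ $L_n(K)$ is $(n)$-solvable.

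The main obstacle is the derived-series bookkeeping in the middle paragraph: one must be careful that the ``$k$-step shift'' provided by the infection $T\mapsto L_n(U)$ genuinely transports the $(n-k)$-th derived information from $\pi_1(S^3-R_{n-k})$ into the $n$-th derived subgroup of $\pi_1(S^3-L_n(U))$. This depends on the non-trivial fact that $\alpha^{+}$ (not just $\alpha$ itself) lands in the $k$-th derived subgroup of the \emph{infected} link group, which is exactly the content of ~\cite[Theorem 8.1]{C} / ~\cite{Lei3} invoked earlier in the paper; the rest is a routine commutator computation and an appeal to Proposition~\ref{prop:operatorsfiltration}.
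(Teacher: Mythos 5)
Your argument follows essentially the same route as the paper: you apply Proposition~\ref{prop:altdescriptions} (with parameters $n\leftarrow n-k$, $i\leftarrow n-k$), factor the infection $T(\alpha,J_{n-k}(K))$ into two stages via the ``postponing'' description, lift the derived-series membership across the inclusion $g^{n-k}$ using $\alpha^+\in\pi_1(S^3-L_n(U))^{(k)}$ (via~\cite[Theorem 8.1]{C}) and the normal generation by the meridian, and finally invoke Proposition~\ref{prop:operatorsfiltration}. This is in substance identical to what the paper does; the paper merely packages the same computation as the $j=n$ instance of the more general Proposition~\ref{prop:linkaltdescriptions}, whose extra generality is needed later in the inductive argument for Theorem~\ref{thm:iteratedlink}.

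There is one small gap: you assert that $L_n(U)$ is a slice \emph{boundary} link, but the justification you offer --- realizing the infection by gluing slice-disk exteriors in $B^4$ --- establishes sliceness only, not the boundary-link property, which is a $3$-dimensional statement about disjoint Seifert surfaces in $S^3$ and does not follow from having slice disks. The paper supplies a separate argument: infecting a boundary link by \emph{any} knot yields a boundary link, because the degree-one map (rel boundary) from any knot exterior to the unknot exterior induces a degree-one map from $S^3-L_n(U)$ onto $S^3-T$, and composing with the epimorphism $\pi_1(S^3-T)\twoheadrightarrow F$ carrying meridians to free generators exhibits $L_n(U)$ as a boundary link (alternatively one can cite ~\cite[p.403]{CO2}). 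Since the boundary-link property plays no role in concluding $n$-solvability --- only sliceness of $L_n(U)$, the derived-series placement of the clones, and Proposition~\ref{prop:operatorsfiltration} are needed there --- this omission does not affect the ``Hence'' clause, but it should be filled in to fully justify the Corollary as stated.
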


The proof of this will be accomplished by establishing that $L_n(K)$ has a variety of different descriptions due to its ``fractal'' nature. Recall $U$ denotes the trivial knot, and $J_0(K)\equiv K$. Suppose that we view the trivial link, $T$, the positive integer $k$ and the curve $\alpha\in F^{(k)}-F^{(k+1)}$ as fixed. Then $T(\alpha,~-)$ may be thought of as an operator from knots to $m$-component links. From this viewpoint, the proof of Proposition~\ref{prop:linkaltdescriptions} below is merely to apply this operator to the result of Proposition~\ref{prop:altdescriptions}. We give more details below.

\begin{prop}\label{prop:linkaltdescriptions}  For any knot $K$, and any $j,n$ such that $k\leq j\leq n$,  $L_n(K)$ can be obtained from $L_j(U)$ by multiple infections along the $2^{j-k}$ clones $\alpha^{j-k}_*=\{g^{j-k}(\eta_*^{j-k})\}$, using the knot $J_{n-j}(K)$ as the infecting knot in each case, and the clones lie in $\pi_1(S^3-L_j(U))^{(j)}$.
\end{prop}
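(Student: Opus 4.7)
The plan is to reduce the claim to Proposition~\ref{prop:altdescriptions} by invoking the ``associativity'' of the infection construction, together with a short derived-series computation. By definition, $L_n(K)=T(\alpha,J_{n-k}(K))$. Apply Proposition~\ref{prop:altdescriptions} to the infecting knot $J_{n-k}(K)$ with the index $i=j-k$ (which satisfies $0\leq j-k\leq n-k$): this describes $J_{n-k}(K)$ as obtained from the ribbon knot $R_{j-k}$ by $2^{j-k}$ infections along the clones $\{\eta^{j-k}_*\}\subset S^3-R_{j-k}$, using the knot $J_{n-j}(K)$ as the infecting knot in each case, with each clone lying in $\pi_1(S^3-R_{j-k})^{(j-k)}$.

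Next I use the ``associativity'' of infection: if a knot $J$ is itself obtained from a knot $R'$ by infection along $\{\eta_i\}$ using knots $\{K_i\}$, then for any ambient link $T$ and unknot $\alpha\subset S^3-T$, the link $T(\alpha,J)$ can be built in two stages---first perform the infection $T(\alpha,R')$, which produces a canonical embedding $g:S^3-R'\hookrightarrow S^3-T(\alpha,R')$ (the ``copy'' of $S^3-R'$ glued in along $\partial(\alpha\times D^2)$), and then infect $T(\alpha,R')$ along the images $\{g(\eta_i)\}$ using the same infecting knots $\{K_i\}$. This follows immediately from the cut-and-paste description of infection: both constructions remove and reglue exactly the same collection of solid tori with the same meridian/longitude identifications, since these identifications propagate through the nested gluing. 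Applying this with $R'=R_{j-k}$ and noting that $T(\alpha,R_{j-k})=L_j(U)$ (the definition of $L_j$ with $K=U$), I obtain the stated description of $L_n(K)$ as the result of $2^{j-k}$ infections of $L_j(U)$ along $\alpha^{j-k}_*=\{g^{j-k}(\eta^{j-k}_*)\}$ using $J_{n-j}(K)$.

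It remains to verify $\alpha^{j-k}_*\subset \pi_1(S^3-L_j(U))^{(j)}$. Under the infection $T(\alpha,R_{j-k})=L_j(U)$, a meridian of $R_{j-k}$ is identified with the longitudinal push-off $\ell_\alpha$, which is isotopic to $\alpha$ in the solid torus $\alpha\times D^2$; by the argument recalled in the paragraph preceding Lemma~\ref{lem:additivity} (appealing to ~\cite[Theorem 8.1]{C}), $\ell_\alpha\in \pi_1(S^3-L_j(U))^{(k)}$ because $\alpha\in F^{(k)}$. Since $\pi_1(S^3-R_{j-k})$ is normally generated by its meridian, the image of $g^{j-k}_*$ is contained in $\pi_1(S^3-L_j(U))^{(k)}$, so the image of its $(j-k)$-th derived subgroup lies in $\pi_1(S^3-L_j(U))^{(k+(j-k))}=\pi_1(S^3-L_j(U))^{(j)}$, which contains the clones $\alpha^{j-k}_*$. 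The most delicate point is the associativity assertion: one must check that the meridian/longitude data match under the two different orders of cutting and regluing. The rest is essentially bookkeeping inherited from Proposition~\ref{prop:altdescriptions}.
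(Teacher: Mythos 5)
Your proof is correct and follows essentially the same route as the paper's: apply Proposition~\ref{prop:altdescriptions} with $i=j-k$ to the infecting knot $J_{n-k}(K)$, then commute the outer infection $T(\alpha,-)$ past the inner infections (what the paper phrases as ``postponing the infections''), and finally verify the derived-series membership using the fact that $\alpha^+$ is carried into $\pi_1(S^3-L_j(U))^{(k)}$ via ~\cite[Theorem 8.1]{C}, so that the image of $g^{j-k}_*$ lands in the $(k)$-th derived subgroup and the clones in its $(j-k)$-th derived subgroup land in $\pi_1(S^3-L_j(U))^{(j)}$. You make the ``associativity'' of infection more explicit than the paper does, but it is the same underlying observation.
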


Corollary~\ref{cor:linknsolvable} follows immediately.

\begin{proof}[Proof that Proposition~\ref{prop:linkaltdescriptions} implies Corollary~\ref{cor:linknsolvable}] Apply Proposition~\ref{prop:linkaltdescriptions} with $j=n$. We claim that $L_n(U)$ is a slice link since it is obtained from the slice link $T$ by infecting using the slice knot $R_{n-k}$ (this is an easy exercise for the reader). 

We claim that $L_n(U)$ is also a boundary link since we claim that infecting a boundary link $T$ by \emph{any} knot results in another boundary link. This is shown in ~\cite[p.403]{CO2}, but it also may be seen as follows. Since there is a degree one map (relative boundary) from any knot exterior to the exterior of the unknot, there is a degree one map from the exterior of the infected link $T(\alpha,K)$ to that of the boundary link $T$. Thus there is a map from the link group of $T(\alpha,K)$ to the free group sending meridians to generators, implying it is a boundary link.
\end{proof}

\begin{proof}[Proof of Proposition~\ref{prop:linkaltdescriptions}] By definition,
$$
L_n(K)\equiv T(\alpha,J_{n-k}(K)), ~L_j(U)\equiv T(\alpha,J_{j-k}(U)).
$$
Since $0\leq j-k\leq n-k$, we have from Proposition~\ref{prop:altdescriptions} that $J_{n-k}(K)$ can be obtained from $J_{j-k}(U)\cong R_{j-k}$ by multiple infections along the $2^{j-k}$ clones $\{\eta^{j-k}_*\}$, using the knot $J_{n-j}(K)$ as the infecting knot in each case. Moreover each clone $\eta^{j-k}_*$ lies in $\pi_1(S^3-R_{j-k})^{(j-k)}$. Therefore, postponing the infections as in Proposition~\ref{prop:altdescriptions}, and as suggested by Figure~\ref{fig:linkaltdescriptions}, we see that $L_n(K)\equiv T(\alpha,J_{n-k}(K))$ can be obtained from $L_j(U)\equiv T(\alpha,R_{j-k})$ by multiple infections along the clones $\{\alpha^{j-k}_*\}=\{g^{j-k}(\eta_*^{j-k})\}$, using the knot $J_{n-j}(K)$ as the infecting knot in each case. 
\begin{figure}[htbp]
\setlength{\unitlength}{1pt}
\begin{picture}(111,150)
\put(-50,0){\includegraphics{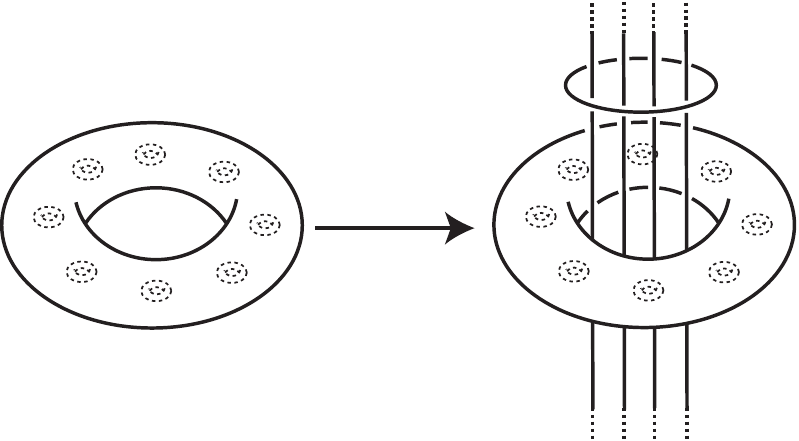}}
\put(98,100){$\tilde{\alpha}$}
\put(58,68){$g^{j-k}$}
\put(-72,62){$$}
\put(-72,52){}
\put(-105,62){$\{\eta^{j-k}_*\}\longrightarrow$}
\put(-25,19){$S^3-R_{j-k}$}
\put(160,19){$T$}
\end{picture}
\caption{$T(\alpha,J_{n-k}(K)$) obtained from $T(\alpha, R_{j-k})$}\label{fig:linkaltdescriptions}
\end{figure}

Since $\alpha\in \pi_1(S^3-T)^{(k)}$, the technical result ~\cite[proof of Theorem 8.1]{C} shows that the longitudinal push-off, $\alpha^+$, of $\alpha$ lies in $\pi_1(S^3-T(\alpha,R_{j-k})^{(k)}$. Hence, since the meridian of $R_{j-k}$ is identified with $\alpha^+$,
$$
g^{j-k}_*(\pi_1(S^3-R_{j-k}))\subset \pi_1(S^3-L_j(U))^{(k)}.
$$
Since, by Proposition~\ref{prop:altdescriptions}, each clone $\eta^{j-k}_*$ lies in $\pi_1(S^3-R_{j-k})^{(j-k)}$ , each clone $g^{j-k}(\eta^{j-k}_*)$ lies in $\pi_1(S^3-L_j(U))^{(j)}$. This completes the inductive step and the proof of Proposition~\ref{prop:linkaltdescriptions}.
\end{proof}

In summary, $L_n(K)$ can be obtained from the slice boundary link $L_n(U)=T(\alpha,R_{n-k})$, as the result of $2^{n-k}$ infections along circles that lie in $\pi_1(L_n(U))^{(n)}$ using the knot $K$ each time. 

Now we will prove the following general analog, for links, of ~\cite[Theorem 4.2]{CT} (for knots). We can apply this to our present situation with $R=T(\alpha,R_{n-k})$, $N=2^{n-k}$, $K_i=K$ for all $i$, $L=L_n(K)$. Observe that this will reduce the proof of Theorem~\ref{thm:mainlink} to proving that the hypotheses of Theorem~\ref{thm:linkinfection} are satisfied for $T(\alpha,R_{n-k})$ and the infection circles $\alpha^{n-k}_*=\{g^{n-k}f^{n-k}_*(\eta^0)\}$. This, in turn, will be accomplished by Theorem~\ref{thm:iteratedlink} below. Applying Theorem~\ref{thm:iteratedlink} shows that $T(\alpha,R_{n-k})$ satisfies the hypotheses of Theorem~\ref{thm:linkinfection} as desired. Thus the proof of 
Theorem~\ref{thm:mainlink} has been reduced to the proofs of the following two theorems.

\begin{thm}\label{thm:linkinfection} Let $R$ be a slice link of $m$ components
($n\ge1$) and $M_R$ the $0$-framed surgery on $R$. Suppose there
exists a collection of homotopy classes
\[
[\eta_i]\in\pi_1(M_R)^{(n)}, \quad 1\leq i\leq N,
\]
that has the following property: For \emph{any}
rational $(n)$-solution $W$ of $M_R$ there exists {\bf some} $i$ such that
$j_*(\eta_i)\notin\pi_1(W)^{(n+1)}_r$ where $j_*:\pi_1(M_R)\to\pi_1(W)$.

Then, for any oriented trivial link $\{\eta_1,\dots,\eta_m\}$ in $S^3\smallsetminus R$
that represents the $[\eta_i]$, and for any $N$-tuple
$\{K_1,\dots,K_N\}$ of Arf invariant zero knots for which
$\rho_0(K_i)>C_{M_R}$ (the Cheeger-Gromov constant of $M_R$), the link
\[
L=R(\eta_1,...,\eta_N,K_1,...,K_N)
\]
is $(n)$-solvable but no positive multiple of it is slice (nor even rationally $(n+1)$-solvable). (If the Arf invariant condition is dropped then $L$ is merely rationally $n$-solvable).
\end{thm}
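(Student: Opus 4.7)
The plan is to mirror the structure of Cochran--Teichner's proof of Theorem~\ref{thm:infection} for knots (the knot analog of our theorem), and in particular the structure already used in the proof of Theorem~\ref{thm:Bingdouble}, but with an additional bookkeeping to handle $M$-fold multiples and the $N$ infection circles simultaneously. The $(n)$-solvability of $L$ (or rational $(n)$-solvability, in the absence of the Arf hypothesis) is essentially immediate from Proposition~\ref{prop:operatorsfiltration} applied $N$ times, since each $\eta_i\in \pi_1(M_R)^{(n)}$ and each $K_i$ is $(0)$-solvable. So the substance of the proof is showing that no positive multiple of $\mathcal{SL}$ can close to a rationally $(n+1)$-solvable link.

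Suppose for contradiction that $\widetilde L$, the closure of the $M$-fold power $\mathcal{SL}^M$, is rationally $(n+1)$-solvable via $W$. Then $\widetilde L$ is obtained from the componentwise $M$-fold connected-sum $\widetilde R$ of $R$ (also a slice link) by $MN$ infections along curves $\eta_i^{(a)}$, $1\leq i\leq N$, $1\leq a\leq M$, each lying in $\pi_1(M_{\widetilde R})^{(n)}$. Set $\pi=\pi_1(W)$, $\Gamma=\pi/\pi^{(n+1)}_r$, and let $\phi\colon\pi_1(M_{\widetilde L})\to\Gamma$ be the inclusion-induced map. The appendix obstruction for rationally $(n+1)$-solvable links (whose knot analog is Theorem~\ref{thm:oldsliceobstr}, and whose verification of the longitude-killing hypothesis goes exactly as in the proof of Theorem~\ref{thm:Bingdouble}) gives $\rho(M_{\widetilde L},\phi)=0$. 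Applying Lemma~\ref{lem:additivity} to the $MN$ infections, and bounding $|\rho(M_{\widetilde R},\phi_R)|\leq M\cdot C_{M_R}$ using additivity under connected sum and the Cheeger--Gromov bound (Proposition~\ref{prop:rho invariants}(5)), yields
$$
\Bigl|\sum_{a=1}^{M}\sum_{i=1}^{N}\epsilon_{i,a}\,\rho_0(K_i)\Bigr|\;\leq\;M\cdot C_{M_R},
$$
where $\epsilon_{i,a}\in\{0,1\}$ according as $\phi((\eta_i^{(a)})^+)\neq 1$ or $=1$.

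The main step, and the chief obstacle, is to show that for each fixed $a\in\{1,\ldots,M\}$ some $\epsilon_{i,a}=1$. Fix $a$ and construct a rational $(n)$-solution $V_a$ for a single copy $M_{R_a}$ as follows: attach the cobordism $E$ of Lemma~\ref{lem:additivity} to $W$ along $M_{\widetilde L}$, producing a manifold with boundary $M_{\widetilde R}\sqcup\coprod_{i,b}M_{K_i}$; cap off each $M_{K_i}$ component with a rational $(0)$-solution (available since $\mathrm{Arf}(K_i)=0$); and cap off every $M_{R_b}$ with $b\neq a$ using the slice-disk exterior for $R$ (which is a rational $(n.5)$-solution). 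Standard Mayer--Vietoris and Lagrangian arguments \`a la ~\cite[Section 4]{COT}, together with Lemma~\ref{lem:mickeyfacts}, confirm that $V_a$ is a rational $(n)$-solution for $M_{R_a}$. By hypothesis, some $\eta_i^{(a)}$ maps non-trivially into $\pi_1(V_a)/\pi_1(V_a)^{(n+1)}_r$. Naturality of the rational derived series under $W\hookrightarrow V_a$ gives $\pi_1(W)^{(n+1)}_r\subset \pi_1(V_a)^{(n+1)}_r$, and property (4) of Lemma~\ref{lem:mickeyfacts} identifies $(\eta_i^{(a)})^+\subset M_{\widetilde L}$ with $\eta_i^{(a)}\subset M_{\widetilde R}$ in $\pi_1(E)$; hence $\phi((\eta_i^{(a)})^+)\neq 1$, so $\epsilon_{i,a}=1$.

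This yields at least $M$ non-zero terms in the sum. After replacing each $K_i$ by its mirror image if necessary, we may assume all $\rho_0(K_i)>0$, so the sum is at least $M\cdot\min_i\rho_0(K_i)>M\cdot C_{M_R}$, contradicting the bound established above. The main technical obstacle is the verification that $V_a$ is indeed a rational $(n)$-solution; this is where one must check that the $\pi_1$-Lagrangian/dual data assembled from the three pieces (the original $W$, the $(0)$-solutions for the $K_i$, and the slice-disk exteriors for the other $R_b$) fit together correctly, including lifting homological information to the cover defined by $\pi^{(n)}_r$. All other steps are routine once this is in hand.
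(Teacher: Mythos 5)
Your overall strategy is sound and follows the same skeleton as the paper's proof (build a rational $(n)$-solution for a copy of $M_R$ from the hypothetical solution for the closure of the multiple, then invoke the hypothesis to find a surviving $\eta_i$ and derive a contradiction from the Cheeger--Gromov bound). However, your route through the signature computation is organized differently from the paper's, and this difference makes your version both harder and, as written, erroneous in one step.

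The paper does \emph{not} compute $\rho(M_{\widetilde R},\phi_R)$ and does not need the bound $|\rho(M_{\widetilde R},\phi_R)|\leq M\cdot C_{M_R}$. Instead it assembles a single $4$-manifold $W$ with $\partial W$ equal to \emph{one} copy of $M_R$: take the $(n+1)$-solution $V$, glue on the standard cobordism $C$ from $M_{\widetilde L}$ to $\coprod^p M_L$, glue a copy of the infection cobordism $E^j$ to each $M_L^j$, cap off the $M_{K_i}^j$ with $(0)$-solutions $Z_i^j$, and cap off all but one of the resulting $M_R^j$ with slice-disk exteriors $Y^j$. This assembled $W$ is shown (Lemma~\ref{lem:H2}) to be a rational $(n)$-solution for $M_R$, so $|\rho(M_R,\phi)| < C_{M_R}$, and the signature of $W$ decomposes as $\rho(M_R,\phi)=\sum_i d_i\,\rho_0(K_i)$ where $d_i\geq 0$ counts the $j$ with $\psi(\eta_i^j)\neq 1$. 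One nonvanishing $d_i$ (supplied by the hypothesis applied once, to this $W$) already gives the contradiction because each $\rho_0(K_i)>C_{M_R}$. In other words, by making the boundary a single copy of $M_R$, the paper needs only \emph{one} surviving infection circle, not $M$ of them. You instead fix $\rho(M_{\widetilde L},\phi)=0$, isolate $\rho(M_{\widetilde R},\phi_R)$ by additivity, bound it by $M\cdot C_{M_R}$, and then must produce at least $M$ surviving circles (one for each $a\in\{1,\dots,M\}$) to overwhelm that bound. That forces you to construct $M$ separate rational $(n)$-solutions $V_a$, each by the full assembly argument, rather than one.

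There is also a concrete error in your description of the assembly of $V_a$. After attaching the infection cobordism $E$ to $W$ along $M_{\widetilde L}$ and capping off the $M_{K_i}$ copies, the remaining boundary component is $M_{\widetilde R}$ --- a single, connected zero-surgery on the $M$-fold connected sum. There is no $\coprod_b M_{R_b}$ available to cap off ``every $M_{R_b}$ with $b\neq a$.'' You are missing the standard cobordism $C$ from $M_{\widetilde R}$ (equivalently, from $M_{\widetilde L}$ before the $E$-step) to the disjoint union of $M$ copies of $M_R$; only after inserting $C$ do the individual copies $M_R^1,\dots,M_R^M$ appear as boundary pieces to be capped with slice-disk exteriors. (The same $C$ is also what makes your bound $|\rho(M_{\widetilde R},\phi_R)|\leq M\cdot C_{M_R}$ legitimate, via Lemma~\ref{lem:sigC=0}; you invoke ``additivity under connected sum'' but never introduce the cobordism that produces it.) Your ``replace $K_i$ by its mirror image'' remark is a red herring: the hypothesis $\rho_0(K_i)>C_{M_R}$ already forces positivity, and one cannot alter the given $K_i$.

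Finally, you correctly identify the verification that the assembled manifold is a rational $(n)$-solution as ``the chief obstacle'' but offer only the phrase ``standard Mayer--Vietoris and Lagrangian arguments.'' In the paper this verification (Lemma~\ref{lem:H2}) is the bulk of the work: one must compute $H_2(W;\mathbb{Q})$, confirm the rank matches, show that the $(0)$-surfaces coming from the $Z_i^j$ are in fact $(n)$-surfaces in $W$ (this is where $\eta_i\in\pi_1(M_R)^{(n)}$ and the isotopy of Lemma~\ref{lem:mickeyfacts}(4) are used), and check the intersection data. Stating the obstacle is not the same as discharging it, so as submitted this is a genuine gap.
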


\begin{thm}\label{thm:iteratedlink} Let $T_{n-k}\equiv T(\alpha,R_{n-k})$ be as above. Suppose $W$ is an \emph{arbitrary} rational $(n)$-solution for $M_{T_{n-k}}$. Then at least one of the $2^{n-k}$ clones $\alpha^{n-k}_*=\{g^{n-k}(\eta^{n-k}_*)\}$ maps non-trivially under the inclusion-induced map
$$
j_*:\pi_1(M_{T_{n-k}})\to \pi_1(W)/\pi_1(W)_r^{(n+1)}.
$$
\end{thm}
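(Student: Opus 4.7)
The plan is to establish Theorem~\ref{thm:iteratedlink} by induction on $j = n-k$ (with $k$ fixed), in close analogy with the proof of Theorem~\ref{thm:iteratednontriviality}. In the base case $j=0$ we have $T_0 = T(\alpha,U) = T$ and the single clone is $\alpha$ itself. The argument from the middle of the proof of Theorem~\ref{thm:Bingdouble} applies verbatim: Cochran-Harvey~\cite[Theorem 2.1]{CH2} applied to $M_T \hookrightarrow W$ yields an injection
\[
\pi_1(M_T)/\pi_1(M_T)^{(k+1)}_H \hookrightarrow \pi_1(W)/\pi_1(W)^{(k+1)}_H,
\]
and since the torsion-free and ordinary derived series coincide on the free group $\pi_1(M_T)=F$, the hypothesis $\alpha \notin F^{(k+1)}$ together with $\pi_1(W)^{(k+1)}_r \subseteq \pi_1(W)^{(k+1)}_H$ gives survival of $\alpha$ in $\pi_1(W)/\pi_1(W)^{(k+1)}_r$.

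For the inductive step $j-1 \Rightarrow j$, Proposition~\ref{prop:linkaltdescriptions} (applied with $K=U$, $n=j$, $i=j-1$) presents $T_j$ as $T_{j-1}(g^{j-1}(\eta^{j-1}_*), R_1)$, i.e.\ as $2^{j-1}$ infections on $T_{j-1}$ along the clones $g^{j-1}(\eta^{j-1}_*) \in \pi_1(M_{T_{j-1}})^{(k+j-1)}$, using $R_1$ each time. Given a rational $(k+j)$-solution $W$ of $M_{T_j}$, build the cobordism $E$ of Lemma~\ref{lem:mickey}, glue $W$ along $M_{T_j}$, and cap off each $M_{R_1}$-component of $\partial E$ by a slice-disk exterior $V_i$ of $R_1$; the resulting $4$-manifold $W''$ has $\partial W'' = M_{T_{j-1}}$. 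Using Lemma~\ref{lem:mickeyfacts}, Lemma~\ref{lem:mickeysig}, and the fact that each $V_i$ is a rational $(m)$-solution for every $m$, one verifies that $W''$ is a rational $(k+j-1)$-solution of $M_{T_{j-1}}$; the inductive hypothesis then supplies an $i_0$ for which the clone $g^{j-1}(\eta^{j-1}_{i_0})$ has non-trivial image in $\pi_1(W'')/\pi_1(W'')^{(k+j)}_r$. Set $\Lambda = \pi_1(W)/\pi_1(W)^{(k+j)}_r$. Since each meridian $\mu_{R_1}$ is identified in $E$ with $(g^{j-1}(\eta^{j-1}_i))^+ \in \pi_1(W)^{(k+j-1)}$, the coefficient system $\phi$ extends over $E$ and each $V_i$ (the latter because $\phi|_{\pi_1(M_{R_1})}$ then factors through $\mathbb{Z}$), and the induced surjection $\Lambda \twoheadrightarrow \pi_1(W'')/\pi_1(W'')^{(k+j)}_r$ is in fact an isomorphism; combined with property~(4) of Lemma~\ref{lem:mickeyfacts} this gives $\phi\bigl((g^{j-1}(\eta^{j-1}_{i_0}))^+\bigr) \neq 1$ in $\Lambda$.

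Now apply Theorem~\ref{thm:nontriviality} to $T_j = T_{j-1}(g^{j-1}(\eta^{j-1}_i), R_1)$ with solution $W$ and coefficient system $\Lambda$. Since $i_0 \in A$, the kernel
\[
P_{i_0} = \ker\!\Bigl(\mathcal{A}_0(R_1) \to \mathcal{A}_0(R_1)\otimes_{\mathbb{Q}[t,t^{-1}]}\mathbb{Q}\Lambda \to H_1(M_{T_j};\mathbb{Q}\Lambda) \to H_1(W;\mathbb{Q}\Lambda)\Bigr)
\]
is self-annihilating with respect to the classical Blanchfield form $\mathcal{B}\ell_0^{R_1}$. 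Because this form is non-singular on the rank-two module $\mathcal{A}_0(R_1)$ with generators $\eta^1_\pm = f^1_\pm(\mu_U)$, the submodule $P_{i_0}$ cannot contain both $\eta^1_\pm$. Under the inclusion of the $i_0$-th infecting copy of $R_1$ into $T_j$ followed by $g^j: S^3-R_j \to S^3-T_j$, these two generators are carried onto two of the $2^j$ clones $g^j(\eta^j_*)$, so at least one such clone has non-trivial image in $H_1(W;\mathbb{Q}\Lambda)$. A commutative diagram exactly as at the end of the proof of Theorem~\ref{thm:iteratednontriviality}, using the injection $\pi_1(W)^{(k+j)}_r/\pi_1(W)^{(k+j+1)}_r \hookrightarrow H_1(W;\mathbb{Q}\Lambda)$, then upgrades this to non-triviality in $\pi_1(W)/\pi_1(W)^{(k+j+1)}_r = \pi_1(W)/\pi_1(W)^{(n+1)}_r$, completing the inductive step.

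The main obstacle is the verification that $W''$ is actually a rational $(k+j-1)$-solution of $M_{T_{j-1}}$: the $H_1$-isomorphism is immediate from Lemma~\ref{lem:mickeyfacts}(2) and the slice caps, but producing a rational $(k+j-1)$-Lagrangian with duals in $H_2(W'';\mathbb{Q})$ requires combining the given $(k+j)$-Lagrangian from $W$ with the new classes contributed by each $V_i$, and checking that these new classes are represented by capped-off Seifert surfaces of $R_1$ whose fundamental groups are normally generated by $\mu_{R_1}$, which is identified in $W''$ with $(g^{j-1}(\eta^{j-1}_i))^+ \in \pi_1(W'')^{(k+j-1)}$ (via a Dwyer-type argument as in \cite[Theorem 8.1]{C}). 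All remaining book-keeping on derived-series depths and on the extension of coefficient systems across $E$ and the $V_i$ is driven by the estimates already collected in Lemma~\ref{lem:mickeyfacts}.
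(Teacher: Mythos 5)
Your proof is correct, but it takes a genuinely different route from the paper's. You run an \emph{upward} induction on $j=n-k$ across the whole family of $3$-manifolds $M_{T_0}\subset M_{T_1}\subset\cdots\subset M_{T_{n-k}}$: at each inductive step you replace the given rational $(k+j)$-solution $W$ for $M_{T_j}$ by a new $4$-manifold $W''=W\cup E\cup(\sqcup_iV_i)$, capping the infection cobordism $E$ with slice-disk exteriors $V_i$ for $R_1$, and you must then verify that $W''$ is a rational $(k+j-1)$-solution for $M_{T_{j-1}}$. The paper instead proves the stronger Proposition~\ref{prop:iteratedghost} and runs a \emph{downward} induction on the ghost depth $j$, keeping \emph{both} the $3$-manifold $M_{T_{n-k}}$ \emph{and} the $4$-manifold $W$ fixed throughout; the only thing that varies is the term of the rational derived series that furnishes the coefficient system. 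The engine making this possible is that a rational $(m)$-solution is a fortiori a rational $(m-1)$-solution, so the inductive hypothesis applies to the very same $W$ with no new $4$-manifold to construct or certify. This is what the paper's approach buys: one never has to re-verify solvability. Your approach is sound but more laborious, and in fact the one verification you flag as the ``main obstacle'' is easier than you anticipate: since the $V_i$ are slice-disk exteriors, Alexander duality gives $H_2(V_i;\mathbb{Q})=0$, so they contribute \emph{no} new classes to second homology --- there are no ``new classes represented by capped-off Seifert surfaces'' to worry about. A Mayer--Vietoris computation, paralleling Lemma~\ref{lem:H2} and using $H_1(M_{R_1};\mathbb{Q})\cong H_1(V_i;\mathbb{Q})$, shows $H_2(W'';\mathbb{Q})\cong H_2(W;\mathbb{Q})$; the $(k+j)$-Lagrangian and $(k+j)$-duals of $W$ then already serve as a $(k+j-1)$-Lagrangian with duals in $W''$ because $\pi_1(W)^{(k+j)}$ maps into $\pi_1(W'')^{(k+j)}$. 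One further small remark: your claim that the induced map $\Lambda\twoheadrightarrow\pi_1(W'')/\pi_1(W'')^{(k+j)}_r$ is an isomorphism is both not obviously true and not needed --- since the composition $\pi_1(W)\to\pi_1(W'')\to\pi_1(W'')/\pi_1(W'')^{(k+j)}_r$ factors through $\Lambda=\pi_1(W)/\pi_1(W)^{(k+j)}_r$ by functoriality of the rational derived series, nontriviality of $(g^{j-1}(\eta^{j-1}_{i_0}))^+$ in the target immediately forces its nontriviality in $\Lambda$. Finally, in your base case the correct citation is Proposition~\ref{prop:nsolvable} (that is, \cite[Proposition 4.11]{CH2}) rather than \cite[Theorem 2.1]{CH2}: the latter is the statement the paper invokes in the proof of Theorem~\ref{thm:Bingdouble} where $W$ has a very specific decomposition, whereas Proposition~\ref{prop:nsolvable} applies directly to an arbitrary rational $(k)$-solution, which is all you have here.
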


\begin{proof}[Proof of Theorem~\ref{thm:linkinfection}] Supposing that such $R$ and $\eta_i$ exist, let $L=R(\eta_1,...,\eta_N,K_1,...,K_N)$ for knots $K_i$ such that, for each $i$, Arf($K_i)=0$ and $\rho_0(K_i)>C_{M_R}$ (the Cheeger-Gromov constant of $M_R$). 

Since $L$ is the result of infections on an $n$-solvable) link along circles lying in the $n^{th}-$ derived subgroup $L$ is $n$-solvable (merely rationally $n$-solvable without the Arf invariant condition) by Proposition~\ref{prop:operatorsfiltration}.

Now we proceed by contradiction. Suppose that $\tilde L\equiv\#^p_{j=1}L$ were rationally $(n+1)$-solvable for some $p>0$. Then there would exist a rational $(n+1)$-solution $V$ with $\partial V= M_{\tilde L}$, the zero framed surgery on $\tilde L$. Using this we construct a particular rational $(n)$-solution $W$ for $M_{R}$ as follows (shown schematically in Figure~\ref{fig:cobordismlink}).
\begin{figure}[htbp]
\setlength{\unitlength}{1pt}
\begin{picture}(500,200)
\put(200, 57){$C$}
\put(71, 133){$Z^1_1$}
\put(107, 133){$Z^1_N$}
\put(71, 99){$E^1$}
\put(205, 99){$E^2$}
\put(337, 99){$E^p$}
\put(207, 133){$Z^2_1$}
\put(242, 133){$Z^2_N$}
\put(8,79){$M_{L}^1$}
\put(398,79){$M_{L}^p$}
\put(9,118){$M_{R}$}
\put(145,118){$M_{R}^2$}
\put(275,118){$M_{R}^p$}
\put(170, 130){$Y^2$}
\put(301, 130){$Y^p$}
\put(337, 133){$Z^p_1$}
\put(373, 133){$Z^p_N$}
\put(200,19){$V$}
\put(8,36){$M_{\tilde{L}}$}
\put(28,10){\includegraphics{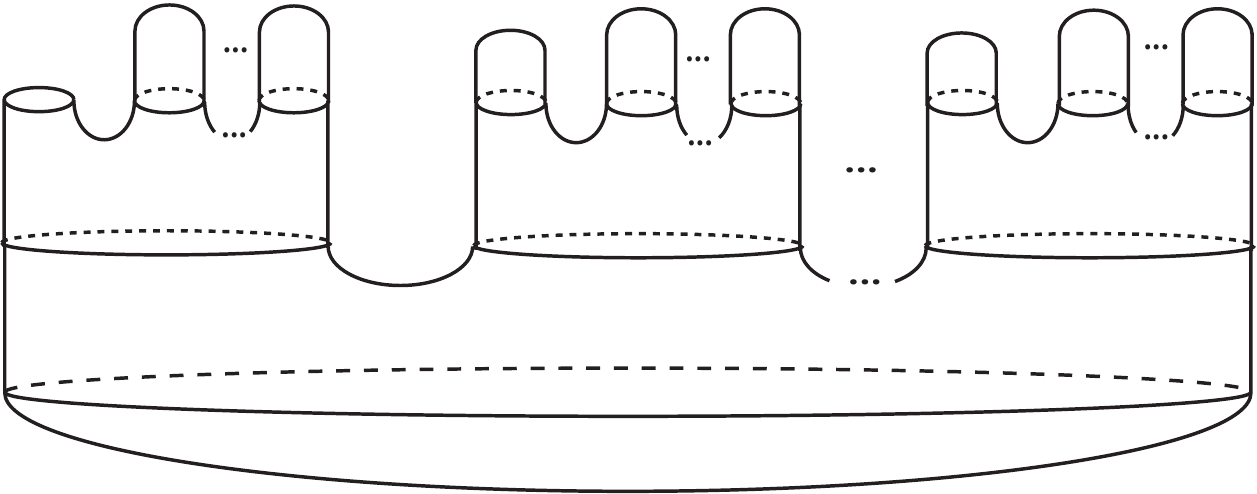}}
\end{picture}
\caption{The rational $n$-solution $W$ for $M_{R}$} \label{fig:cobordismlink}
\end{figure}
Here $C$ is the standard cobordism from $M_{\tilde L}$ to the disjoint union of $p$ copies of $M_{L}$. This cobordism is discussed in detail in ~\cite[Section 4]{COT2}. Cap off the boundary component $M_{\tilde L}$ using the rational $(n+1)$-solution $V$. Since $L$ is obtained from the link $R$ by infection on circles $\eta_i$ using the knots $K_i$, there is a cobordism $E$, as shown in Figure~\ref{fig:mickey}, such that
$$
\partial E= -M_L\sqcup M_R \sqcup_{i=1}^{N} M_{i}
$$
where we abbreviate $M_{K_i}$ by $M_i$. Add a copy of $E$ to each of the $p$ copies of $M_{L}$. We denote these copies by $E^j, 1\leq j\leq p$. Now, for each $i$, cap off each of the $p$ copies of $M_{i}$ with a $(0)$-solution $Z^j_i$ for $K_i$ (we can assume that $\pi_1(Z^j_i)=\mathbb{Z}$ by ~\cite[p.108]{COT2}~\cite[Appendix 5]{COT2}) and cap off each of the copies of $M_R$, except the ``first'', with a copy, $Y^j, 2\leq j\leq p$, of the exterior $Y$ of a set of slicing disks for the slice link $R$. The resulting manifold $W$ then has a single copy of $M_{R}$ as its boundary.

\begin{lem}\label{lem:H2} $W$ is a rational $n$-solution for $M_R$.
\end{lem}

\begin{proof}[Proof of Lemma~\ref{lem:H2}] By Definition~\ref{defn:rationalnsolvable}, we must show that
\begin{itemize}
\item $H_1(M_R;\mathbb{Q})\to H_1(W;\mathbb{Q})$ is an isomorphism, and
\item  $W$ admits a rational $(n)$-Lagrangian with rational $(n)$-duals.
\end{itemize}

First we claim that:
$$
H_2(W;\mathbb{Q})\cong H_2(V;\mathbb{Q})\oplus_{i,j} H_2(Z_i^j;\mathbb{Q}).
$$
Since $V$ is a rational $(n+1)$-solution for $M_{\tilde L}$, the inclusion-induced map
$$
j_*:~H_1(M_{\tilde L};\mathbb{Q})\to H_1(V;\mathbb{Q})
$$
is an isomorphism. It follows from duality that 
$$
j_*:~H_2(M_{\tilde L};\mathbb{Q})\to H_2(V;\mathbb{Q})
$$
is the zero map. Therefore if we examine the Mayer-Vietoris sequence with $\mathbb{Q}$-coefficients,
$$
H_2(C)\oplus H_2(V)\overset{\pi_*}{\lra} H_2(C\cup V)\to H_1(M_{\tilde L})\overset{(i_*,j_*)}{\longrightarrow}H_1(C)\oplus H_1(V),
$$
we see that $\pi_*$ induces an isomorphism
$$
(H_2(C)/(i_*(H_2(M_{\tilde L})))\oplus H_2(V)\cong H_2(V\cup C).
$$
Moreover recall that $C$ is obtained from a collar of the disjoint union of $p$ copies of $M_{L}$ by adding $p-1$ $1$-handles (to connect the components) and then adding $m(p-1)$ $2$-handles that have the effect of equating pairwise the meridional elements of the copies $L$. In this way we see that, for any of the boundary components $M_L$, $H_1(M_L;\mathbb{Q})\cong H_1(C;\mathbb{Q})\cong \mathbb{Q}^m$ generated by a set of meridians, and that $H_2(C;\mathbb{Q})\cong\oplus_{j=1}^p H_2(M_L;\mathbb{Q})$ (this is analyzed in more detail in ~\cite[p. 113-114]{COT2}). It is easy to see that a basis of $i_*(H_2(M_{\tilde L}))$ is formed from the sum, $1\leq j\leq p$ of the elements of natural bases for each $H_2(M_L;\mathbb{Q})$. Thus
$$
H_2(V\cup C;\mathbb{Q})\cong   H_2(V;\mathbb{Q})\oplus(\oplus_{j=1}^p H_2(M_L;\mathbb{Q}))/D
$$
where $D\cong \mathbb{Q}^m$ is the diagonal subgroup. Now, recall that we have analyzed the homology of $E$ in Lemma~\ref{lem:mickeyfacts}and found that,
$$
H_1(M_{L})\overset{i_*}{\longrightarrow} H_1(E)
$$
is an isomorphism. Therefore the following Mayer-Vietoris sequence with $\mathbb{Q}$-coefficients is exact,
$$
\oplus^p_{j=1} H_2(M_{L}^j)\to \oplus^p_{j=1} H_2(E^j)\oplus H_2(V\cup C)\overset{\pi_*}{\to} H_2(V\cup C\sqcup^p_{j=1} E^j)\to 0.
$$
Moreover, from property $(3)$ of Lemma~\ref{lem:mickeyfacts}, 
$$
H_2(E)\cong \oplus^N_{i=1}H_2(M_i)\oplus H_2(M_R)
$$
where the latter $H_2(M_R)\cong H_2(M_L)$ in $H_2(E)$. Combining these facts we have that
$$
H_2(V\cup C\sqcup^p_{j=1} E^j)\cong H_2(V)\oplus_{j=1}^p\oplus_{i=1}^N H_2(M_i^j)\oplus_{j=1}^p(H_2(M_R^j)/D).
$$
The next step in the formation of $W$ was the addition of the slice exteriors $Y^j$ to the copies $M_R^j$ for $2\leq j\leq p$. Since $H_1(\partial Y^j)\to H_1(Y^j)$ is an isomorphism and $H_2(Y^j)=0$, the effect on $H_2$ of adding the $Y^j$ is merely to kill all the $H_2$ carried by the boundaries $H_2(M^j_R)$, $2\leq j\leq p$. Taking into account the diagonal relation, we have
$$
H_2(V\cup C\cup E^j\cup Z_i^j)\cong H_2(V)\oplus_{j=1}^p\oplus_{i=1}^N H_2(M_i^j).
$$
The final step in the formation of $W$ was the addition of the $(0)$-solutions $Z^j_i$ to all the copies $M_i^j$ of $M_{K_i}$. Since, $Z^j_i$ is a $(0)$-solution, $H_1(M_i^j)\to H_1(Z_i^j)$ is an isomorphism and by duality $H_2(M_i^j)\to H_2(Z_i^j)$ is the zero map. Thus the effect on $H_2$ of adding the $Z_i^j$ is merely to kill all the generators of the $H_2(M_i^j)$ summand and add $H_2(Z_i^j)$. Thus we have
$$
H_2(W;\mathbb{Q})\cong H_2(V;\mathbb{Q})\oplus_{i,j}H_2(Z_i^j)
$$
This establishes the claim.

Combining some of the observations above it also follows that $H_1(M_R;\mathbb{Q})\to H_1(W;\mathbb{Q})$ is an isomorphism.

We return now to the proof that $W$ is a rational $n$-solution for $M_R$. Since $V$ is a rational $(n+1)$-solution, it is a rational $(n)$-solution. Let $\{\ell_1,\dots,\ell_g\}$ be a collection of $n$-surfaces generating a rational $n$-Lagrangian for $V$ and $\{d_1,\dots,d_g\}$ be a collection of $(n)$-surfaces generating its rational $(n)$-duals. By definition, $2g=\text{rank}_\mathbb{Q}H_2(V;\mathbb{Q})$. Similarly, for each $i$ and $j$ take a collection of such $(0)$-surfaces $\{l^{ij}_1,..,l^{ij}_k\}$, $\{d^{ij}_1,..,d^{ij}_k\}$ for the $(0)$-solutions $Z_i^j$. Now taking these surfaces for $V$ together with the collections of surfaces for the $Z_i^j$, these collections have the required \textbf{cardinality} (by the first part of the Lemma) to generate a rational $n$-Lagrangian with rational $(n)$-duals for $W$. Since $\pi_1(V)^{(n)}$ maps into $\pi_1(W)^{(n)}$, the $(n)$-surfaces for $V$ are also $n$-surfaces for $W$. We need to show that the $(0)$-surfaces for $Z_i^j$ are $(n)$-surfaces for $W$. 

The group $\pi_1(Z_j^i)\cong \mathbb{Z}$ is generated by the meridian of the knot $K^j_i$ in $M^j_i$.  This meridian is isotopic in $E_j$ to the infection curve $\eta^j_i\in M_R^j$. By hypothesis, 
$$
[\eta^j_i]\in \pi_1(M_R^j)^{(n)}.
$$
Therefore
$$
j_*(\pi_1(Z^i_j))\subset \pi_1(W)^{(n)}.
$$
Hence \emph{any} surface in $Z_i^j$ is an $(n)$-surface for $W$. Moreover, by functoriality of the intersection form with twisted coefficients these collections of surfaces have the required intersection properties to generate a rational $n$-Lagrangian with rational $(n)$-duals for $W$. Hence $W$ is a rational $(n)$-bordism for $M_R$, as was claimed. 

This completes the proof of Lemma~\ref{lem:H2}.
\end{proof}
 
We continue with the proof of Theorem~\ref{thm:linkinfection}. Now set $\Gamma= \pi_1(W)/\pi_1(W)^{(n+1)}_r$. Let $\psi:\pi_1(W)\to\Gamma$ be canonical surjection. Let $\phi:\pi_1(M_{R})\to\Gamma$ be the composition $\psi\circ j_*$. Thus by the hypothesis of Theorem~\ref{thm:linkinfection} there exists {\bf some} $i$ such that
$\phi(\eta_i)\neq 1$. We shall now compute $|\rho(M_{R},\phi)|$ using $W$, and find it to be greater than $C_R$. This contradiction will show that in fact $\tilde L\equiv\#^p_{j=1}L$ is not rationally $(n+1)$-solvable.

By definition we have,
$$
\rho(M_{R},\phi)= \sigma^{(2)}_\Gamma(W,\psi)-\sigma(W).
$$
By the additivity of the non Neumann and the ordinary signatures (~\cite[Lemma 5.9]{COT}) the latter signatures are the sums of the corresponding signatures for the submanifolds $V$, $C$, $E^j$, $Y^j$ and $Z^j_i$.

Since $V$ is a rational $(n+1)$-solution and $\Gamma^{(n+1)}=1$, by Theorem~\ref{thm:rho=0}
$$
\sigma^{(2)}_\Gamma(V)-\sigma(V)=0.
$$
Similarly, since $H_1(\partial Y^j)\to H_1(Y^j)$ is an isomorphism and $H_2(Y^j)=0$, $Y^j$ is a rational $(n+1)$-solution for any $n$. Hence
$$
\sigma^{(2)}_\Gamma(Y^j)-\sigma(Y^j)=0.
$$
By Lemma~\ref{lem:mickeysig},
$$
\sigma^{(2)}_\Gamma(E^j)-\sigma(E^j)=0.
$$
Now consider the cobordism $C$. There are several results in the literature concerning the vanishing of the signatures of $C$. None of those results can be directly applied because of different hypotheses. 

\begin{lem}\label{lem:sigC=0} For any PTFA coefficient system $\psi:\pi_1(C)\to \G$ 
$$
\sigma^{(2)}_\Gamma(C)=\sigma(C)=0.
$$
\end{lem}
\begin{proof}[Proof of Lemma~\ref{lem:sigC=0}] We have observed above that
$$
H_2(C;\mathbb{Q})/i_*(H_2(\partial C;\mathbb{Q}))=0.
$$
It follows immediately that $\sigma(C)=0$. By ~\cite[Proposition 2.7]{CH2} it follows that
$$
H_2(C;\mathcal{K}\G)/i_*(H_2(\partial C;\mathcal{K}\G))=0.
$$
By Property $1$ of Proposition~\ref{prop:rho invariants},
$$
\sigma^{(2)}_\Gamma(C)=0.
$$
\end{proof}

\noindent This leaves only the $Z^j_i$. Let $\psi^j_i$ denote the restriction of $\psi$ to $\pi_1(Z_i^j)$. Then, by definition
$$
\sigma^{(2)}_\Gamma(Z_i^j)-\sigma(Z_i^j)=\rho(M_i^j,\psi_i^j).
$$
However, since $\pi_1(Z^j_i)\cong \mathbb{Z}$, $\psi^j_i$  factors through $\mathbb{Z}$. Hence by Properties $2$,$3$ and $4$ of Proposition~\ref{prop:rho invariants}
$$
\rho(M_i^j,\psi_i^j)=\rho_0(K_i)
$$
if $\psi^j_i(\eta^j_i)\neq 1$ and is zero if  $\psi^j_i(\eta^j_i)=1$. Note that here we have used the fact that the infection circle $\eta^j_i$ (in $M_R^j$) is isotopic (in $E_j$) to the meridian of $K_i$ in $M^j_i$ (see property $(4)$ of Lemma~\ref{lem:mickeyfacts}).

Putting all of these together we have
$$
\rho(M_{R},\phi)=\sum_{i=1}^N d_i\rho_0(K_i)
$$
where $d_i$ is the number of values of $j$ for which $\psi(\eta^j_{i})\neq 1$. Since our hypothesis is that for each $i$
$$
\rho_0(K_i)> C_{M_R},
$$
this is a contradiction unless each $d_i=0$. However, we claim some $d_i>0$. For recall by Lemma~\ref{lem:H2}, $W$ is a rational $(n)$-solution for $M_R$. Thus by hypothesis there exists {\bf some} $i$ such that
$j_*(\eta^1_i)\notin\pi_1(W)^{(n+1)}_r$ where $j_*:\pi_1(M_R)\to\pi_1(W)$. Hence for some $i$, 
$$
\psi^j_i(\eta^1_i)\neq 1.
$$
This is a contradiction, completing the proof of Theorem~\ref{thm:linkinfection}.
\end{proof}

Thus the proof of Theorem~\ref{thm:main} has been reduced to the proof of Theorem~\ref{thm:iteratedlink}.

\begin{proof}[Proof of Theorem~\ref{thm:iteratedlink}]

\begin{defn}\label{defn:linkghosts} Let $\mu_j$ denote a meridian of $R_j$ for $0\leq j\leq n-k$. A ghost of $\mu_j$ , denoted $(\mu_j)_*$ is an element of the set of $2^{n-k-j}$ circles $\{g^{n-k}f^{n-k}_{\pm}\circ\dots\circ f^{j+1}_{\pm}(\mu_j)\}$. Thus, for any $j$, the ghosts of $\mu_j$ live in $S^3-T(\alpha,R_{n-k})$ and $(\mu_j)_*\in \pi_1(S^3-T(\alpha,R_{n-k}))^{(n-j)}$. These circles are precisely the meridians of the \textbf{copies} of $S^3-R_j$ that are embedded in $S^3-T(\alpha,R_{n-k}$ by the maps $\{g^{n-k}f^{n-k}_{\pm}\circ\dots\circ f^{j+1}_{\pm}\}$. Note that $\mu_0$ is the meridian of $R_0=U$ so $\mu_0=\eta^0$. Thus in particular, taking $j=0$, the ghosts of $\mu_0$ coincide with the clones $\{\alpha^{n-k}_*\}$, that is $\{(\mu_0)_*\}=\{\alpha^{n-k}_*\}$.
\end{defn}

Theorem~\ref{thm:iteratedlink} is a special case ($j=n-k$) of the following more general result. This Proposition should be viewed as a formulation of the inductive proof of Theorem~\ref{thm:iteratedlink}.

\begin{prop}\label{prop:iteratedghost} Suppose $0\leq j \leq n-k$ and $W$ is an \emph{arbitrary} rational $n-j$-solution for $T_{n-k}\equiv T(\alpha,R_{n-k})$. Then at least one of the ghosts of $\mu_{j}$ maps non-trivially under the inclusion-induced map
$$
j_*:\pi_1(M_{T_{n-k}})\to \pi_1(W)/\pi_1(W)_r^{(n-j+1)}.
$$
\end{prop}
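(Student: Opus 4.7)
The plan is downward induction on $j$, starting from the base case $j=n-k$ and descending to $j=0$ (which is Theorem~\ref{thm:iteratedlink}). The inductive step will be a link-theoretic reprise of the knot-case argument of Theorem~\ref{thm:iteratednontriviality}, using Theorem~\ref{thm:nontriviality} together with the nonsingularity of the classical Blanchfield form of $R_1$; the new ingredient is the base case, which invokes Harvey's torsion-free derived series theorem.

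\emph{Base case $j=n-k$.} Here there is a unique ghost of $\mu_{n-k}$, namely $g^{n-k}(\mu_{n-k})$, which the infection description identifies with the longitudinal push-off $\alpha^{+}$ in $M_{T_{n-k}}$. I would form $\tilde W = W \cup_{M_{T_{n-k}}} E \cup_{M_{R_{n-k}}} Y$, where $E$ is the cobordism of Lemma~\ref{lem:mickeyfacts} (associated to the infection $T_{n-k}=T(\alpha,R_{n-k})$) and $Y$ is the exterior of a topological slice disk for the ribbon knot $R_{n-k}$, so that $\partial\tilde W=M_T$. A Mayer--Vietoris computation modeled on Lemma~\ref{lem:H2}, using that $H_2(Y)=0$ and that $Y$ is rationally $(\infty)$-solvable, shows that $\tilde W$ is a rational $k$-bordism for $M_T$. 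Then Theorem~2.1 of~\cite{CH2} supplies an injection
$$
\pi_1(M_T)/\pi_1(M_T)^{(k+1)}_H \hookrightarrow \pi_1(\tilde W)/\pi_1(\tilde W)^{(k+1)}_H.
$$
Since $\pi_1(M_T)$ is free, its torsion-free derived series coincides with its derived series, so the hypothesis $\alpha\in F^{(k)}\smallsetminus F^{(k+1)}$ gives a nontrivial image on the left, hence on the right. Because $\pi^{(k+1)}_r\subset\pi^{(k+1)}_H$ for every group $\pi$, this nontriviality pulls back along the inclusion $W\hookrightarrow\tilde W$ to yield $\alpha^{+}\neq 1$ in $\pi_1(W)/\pi_1(W)^{(k+1)}_r$, as required.

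\emph{Inductive step.} Assume the conclusion holds at $j+1$, where $0\le j<n-k$, and let $W$ be a rational $(n-j)$-solution for $M_{T_{n-k}}$. Set $\Lambda=\pi_1(W)/\pi_1(W)^{(n-j)}_r$ and let $\phi$ denote the induced coefficient system on $\pi_1(M_{T_{n-k}})$. Since $W$ is also a rational $(n-j-1)$-solution, the inductive hypothesis at $j+1$ produces some ghost $(\mu_{j+1})_{*}$ with $\phi((\mu_{j+1})_{*})\neq 1$. Using Proposition~\ref{prop:linkaltdescriptions} with intermediate level $n-j-1$, express $T_{n-k}$ as the result of $2^{n-j-1-k}$ infections on $L_{n-j-1}(U)=T(\alpha,R_{n-j-1-k})$ along the clones $\alpha^{n-j-1-k}_{*}\subset\pi_1(L_{n-j-1}(U))^{(n-j-1)}$, using the knot $R_{j+1}$ as the infecting knot in each case. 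Under this description the ghosts of $\mu_{j+1}$ are precisely the meridians of the copies of $R_{j+1}$, so the surviving ghost singles out an infection circle $\alpha^{n-j-1-k}_{i}$ with $\phi((\alpha^{n-j-1-k}_{i})^{+})\neq 1$, i.e.\ $i\in A$ in the notation of Theorem~\ref{thm:nontriviality}. Apply that theorem with parameter $n-j$, coefficient system $\Lambda$, and rational $(n-j)$-solution $W$: the kernel $P_{i}$ of
$$
\mathcal{A}_0(R_{j+1})\xrightarrow{\mathrm{id}\otimes 1}\mathcal{A}_0(R_{j+1})\otimes_{\mathbb{Q}[t,t^{-1}]}\mathbb{Q}\Lambda\xrightarrow{i_{*}}H_1(M_{T_{n-k}};\mathbb{Q}\Lambda)\xrightarrow{j_{*}}H_1(W;\mathbb{Q}\Lambda)
$$
satisfies $P_{i}\subset P_{i}^{\perp}$ under the classical Blanchfield form of $\mathcal{A}_0(R_{j+1})\cong\mathcal{A}_0(R_1)$. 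The two generators $f^{j+1}_{\pm}(\mu_{j})$ of this Alexander module cannot both lie in $P_i$ (by nonsingularity of the classical Blanchfield form of $R_1$), so at least one of them has nonzero image in $H_1(W;\mathbb{Q}\Lambda)$. The standard commutative diagram identifies $H_1(W;\mathbb{Q}\Lambda)$ with $(\pi_1(W)^{(n-j)}_r/[\pi_1(W)^{(n-j)}_r,\pi_1(W)^{(n-j)}_r])\otimes_{\mathbb{Z}}\mathbb{Q}$, into which the torsion-free quotient $\pi_1(W)^{(n-j)}_r/\pi_1(W)^{(n-j+1)}_r$ injects; meanwhile the image in $S^{3}-T_{n-k}$ of the chosen generator $f^{j+1}_{\pm}(\mu_{j})$ is precisely $g^{n-k}\circ f^{n-k}_{\pm}\circ\cdots\circ f^{j+1}_{\pm}(\mu_{j})$, which is a ghost of $\mu_{j}$. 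Therefore at least one ghost of $\mu_{j}$ survives in $\pi_1(W)/\pi_1(W)^{(n-j+1)}_r$, completing the induction.

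The main obstacle is the base case: once Harvey's theorem is in hand, the rest of the argument is a direct translation of the knot-case induction, but the base requires the auxiliary construction of $\tilde W$ and a careful Mayer--Vietoris verification that it is indeed a rational $k$-bordism for $M_T$. The extra infection by $\alpha$ is precisely what forces the induction to bottom out at $j=n-k$ rather than at $j=0$ as in the knot case.
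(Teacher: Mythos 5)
Your inductive step is essentially the paper's verbatim: same appeal to Proposition~\ref{prop:linkaltdescriptions} with intermediate level $n-j-1$, same application of Theorem~\ref{thm:nontriviality} with $\Lambda=\pi_1(W)/\pi_1(W)^{(n-j)}_r$, same use of nonsingularity of the Blanchfield form on $\mathcal{A}_0(R_{j+1})\cong\mathcal{A}_0(R_1)$, and the same final commutative diagram. The real divergence is in the base case $j=n-k$, which you handle differently and, as you observe, is where the new link-theoretic content lives.

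The paper's base case applies Proposition~\ref{prop:nsolvable} (i.e.\ \cite[Proposition 4.11]{CH2}) \emph{directly} to $W$ as a rational $(k)$-solution for $M_{T_{n-k}}$: it first cites \cite[Proposition 3.1]{Lei3} for a degree-one map $r:M_{T_{n-k}}\to M_T$ that induces $\pi_1(M_{T_{n-k}})/\pi_1(M_{T_{n-k}})^{(k+1)}_r\cong F/F^{(k+1)}$ taking $\alpha^+$ to $\alpha$, notes that this identification forces the rational and torsion-free derived series of $\pi_1(M_{T_{n-k}})$ to coincide with those of the free group $F$ up to stage $k+1$, and then injects $\pi_1(M_{T_{n-k}})/\pi_1(M_{T_{n-k}})^{(k+1)}_H$ into $\pi_1(W)/\pi_1(W)^{(k+1)}_H$. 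You instead enlarge $W$ to $\tilde W = W\cup_{M_{T_{n-k}}}E\cup_{M_{R_{n-k}}}Y$ so that $\partial\tilde W=M_T$, verify (by a Lemma~\ref{lem:H2}-style Mayer--Vietoris, using $H_2(Y)=0$ and the boundary-carried $H_2$ of $E$) that $\tilde W$ is a rational $(k)$-solution for $M_T$, apply the Harvey monomorphism to $\tilde W$, and transport nontriviality of $\alpha$ back to $W$ via the isotopy $\alpha^+\simeq\alpha$ in $E$ (Lemma~\ref{lem:mickeyfacts}(4)) together with functoriality of the \emph{rational} derived series ($\pi_1(W)^{(k+1)}_r\to\pi_1(\tilde W)^{(k+1)}_r\subset\pi_1(\tilde W)^{(k+1)}_H$). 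Both routes are correct. The paper's is more economical — no auxiliary $4$-manifold or extra Mayer--Vietoris verification is needed — and keeps the Harvey injection at the level of $M_{T_{n-k}}$ itself. Your route is slightly heavier on construction but avoids invoking the degree-one map of \cite{Lei3} and works purely from pieces already built ($E$, $Y$, $W$), which some readers may find more transparent; it also makes clear exactly why the induction bottoms out at $j=n-k$ rather than $j=0$. One small caution on wording: when you pass from nontriviality in $\pi_1(\tilde W)/\pi_1(\tilde W)^{(k+1)}_H$ to nontriviality in $\pi_1(W)/\pi_1(W)^{(k+1)}_r$, the precise point is functoriality of the \emph{rational} derived series (together with $\pi^{(k+1)}_r\subset\pi^{(k+1)}_H$), not functoriality of the torsion-free series — your sentence gestures at this but would benefit from spelling it out, since the torsion-free derived series is not functorial in general.
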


\begin{proof}[Proof of Proposition~\ref{prop:iteratedghost}] Here we view $k$ and $n$ as fixed and proceed by downward induction on $j$. First suppose $j=n-k$. In this degenerate case the single ghost is merely the meridian of $R_{n-k}$ viewed as a circle in $T(\alpha, R_{n-k})$, which is of course identified with a push-off, $\alpha^+$, of $\alpha$ itself, and $W$ is a rational $(k)$-solution for $M_{T_{n-k}}$. We must show that $j_*(\alpha^+)\neq 1$ under the map
$$
j_*:\pi_1(M_{T_{n-k}})\to \pi_1(W)/\pi_1(W)^{(k+1)}_r.
$$

Since $T_{n-k}$ is obtained from the trivial link $T$ by infection on a curve $\alpha\in F^{(k)}$, by ~\cite[Proposition 3.1]{Lei3}, there is a degree one map $r:M_{T_{n-k}}\to M_T$ that induces an isomorphism
$$
\pi_1(M_{T_{n-k}})/(\pi_1(M_{T_{n-k}}))^{(k+1)}_r\cong F/F^{(k+1)}
$$
and sends $\alpha^+$ to $\alpha$. Since $\alpha$ is not in  $F^{(k+1)}$, $\alpha^+ \neq 1$ in $\pi_1(M_{T_{n-k}})/\pi_1(M_{T_{n-k}})^{(k+1)}_r$. This also implies that the successive terms of the derived series of $\pi_1(M_{T_{n-k}})$ agree with those of the free group (up to this value of $k$). Thus the derived series, the rational derived series and even Harvey's torsion-free derived series agree for this group (up to this value of $k$)~\cite[Section 2]{Ha2})~\cite[Proposition 2.3]{Ha2}. This is useful because we now claim that the following is a monomorphism
$$
\pi_1(M_{T_{n-k}})/\pi_1(M_{T_{n-k}})^{(k+1)}\overset{j_*}\to \pi_1(W)/\pi_1(W)^{(k+1)}_r
$$
because the composition
$$
\pi_1(M_{T_{n-k}})/\pi_1(M_{T_{n-k}})^{(k+1)}\overset{j_*}\to \pi_1(W)/\pi_1(W)^{(k+1)}_r\to \pi_1(W)/\pi_1(W)^{(k+1)}_H
$$
is a monomorphism by the following result of the authors. Here we use that $W$ is a rational $(k)$-solution for $M_{T_{n-k}}$ and that the torsion-free derived series of a free group is the same its rational derived series.

\begin{prop}\label{prop:nsolvable}[Proposition 4.11 ~\cite{CH2}] If $M$ is rationally $(k)-solvable$ via $W$ then, letting $A=\pi_{1}(M)$ and $B=\pi_{1}(W)$, the inclusion $j:M \rightarrow W$ induces a monomorphism
$$j_*: \frac{\pi_1(M)}{\pi_1(M)_{H}^{(k+1)}} \hookrightarrow  \frac{\pi_1(W)}{\pi_1(W)_{H}^{(k+1)}}.$$
\end{prop}

Tt follows that $j_*(\alpha^+)\neq 1$ as required by Proposition~\ref{prop:iteratedghost}. Thus the Proposition holds for $j=n-k$.

Now suppose that the Proposition is true for $j+1$ where $1\leq j+1 \leq n-k$. We will establish it for $j$ (downwards induction). So consider a rational $(k+j)$-solution, $W$, for $M_{T_{n-k}}$. Let $\Lambda=\pi_1(W)/\pi_1(W)^{(n-j)}_r$ and let $\psi:\pi_1(W)\to \Lambda$, and $\phi:\pi_1(M_{T_j{n-k}})\to \Lambda$ be the induced coefficient systems.

Note that $W$ is \emph{a fortiori} a rational ($n-j-1$)-solution. Therefore the inductive hypothesis applies to $W$ for the value $j+1$ and allows us to conclude that at least one ghost of $\mu_{j+1}$ does not map into $\pi_1(W)^{(n-j)}_r$ under the inclusion, that is, we have ~$\phi(\mu_{j+1})_*)\neq 1$ for some ghost of $\mu_{j+1}$. We will need this fact below.

We can apply Proposition~\ref{prop:linkaltdescriptions} with $K=U$  to deduce that $L_n(U)$ ($\equiv T(\alpha,R_{n-k})\equiv T_{n-k}$) can be obtained from $L_{n-j-1}(U)\equiv T_{n-j-k-1}$ by infections along the clones $\{\alpha^{n-j-k-1}_*\}= \{g^{n-j-k-1}(\eta_*^{n-j-k-1})\}$  using the knot $R_{j+1}$ as infecting knot in each case. Then, in the notation of Theorem~\ref{thm:nontriviality}
$$
T_{n-k}=T_{n-j-k-1}(\alpha^{n-k-j-1}_i,R_{j+1}^i, 1\leq i\leq 2^{n-k-j-1})
$$
where $(R_{j+1})^i$ is the $i^{th}$ copy of $R_{j+1}$. Applying Theorem~\ref{thm:nontriviality} we see that, for any clone such that $\phi((\alpha^{n-k-j-1}_i)^+)\neq 1$ the kernel, $P_i$ of the composition
$$
\mathcal{A}_0(R_{j+1})\overset{}\to (\mathcal{A}_0(R_{j+1}) \otimes\mathbb{Q}\Lambda)\overset{i_*}{\to} H_1(M_{T_{n-k}};\mathbb{Q}\Lambda)\overset{j_*}\to H_1(W;\mathbb{Q}\Lambda),
$$
satisfies $P_i\subset P_i^\perp$. We claim that there exists at least one such clone. For, by definition of infection, when we infect $T_{n-j-k-1}$ along $\alpha^{n-k-j-1}_i$ the push-off or longitude of such a circle, $(\alpha^{n-k-j-1}_i)^+$, is identified to the meridian of the $i^{th}$ copy of the infecting knot $(R_{j+1})^i$. This meridian, when viewed as a circle in $T_{n-k}$, is not a meridian of the abstract knot $R_{j+1}$, but rather an embedded copy of that meridian in $T_{n-k}$. Thus $(\alpha^{n-k-j-1}_i)^+$, viewed as a circle in $T_{n-k}$, is, by definition, one of the one of the \textbf{ghosts} of $\mu_{j+1}$! But we established above, by our inductive assumption, that for at least one of these ghosts, $\phi((\mu_{j+1})_*)\neq 1$. Thus we have verified that there is at least one such clone (say the $i^{th}$) for which the hypotheses of Theorem~\ref{thm:nontriviality} apply. We now restrict attention to such a value of $i$.

The two circles
$$
f^{j+1}_\pm(\mu_{j}) \in \pi_1(S^3-R_{j+1})^{(1)}
$$
as shown in the Figure~\ref{fig:twocircles}, form a generating set for $\mathcal{A}_0(R_{j+1})$ (which is isomorphic to $\mathcal{A}_0(R_{1})$ and hence nontrivial). 
\begin{figure}[htbp]
\setlength{\unitlength}{1pt}
\begin{picture}(400,200)
\put(0,0){\includegraphics[height=200pt]{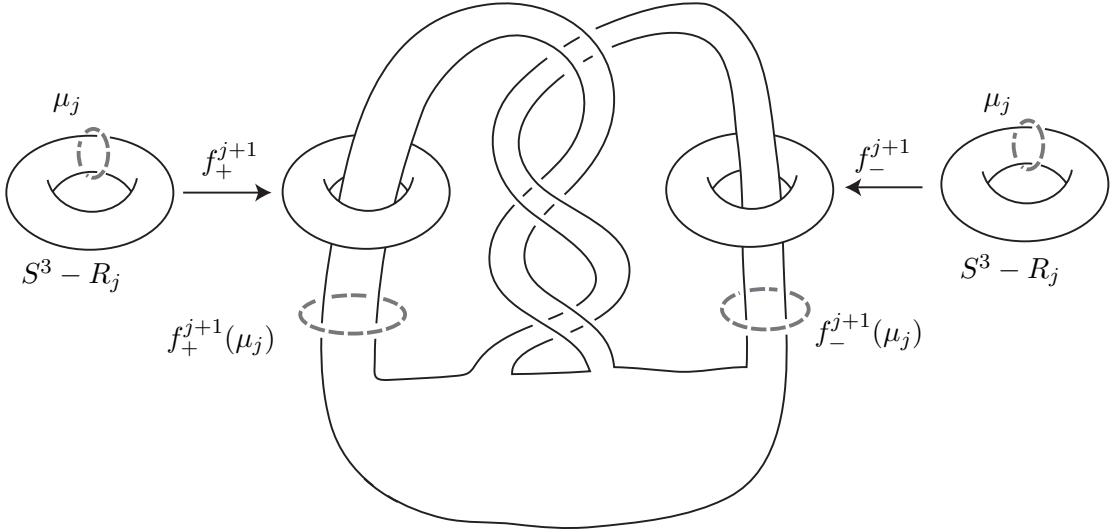}}
\put(74,136){$f^{j+1}_+$}
\put(321,137){$f^{j+1}_-$}
\put(6,93){$S^3-R_{j}$}
\put(361,95){$S^3-R_{j}$}
\put(61,69){$f^{j+1}_+(\mu_{j})$}
\put(306,71){$f^{j+1}_-(\mu_{j})$}
\put(18,160){$\mu_{j}$}
\put(370,160){$\mu_{j}$}
\end{picture}
\caption{Inside the $i^{th}$ copy of $S^3-R_{j+1}$}\label{fig:twocircles}
\end{figure}
From this we can conclude that at least one of the generators is not in $P_i$ since otherwise
$$
P_i=\mathcal{A}_0(R_{j+1})\subset\mathcal{A}_0(R_{j+1})^\perp,
$$
contradicting the nonsingularity of the classical Blanchfield form of $\mathcal{A}_0(R_{j+1})$. Finally, consider the commutative diagram below, where we abbreviate $\pi_1(W)$ by $\pi$. Recall that $H_1(W;\mathbb{Q}\Lambda)$ is identifiable as the ordinary rational homology of the covering space of $W$ whose fundamental group is the kernel of $\psi:\pi\to \Lambda$. Since this kernel is precisely $\pi^{(n-j)}_r$, we have that
$$
H_1(W;\mathbb{Q}\Lambda)\cong (\pi^{(n-j)}_r/[\pi^{(n-j)}_r,\pi^{(n-j)}_r])\otimes_{\mathbb{Z}} \mathbb{Q}
$$
as indicated in the diagram below. By the definition of the rational derived series, the far-right vertical map $j$ is injective.
\begin{equation*}
\begin{CD}
\pi_1(S^3-R_{j+1})^{(1)}      @>i_*>>    \pi_1(M_{T_{n-k}})^{(n-j)}  @>j_*>>   \pi^{(n-j)}_r  @>>>
\pi^{(n-j)}_r/\pi^{(n-j+1)}_r \\
  @VVV   @VVV        @VVV       @VVjV\\
\mathcal{A}_0(R_{j+1})     @>i_*>>  H_1(M_{T_{n-k}};\mathbb{Q}\Lambda)    @>j_*>> H_1(W;\mathbb{Q}\Lambda) @>\cong>>
  (\pi^{(n-j)}_r/[\pi^{(n-j)}_r,\pi^{(n-j)}_r])\otimes_{\mathbb{Z}} \mathbb{Q}\\
\end{CD}
\end{equation*}
Hence, since the composition in the bottom row sends one of the two homology classes $[f^{j+1}_\pm(\mu_{j})]$ to non-zero, the composition in the top row sends at least one of the two $f^{j+1}_\pm(\mu_{j})$ to non-zero under $i_*$. Now observe that the map $i_*$ in the top row above is induced by one of the compositions $g^{n-k}\circ f_\pm^{n-k}\circ\dots\circ f_\pm^{j+2}$. Thus
$$
i_*(f^{j+1}_\pm(\mu_{j})=g^{n-k}\circ f_\pm^{n-k}\circ\dots\circ f_\pm^{j+2}\circ f^{j+1}_\pm(\mu_{j}).
$$
For various values of $i$ these are precisely the ghosts of $\mu_{j}$. Hence we have shown that for at least one such ghost of $\mu_{j}$
$$
j_*((\mu_{j})_*)\neq 1 ~\text{in} ~\pi^{(n-j)}_r/\pi^{(n-j+1)}_r
$$
as desired. 

This finishes the inductive proof of Proposition~\ref{prop:iteratedghost}, hence finishing the proof of Theorem~\ref{thm:iteratedlink} and the proof of Theorem~\ref{thm:mainlink}.
\end{proof}
\end{proof}
\end{proof}

More generally, the proof shows the following:

\begin{thm}\label{thm:mainlink3} Suppose $T$ is a slice link, $\alpha$ is an unknotted circle in $S^3-T$ that represents an element in $\pi_1(S^3-T)^{(k)}$ and $R_j$, $1\leq j\leq n-k$, are slice knots and Arf($K$)$=0$, then the result, $T_{\alpha}\circ R_n\circ\dots\circ R_1(K)$, of the n-times iterated generalized doubling (applied to $K$) lies in $\mathcal{F}_{n}$. If, additionally, for each $j$, the submodule of the classical Alexander polynomial of $R_j$ generated by $\{\eta_{j1},\dots,\eta_{jm_j}\}$ contains elements $x,y$ such that $\mathcal{B}\ell_0^j(x,y)\neq 0$, where $\mathcal{B}\ell_0^j$ is the Blanchfield form of $R_j$, and $\alpha$  does not lie in $\pi_1(M_T)^{(k+1)}_H$, then there is a constant $C$, such that if the integral of the Levine signature function of $K$ is greater than $C$ in absolute value, then the resulting knot is of infinite order in the topological concordance group (moreover no multiple lies in $\mathcal{F}_{n+1}$).
\end{thm}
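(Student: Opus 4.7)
The plan is to combine the inductive proof of Theorem~\ref{thm:main3} (iterated knot doubling) with the single link-infection argument of Theorem~\ref{thm:mainlink}. The $(n)$-solvability claim is routine: since $\mathrm{Arf}(K)=0$ forces $K\in\mathcal{F}_0$, repeated application of Proposition~\ref{prop:operatorsfiltration} shows that $R_{n-k}\circ\cdots\circ R_1(K)\in\mathcal{F}_{n-k}$, and infecting the slice link $T$ along $\alpha\in\pi_1(S^3-T)^{(k)}$ using this knot lands the result in $\mathcal{F}_n$.

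For the non-$(n+1)$-solvability, I would argue by contradiction. Let $L=T_\alpha\circ R_{n-k}\circ\cdots\circ R_1(K)$ and suppose some nonzero multiple $\tilde L$ of $L$ is rationally $(n+1)$-solvable via $V$. I would build a rational $n$-solution $W$ for the base link $L_U$ obtained by substituting the unknot for $K$, by capping $V$ with the standard connected-sum cobordism $C$, the infection cobordisms $E^j$ of Lemma~\ref{lem:additivity}, slice-disk exteriors $Y^j$ for $L_U$, and $(0)$-solutions $Z_i^j$ for the copies of $K$, exactly as in the proof of Theorem~\ref{thm:linkinfection}. The same Mayer-Vietoris analysis shows $W$ is a rational $(n)$-solution, and additivity of the von Neumann $\rho$-invariant across $W$ reduces $\rho(M_{L_U},\phi)$, for $\phi$ induced by $\pi_1(W)\to\pi_1(W)/\pi_1(W)^{(n+1)}_r$, to a sum $\sum d_i\,\rho_0(K)$ plus a term bounded in absolute value by the Cheeger-Gromov constant of $M_{L_U}$. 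Taking $C$ larger than this Cheeger-Gromov constant then yields the contradiction, provided some $d_i\neq 0$; equivalently, provided at least one of the iterated infection curves survives under $\phi$.

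The crux of the proof, and the main obstacle, is verifying this survival statement, which is a simultaneous generalization of Theorem~\ref{thm:iteratednontriviality} and Proposition~\ref{prop:iteratedghost}. I would establish it by downward induction on the level $j$, where the inductive hypothesis asserts that for any rational $(n-j)$-solution $W$ of $M_{L_U}$, at least one ghost of the meridian of $R_j$ survives in $\pi_1(W)/\pi_1(W)^{(n-j+1)}_r$. The base case is the outermost level corresponding to $T_\alpha$: here one uses Harvey's Proposition~\ref{prop:nsolvable} together with the hypothesis $\alpha\notin\pi_1(M_T)^{(k+1)}_H$ to conclude that the longitudinal push-off $\alpha^+$ is nontrivial in $\pi_1(W)/\pi_1(W)^{(k+1)}_r$, which is precisely why the torsion-free derived series enters the hypotheses. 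The inductive step applies Theorem~\ref{thm:nontriviality} to the $R_j$-doubling with coefficient system $\pi_1(W)/\pi_1(W)^{(n-j+1)}_r$; the hypothesis that the submodule of $\mathcal{A}_0(R_j)$ generated by $\{\eta_{j1},\dots,\eta_{jm_j}\}$ contains $x,y$ with $\mathcal{B}\ell_0^j(x,y)\neq 0$ prevents that submodule from being contained in its own $\mathcal{B}\ell_0^j$-annihilator, forcing the kernel $P_i$ of Theorem~\ref{thm:nontriviality} to miss at least one generator. Via the identification of clone longitudes with meridians of the next infecting knot (property~(4) of Lemma~\ref{lem:mickeyfacts}), this translates into survival of a ghost at the next level inward, completing the induction.

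The main technical difficulty is the bookkeeping: one must carefully track how surviving curves at a given level become meridians of infecting knots at the next inward level, verify that the ghost produced by the inductive hypothesis is precisely the meridian needed to activate Theorem~\ref{thm:nontriviality} at the next stage, and confirm that the non-vanishing Blanchfield-form condition is exactly what is needed at each stage to keep the chain of survivals from breaking. Once this combinatorial scaffolding is in place, the final step is the $\rho$-computation already sketched, together with the observation that at least one $d_i\neq 0$, mirroring the final paragraph of the proof of Theorem~\ref{thm:linkinfection}.
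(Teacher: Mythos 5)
Your proposal is correct and follows essentially the same route the paper intends. The paper's "proof" of Theorem~\ref{thm:mainlink3} is literally the single sentence ``More generally, the proof shows the following,'' so the content of the proof is exactly what you spelled out: the $\mathcal{F}_n$ membership via the alternative-description/operator-filtration argument (Proposition~\ref{prop:operatorsfiltration}, Lemma~\ref{lem:nsolv}, and the analogue of Corollary~\ref{cor:linknsolvable}); the construction of the rational $(n)$-solution $W$ for $M_{L_U}$ by capping $V$ with $C$, the $E^j$, the $Y^j$, and the $Z^j_i$ as in Theorem~\ref{thm:linkinfection}; the $\rho$-additivity and Cheeger--Gromov bound for $M_{L_U}$; and the downward induction establishing ghost survival. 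You also correctly pinpoint where the two new hypotheses of Theorem~\ref{thm:mainlink3} enter to replace features of the special case: the condition $\alpha\notin\pi_1(M_T)^{(k+1)}_H$ feeds directly into Proposition~\ref{prop:nsolvable} in the base case (replacing the paper's use of the fact that $\pi_1(M_T)$ is free when $T$ is trivial, where the torsion-free derived series coincides with the derived series), and the Blanchfield nondegeneracy hypothesis on the $\{\eta_{j1},\dots,\eta_{jm_j}\}$ feeds into Theorem~\ref{thm:nontriviality} in the inductive step (replacing the paper's use of the explicit generators $f^{j+1}_\pm(\mu_j)$ of $\mathcal{A}_0(R_1)$). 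This is the intended generalization, and your identification of these two substitution points is the real content of the extension.
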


\section{Higher-Order Signatures as Obstructions to being Slice and the COT n-solvable Filtration }\label{sec:appendix}

\textbf{The COT n-solvable filtration}

Recall that \cite[Section 8]{COT} introduced a filtration of the concordance classes of links $\mathcal{C}$
$$
\cdots \subseteq \mathcal{F}_{n} \subseteq \cdots \subseteq
\mathcal{F}_1\subseteq \mathcal{F}_{0.5} \subseteq \mathcal{F}_{0} \subseteq \mathcal{C}.
$$
where the elements of $\mathcal{F}_{n}$ and $\mathcal{F}_{n.5}$ are called \textbf{$(n)$-solvable links} and \textbf{$(n.5)$-solvable links} respectively. In the case of knots this is a filtration by \emph{subgroups} of the knot concordance group. A slice link $L$ has the property that it's zero surgery $M_L$ bounds a $4$-manifold $W$ (namely the exterior of the slicing disks) such that $H_1(M_L)\to H_1(W)$ is an isomorphism and $H_2(W)=0$. An \textbf{n-solvable} link is one, loosely speaking, such that $M_L$ bounds a $4$-manifold $W$ such that $H_1(M_L)\to H_1(W)$ is an isomorphism and the intersection form on $H_2(W)$ ``looks'' hyperbolic modulo the $n^{th}$-term of the derived series of $\pi_1(W)$. We shall only give a detailed definition of the slightly larger class of \textbf{rationally $(n)$-solvable links}.

For a compact oriented topological 4-manifold $W$, let $W^{(n)}$ denote the covering
space of $W$ corresponding to the $n$-th derived subgroup of $\pi_1(W)$. The deck translation group of this cover is the solvable group $\pi_1(W)/\pi_1(W)^{(n)}$. Then $H_2(W^{(n)};\mathbb{Q})$ can be endowed with the structure of a right $\mathbb{Q}[\pi_1W)^{(n)}]$-module. This agrees with the homology group with twisted coefficients $H_2(W;\mathbb{Q}[\pi_1(W)^{(n)}])$. There is an equivariant
intersection form
$$
\lambda_n : H_2(W^{(n)};\mathbb{Q}) \times H_2(W^{(n)};\mathbb{Q}) \lra
\mathbb{Q}[\pi_1(W)/\pi_1(W)^{(n)}]
$$
 \cite[Chapter 5]{Wa}\cite[Section 7]{COT}. The usual intersection form is the case $n=0$. In general, these
intersection forms are singular. Let $I_n \equiv$ image($j* : H_2(\partial W^{(n)};\mathbb{Q}) \to H_2(W^{(n)};Q)$). Then this intersection form factors
through
$$
\ov{\lambda_n} : H_2(W^{(n)};\mathbb{Q})/I_n \times H_2(W^{(n)};\mathbb{Q})/I_n \lra \mathbb{Q}[\pi_1(W)/\pi_1(W)^{(n)}].
$$
We define a  \emph{rational $n$-Lagrangian} of $W$ to be a
submodule of $H_2(W;\mathbb{Q}[\pi_1W)^{(n)}]$ on which
$\ov\lambda_n$ vanishes identically and which maps onto a $\frac12$-rank
subspace of $H_2(W;\mathbb{Q})/I_0$ under the covering map. An
\emph{$n$-surface} is a based and immersed surface
in $W$ that can be lifted to $W^{(n)}$. Observe that any class in
$H_2(W^{(n)})$ can be represented by an $n$-surface and that
$\lambda_n$ can be calculated by counting intersection points in
$W$ among representative $n$-surfaces weighted appropriately by
signs and by elements of $\pi_1(W)/\pi_1(W)^{(n)}$. We say a rational
$n$-Lagrangian $L$ admits \emph{rational $m$-duals} (for $m\le n$) if $L$
is generated by (lifts of) $n$-surfaces $\ell_1,\ell_2,\ldots,\ell_g$ and
there exist $m$-surfaces $d_1,d_2,\ldots, d_g$ such that $H_2(W;\mathbb{Q})/I_0$
has rank $2g$ and $\lambda_m(\ell_i,d_j)=\delta_{i,j}$. 

Under the assumption that we will impose below, that
$$
H_1(M;\mathbb{Q})\to H_1(W;\mathbb{Q})
$$
is an isomorphism, it follows that the dual map
$$
H_3(W,M;\mathbb{Q})\to H_2(M;\mathbb{Q})
$$
is an isomorphism and hence that $I_0=0$. Thus the ``size'' of rational $(n)$-solutions is dictated by the rank of $H_2(W;\mathbb{Q})$.

\begin{defn}
\label{defn:rationalnsolvable} Let $n$ be a nonnegative integer. A compact, connected oriented topological 4-manifold $W$ with $\partial W = M$ is a \textbf{rational $n$-solution for $M$} if
\begin{itemize}
\item $H_1(M;\mathbb{Q})\to H_1(W;\mathbb{Q})$ is an isomorphism, and
\item  $W$ admits a rational $(n)$-Lagrangian with rational $(n)$-duals. 
\end{itemize}
Then we say that $M$ is \emph{rationally $(n)$-solvable via $W$}. A link $L$ is an \emph{$(n)$-solvable link} if 
$M_L$ is rationally $(n)$-solvable for some such $W$.
\end{defn}

\begin{defn}
\label{defn:rationaln.5solvable} Let $n$ be a nonnegative integer. A compact, connected oriented 4-manifold $W$ with
$\partial W = M$ is a \textbf{rational $n.5$-solution for $M$} if
\begin{itemize}
\item $H_1(M;\mathbb{Q})\to H_1(W;\mathbb{Q})$ is an isomorphism, and
\item  $W$ admits a rational $n$-Lagrangian with rational $(n+1)$-duals. 
\end{itemize}
Then we say that \textbf{$M$ is rationally $(n.5)$-solvable via $W$}. A link $L$ is an \textbf{$(n.5)$-solvable link} if $M_L$ is rationally $(n.5)$-solvable for some such $W$.
\end{defn}

A $4$-manifold $W$ is an \textbf{$(n)$-solution} (respectively an \textbf{$(n.5)$-solution}) if, in addition, it is spin, it satisfies the conditions above with $\mathbb{Q}$ replaced by $\mathbb{Z}$ and the equivariant \textbf{self-intersection form} also vanishes on the Lagrangian (see ~\cite[Section 8]{COT}.

\begin{rem}
\label{rem:n-solvable}
\begin{enumerate}
\item An $(n)$-solution is a fortiori a rational $(n)$-solution.
\item An $(n)$-solution (respectively rational $(n)$-solution) is a fortiori an $(m)$-solution (respectively rational $(m)$-solution) for any $m<n$.
\item If $L$ is slice in a topological (rational) homology $4$-ball then the complement of a set of slice disks is an (rational) $(n)$-solution for any integer or half-integer $n$.  This follows since if $H_2(W;\mathbb{Z})=0$ then the Lagrangian may be taken to be the zero submodule.
\end{enumerate}
\end{rem}

The following result is useful.

\begin{lem}\label{lem:nsolv} Suppose $L$ is a link obtained from a $(p+q)$-solvable link $R$ by infection along curves in $\pi_1(S^3-R)^{(p)}$ using knots $K_i$. Suppose the knots $K_i$ are $(q)$-solvable via $4$-manifolds $W_i$ such that $\pi_1(W_i)$ is normally generated by the meridian of $K_i$ (if $q=0$ the latter condition always holds). Then $L$ is also a $(p+q)$-solvable link.
\end{lem}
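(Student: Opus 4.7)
The plan is to construct a $(p+q)$-solution $W$ for $M_L$ by assembling the given data along the standard infection cobordism. Let $V$ be the given $(p+q)$-solution for $M_R$ and $W_i$ the given $(q)$-solutions for $M_{K_i}$, with $\pi_1(W_i)$ normally generated by the meridian $\mu_i$ of $K_i$. Define
\[
W \;=\; V \;\cup_{M_R}\; E \;\cup_{\sqcup_i M_{K_i}}\; \Bigl(\,\bigsqcup_i W_i\Bigr),
\]
where $E$ is the cobordism of Figure~\ref{fig:mickey}; then $\partial W = M_L$.

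First I would verify the $H_1$ condition of Definition~\ref{defn:rationalnsolvable}. A Mayer-Vietoris computation, using the assumption that $H_1(M_R;\mathbb{Q})\to H_1(V;\mathbb{Q})$ and $H_1(M_{K_i};\mathbb{Q})\to H_1(W_i;\mathbb{Q})$ are isomorphisms together with Lemma~\ref{lem:mickeyfacts}(2), shows that $H_1(M_L;\mathbb{Q})\to H_1(W;\mathbb{Q})$ is an isomorphism. Next, a Mayer-Vietoris argument exactly parallel to the one in the proof of Lemma~\ref{lem:H2}, using Lemma~\ref{lem:mickeyfacts}(3) to compute $H_2(E)$ and then successively absorbing the boundary components, yields
\[
H_2(W;\mathbb{Q}) \;\cong\; H_2(V;\mathbb{Q}) \;\oplus\; \bigoplus_i H_2(W_i;\mathbb{Q}),
\]
so $\operatorname{rank}H_2(W)$ is the sum of the ranks of $H_2(V)$ and of the $H_2(W_i)$.

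The candidate Lagrangian and duals for $W$ are then obtained by taking the union of the Lagrangian and dual generators of $V$ (of the $(p+q)$-type) with those of each $W_i$ (of the $(q)$-type); the cardinality is exactly right by the above decomposition, and intersection numbers pair correctly because the Lagrangian/dual surfaces from distinct pieces ($V$, $W_i$, $W_j$) lie in disjoint subregions of $W$ and hence have zero mutual intersection. The $(p+q)$-surfaces coming from $V$ remain $(p+q)$-surfaces in $W$ by functoriality of the derived series under the inclusion $\pi_1(V)\to \pi_1(W)$. The main step, and the one I expect to be the main obstacle, is to verify that each $(q)$-surface $\Sigma$ coming from some $W_i$ is actually a $(p+q)$-surface in $W$, i.e.\ that $\pi_1(\Sigma)$ maps into $\pi_1(W)^{(p+q)}$. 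The key observation is that $\mu_i$, viewed in $W$, is isotopic through $E$ to the longitudinal push-off $\eta_i^+$ of the infection curve $\eta_i$ (Lemma~\ref{lem:mickeyfacts}(4)); since $\eta_i\in \pi_1(S^3\setminus R)^{(p)}$, the standard argument of \cite[Theorem 8.1]{C} (used repeatedly throughout the paper) gives $\eta_i^+\in \pi_1(M_L)^{(p)}\subset \pi_1(W)^{(p)}$. Because $\pi_1(W_i)$ is normally generated by $\mu_i$ by hypothesis, and because $\pi_1(W)^{(p)}$ is a characteristic (hence normal) subgroup of $\pi_1(W)$, the image of $\pi_1(W_i)$ in $\pi_1(W)$ is contained in $\pi_1(W)^{(p)}$. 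Functoriality of the derived series then gives that the image of $\pi_1(W_i)^{(q)}$ lies in $(\pi_1(W)^{(p)})^{(q)}=\pi_1(W)^{(p+q)}$, so $\pi_1(\Sigma)\subset \pi_1(W_i)^{(q)}$ maps into $\pi_1(W)^{(p+q)}$, as required. The same reasoning applies to the $(q)$-duals in the $W_i$.

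For the integral version of $(p+q)$-solvability, one must additionally check that $W$ is spin and that the equivariant self-intersection form vanishes on the combined Lagrangian; the spin structure is inherited from the pieces ($V$, $W_i$, and $E$ are all spin, and the gluings take place along $3$-manifolds so the spin structures extend after possibly twisting), and the self-intersection condition decomposes as a direct sum over the pieces and therefore vanishes piece-by-piece. This completes the construction of the required $(p+q)$-solution for $M_L$.
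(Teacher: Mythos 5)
Your proof is correct and it fills in exactly the argument that the paper elides: the paper's "proof" of this lemma is a one-line citation to \cite[Proposition 3.1]{COT2} (see also \cite[Corollary 3.14]{CT}), and your construction — gluing $V\cup E\cup(\sqcup_i W_i)$, decomposing $H_2$ by Mayer--Vietoris, and promoting the $(q)$-Lagrangians of the $W_i$ to $(p+q)$-Lagrangians in $W$ via the normal-generation hypothesis and the containment $\mu_i\in\pi_1(W)^{(p)}$ — is precisely the argument of those references adapted to this setting. One small simplification: you do not really need \cite[Theorem 8.1]{C} to place $\mu_i$ in $\pi_1(W)^{(p)}$; since $\eta_i^+$ is isotopic to $\eta_i$ inside the solid torus in $M_R$, one has $\eta_i^+\in\pi_1(M_R)^{(p)}$ directly, and then the inclusion $M_R\hookrightarrow V\hookrightarrow W$ together with Lemma~\ref{lem:mickeyfacts}(4) gives the claim without passing through $\pi_1(M_L)$.
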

\begin{proof} One can repeat almost verbatim the proof of ~\cite[Proposition 3.1]{COT2} (see also ~\cite[Corollary 3.14]{CT}).
\end{proof}

\begin{thm}\label{thm:sliceobstr}(Cochran-Orr-Teichner~\cite[Theorem 4.2]{COT}) If a knot $K$ is rationally $(n.5)$-solvable via $W$ and  $\phi:\pi_1(M_K)\to \G$ is a PTFA coefficient system that extends to $\pi_1(W)$ and such that $\G^{(n+1)}=1$, then $\rho(M_K,\phi)=0$.
\end{thm}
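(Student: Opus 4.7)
The plan is to compute $\rho(M_K,\phi)$ via the 4-dimensional signature defect formula. Since $\phi$ extends to $\psi:\pi_1(W)\to\G$, one has
\[
\rho(M_K,\phi) = \sigma^{(2)}_{\G}(W,\psi) - \sigma(W),
\]
so it suffices to establish $\sigma(W) = 0 = \sigma^{(2)}_{\G}(W,\psi)$. By part (1) of Proposition~\ref{prop:rho invariants}, each vanishing follows from producing a half-rank summand of $H_2(W)/j_{\ast}H_2(\partial W)$ on which the equivariant intersection form vanishes, in both $\mathbb{Q}$- and $\mathcal{K}\G$-coefficients. Because $\partial W = M_K$ has $\beta_1 = 1$ and the inclusion induces an isomorphism on $H_1(-;\mathbb{Q})$, a Poincar\'e-duality argument shows $j_\ast H_2(\partial W;\mathbb{Q}) = 0$, and the analogous statement holds with $\mathcal{K}\G$-coefficients.

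For both coefficient systems the natural Lagrangian is the span of the images $\{\ell_i'\}$ of the rational $(n)$-surfaces $\{\ell_i\}_{i=1}^g$ provided by the $(n.5)$-solution, where $2g = \dim_\mathbb{Q} H_2(W;\mathbb{Q})$. The ordinary case is immediate: by naturality of intersection forms, $\lambda_0$ is the augmentation of $\lambda_n$, which vanishes on the Lagrangian, so $\lambda_0(\ell_i,\ell_j) = 0$; the rational $(n+1)$-duals $\{d_i\}$ guarantee the correct rank, giving $\sigma(W)=0$. For the $L^{(2)}$-case I would imitate the matrix argument in Lemma~\ref{lem:selfannihil}. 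The hypothesis $\G^{(n+1)} = 1$ combined with the fact that $\pi_1(d_j) \subset \pi_1(W)^{(n+1)}$ forces $\pi_1(d_j)\subset\ker\psi$, so each dual $d_j$ lifts to the $\G$-cover $\tilde W$. Consequently the equivariant intersection numbers $\lambda_\G(\ell_i',d_j') = \pm\delta_{ij}$ are just the ordinary intersection numbers, and in the basis $\{\ell_i',d_j'\}$ the form $\lambda_\G$ acquires block shape
\[
\begin{pmatrix} A & I \\ I^{\ast} & B \end{pmatrix},
\]
which is invertible (so $\{\ell_i',d_j'\}$ span a rank-$2g$ free summand) and has vanishing $L^{(2)}$-signature for \emph{arbitrary} $A$, $B$. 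An Euler-characteristic/rank upper bound $\operatorname{rk}_{\mathcal{K}\G}H_2(W;\mathcal{K}\G) \le 2g$ (as in the proof of Lemma~\ref{lem:selfannihil}) then ensures the remaining part of $H_2(W;\mathcal{K}\G)$ is $\mathcal{K}\G$-torsion and contributes nothing to signatures, completing the argument.

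The hardest step is rigorously identifying the matrix of $\lambda_\G$ in the claimed form, because $\psi$ does \emph{not} factor through $\pi_1(W)/\pi_1(W)^{(n)}$ and so one cannot naively push $\lambda_n=0$ forward to $\mathcal{K}\G$. The standard resolution uses the $(n+1)$-duals to perform Whitney-type modifications that make the $\ell_i$ pairwise disjoint after lifting to an intermediate $(n+1)$-cover, through which $\psi$ does factor; this produces the matrix form above and the $L^{(2)}$-signature vanishes formally. This replacement of an $(n)$-Lagrangian by something self-annihilating at the $(n+1)$-level is precisely the geometric content of the half-integer $(n.5)$-solvability hypothesis, and is the single point in the argument where the extra half of solvability (rather than just $(n)$-solvability) is essential.
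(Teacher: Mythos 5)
Your high-level architecture is the right one: write $\rho(M_K,\phi)=\sigma^{(2)}_\G(W,\psi)-\sigma(W)$, verify $j_*H_2(\partial W)=0$ with both coefficient systems, and argue each signature vanishes by exhibiting a half-rank isotropic summand. The reduction steps and the rank-counting argument modeled on Lemma~\ref{lem:selfannihil} are all sound (modulo the remark that over the skew field $\mathcal{K}\G$ there is no torsion: the rank inequality simply forces the Lagrangian together with the duals to span all of $H_2(W;\mathcal{K}\G)$). However, the argument contains two genuine errors at its critical step, the vanishing of the $L^{(2)}$-signature.

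First, the claim that a Hermitian block matrix
\[
\begin{pmatrix} A & I \\ I^* & B \end{pmatrix}
\]
has vanishing $L^{(2)}$-signature \emph{for arbitrary} $A,B$ is false. Already over $\mathbb{R}$, $\begin{pmatrix} 2 & 1 \\ 1 & 2 \end{pmatrix}$ is positive definite. The $L^{(2)}$-signature vanishes precisely when $A=0$ (one then completes the square to obtain a hyperbolic form), which is exactly the statement that the $\ell_i'$ form a $\lambda_\G$-Lagrangian. So you cannot sidestep proving $\lambda_\G(\ell_i',\ell_j')=0$; the matrix shape alone buys nothing.

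Second, your proposed fix — Whitney-type modifications driven by the $(n+1)$-duals to push the $\ell_i$ up to the $(n+1)$-cover — is not the Cochran--Orr--Teichner argument, and in the topological category such geometric surgery on surfaces is delicate and unnecessary. You were misled by this paper's Definition~\ref{defn:rationaln.5solvable}, which reads ``rational $(n)$-Lagrangian with rational $(n+1)$-duals.'' This is a misprint (indeed it is internally inconsistent with the paper's own definition of $m$-duals, which requires $m\le n$): in COT, an $(n.5)$-solution has an $(n+1)$-Lagrangian with $(n)$-duals. With the correct definition the proof becomes purely formal: the $\ell_i$ are $(n+1)$-surfaces, so $\lambda_{n+1}(\ell_i,\ell_j)=0$ in $\mathbb{Z}[\pi_1(W)/\pi_1(W)^{(n+1)}]$; the hypothesis $\G^{(n+1)}=1$ means $\psi$ factors through $\pi_1(W)/\pi_1(W)^{(n+1)}$; hence by naturality of the equivariant intersection form $\lambda_\G(\ell_i',\ell_j')=0$ with no geometric moves whatsoever. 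This, combined with your (correct) rank argument showing the Lagrangian is half-rank in $H_2(W;\mathcal{K}\G)$ and Proposition~\ref{prop:rho invariants}(1), finishes the proof. The extra half of solvability is used exactly here — it is what puts the Lagrangian, rather than the duals, at depth $n+1$ in the derived series, which is the depth $\psi$ can see.
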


For links the following recent result of the first two authors is the best known result. Note the extra rank condition.

\begin{thm}\label{thm:rho=0}[Cochran-Harvey ~\cite[Theorem 4.9, Proposition 4.11]{CH2}] Let $\Gamma$ be a PTFA group such that $\G^{(n+1)}=0$. Let $M$ be a closed, connected, oriented $3$-manifold equipped with a non-trivial coefficient system $\phi:\pi_1(M)\to \Gamma$. Suppose $\text{rank}_{\mathcal{K}\Gamma}(H_1(M;\mathcal{K}\G))= \beta_1(M)-1$. Then if $M$ is
rationally
$(n.5)$-solvable via a
$4$-manifold $W$ over which $\phi$ extends, then
$$
\rho(M,\phi)= \sigma^{(2)}_\Gamma(W)-\sigma(W)=0.
$$
Moreover, if additionally $M$ is
rationally $(n+1)$-solvable via $W$ then the extra rank condition above is automatically satisfied.
\end{thm}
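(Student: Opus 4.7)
The plan is to apply the L\"uck--Schick formula $\rho(M,\phi) = \sigma^{(2)}_\Gamma(W,\psi) - \sigma(W)$ (recalled in the excerpt) and show both signatures vanish. For $\sigma(W) = 0$: the rational $(n.5)$-solution $W$ carries by definition a rational $n$-Lagrangian $\{\ell_1,\dots,\ell_g\}$ in $H_2(W; \mathbb{Q}[\pi_1(W)/\pi_1(W)^{(n)}])$ with rational $(n+1)$-duals $\{d_1,\dots,d_g\}$. Since $H_1(M;\mathbb{Q}) \to H_1(W;\mathbb{Q})$ is an isomorphism, $I_0 = 0$. Augmenting the coefficient ring to $\mathbb{Q}$ sends $\bar\lambda_n$ to the ordinary form $\bar\lambda_0$ on $H_2(W;\mathbb{Q})$, so the augmented Lagrangian is isotropic and the augmented duals pair with it as the identity, exhibiting $H_2(W;\mathbb{Q})$ as hyperbolic of rank $2g$ and signature zero.

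For $\sigma^{(2)}_\Gamma(W,\psi) = 0$, I would apply Property~(1) of Proposition~\ref{prop:rho invariants}, which requires exhibiting a half-rank direct summand of $H_2(W;\mathcal{K}\Gamma)/j_*(H_2(M;\mathcal{K}\Gamma))$ on which $\lambda_\Gamma$ vanishes. First I would establish the rank identity $\operatorname{rank}_{\mathcal{K}\Gamma}\bigl(H_2(W;\mathcal{K}\Gamma)/j_*(H_2(M;\mathcal{K}\Gamma))\bigr) = 2g$ via the Euler characteristic $\chi(W) = \sum_i (-1)^i \operatorname{rank}_{\mathcal{K}\Gamma} H_i(W;\mathcal{K}\Gamma)$ (valid over the skew field $\mathcal{K}\Gamma$), Poincar\'e--Lefschetz duality with twisted coefficients, and the long exact sequence of $(W,M)$---this is precisely where the hypothesis $\operatorname{rank}_{\mathcal{K}\Gamma} H_1(M;\mathcal{K}\Gamma) = \beta_1(M) - 1$ enters.

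Because $\Gamma^{(n+1)} = 1$, the $(n+1)$-duals $d_j$ have $\pi_1$-image in $\pi_1(W)^{(n+1)}$ mapping trivially to $\Gamma$, so they lift canonically to the $\Gamma$-cover, giving $\lambda_\Gamma(\ell_i, d_j) = \delta_{ij}$ and $\lambda_\Gamma(d_i, d_j) \in \mathbb{Z}\subset \mathcal{K}\Gamma$. The $\ell_i$ have $\pi_1$-image in the abelian group $\Gamma^{(n)}$, and by naturality the vanishing $\lambda_n(\ell_i,\ell_j)=0$ forces the entries of $A := (\lambda_\Gamma(\ell_i,\ell_j))$ to lie in the relative augmentation ideal $\ker(\mathbb{Z}\Gamma \to \mathbb{Z}[\Gamma/\Gamma^{(n)}])$. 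The $2g$ classes $\{\ell_i, d_j\}$ thus span the quotient with intersection matrix of block form $\begin{pmatrix} A & I \\ I & X \end{pmatrix}$. To produce the desired isotropic summand, I would modify $\tilde\ell_i = \ell_i + \sum_j c_{ij} d_j$, choosing $C = (c_{ij})$ to solve the Hermitian matrix equation $A + C + C^* + CXC^* = 0$ over $\mathcal{K}\Gamma$; this is made tractable by the abelianness of $\Gamma^{(n)}$ (making the subring generated by the entries of $A$ commutative) and the integrality of $X$. The resulting half-rank subspace $\langle \tilde\ell_i \rangle$ is totally isotropic, and Property~(1) yields $\sigma^{(2)}_\Gamma(W,\psi)=0$.

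For the ``moreover'' clause, if $M$ is rationally $(n+1)$-solvable via $W$, then $W$ admits an $(n+1)$-Lagrangian with $(n+1)$-duals, all with $\pi_1$ in $\pi_1(W)^{(n+1)}$ mapping trivially to $\Gamma$; these lift canonically to give $2g$ classes in $H_2(W;\mathcal{K}\Gamma)/j_*(H_2(M;\mathcal{K}\Gamma))$. Running the Euler-characteristic and duality argument in reverse forces the rank equality $\operatorname{rank}_{\mathcal{K}\Gamma} H_1(M;\mathcal{K}\Gamma) = \beta_1(M) - 1$ automatically. The main obstacle of the entire argument is solvability of the Hermitian matrix equation for $C$ over the non-commutative skew field $\mathcal{K}\Gamma$; this is a delicate Witt-theoretic step that crucially relies on the abelianness of $\Gamma^{(n)}$ (a consequence of $\Gamma^{(n+1)} = 1$) and on $A$ lying in the relative augmentation ideal. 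A subsidiary technical challenge is the rank computation for $H_2(W;\mathcal{K}\Gamma)$ modulo the boundary image, which is the step that genuinely uses the rank hypothesis on $H_1(M;\mathcal{K}\Gamma)$.
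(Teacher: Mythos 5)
Your overall skeleton — compute $\rho(M,\phi)$ as $\sigma^{(2)}_\Gamma(W)-\sigma(W)$, kill $\sigma(W)$ by augmenting the Lagrangian and duals to $\mathbb{Q}$-coefficients, kill $\sigma^{(2)}_\Gamma(W)$ by exhibiting a half-rank isotropic summand of $H_2(W;\mathcal{K}\Gamma)/j_*H_2(M;\mathcal{K}\Gamma)$, with the hypothesis $\operatorname{rank}_{\mathcal{K}\Gamma}H_1(M;\mathcal{K}\Gamma)=\beta_1(M)-1$ entering only through the Euler-characteristic/duality count of that quotient's rank — is the right one and matches the Cochran--Harvey argument. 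But the central step is wrong. You treat the Lagrangian $\{\ell_i\}$ as isotropic only for $\lambda_n$, so that $A=(\lambda_\Gamma(\ell_i,\ell_j))$ merely lies in $\ker(\mathbb{Z}\Gamma\to\mathbb{Z}[\Gamma/\Gamma^{(n)}])$, and you propose to repair this by solving $A+C+C^*+CXC^*=0$. That equation need not have a solution, and neither the commutativity of $\Gamma^{(n)}$ nor the integrality of $X$ rescues it: already over $\mathbb{R}$ with $g=1$ the form $\left(\begin{smallmatrix} 2 & 1\\ 1 & 2\end{smallmatrix}\right)$ (so $A=(2)$, $X=(2)$) has exactly the block shape you describe yet is positive definite, hence admits no isotropic line and has nonzero signature. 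A nonsingular Hermitian form possessing a half-rank subspace with a dual basis, but on which the form does not literally vanish, is in general not Witt-trivial. So as written the argument does not establish $\sigma^{(2)}_\Gamma(W)=0$.

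The missing idea is that this is precisely what the extra half-step of solvability buys, and no Witt-theoretic surgery on the basis is needed. In the standard definition (COT, Definition 8.7; the present paper's Definition~\ref{defn:rationaln.5solvable} appears to have the levels of the Lagrangian and the duals transposed, which is presumably what misled you — note it is inconsistent with the paper's own requirement $m\le n$ for ``rational $m$-duals''), a rational $(n.5)$-solution carries a Lagrangian that is isotropic one level deeper, i.e.\ $\lambda_{n+1}(\ell_i,\ell_j)=0$ in $\mathbb{Q}[\pi_1(W)/\pi_1(W)^{(n+1)}]$, with the duals only required to be $(n)$-surfaces. Since $\Gamma^{(n+1)}=1$, the coefficient system factors through $\pi_1(W)/\pi_1(W)^{(n+1)}$, so $\lambda_\Gamma(\ell_i,\ell_j)$ is the image of $\lambda_{n+1}(\ell_i,\ell_j)$ under the induced ring map and is therefore literally zero: $A=0$, the Gram matrix is $\left(\begin{smallmatrix} 0 & B\\ B^* & X\end{smallmatrix}\right)$ with $B$ augmenting to the identity, and the $\ell_i$ themselves already span the required totally isotropic half-rank summand (exactly as in the matrix manipulation in the paper's proof of Lemma~\ref{lem:selfannihil}). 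The remaining parts of your outline — $\sigma(W)=0$ by augmentation, and the ``moreover'' clause via a half-lives-half-dies count for a rational $(n+1)$-solution — are fine in spirit, though the latter is only sketched.
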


\begin{proof}[Proof that Theorem~\ref{thm:rho=0} implies Theorem~\ref{thm:linksliceobstr}] Since $\G$ is PTFA, it is solvable so there exists some $n$ such that $\G^{(n+1)}=0$. Let $W$ denote the exterior of the slicing disks. By Alexander duality, $H_2(W;\mathbb{Q})=0$ and $H_1(M_L;\mathbb{Q})\to H_1(W;\mathbb{Q})$ is an isomorphism. Thus $W$ is a certainly a rational $(n+1)$-solution for $L$. The the result follows immediately from Theorem~\ref{thm:rho=0}.
\end{proof}

There is another common situation in which the extra rank condition is satisfied.

\begin{lem}\label{lem:rank} Suppose $L$ is a link obtained from the link $R$ by infections on circles $\eta_i$ using knots $K_i$. Suppose $\phi:\pi_1(M_L)\to\G$ is a nontrivial PTFA coefficient system such that $\phi(\mu_{\eta_i}\equiv l_{K_i})=1$. Then there is a coefficient system $\phi:\pi_1(M_L)\to\G$ induced on $M_R$ and
$$
\text{rank}_{\mathcal{K}\G}(H_1(M_{L};\mathcal{K}\G))\geq \text{rank}_{\mathcal{K}\G}(H_1(M_R;\mathcal{K}\G)).
$$
In particular if $R$ is the trivial link of $m$ components then 
$$
\text{rank}_{\mathcal{K}\G}(H_1(M_{L};\mathcal{K}\G))= \beta_1(M_L)-1.
$$
\end{lem}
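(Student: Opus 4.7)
The statement has two parts: the existence of the induced $\phi_R$ on $\pi_1(M_R)$, and the rank inequality with its sharpening for $R$ trivial. My plan uses the cobordism $E$ of Lemma~\ref{lem:additivity} as the bridge between $M_L$ and $M_R$. For the induced coefficient system I invoke Lemma~\ref{lem:mickeyfacts}$(1)$, which identifies $\pi_1(E)$ as $\pi_1(M_L)$ modulo the normal closure of the longitudes $\ell_{K_i}$; since $\phi(\ell_{K_i})=1$ by hypothesis, $\phi$ descends uniquely to $\ov\phi:\pi_1(E)\to\G$, and $\phi_R$ is then the restriction of $\ov\phi$ to $\pi_1(M_R)$ via the inclusion $M_R\hra E$.

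For the rank inequality I would factor the comparison through $H_1(E;\KK\G)$. On the one hand, $E\simeq\ov E$ is built from $M_L$ by attaching $m$ $2$-cells along the loops $\ell_{K_i}$ together with $m$ $3$-cells; the $3$-cells cannot change $H_1$ and each $2$-cell imposes at most one relation (its boundary lifts to the $\G$-cover because $\ov\phi(\ell_{K_i})=1$), so the inclusion $M_L\hra E$ induces a surjection $H_1(M_L;\KK\G)\twoheadrightarrow H_1(E;\KK\G)$ and hence $\operatorname{rank} H_1(M_L;\KK\G)\geq\operatorname{rank} H_1(E;\KK\G)$. On the other hand, $E$ also deformation retracts to $M_R\cup_{\sqcup_i\eta_i\x D^2}(\sqcup_i M_{K_i})$, and I apply Mayer--Vietoris with $\KK\G$-coefficients. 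Splitting the index set as $A=\{i:\phi(\eta_i)\neq 1\}$ and its complement: for $i\in A$ the solid torus $\eta_i\x D^2\simeq S^1$ has vanishing $\KK\G$-homology in every degree (exactly as in the proof of Lemma~\ref{lem:mickeysig}) and $H_1(M_{K_i};\KK\G)$ is $\KK\G$-torsion by the standard argument for knot surgeries with nontrivial PTFA (see~\cite[Proposition 2.11]{COT}); for $i\notin A$ both $\eta_i\x D^2$ and $M_{K_i}$ carry the trivial coefficient system (because $\mu_{K_i}$ normally generates $\pi_1(M_{K_i})$ and $\phi(\mu_{K_i})=\phi(\eta_i)=1$), and the Mayer--Vietoris boundary identifies $[\eta_i]$ with $[\mu_{K_i}]$. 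In either case a direct rank count in the sequence yields $\operatorname{rank} H_1(E;\KK\G)\geq\operatorname{rank} H_1(M_R;\KK\G)$, which combined with the first step gives the desired inequality.

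For the trivial-link case $R=T$, first note that $\phi_R$ must be nontrivial on $M_T$: otherwise $\phi$ would vanish on every $\mu_{\eta_i}$ and hence on every $\mu_{K_i}$ (since $\mu_{K_i}$ is identified in $E$ with the longitudinal push-off $\eta_i^+$), forcing $\phi=1$ on $\pi_1(M_L)$ (which is normally generated by these meridians), a contradiction. Since $\pi_1(M_T)\cong F_m$, a direct Fox-calculus computation with the standard handle decomposition of $\#^m(S^1\x S^2)$ gives $\operatorname{rank} H_1(M_T;\KK\G)=m-1$, and combining with the rank inequality yields $\operatorname{rank} H_1(M_L;\KK\G)\geq m-1=\beta_1(M_L)-1$. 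For the reverse inequality, recall that $H_1$ with local coefficients depends only on $\pi_1$, so $H_1(M_L;\KK\G)\cong H_1(\pi_1(M_L);\KK\G)$; since $\pi_1(M_L)$ is generated by the $m$ meridians of $L$ (Wirtinger), the group chain complex begins $\KK\G^m\xrightarrow{\partial_1}\KK\G$ with $\partial_1(\mu_j)=\phi(\mu_j)-1$, and nontriviality of $\phi$ forces some $\phi(\mu_j)\neq 1$ so $\partial_1$ is surjective of rank $1$, giving $\operatorname{rank} H_1\leq m-1$. The main obstacle is the rank bookkeeping in the Mayer--Vietoris of the middle paragraph: one must verify that for each $i\notin A$ the relation $[\eta_i]=[\mu_{K_i}]$ consumes exactly the rank it adds from the $M_{K_i}$ summand, so that no rank is lost from the $H_1(M_R;\KK\G)$ factor.
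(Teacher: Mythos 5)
Your overall strategy closely tracks the paper's own proof: use the cobordism $E$ of Lemma~\ref{lem:additivity}, obtain $\phi_R$ and a surjection $H_1(M_L;\KK\G)\thra H_1(E;\KK\G)$ from Lemma~\ref{lem:mickeyfacts}(1), then run Mayer--Vietoris on $E$ to compare with $H_1(M_R;\KK\G)$, and close with a free-group rank computation. Two points deserve comment.

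First, the ``obstacle'' you flag in the Mayer--Vietoris bookkeeping is not actually an obstacle. The paper shows that the inclusion $H_1(\eta_i\x D^2;\KK\G)\to H_1(M_{K_i};\KK\G)$ is an \emph{isomorphism for every} $i$: when $\phi(\eta_i)\neq 1$ both groups vanish (the source by the $t-1$ argument of Lemma~\ref{lem:mickeysig}, the target by ~\cite[Lemma~2.10]{COT}); when $\phi(\eta_i)=1$ both coefficient systems are trivial and the map is the obvious isomorphism $\KK\G\cong\KK\G$ given by $[\eta_i]\mapsto[\mu_{K_i}]$. Together with the injectivity of $H_0(\eta_i\x D^2;\KK\G)\to H_0(M_{K_i};\KK\G)$ (same two cases), this makes the connecting map $\partial_*$ vanish and the image of $\oplus_i H_1(\eta_i\x D^2;\KK\G)$ in $\oplus_i H_1(M_{K_i};\KK\G)\oplus H_1(M_R;\KK\G)$ a graph over the first factor, so $H_1(E;\KK\G)\cong H_1(M_R;\KK\G)$ exactly. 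No rank is ``lost'' and nothing is left to verify; you already have all the pieces but stopped short of assembling them into the isomorphism.

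Second, your proof of the reverse inequality in the trivial-link case has a genuine gap. You claim $\pi_1(M_L)$ is generated by the $m$ meridians of $L$ and build a group chain complex $\KK\G^m\to\KK\G$. This is false: the Wirtinger presentation of a link group has one generator per arc, not per component, and $\pi_1(M_L)$ is merely \emph{normally} generated by $m$ meridians, which is a much weaker statement and does not bound the size of $C_1$. The paper instead cites ~\cite[Proposition~2.11]{COT}, which gives the general bound $\operatorname{rank}_{\KK\G}H_1(M;\KK\G)\leq\beta_1(M)-1$ for closed $3$-manifolds with nontrivial PTFA $\phi$ via Poincar\'e duality and Euler characteristic; that is the right tool here, and you should replace your Fox-calculus paragraph with an appeal to it. (Your observation that $\phi_R$ must be nontrivial when $R$ is trivial is correct and useful, since the free-group Euler characteristic computation $\operatorname{rank}H_1(F_m;\KK\G)=m-1$ does require $H_0(F_m;\KK\G)=0$, i.e.\ nontriviality.)
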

\begin{proof}[Proof of Lemma~\ref{lem:rank}] Consider the cobordism $E_L$ of Figure~\ref{fig:mickey}. By Property $(1)$ of Lemma~\ref{lem:mickeyfacts}, the map
$$
\pi_1(M_L)\to \pi_1(E_L)
$$ 
is a surjection whose kernel is normally generated by $\{\mu_{\eta_i}\}$. Thus, as shown there, $\phi$ extends uniquely to $\pi_1(E_L)$ and hence by restriction to $\pi_1(M_R)$. Therefore there is a surjection 
$$
H_1(M_L;\mathcal{K}\G)\to H_1(E_L;\mathcal{K}\G)
$$ 
so
$$
\text{rank}_{\mathcal{K}\G}(H_1(M_L;\mathcal{K}\G))\geq \text{rank}_{\mathcal{K}\G}(H_1(E_L;\mathcal{K}\G)).
$$
Now examine the Mayer-Vietoris sequence with $\mathcal{K}\G$ coefficients for $E_L$ as in the proof of Lemma~\ref{lem:mickeysig}
$$
\oplus_i H_1(\eta_i \times D^2)\to \oplus_i H_1(M_{K_i})\oplus H_1(M_R)\to H_1(E_L)\overset{\partial_*}\to \oplus_iH_0(\eta_i \times D^2).
$$
We claim that the inclusion-induced maps
$$
H_0(\eta_i\x D^2;\mathcal{K}\G)\ra H_0(M_i;\mathcal{K}\G)
$$
are injective. In the case that $\phi(\eta_i)\neq 1$, $H_0(\eta_i\x D^2;\mathcal{K}\G)=~0$ by ~\cite[Proposition 2.9]{COT}, so injectivity holds. If $\phi(\eta_i)=1$ then, since $\eta_i$ is equated to the meridian of $K_i$, $\phi(\mu_{K_i})=1$. Since $\mu_i$ normally generates $\pi_1(M_i)$, it follows that the coefficient systems on $\eta_i\x D^2$ and $M_i$ are trivial and hence the injectivity follows from the injectivity with $\mathbb{Z}$-coefficients, which is obvious since both are path-connected. Hence $\partial_*$ is the zero map.
Similarly we claim that the inclusion-induced maps
$$
H_1(\eta_i\x D^2;\mathcal{K}\G)\ra H_1(M_{K_i};\mathcal{K}\G)
$$
are isomorphisms. In the case that $\phi(\eta_i)\neq 1$, both groups are zero by ~\cite[Lemma 2.10]{COT}. If $\phi(\eta_i)=1$ then both coefficient systems are trivial and result follows from the result for $\mathbb{Z}$-coefficients, which is obvious since $u_{K_i}$ generates $H_1(M_{K_i})\cong \mathbb{Z}$.

Armed with these observations, it now follows from the Mayer-Vietoris sequence that
$$
H_1(M_{R};\mathcal{K}\G)\cong H_1(E_L;\mathcal{K}\G).
$$
and the first result follows.

If $R$ is a trivial link then $\pi_1(M_R)$ is the free group $F$ of rank $m$. But it is easy to see from an Euler characteristic argument (~\cite[Lemma 2.12]{COT}) that 
$$
\text{rank}_{\mathcal{K}\G}(F;\mathcal{K}\G))=\beta_1(F)-1=m-1.
$$
Thus 
$$
\text{rank}_{\mathcal{K}\G}(H_1(M_L;\mathcal{K}\G))\geq \beta_1(M_L)-1
$$
but by ~\cite[Proposition 2.11]{COT}, this is also the maximum this rank can achieve, so the inequality is an equality.
\end{proof}

\bibliographystyle{plain}
\bibliography{mybib3}

\begin{thebibliography}{10}

\bibitem{CG1}
A.~Casson and C.~McA. Gordon.
\newblock On slice knots in dimension three.
\newblock {\em Proc. Symp. in Pure Math.}, XXX part 2:39--53, 1978.

\bibitem{CG2}
A.~Casson and C.~McA. Gordon.
\newblock Cobordism of classical knots.
\newblock {\em A la recherche de la topologie perdue 62}, 1986.
\newblock appeared as lecture notes, Orsay 1975.

\bibitem{Cha4}
J.C. Cha.
\newblock Link concordance, homology cobordism, and hirzebruch-type
  intersection form defects from towers of iterated p-covers.
\newblock preprint arXiv:/0705.0088.

\bibitem{Cha2}
J.C. Cha.
\newblock The structure of the rational concordance group of knots.
\newblock {\em Memoirs of American Math. Soc.}

\bibitem{Cha3}
J.C. Cha.
\newblock Topological minimal genus and $l^2$-signatures.
\newblock preprint http://xxx.lanl.gov/abs/math.GT/0609411.

\bibitem{CRL}
J.C. Cha, C.~Livingston, and D.~Ruberman.
\newblock Algebraic and heegard-floer invariants of knost with slice bing
  doubles.
\newblock {\em Math.Proc. Cambridge Phil. Soc.}
\newblock to appear, preprint http://xxx.lanl.gov/abs/math.GT/0612419.

\bibitem{ChGr1}
J.~Cheeger and M.~Gromov.
\newblock Bounds on the von neumann dimension of $l^2$-cohomology and the
  gauss-bonnet theorem for open manifolds.
\newblock {\em J. Differential Geom.}, 21:1--34, 1985.

\bibitem{Ci}
D.~Cimasoni.
\newblock Slicing bing doubles.
\newblock {\em Algebraic and Geometryic Topology}, 6:2395--2415, 2006.

\bibitem{C}
T.~Cochran.
\newblock Noncommutative knot theory.
\newblock {\em Algebr. Geom. Topol.}, 4:347--398, 2004.

\bibitem{CH2}
T.~Cochran and S.~Harvey.
\newblock Homology and derived series of groups ii: Dwyer's theorem.
\newblock preprint http://xxx.lanl.gov/abs/math.GT/0609484.

\bibitem{CHL1}
T.~Cochran, S.~Harvey, and Constance Leidy.
\newblock Knot concordance and blanchfield duality.
\newblock {\em Oberwolfach Reports}, 3(3), 2006.

\bibitem{CK}
T.~Cochran and T.~Kim.
\newblock Higher-order alexander invariants and filtrations of the knot
  concordance group.
\newblock {\em Trans.Amer.Math.Soc.}
\newblock In press; preprint http://xxx.lanl.gov/abs/math.GT/0411641.

\bibitem{COT}
T.~Cochran, K.~Orr, and P.~Teichner.
\newblock Knot concordance, whitney towers and $l^2$-signatures.
\newblock {\em Annals of Math.}, 157:433--519, 2003.

\bibitem{COT2}
T.~Cochran, K.~Orr, and P.~Teichner.
\newblock Structure in the classical knot concordance group.
\newblock {\em Comment. Math. Helv.}, pages 105--123, 2004.

\bibitem{CO2}
T.~Cochran and Kent Orr.
\newblock Homology boundary links and blanchfield forms: Concordance
  classification and new tangle-theoretic constructions.
\newblock {\em Topology}, 33:397--427, 1994.

\bibitem{CT}
T.~Cochran and P.~Teichner.
\newblock Knot concordance and von neumann $\rho$-invariants.
\newblock {\em Duke Math. Journal}, 137, no.2:337--379, 2007.

\bibitem{Fr2}
S.~Friedl.
\newblock Eta invariants as sliceness obstructions and their relation to
  casson-gordon invariants.
\newblock {\em Algebr. Geom. Topol.}, 4:893--934, 2004.

\bibitem{Fr3}
Stefan Friedl.
\newblock {$L\sp 2$}-eta-invariants and their approximation by unitary
  eta-invariants.
\newblock {\em Math. Proc. Cambridge Philos. Soc.}, 138(2):327--338, 2005.

\bibitem{FrT}
Stefan Friedl and Peter Teichner.
\newblock New topologically slice knots.
\newblock {\em Geom. Topol.}, 9:2129--2158, 2005.

\bibitem{Gi1}
P.~Gilmer.
\newblock Some interesting non-ribbon knots.
\newblock {\em Abstracts of papers presented to Amer. Math. Soc.}, 2:448, 1981.

\bibitem{GL1}
P.~Gilmer and C.~Livingston.
\newblock The {C}asson-{G}ordon invariant and link concordance.
\newblock {\em Topology}, 31(3):475--492, 1992.

\bibitem{Gi5}
Patrick~M. Gilmer.
\newblock Configurations of surfaces in {$4$}-manifolds.
\newblock {\em Trans. Amer. Math. Soc.}, 264(2):353--380, 1981.

\bibitem{Gi3}
Patrick~M. Gilmer.
\newblock Slice knots in {$S\sp{3}$}.
\newblock {\em Quart. J. Math. Oxford Ser. (2)}, 34(135):305--322, 1983.

\bibitem{Go1}
C.~McA. Gordon.
\newblock Some aspects of classical knot theory.
\newblock In {\em Knot theory (Proc. Sem., Plans-sur-Bex, 1977)}, volume 685 of
  {\em Lecture Notes in Math.}, pages 1--60. Springer, Berlin, 1978.

\bibitem{Ha2}
S.~Harvey.
\newblock Homology cobordism invariants of 3-manifolds and the
  cochran-orr-teichner filtration of the link concordance group.
\newblock preprint, http://front.math.ucdavis.edu/math.GT/0609378.

\bibitem{Ha1}
Shelly~L. Harvey.
\newblock Higher-order polynomial invariants of 3-manifolds giving lower bounds
  for the {T}hurston norm.
\newblock {\em Topology}, 44(5):895--945, 2005.

\bibitem{Ji1}
Bo~Ju Jiang.
\newblock A simple proof that the concordance group of algebraically slice
  knots is infinitely generated.
\newblock {\em Proc. Amer. Math. Soc.}, 83(1):189--192, 1981.

\bibitem{Ki1}
Taehee Kim.
\newblock Filtration of the classical knot concordance group and
  {C}asson-{G}ordon invariants.
\newblock {\em Math. Proc. Cambridge Philos. Soc.}, 137(2):293--306, 2004.

\bibitem{Lei3}
C.~Leidy.
\newblock Higher-order linking forms for 3-manifolds.
\newblock preprint.

\bibitem{Lei1}
C.~Leidy.
\newblock Higher-order linking forms for knots.
\newblock {\em Commentarii Math. Helv.}, 81:755--781, 2006.

\bibitem{Let}
Carl~F. Letsche.
\newblock An obstruction to slicing knots using the eta invariant.
\newblock {\em Math. Proc. Cambridge Philos. Soc.}, 128(2):301--319, 2000.

\bibitem{L5}
J.~Levine.
\newblock Knot cobordism groups in codimension two.
\newblock {\em Comm. Math. Helv.}, 44:229--244, 1969.

\bibitem{L6}
J.~Levine.
\newblock Link invariants via the eta invariant.
\newblock {\em Comm. Math. Helv.}, 69:82--119, 1994.

\bibitem{Le7}
J.~P. Levine.
\newblock Link concordance.
\newblock In {\em Algebra and topology 1988 (Taej\u on, 1988)}, pages 57--76.
  Korea Inst. Tech., Taej\u on, 1988.

\bibitem{Lith1}
R.~Litherland.
\newblock Cobordism of satellite knots.
\newblock (35):327--362, 1984.

\bibitem{LiM}
C.~Livingston and P.~Melvin.
\newblock Abelian invariants of satellite knots.
\newblock (1167):217--227, 1985.

\bibitem{Li7}
Charles Livingston.
\newblock Knots which are not concordant to their reverses.
\newblock {\em Quart. J. Math. Oxford Ser. (2)}, 34(135):323--328, 1983.

\bibitem{Li10}
Charles Livingston.
\newblock Links not concordant to boundary links.
\newblock {\em Proc. Amer. Math. Soc.}, 110(4):1129--1131, 1990.

\bibitem{Li6}
Charles Livingston.
\newblock Order 2 algebraically slice knots.
\newblock In {\em Proceedings of the Kirbyfest (Berkeley, CA, 1998)}, volume~2
  of {\em Geom. Topol. Monogr.}, pages 335--342 (electronic). Geom. Topol.
  Publ., Coventry, 1999.

\bibitem{Li5}
Charles Livingston.
\newblock Infinite order amphicheiral knots.
\newblock {\em Algebr. Geom. Topol.}, 1:231--241 (electronic), 2001.

\bibitem{Li1}
Charles Livingston.
\newblock A survey of classical knot concordance.
\newblock In {\em Handbook of knot theory}, pages 319--347. Elsevier B. V.,
  Amsterdam, 2005.

\bibitem{LS}
Wolfgang L{\"u}ck and Thomas Schick.
\newblock Various {$L\sp 2$}-signatures and a topological {$L\sp 2$}-signature
  theorem.
\newblock In {\em High-dimensional manifold topology}, pages 362--399. World
  Sci. Publ., River Edge, NJ, 2003.

\bibitem{P}
D.S. Passman.
\newblock {\em The Algebraic Structure of Group Rings}.
\newblock John Wiley and Sons, New York, 1977.

\bibitem{Ste}
B.~Stenstrom.
\newblock {\em Rings of Quotients}.
\newblock Springer-Verlag, New York, 1975.

\bibitem{Sto}
Neal~W. Stoltzfus.
\newblock Unraveling the integral knot concordance group.
\newblock {\em Mem. Amer. Math. Soc.}, 12(192):iv+91, 1977.

\bibitem{Wa}
C.~T.~C. Wall.
\newblock {\em Surgery on compact manifolds}, volume~69 of {\em Mathematical
  Surveys and Monographs}.
\newblock American Mathematical Society, Providence, RI, second edition, 1999.
\newblock Edited and with a foreword by A. A. Ranicki.

\end{thebibliography}
\end{document}